\newcommand{\cO}{\mathcal{O}}
\newcommand{\cF}{\mathcal{F}}
\newcommand{\cT}{\mathcal{T}}
\newcommand{\cL}{\mathcal{L}}
\newcommand{\cE}{\mathcal{E}}
\newcommand{\bE}{\mathbb{E}}
\newcommand{\bL}{\mathbb{L}}
\newcommand{\bP}{\mathbb{P}}
\newcommand{\bH}{\mathbb{H}}
\newcommand{\bVh}{{\mathbb V}_h}
\newcommand{\p}{\partial}
\newcommand{\dd}{{\rm d}}
\newcommand{\ds}{{\rm d}s}
\newcommand{\dW}{{\rm d}W}
\newcommand{\uli}{\overline{u}}
\newcommand{\eli}{\overline{e}}
\newcommand{\supT}{\mathop {\sup} \limits_{0 \le t \le T}}
\definecolor{Green}{rgb}{0.0, 0.5, 0.}
\definecolor{Red}{rgb}{0.7, 0.2, 0}
\newcommand\del[1]{}
\numberwithin{equation}{section}
\newtheorem{theorem}{Theorem}[section]
\newtheorem{definition}[theorem]{Definition}
\newtheorem{lemma}[theorem]{Lemma}
\newtheorem{remark}{Remark}
\newtheorem{example}{Example}
\newtheorem{scheme}{Scheme}
\numberwithin{figure}{section}
\newcommand{\blue}{\color{blue}}
\newcommand{\red}{\color{black}}
\begin{document}

\author{\makebox[\textwidth]
{Xiaobing Feng$^\dag$, Akash Ashirbad Panda$^\ddagger$, \and Andreas Prohl$^\star$}
}

\thanks{\\$^\dag$Department of Mathematics,  The University of Tennessee, Knoxville, TN 37996, U.S.A. ({\tt xfeng@utk.edu}); The work of this author was partially supported by the NSF grant DMS-2012414.
	\\$^\ddagger$Mathematisches Institut, Universit\"at T\"ubingen, Auf der 
 			Morgenstelle 10, D-72076, T\"ubingen, Germany ({\tt panda@na.uni-tuebingen.de}).
	\\$^\star$Mathematisches Institut, Universit\"at T\"ubingen, Auf der 
 			Morgenstelle 10, D-72076, T\"ubingen, Germany ({\tt prohl@na.uni-tuebingen.de}).	
}

\title{{Higher order time discretization for the stochastic semilinear 
 		wave equation with multiplicative noise}}

 \begin{abstract}
In this paper, a higher order time-discretization scheme is proposed, where the iterates approximate the solution of the stochastic semilinear wave equation driven by multiplicative noise with general drift and diffusion.  We employ variational method for its error analysis and prove an improved convergence order of $\frac 32$ for the approximates of the solution. The core of the analysis is H\"older continuity in time and moment bounds for the solutions of the continuous and the discrete problem. Computational experiments are also presented.
%
%Computational studies are provided to illustrate the effect of different diffusion terms --- i.e., depending on the [gradient of the] solution, or its temporal derivative --- onto the accuracy of the scheme. This paper develops and analyzes several time-semi-discrete and fully discrete finite 
%element methods for stochastic nonlinear wave equations with multiplicative noise.
%Six semi-discretizate schemes which are
%based on different time-stepping strategies for the linear and nonlinear terms are proposed. 
%Strong convergence with rates of three prototypical time-discrete schemes 
%(i.e., Scheme~\ref{scheme1}, \ref{scheme2} and \ref{scheme5} in the paper) 
%are established.  Moreover, strong convergence
%with sharp rates is obtained for the fully discrete finite element method based on semi-discrete Scheme~\ref{scheme1}. 
%Numerical experiments are also provided to gauge the performance of the proposed fully discrete finite element methods 
%and to analyze the effect of noise on the numerical solutions.
\end{abstract}

\maketitle

\noindent
\textbf{Keywords and phrases:}
 		{stochastic semilinear wave equations, 
 		multiplicative noise,
		time discretization,
 		stability analysis,
		convergence rates,
 		strong approximation.}

\medskip

\noindent	
\textbf{AMS subject classification (2020):} { 65M12, 65C20, 60H10, 60H15, 60H35, 65C30.}
 	
\pagestyle{myheadings}
\thispagestyle{plain}
\markboth{XIAOBING FENG, AKASH A. PANDA, AND ANDREAS PROHL}{HIGHER ORDER DISCRETIZATION OF THE STOCHASTIC SEMILINEAR WAVE EQUATION} 
 	
\medskip

%%%%%%%%%%%%%%%%
\section{Introduction}\label{sec-1}

\del{
In the field of fluid dynamics, electromagnetics, and acoustics, the physical phenomena such as mechanical vibrations and wave motions are commonly observed. These phenomena are usually modelled by hyperbolic partial differential equations (PDEs). One of the examples of hyperbolic PDEs is the wave equation, which describes the propagation of variety of waves (e.g. water waves, sound waves, and seismic waves). The wave equation subject to random perturbations gained a lot of attention during 1960's due to their applications in various fields such as oceanography, quantum mechanics and gene therapy, to name a few; we give several engineering examples that we aim to numerically access in Section \ref{sec-appli}. For an introduction to the theory of stochastic wave equation (SWE) and its applications, we refer to \cite[Chapter 5]{Chow_2015}, and references therein.
}

Let ${\mathcal O} \subset {\mathbb R}^d$,  for $1 \leq d \leq 3$ be a bounded domain. We consider the numerical approximation of the following stochastic semilinear wave equation perturbed by multiplicative noise of It\^o type:
\begin{align}\label{stoch-wave1:1}
\begin{cases}
\p_t^2 u + A  u = F(u,\p_t u) + \sigma(u, \p_t u)\,\p_t W \qquad
&\mbox{\rm in}\  (0,T) \times \cO\,, \\ 
u(0,\cdot) = u_0\, , \quad \p_t u(0,\cdot) = v_0 \qquad &\mbox{\rm in}\  \cO\,,\\
u(t,\cdot) =0  \qquad &\mbox{\rm on}\  \p \cO, \,\forall \,  t \in (0,T)\,,
\end{cases}
\end{align}
where $A$ is a strongly elliptic second order differential operator of the form
\begin{equation}\label{stoch-wave1:1b} A \varphi(x)= - \sum_{i, j=1}^d \frac{\partial} {\partial x_i} \Bigl( a_{ij}(x) \frac{\partial}{\partial x_j} \varphi(x)\Bigr)  \qquad \forall\, x \in {\mathcal O}\,,
\end{equation}
with suitably smooth coefficients $a_{ij}(x)$, where $a^{ij} = a^{ji} \ \forall i, j,$ and for every non-zero $\xi \in {\mathbb R}^d$, $\sum_{i, j}^d a_{ij}(x) \xi_i \xi_j \geq \gamma |\xi|^2$, for some constant $\gamma > 0$. Here, $F$ and $\sigma$ are Lipschitz in both arguments.
  Let $\mathfrak{P}:= (\Omega, \mathcal{F}, \mathbb{F}, \mathbb{P})$ be a filtered probability space, and $\{W(t)\}_{t\geq0}$  be a finite dimensional Wiener process defined on it; the initial data $u_0$ and $v_0$ are given $\cF_0-$measurable random variables. 
  
A strong variational solution to (\ref{stoch-wave1:1}) exists, see {\em e.g.}~\cite[Sec.~6.8]{Chow_2015}, and is usually constructed via the reformulation of (\ref{stoch-wave1:1})$_1$ as a first order system by setting $v = \partial_t u$,
  \begin{align}\label{stoch-wave1:1a}
\begin{cases}
&\dd u  = v\,  \dd t\,  \\
&\dd v  = \bigl(-A u  + F (u , v) \bigr)\, \dd t +  \sigma(u,v) \,\dd W(t)\,,
%u(0,\cdot) = u_0\, , \quad \p_t u(0,\cdot) = v_0, \qquad &\mbox{\rm on}\  \cO\,;\\
%u(t,\cdot) =0,  \qquad &\mbox{\rm on}\  \p \cO, \,\forall \,  t \in (0,T)\,,
\end{cases}
\end{align}
and then using a Faedo-Galerkin method, related uniform energy bounds, and a compactness argument; see Definition \ref{def-Strong_sol}, and Appendix \ref{Lem-EE} below. A prototype example is $A = -\Delta$, for which we associate the following energy functional 
\begin{equation}\label{energ1}
{\mathcal E}(u,v) := {\mathcal E}_{\tt kin}(v) + {\mathcal E}_{\tt ela}(u) = \frac{1}{2} \int_{\mathcal O} \vert v(x)\vert^2 \dd x + \frac{1}{2} \int_{\mathcal O} \vert \nabla u(x)\vert^2 \dd x\,,
\end{equation}
where the first term represents the kinetic energy, and the second the elastic energy of the propagating wave with pointwise elongation $u: [0,T] \times \overline{\mathcal O} \times \Omega \rightarrow {\mathbb R}$. ---
  We begin the further discussion with an example to motivate the effect of noise.

\begin{example}\label{noise-effect}
 Let $\cO = (0, 1),$ $T=1,$ $A = -\Delta$, $F\equiv0$ in \eqref{stoch-wave1:1}, and $W$ be of the form \eqref{w-form}, with $M =3$, and  $e_j(x) = \sqrt{2} \sin(j\pi x)$. --- The first line in Fig.~\ref{energy} displays single trajectories of $u$ for different $\sigma \equiv \sigma(u, v).$ For $\sigma \equiv 0$ both, the amplitude and wavelength remain constant over time in snapshot {\rm (A)}, as does  $\mathcal{E}(u, v)$ in  {\rm ({D})}. For $\sigma(u, v)=\frac{1}{2}u$, the amplitude of a single wave realization in snapshot {\rm (B)} changes --- as do the trajectory-wise energy parts in {\rm (E)} ---, while the wavelength remains constant over time. The computation of the (approximate) total energy uses ${\tt MC}=10^3$  Monte-Carlo simulations in snapshot {\rm (G)}: it is  conserved, and close to {\rm (D)}.

For $\sigma(u, v)=\frac{1}{2}v$ both, the wavelength and frequency of a single trajectory are heavily affected, see  snapshot {\rm (C)}, and {\rm (F)}, where
only  $t \mapsto \mathcal{E}_{\tt ela}(u(t, \omega))$ is smooth. In contrast, the
dynamics of ${\mathbb E}_{\tt MC}[{\mathcal E}(u,v)]$ in {\rm (H)} recovers the
exchange of elastic and kinetic energy parts, but the total energy is not conserved any more. The proper resolution of snapshot {\rm (H)} required $5$ times more Monte-Carlo simulations than for {\rm (G)}.
\del{
The components of the energy of a wave are kinetic and potential, which depends on the amplitude and the frequency of the wave. The total mechanical energy of the wave is the sum of its potential energy and kinetic energy, which is defined as
\[\mathcal{E}(u, v):= \frac{1}{2} \int_0^1 (\nabla u)^2 \,dx + \frac{1}{2} \int_0^1 (v)^2 \,dx.\] 
Time-changes of energy profiles are depicted in Figure \ref{energy}. The three different panels of this figure display the evolution of the above mentioned energies for different functions $\sigma=\sigma(u,v)$. 
}
\begin{figure}[htbp!]
  \centering
  \subfloat[$\sigma(u, v)=0$]{\includegraphics[width=0.33\textwidth]{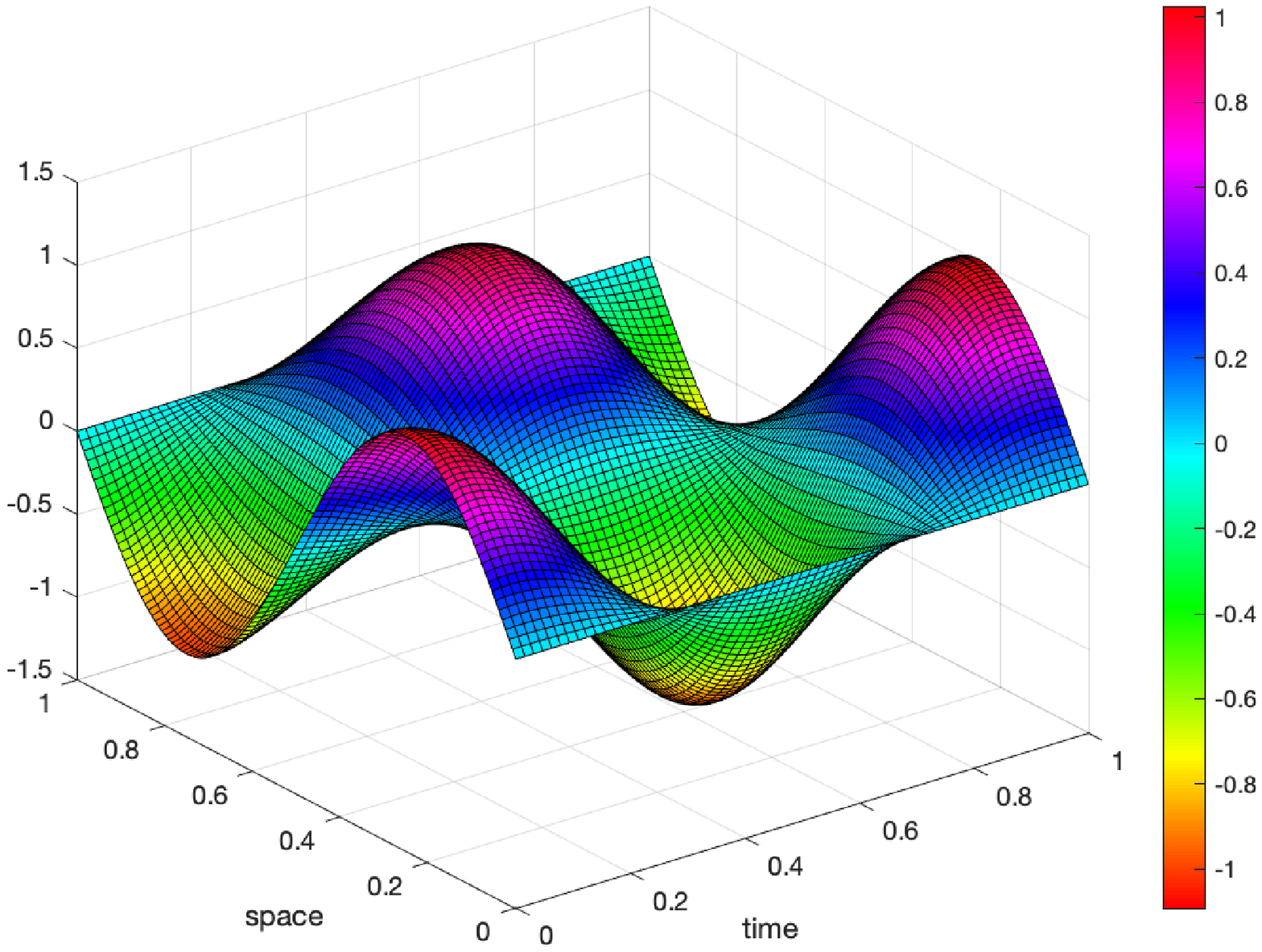}\label{fig:f4}}
\hfill
\subfloat[$\sigma(u, v)= \frac{1}{2} u$]{\includegraphics[width=0.33\textwidth]{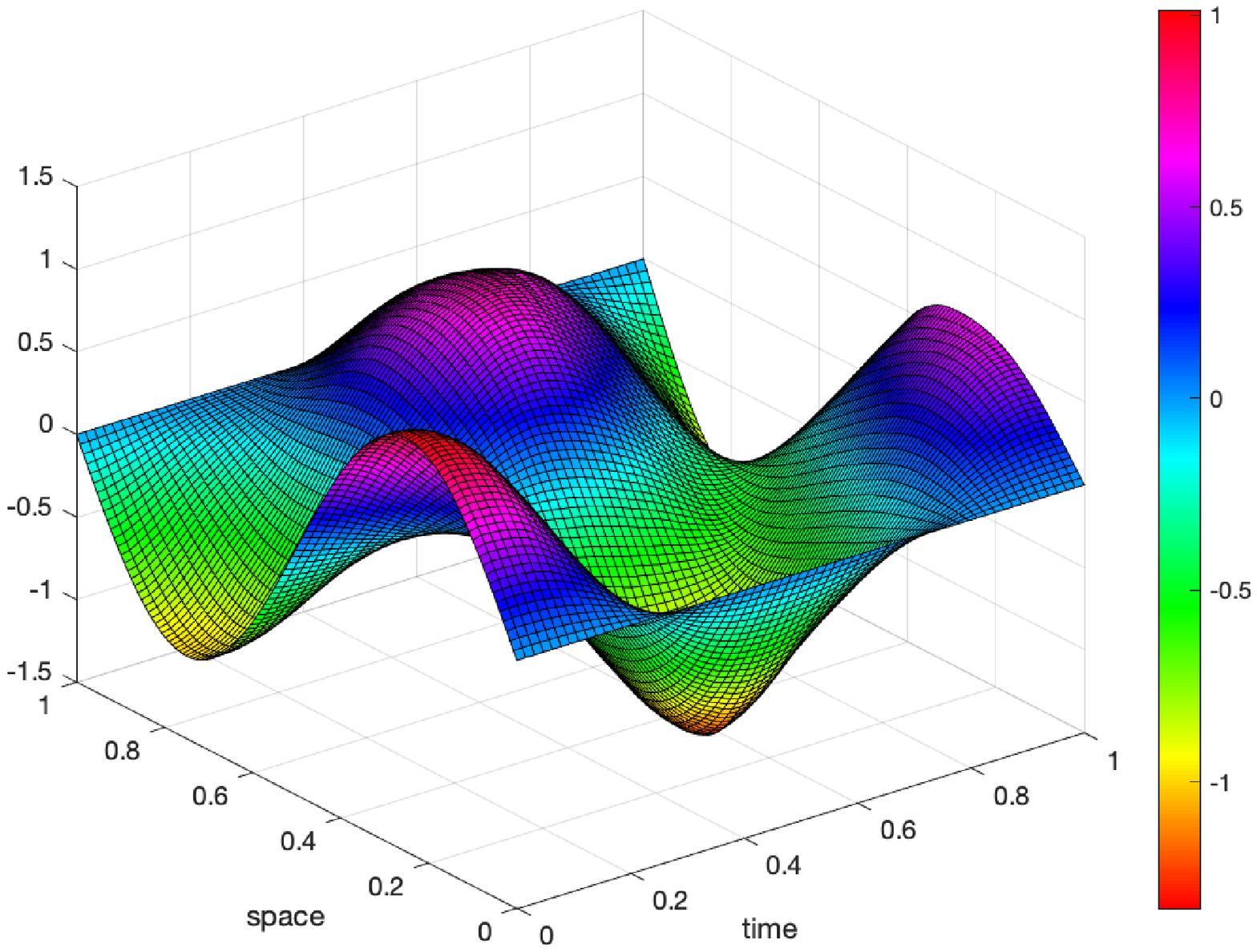}\label{fig:f5}}
\hfill
\subfloat[$\sigma(u, v)= \frac{1}{2} v$]{\includegraphics[width=0.33\textwidth]{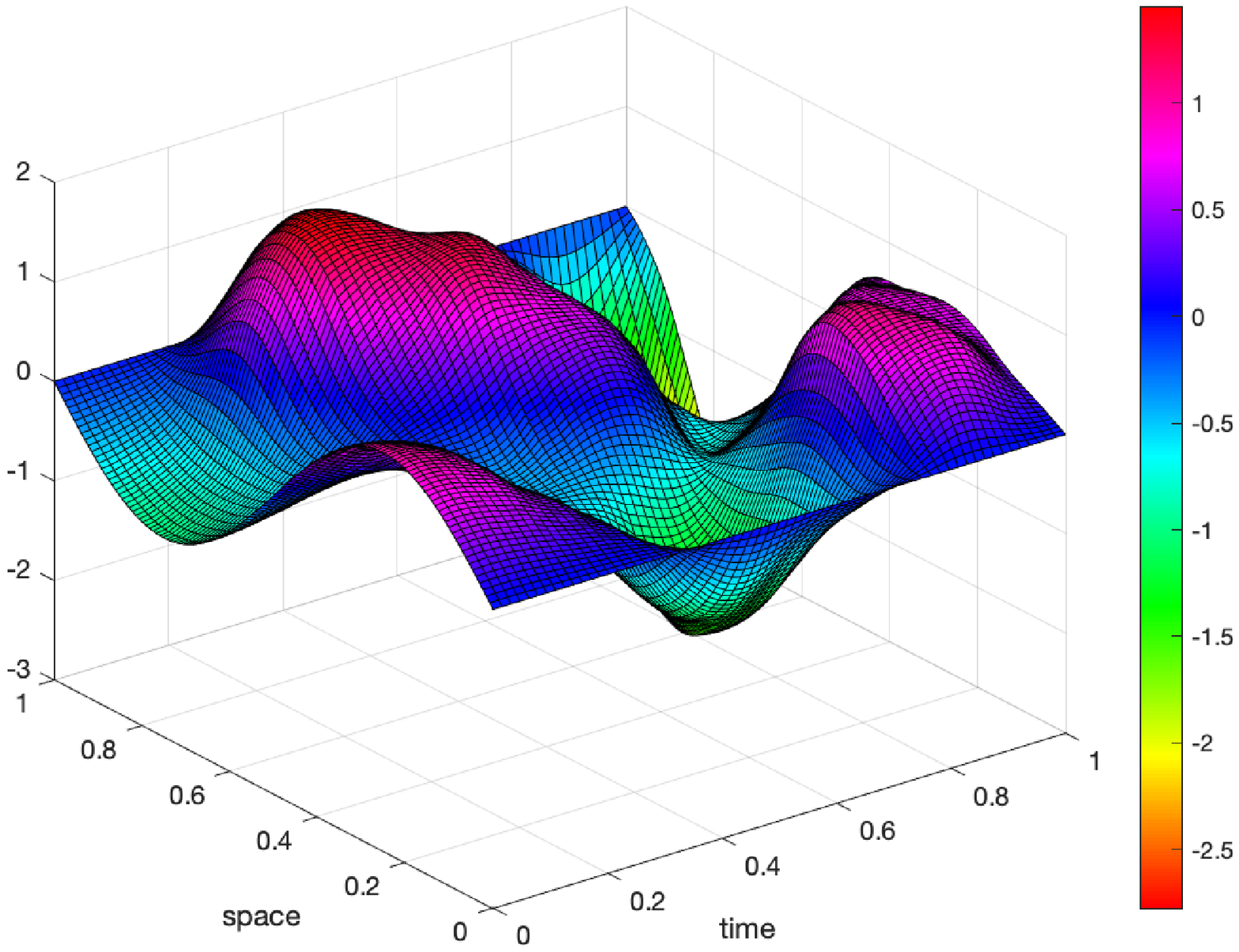}\label{fig:f6}}
\par
  \subfloat[$\sigma(u, v)=0$]{\includegraphics[width=0.33\textwidth]{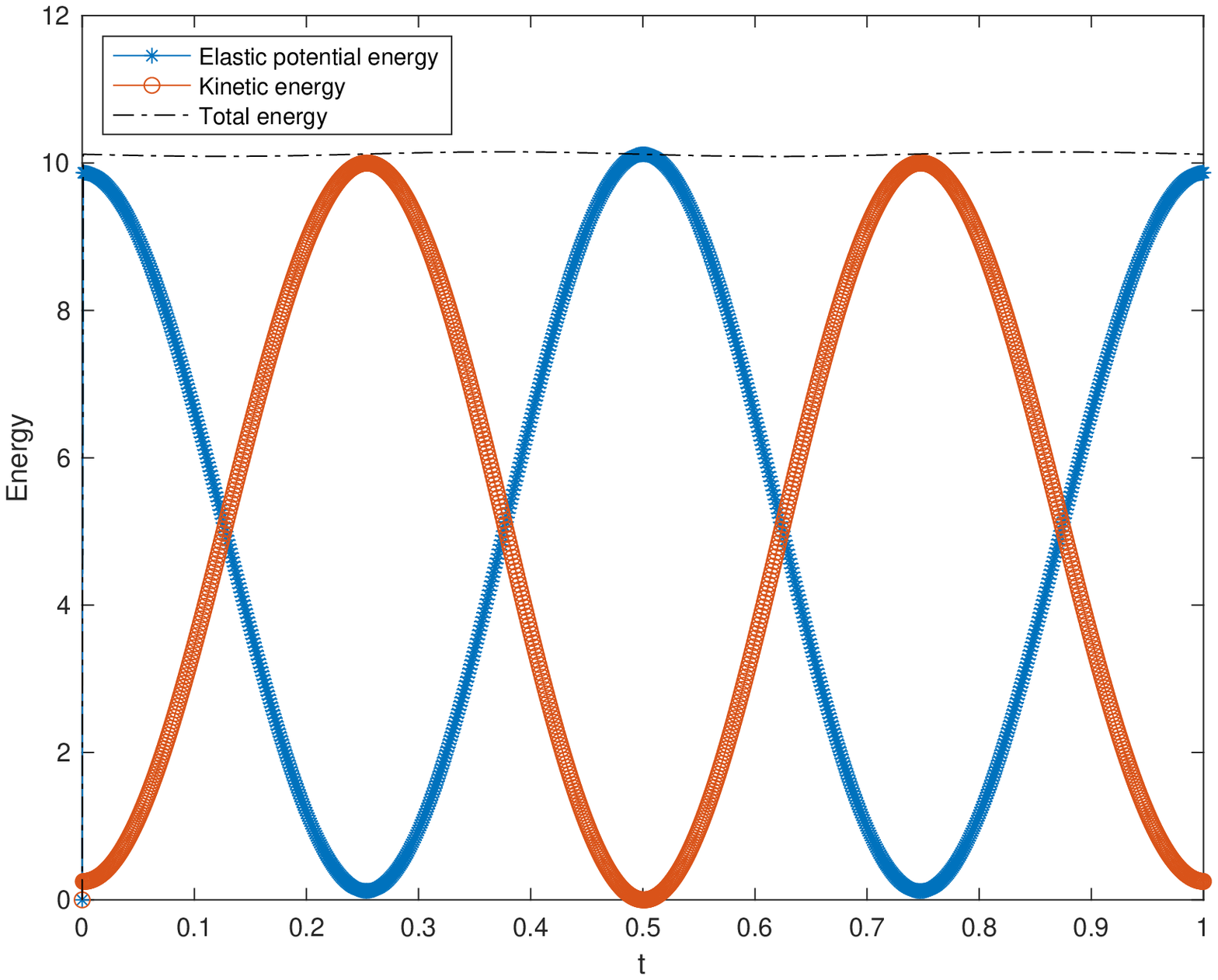}\label{f-f1}}
  \hfill
  \subfloat[$\sigma(u, v)= \frac{1}{2} u$]{\includegraphics[width=0.33\textwidth]{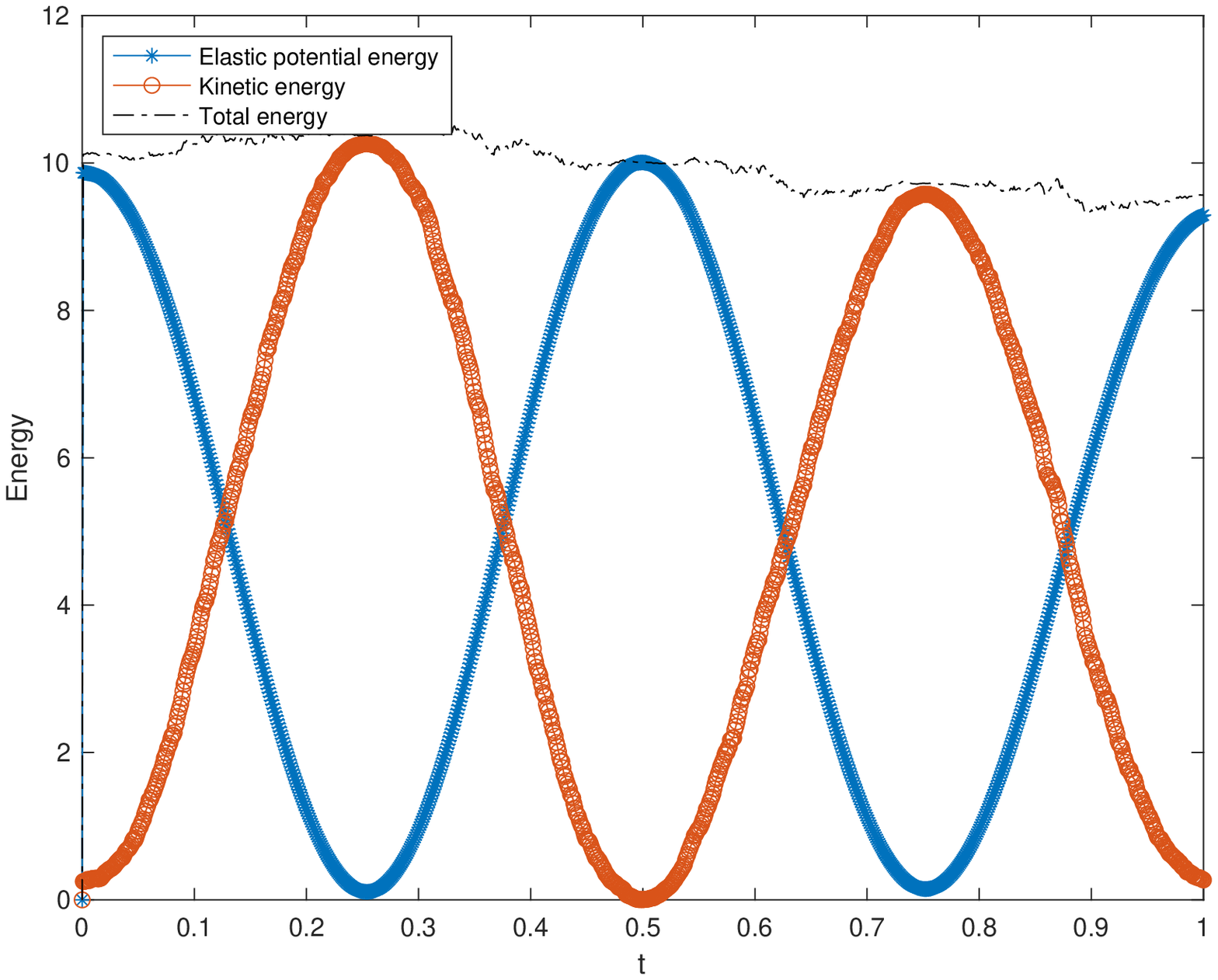}\label{f-f2}}
 \hfill
 \subfloat[$\sigma(u, v)= \frac{1}{2} v, \beta=\frac{1}{4}$]{\includegraphics[width=0.33\textwidth]{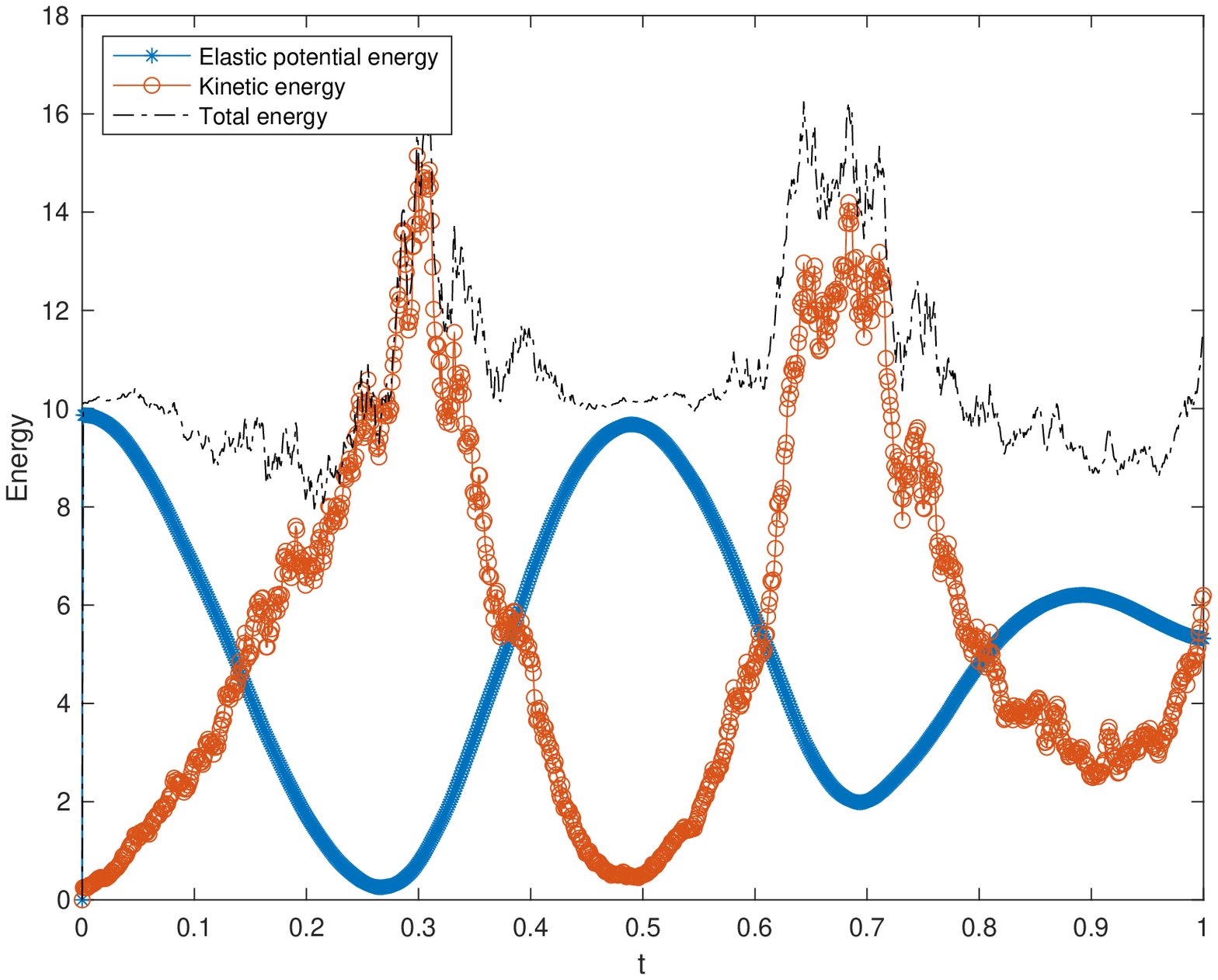}\label{f-f3}}
\par
  \subfloat[$\sigma(u, v)= \frac{1}{2} u,$ with ${\tt MC}=10^3$]{\includegraphics[width=0.33\textwidth]{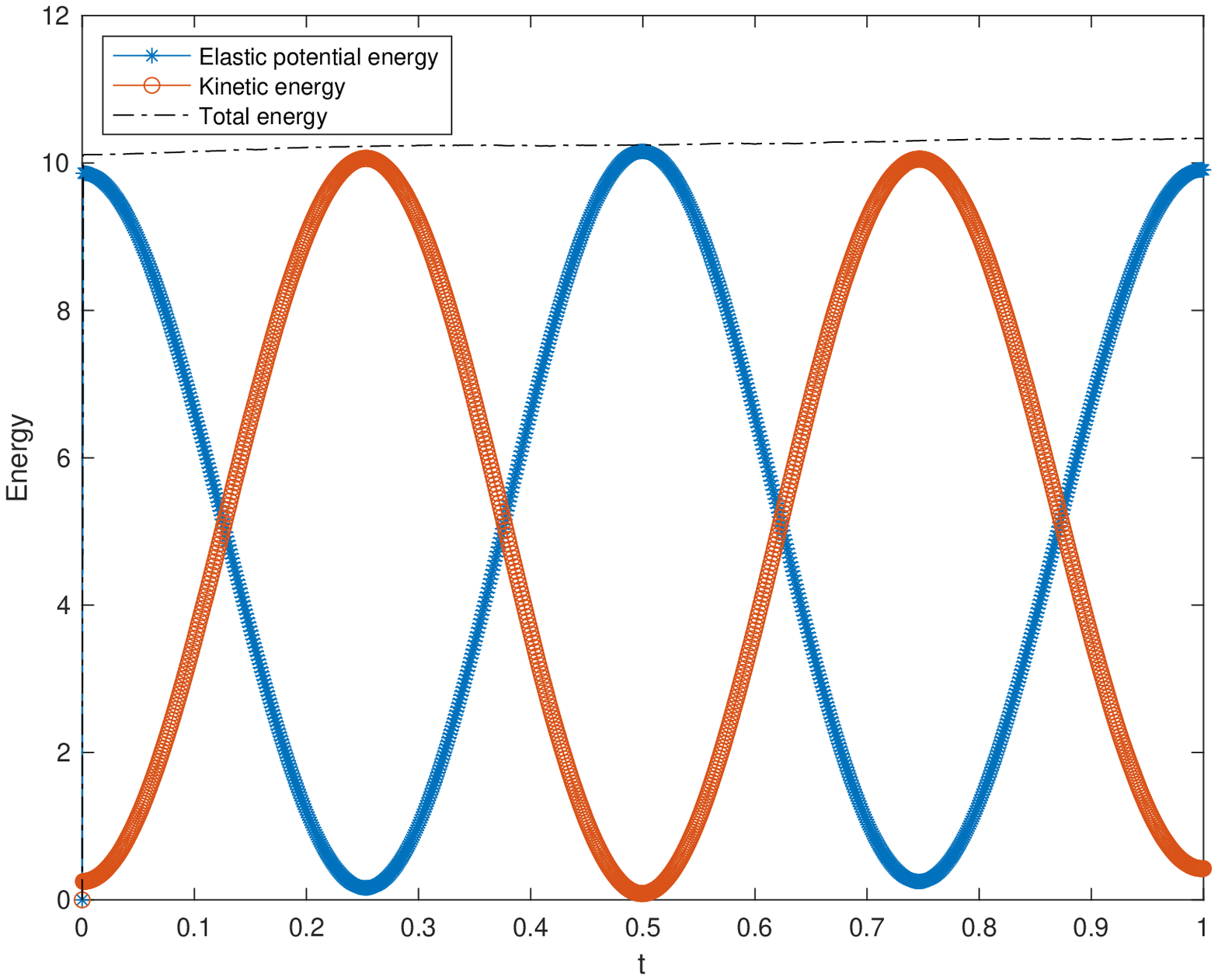}\label{f-f4}}
  \subfloat[$\sigma(u, v)=\frac{1}{2} v,$ with ${\tt MC}=5 \times 10^3$]{\includegraphics[width=0.33\textwidth]{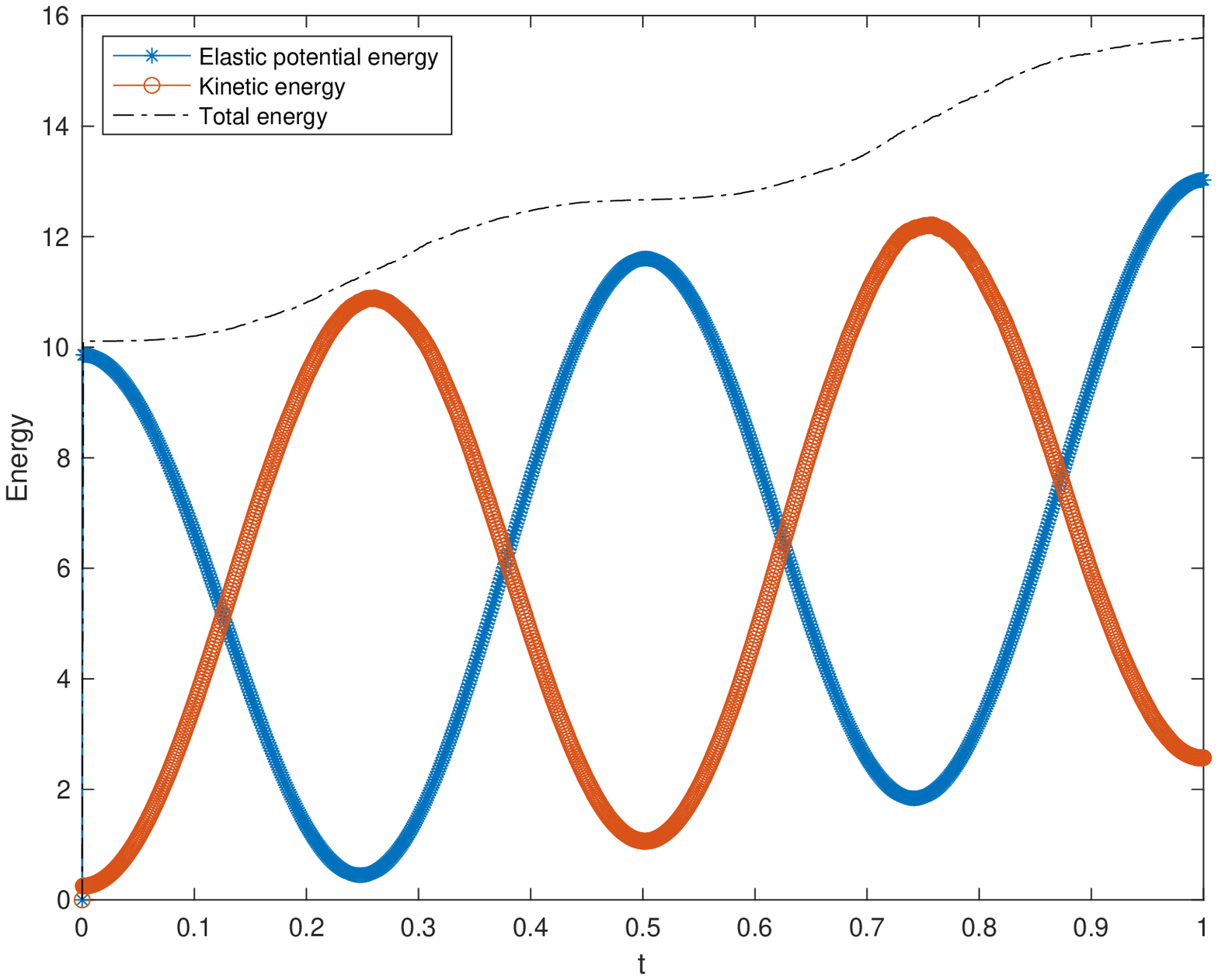}\label{f-f5}}

\caption{{\bf (Example \ref{noise-effect})} (1st line) Single trajectory of $u$ from \eqref{stoch-wave1:1}, simulated via $(\widehat{\alpha}, \beta)-$scheme ($\widehat{\alpha} = 1$). (2nd line) Corresponding elastic ($\mathcal{E}_{\tt ela}$), kinetic ($\mathcal{E}_{\tt kin}$), total energy ($\mathcal{E}$), for mesh sizes $h=2^{-7}$ and $k=2^{-10}$. (3rd line) Plots $t \mapsto \mathbb{E}_{\tt MC} [\mathcal{E}(u(t), v(t))]$, with ${\tt MC}=10^3$ in  snapshot (G) and ${\tt MC}=5 \times 10^3$ in (H).}
\label{energy}
\end{figure}
%
%\begin{figure}[h!]
  %\centering
  %\end{figure}

\end{example}
The first works to numerically solve (\ref{stoch-wave1:1}) are \cite{Walsh_2006} and \cite{Sar+San_2006}, where (semi-)discrete schemes were constructed based on the solution concept of a mild solution for (\ref{stoch-wave1:1}): 
in \cite{Walsh_2006}, which considered ${\mathcal O}= {\mathbb R}$, $A=-\Delta$, Lipschitz nonlinearities $F \equiv F(u)$ and $\sigma \equiv \sigma(u)$, and white noise, a strong convergence rate ${\mathcal O}(k^{1/2})$ was shown for an explicit finite difference scheme, where the temporal step size $k$ is equal to the mesh size $h$ of the Cartesian spatial mesh; the error analysis uses
the Green's function, which is explicitly known in this case, and hence used the mild solution concept for this Cauchy problem.
%
%
%
%Over the last two decades, the stochastic wave equation \eqref{stoch-wave1:1} has been broadly studied and various notions of solution have been established (see e.g. \cite{Chow_pol_2002} and references therein), which affected the construction of numerical schemes. One can rewrite the system \eqref{stoch-wave1:1} by means of Green function of the wave equation as
%{\small{
%\begin{equation}\label{GF}
%\begin{split}
%u(t,x) &=\int_{\cO} G(t,x,y) \,v_0(y)\,d y + \frac{\partial}{\partial t} \Big( \int_{\cO} G(t,x,y) \,u_0(y)\,d y\Big)
%\\ &\quad +\int_0^t \int_{\cO} G(t-s,x,y) \,F(u(s,y),v(s,y))\,d y\,d s
%\\ &\quad +\int_0^t \int_{\cO} G(t-s,x,y) \,\sigma(u(s,y),v(s,y))\,W(ds, dy),
%\end{split}
%\end{equation}
%}}
%%
%where $G = G(t, x, y)$ is the Green function of the wave equation with homogeneous Dirichlet boundary conditions.
%Walsh \cite{Walsh_2006} considered \eqref{GF} with $\mathcal{O}=\mathbb{R}, F=F(s,y,u(s,y))$ and $\sigma=\sigma(s,y,u(s,y))$, where $G$ is known. He used an adaptation of leap-frog discretization and constructed a fully discrete finite difference scheme that attains convergence order $1/2$ in both space and time. 

A further development in this direction is  \cite{CQ_2016},
where ${\mathcal O}=(0,1), A= -\Delta$  in (\ref{stoch-wave1:1}),
and the authors used the explicit representation of (discrete) Green's function, such that its implementation crucially hinges on
the availability of eigenvalues and eigenfunctions of the Laplacian; see also \cite[Sec.~5.3]{Chow_2015}, and \cite{Hochbruck_2010}. The stable scheme then allows independent choices of $k$ and $h$, and the proof of \cite[Thm.~4.1]{CQ_2016} provides convergence rates both, in terms of spatial and temporal discretization. We also mention \cite{CLS_2013}, where ${\mathcal O}$ is a bounded convex domain with polygonal boundary, and $A= -\Delta$  in (\ref{stoch-wave1:1}); the space-time discretization was proposed with the explicit knowledge of the related (discrete) semigroup, whose efficient implementation again hinges on the knowledge of the related eigenvalues and eigenfunctions. Later, in \cite{ACLW_2016} the authors addressed the multiplicative noise case with $\sigma \equiv \sigma(u)$, where $\sigma$ and also the nonlinearity $F \equiv F(u)$ were assumed to be zero on the boundary. The above mentioned works did not address the case when $F \equiv F(u, v)$ and $\sigma \equiv \sigma(u, v)$.

In engineering applications for elastic and acoustic wave propagations which may be described by (\ref{stoch-wave1:1}), the considered domains ${\mathcal O} \subset {\mathbb R}^d$ are typically complicated, and/or the propagating medium is heterogeneous, 
 with layers, anisotropies, cavities ({\em e.g.} in seismology, or material testing), or may even be random. Moreover, models of type (\ref{stoch-wave1:1}) often require   non-constant and non-self-adjoint operators, such as those in (\ref{stoch-wave1:1b}), which may even have random coefficients. Therefore, such engineering problems often exclude the efficient use of semigroup based methods through spectral theory as discussed above. This motivates us to aim for the following goals in this work:
\begin{itemize}
\item[1)] Use an implicit method in time (below referred to as $(\widehat{\alpha}, \beta)-$method, where $\widehat{\alpha}=0$; see Scheme \ref{Hat-scheme}) to approximate \eqref{stoch-wave1:1} with $F \equiv F(u, \partial_t u)$ and $\sigma \equiv \sigma(u, \partial_t u)$, and employ variational methods for its error analysis. This part is motivated by  \cite{Dup_1973} for the deterministic linear wave equation, {\em i.e.}, $F \equiv \sigma \equiv 0$. For {finite dimensional noise} of type \eqref{w-form}, we use energy arguments to obtain ${\mathcal O}(k)$ for the temporal error --- which coincides with the order obtained in \cite[Thm.~4]{ACLW_2016} and \cite[Thm.~4.1]{CQ_2016} for an exponential integrator, in the case $\sigma \equiv \sigma(u)$, $F \equiv F(u)$ and trace-class noise in (\ref{stoch-wave1:1}). We obtain ${\mathcal O}(k^{\frac 12})$ for the temporal error in the general case $\sigma \equiv \sigma(u, v)$ and $F \equiv F(u, v)$, which has not been addressed in the existing literature. 
\item[2)] For $\sigma \equiv \sigma(u)$ and $F \equiv F(u)$, in fact, we improve the $(\widehat{\alpha}, \beta)-$method  to a higher-order method which yields improved convergence order ${\mathcal O}(k^{3/2})$ for approximates of $u$ in ${\mathbb L}^2$; see Theorem \ref{lem:scheme1:con}. The additional term that arises for $\widehat{\alpha} = 1$ is {\em motivated} by It\^o's formula, and uses
increments
\begin{equation}\label{w-tilde}
\widetilde{\Delta_n W}:=  \int_{t_n}^{t_{n+1}} (s-t_n) \,\mathrm{d} W(s) = \int_{t_n}^{t_{n+1}} s \, {\mathrm d}W(s) - t_n \Delta_n W\, .
\end{equation}
\del{
We show that this term approximates the explicit 
increment $\widehat{\Delta_n W} := k \Delta_n W$ in such a way that the overall order ${\mathcal O}(k^{3/2})$ is not affected.
}
\item[3)] Computational experiments in Section~\ref{sec-6} show that these results are sharp {\em w.r.t.} the used noise, {\em i.e.}, there are examples for $\sigma \equiv \sigma(u,v)$ where 
the error converges only in order ${\mathcal O}(k)$ --- rather than ${\mathcal O}(k^{3/2})$ in the case $\sigma \equiv \sigma(u)$.
\end{itemize}
In this work, we focus on proper time discretizations for (\ref{stoch-wave1:1}), which we consider to be the essential part of an overall discretization,
and leave a related finite element error analysis for future work.
The results will be derived for (\ref{stoch-wave1:1}) with $A = -\Delta$ to simplify the technical setup, but easily generalize to $A$ in (\ref{stoch-wave1:1b}), even with random coefficients there. Moreover, the $(\widehat{\alpha}, \beta)-$method is neither a spectral Galerkin method nor does its implementation hinge on related semigroups.

\smallskip

While being inspired by the second order time-stepping scheme of \cite{Dup_1973} for the deterministic wave equation, where $u^{n, \frac 12} := \frac{1}{2} (u^{n+1} + u^{n-1})$, we propose the following scheme for (\ref{stoch-wave1:1}):
\begin{scheme}\label{scheme--2}
{\bf ($(\widetilde{\alpha},\beta)-$scheme)} Fix $\widetilde{\alpha} \in \{0,1\}$ and $\beta \in [0, 1/2)$.
Let $\{ t_n\}_{n=0}^N$ be a mesh of size $k>0$ covering $[0, T],$ and $\{ (u^n, v^n)_{n=0, 1}\}$ be given $\mathcal{F}_{t_n}$-measurable, $[{\mathbb H}^1_0]^2$-valued r.v's.
For every $n \ge 1$, find $[{\mathbb H}^1_0]^2$-valued, 
${\mathcal F}_{t_{n+1}}$-measurable r.v's~$(u^{n+1}, v^{n+1})$ such that ${\mathbb P}$-a.s.
\begin{eqnarray}
\label{scheme2--1}
\quad &&(u^{n+1}-u^{n}, \phi) = k(v^{n+1},\phi) 
\quad 
\forall \phi \in \mathbb{L}^2\, , \\  \nonumber
\quad &&(v^{n+1}-v^{n}, \psi) = - k \big(\nabla \widetilde{u}^{n,\frac 12}, \nabla\psi \big)  
+ \Bigl(\sigma(u^{n}, v^{n- \frac 12})\, \Delta_n W, \psi\Bigr)  
\\  \label{scheme2--2}
\quad &&\qquad\qquad\qquad\quad+ \widetilde{\alpha} \,\Bigl( D_u\sigma(u^{n}, v^{n-\frac 12})\, v^{n}\, \widetilde{\Delta_n W}, \psi\Bigr) \\ \nonumber
%\quad &&\qquad\qquad\qquad\quad+ \widehat{\alpha}\,\bigl( \partial_u\sigma(u^{n}, v^{n}) v^{n} \,\widehat{\Delta_n W}, \psi\bigr) 
\quad &&\qquad\qquad\qquad\quad+ \frac{k}{2}\Bigl(3F(u^n, v^n) 
- F(u^{n-1}, v^{n-1}), \psi\Bigr) \quad \forall \psi \in \mathbb{H}^1_0\, ,
\end{eqnarray}
where 
\begin{align}\label{tilde1/2}
\widetilde{u}^{n,\frac 12} \equiv {{\widetilde{u}^{n,\frac 12}_\beta}} := \frac{1+ \beta\,k^{\beta}}{2} u^{n+1} + \frac{1- \beta\,k^{\beta}}{2} u^{n-1}\,,
\end{align}
and
\begin{align*}
\Delta_{n} W := W(t_{n+1}) - W(t_n) \ \ \mbox{ and } \ \ v^{n- \frac 12} := \frac 12 (v^n + v^{n-1})\, .
\end{align*}
\end{scheme}

Note that $\widetilde{u}^{n,\frac 12} = u^{n,\frac 12}$ for $\beta=0$. 
Also, in the case when $F \equiv F(u)$, $\sigma \equiv \sigma(u)$ and $\beta=0$, the $(\widetilde{\alpha},\beta)-$scheme simplifies to (for $n \geq 1$)
\begin{eqnarray}
\label{scheme2:1}
\quad &&(u^{n+1}-u^{n}, \phi) = k(v^{n+1},\phi) 
\quad 
\forall \phi \in \mathbb{L}^2\, , \\  
\quad &&(v^{n+1}-v^{n}, \psi) = - k \big(\nabla u^{n, \frac 12}, \nabla\psi \big)  
+ \Bigl(\sigma(u^{n}) \Delta_n W, \psi\Bigr)+ \widetilde{\alpha}  \,\Bigl(D_u\sigma(u^{n}) v^{n} \widetilde{\Delta_n W}, \psi\Bigr) 
\label{scheme2:2}\\
\quad &&\qquad\qquad\qquad\quad+ \frac{k}{2}\Bigl(3F(u^n) 
- F(u^{n-1}), \psi\Bigr) \quad \forall \psi \in \mathbb{H}^1_0\, \nonumber
. 
\end{eqnarray}

Scheme~\ref{scheme--2} involves the increment $\widetilde{\Delta_n W}$ from \eqref{w-tilde}. For their implementation, we approximate it by {\em $\widehat{\Delta_n W}$} defined as
\begin{align}\label{W-hat}
\widehat{\Delta_n W} := k W(t_{n+1}) - k^2 \sum_{\ell=1}^{k^{-1}} W(t_{n,\ell})\,,
\end{align}
where $\big\{ W(t_{n, \ell}) \big\}_{\ell = 1}^{k^{-1}}$ is the piecewise affine approximation of $W$ on $[t_{n},t_{n+1}]$ on an equidistant mesh $\{ t_{n,\ell}\}_{\ell=1}^{k^{-1}},$ of size $k^2:= t_{n, \ell+1} - t_{n, \ell}$.
To motivate this approximation, we first use It\^o's formula to restate $\widetilde{\Delta_n W}$ as
\begin{equation}\label{DnW-tilde}
\begin{split}
\widetilde{\Delta_n W} &= \Bigl(t_{n+1} W(t_{n+1}) - t_{n} W(t_{n})\Bigr) -
\int_{t_n}^{t_{n+1}} W(s)\, {\mathrm d}s - t_n \Delta_n W 
\\ &= \int_{t_n}^{t_{n+1}} \bigl[W(t_{n+1}) - W(s)\bigr]\, {\rm d}s = k W(t_{n+1}) - \int_{t_n}^{t_{n+1}} W(s)\, {\rm d}s
\end{split}
\end{equation}
and we approximate the last integral on the right-hand side of \eqref{DnW-tilde} by $k^2 \sum_{\ell=1}^{k^{-1}} W(t_{n,\ell})$. Thus, we have the following implementable scheme:
\begin{scheme}\label{Hat-scheme}
{\bf ($(\widehat{\alpha}, \beta)-$scheme)} Consider Scheme \ref{scheme--2}. We refer to \eqref{scheme2--1}--\eqref{scheme2--2} as $(\widehat{\alpha}, \beta)-$scheme, when $\widetilde{\alpha}$ and $\widetilde{\Delta_n W}$ are replaced by $\widehat{\alpha}$ and $\widehat{\Delta_n W}$, respectively. 
\end{scheme}

\medskip

The following example motivates that the convergence rate for the
%in the scheme \eqref{scheme2:1}--\eqref{scheme2:2} ({\em i.e.,} 
$(1, 0)-$scheme is boosted from ${\mathcal O}(k)$ to ${\mathcal O}(k^{3/2})$,
 in case $\sigma \equiv \sigma(u)$ and $F \equiv F(u)$.
\begin{example}\label{exm-alp0}
Let $\cO=(0, 1)$, $T=1$, $A = -\Delta$, $F \equiv 0$, $\sigma(u)=2 \sin(u)$
in \eqref{stoch-wave1:1a}. Let
$$u_0(x) = \sin(2 \pi x) \qquad \mbox{and} \qquad  v_0(x) = \sin(3 \pi x)\,,$$
 and $W$ as in Example \ref{noise-effect}. 
Fig. \ref{5.1} displays convergence studies for  the scheme \eqref{scheme2:1}--\eqref{scheme2:2}$:$
for $\widehat{\alpha} = 0$, the plots {\rm (A) -- (C)} show $\bL^2$-errors in $u$, $\nabla u$, evidencing convergence order ${\mathcal O}(k)$, and those for $v$ evidence convergence order ${\mathcal O}(k^{1/2})$. For $\widehat{\alpha} = 1$, the convergence order improves to ${\mathcal O}(k^{3/2})$ for $u$, $\nabla u$, and ${\mathcal O}(k)$ for $v$; see  plots {\rm (D) -- (F)}.
See Section \ref{sec-6} for more details.
\begin{figure}[h!]
  \centering
  \subfloat[$\bL^2$-error for $u$ ($\widehat{\alpha}=0$ case)]{\includegraphics[width=0.33\textwidth]{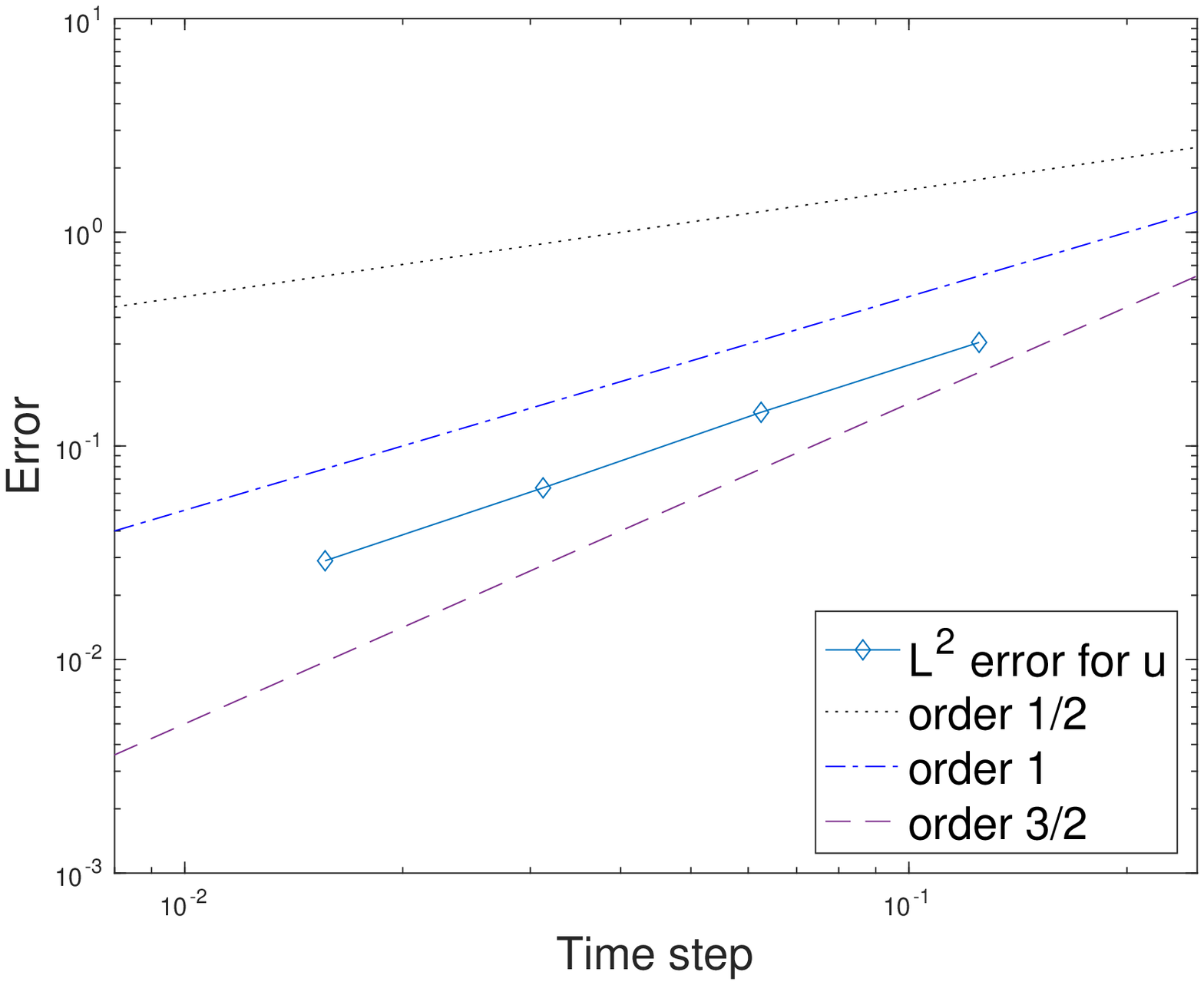}\label{fig:f1}}
  \hfill
  \subfloat[$\bL^2$-error for $\nabla u$ ($\widehat{\alpha}=0$ case)]{\includegraphics[width=0.33\textwidth]{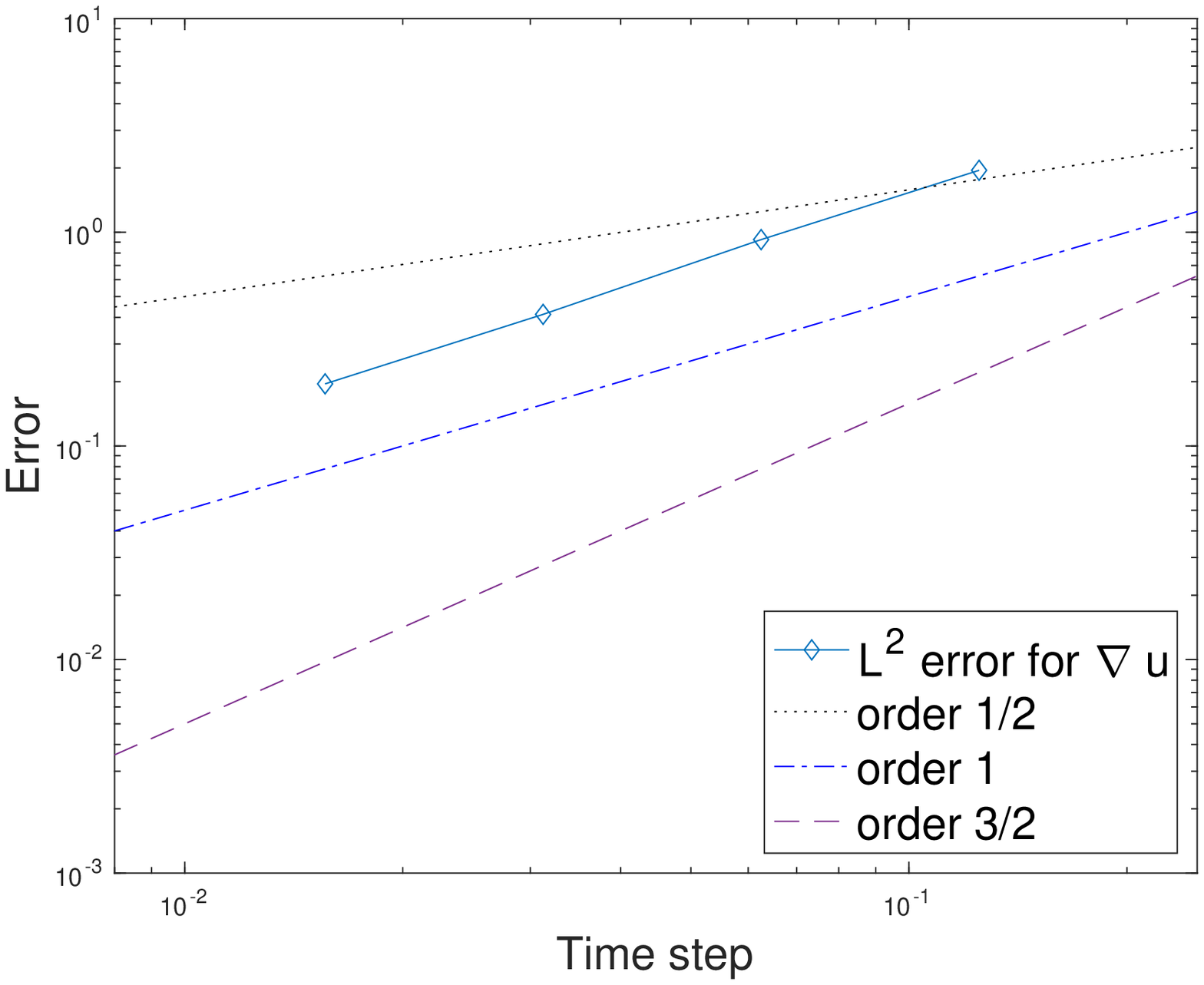}\label{fig:f2}}
  \hfill
  \subfloat[$\bL^2$-error for $v$ ($\widehat{\alpha}=0$ case)]{\includegraphics[width=0.33\textwidth]{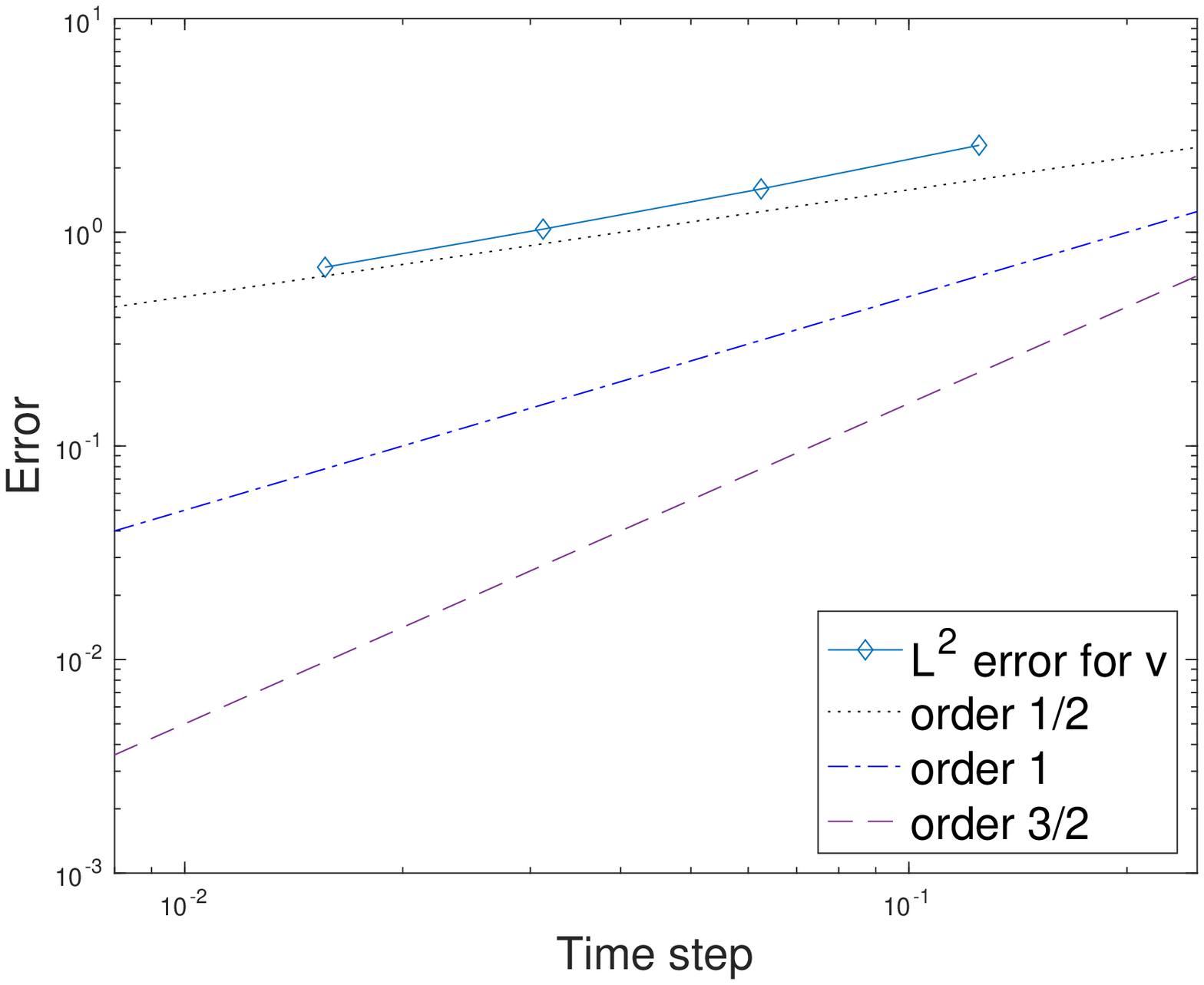}}\label{fig:f3}
 \par
 \subfloat[$\bL^2$-error for $u$ ($\widehat{\alpha}=1$ case)]{\includegraphics[width=0.33\textwidth]{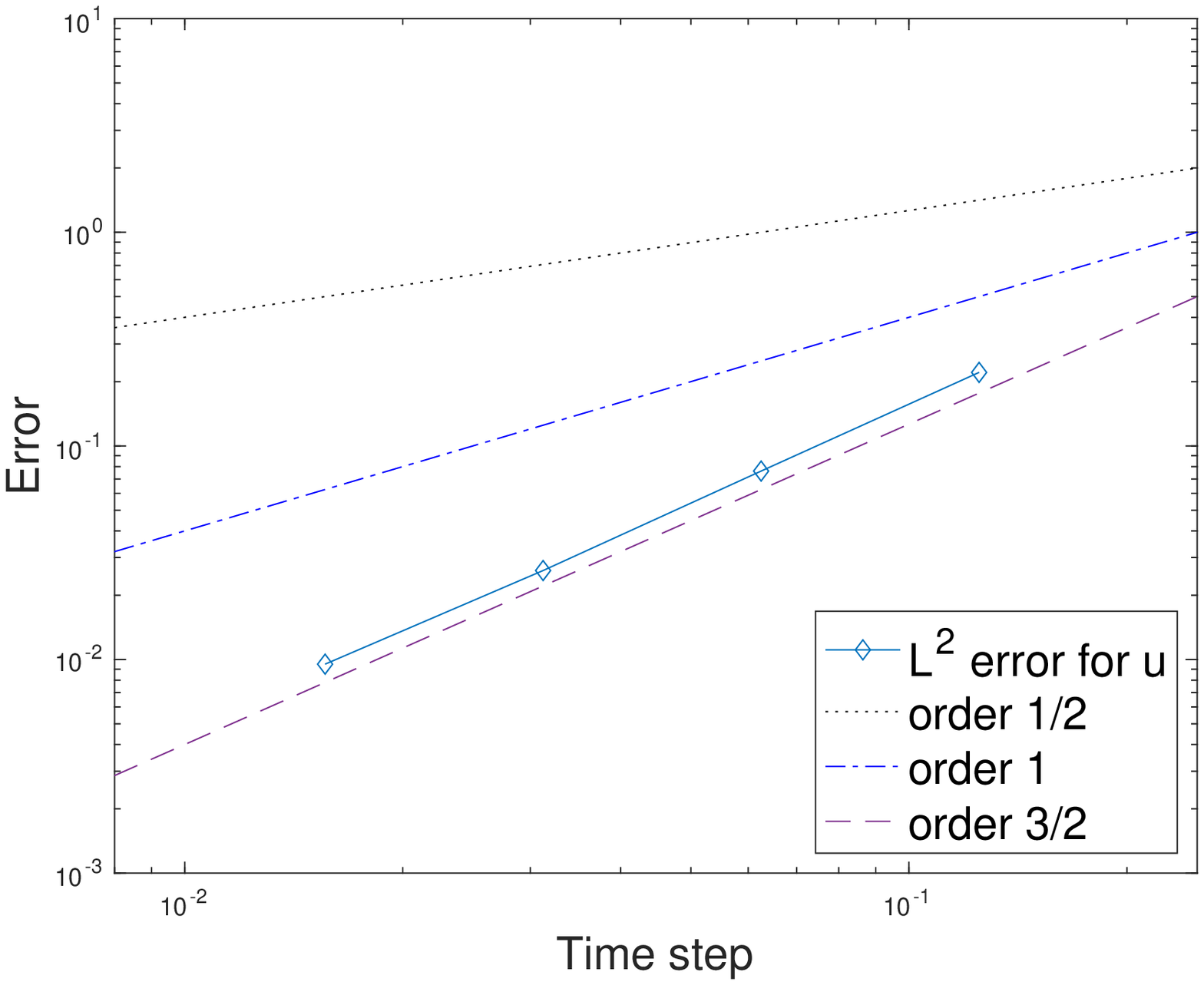}\label{fig4}}
  \hfill
  \subfloat[$\bL^2$-error for $\nabla u$ ($\widehat{\alpha}=1$ case)]{\includegraphics[width=0.33\textwidth]{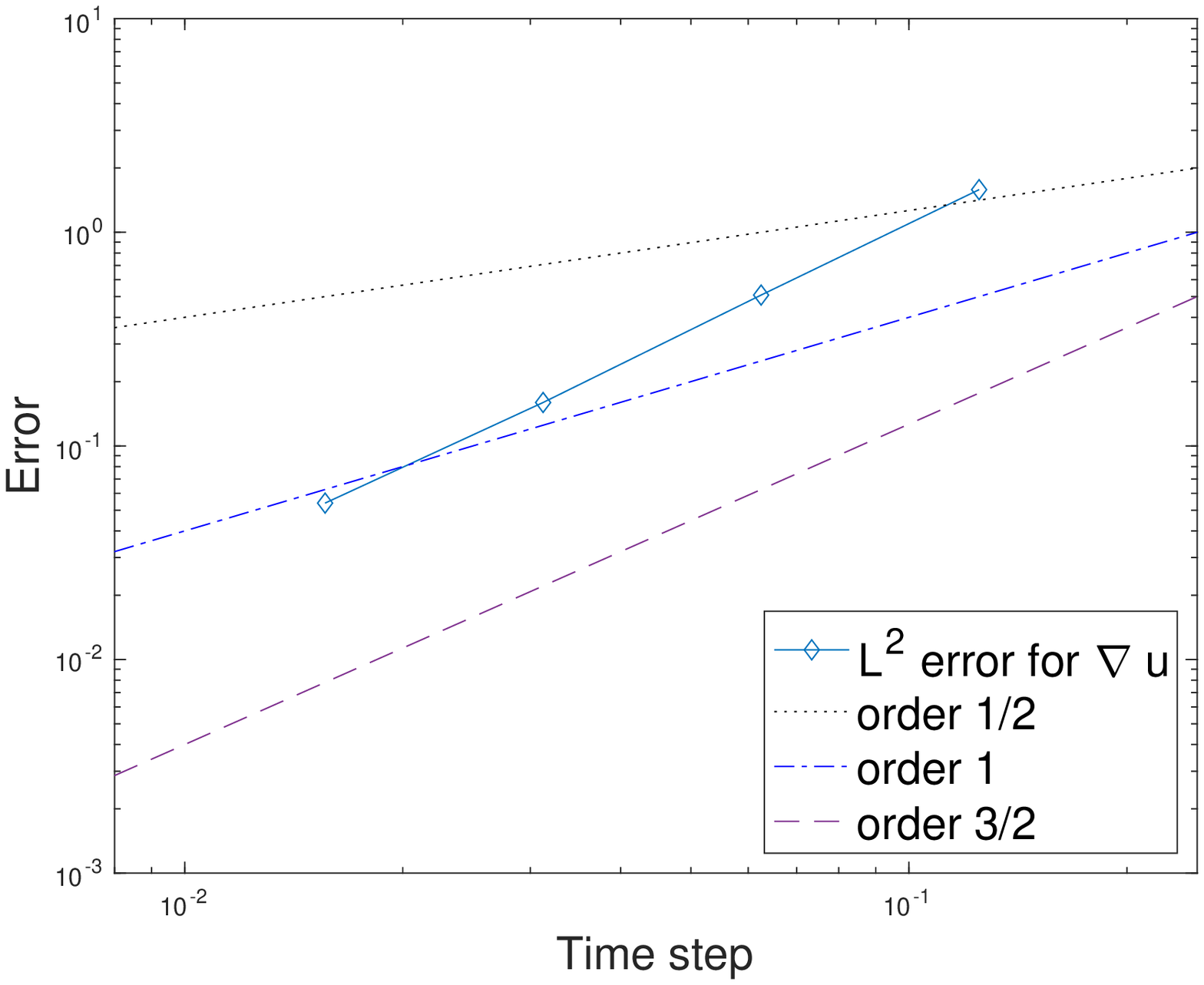}\label{fig5}}
  \hfill
  \subfloat[$\bL^2$-error for $v$ ($\widehat{\alpha}=1$ case)]{\includegraphics[width=0.33\textwidth]{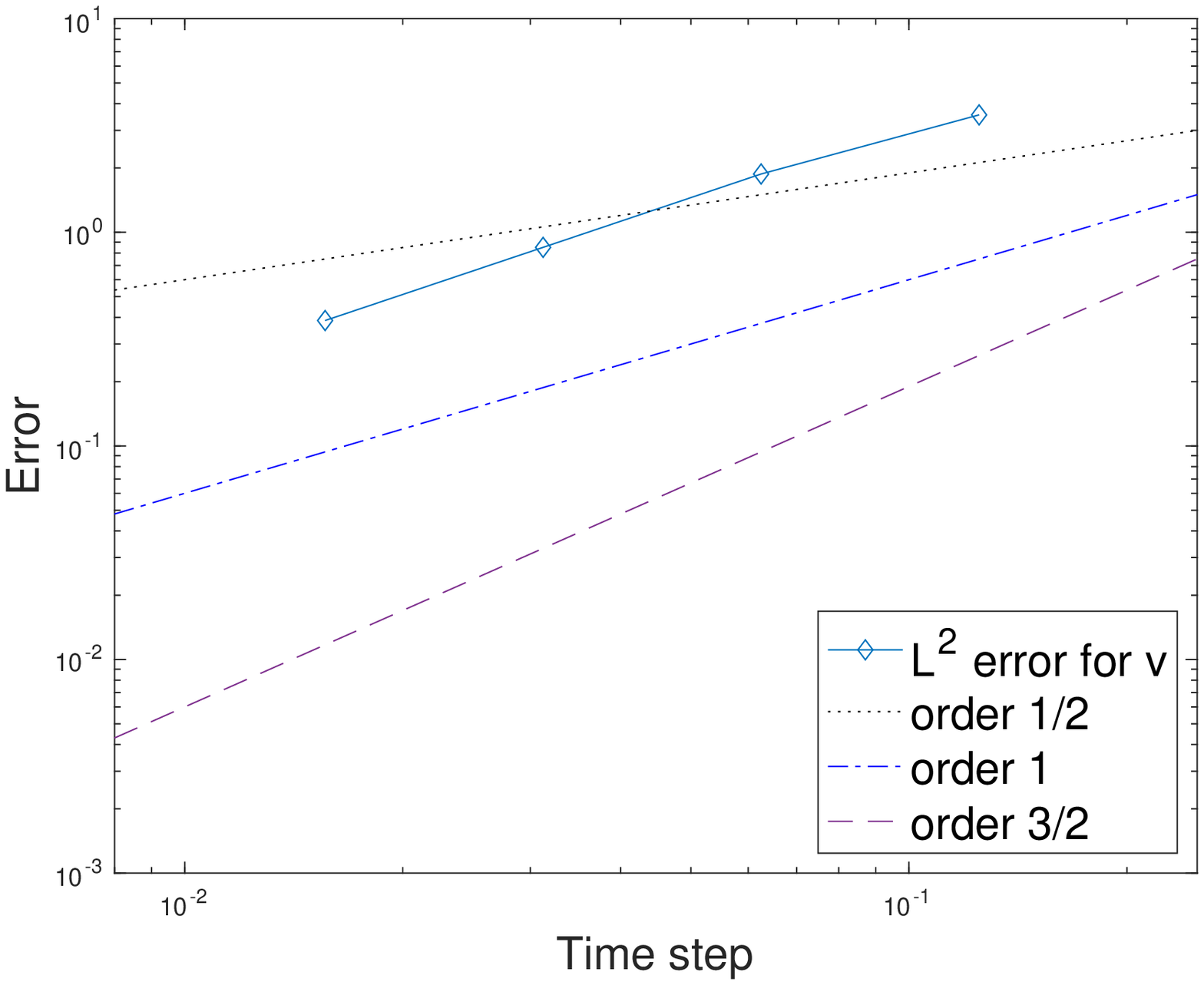}\label{fig6}}
 % \subfloat[]{\includegraphics[width=0.5\textwidth]{fig-3_uv_err.eps}\label{fig:f3}}
 % \hfill
 % \subfloat[]{\includegraphics[width=0.5\textwidth]{fig-4_uL2_err.eps}\label{fig:f4}}
  \caption{{\bf (Example \ref{exm-alp0})} Temporal rates of convergence for the scheme \eqref{scheme2:1}--\eqref{scheme2:2} with $F\equiv 0$ and $\sigma(u) = 2 \sin(u)$; $\widehat{\alpha}=0$ in {\rm (A)}, {\rm (B)}, {\rm (C)}, and $\widehat{\alpha}=1$ in {\rm (D)}, {\rm (E)}, {\rm (F)}; discretization parameters: $h= 2^{-7}, k=\{2^{-3}, \cdots, 2^{-6}\}$, ${\tt MC}=3000$.} 
\label{5.1}
\end{figure}

\end{example}

\medskip

The rest of the paper is organized as follows.  In Section \ref{Nota+Prel}, we  precise the data requirements in \eqref{stoch-wave1:1a} with $A = -\Delta$ and provide the structure assumptions on $F$ and $\sigma$. In Section~\ref{Mat-fra}, we recall the concept of a strong variational  solution for the problem \eqref{stoch-wave1:1a} and discuss its regularity. 
In Section~\ref{sec-3}, we prove stability results for the $(\widehat{\alpha}, \beta)-$scheme. In Section~\ref{sec-4}, we prove strong rates of convergence for the above mentioned schemes. In Section~\ref{sec-6}, we present comparative computational studies which evidence the role of noise in various cases and validates the proved error estimate results.

\bigskip

\section{Preliminaries and Assumptions}\label{Nota+Prel}

\subsection{Notation and useful results} 
Let $(L^p(\cO), \|\cdot\|_{L^p})$ and $(W^{m, p}(\cO), \|\cdot\|_{W^{m, p}})$ be the Lebesgue and Sobolev spaces respectively, endowed with usual norms for $m \in \mathbb{N}$ and $1 \leq p \leq \infty$. We denote ${\mathbb L}^p := L^p(\cO)$ and  ${\mathbb W}^{m,p} := W^{m, p}(\cO)$. For $p=2$, let $(\cdot, \cdot)$ be the inner-product in ${\mathbb L}^2$, and  ${\mathbb H}^m := {\mathbb W}^{m, 2}$. We define ${\mathbb H}^1_0 := \{u\in {\mathbb H}^1: u |_{\partial \cO} = 0\}$. 

Let $\mathbb{X}, \mathbb{Y}$ be two separable Hilbert spaces. Let $\mathcal{L}(\mathbb{X}, \mathbb{Y})$ denote the space of all linear maps from $\mathbb{X}$ to $\mathbb{Y}$, and $\mathcal{L}_m(\mathbb{X}, \mathbb{Y})$ denotes the space of all multi-linear maps from $\mathbb{X} \times \cdots \times \mathbb{X}$  ($m$-times) to $\mathbb{Y}$ for $m \geq 2$. Throughout this paper, for some $\Phi: \bH^1_0 \times \bH^1_0 \to \mathbb{L}^2$, we use the notation $D_u \Phi(u, v) \in \mathcal{L}(\bH^1_0, \bL^2)$ for the Gateaux derivative w.r.t $u$, whose action is seen as
\[h \mapsto D_u \Phi(u, v)(h), \quad \mbox{for } h \in \bH^1_0\, .\] 
We denote the second derivative w.r.t. $u$ by $D_u^2 \Phi(u, v) \in \mathcal{L}_2(\bH^1_0, \bL^2)$, whose action can be seen as
\[(h, k) \mapsto D_u^2 \Phi(u, v)(h, k) := (D_u^2 \Phi(u, v) h)(k) \quad \mbox{for } (h, k) \in [\bH^1_0]^2\, .\]
Similarly, we define $D_v \Phi(u, v), D_v^2 \Phi(u, v)$.

\smallskip

\subsubsection{A quadrature formula}

\del{
In our paper, we use an important trapezoid inequality in order to find a relation between the It\^o integral $\widetilde{\Delta_n W}$ and $\widehat{\Delta_n W}.$ In \cite{Dragomir_2011}, we have the generalisation of the classical trapezoid inequality for integrators of bounded variation and H\"older continuous integrands. We state the result here.
\begin{lemma}\label{Dra-thm}
Let $f:[a, b] \to \mathbb{C}$ be a $p-\eta$-H\"older type function, that is, it satisfies the condition
\[|f(x) - f(y)| \leq \eta\,|x-y|^p, \quad \forall x, y\in [a, b],\]
where $\eta>0$ and $p\in (0, 1]$ are given, and $z:[a, b] \to \mathbb{C}$ is a function of bounded variation on $[a, b]$. Then we have the inequality:
\begin{align}
\Big| \frac{f(a)+f(b)}{2} \cdot [z(a)-z(b)] - \int_a^b f(t)\,dz(t)\Big| \leq \frac{1}{2^{p}} \,\eta\,(b-a)^p \,TV_{[a,b]}(z)\, ,
\end{align}
where $TV_{[a,b]}(z)$ denotes the total variation of $z$ on the interval $[a, b].$
\end{lemma}
}

\noindent
The following quadrature formula will be crucially used in our error analysis (see \cite[Thm. 2]{DM_2000}).
\begin{lemma}\label{quadrature1}
	Let $f \in {C}^{1,\gamma}([0,T]; {\mathbb R})$, for some $\gamma \in (0,1]$. 
	Then there holds
	\begin{align*}
	\Bigl|\frac{f(0) + f(T)}{2} 
	-\frac{1}{T}\int_0^T f(\xi)  \, \dd \xi\Bigr|
	\le \frac{\widetilde{C}}{(\gamma+2)(\gamma + 3)} T^{1+\gamma},
	\end{align*}
	where $\widetilde{C}>0$ satisfies
	\begin{equation}\label{cond1} 
	\bigl|f'(t)-f'(s)\bigr|
	\le \widetilde{C} |t-s|^{\gamma} \qquad \forall \, s,t \in [0,T]\, .
	\end{equation}
\end{lemma}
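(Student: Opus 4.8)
The plan is to prove this quadrature error bound via a Taylor-type argument combined with the fundamental theorem of calculus. Since $f \in C^{1,\gamma}([0,T];\mathbb{R})$, the quantity we must control is the difference between the trapezoidal estimate $\frac{f(0)+f(T)}{2}$ and the mean value $\frac{1}{T}\int_0^T f(\xi)\,\dd\xi$. The natural first step is to rewrite both terms so that the derivative $f'$ appears, since the H\"older hypothesis \eqref{cond1} is stated for $f'$, not for $f$ itself. To this end I would express $f(\xi) = f(0) + \int_0^\xi f'(\eta)\,\dd\eta$ inside the integral, and similarly write $f(T)$ via an integral of $f'$. After substituting and interchanging the order of integration (Fubini/Tonelli on the triangle $0 \le \eta \le \xi \le T$), the constant terms $f(0)$ cancel between the two pieces, leaving an expression built purely from $f'$.

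Concretely, after this reduction one finds that
\begin{equation*}
\frac{f(0)+f(T)}{2} - \frac{1}{T}\int_0^T f(\xi)\,\dd\xi
= \frac{1}{T}\int_0^T \Bigl(\xi - \frac{T}{2}\Bigr) f'(\xi)\,\dd\xi,
\end{equation*}
which I would verify by integration by parts on the right-hand side. The key observation is that $\int_0^T \bigl(\xi - \frac{T}{2}\bigr)\,\dd\xi = 0$, so the weight $\bigl(\xi - \frac{T}{2}\bigr)$ has zero average. This lets me subtract any constant from $f'$ without changing the integral; in particular I can replace $f'(\xi)$ by $f'(\xi) - f'(\tfrac{T}{2})$ (or some other fixed reference value), at which point the H\"older bound \eqref{cond1} gives $|f'(\xi) - f'(\tfrac{T}{2})| \le \widetilde{C}\,|\xi - \tfrac{T}{2}|^{\gamma}$.

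The remaining work is to evaluate the resulting elementary integral. The estimate becomes
\begin{equation*}
\Bigl|\frac{f(0)+f(T)}{2} - \frac{1}{T}\int_0^T f(\xi)\,\dd\xi\Bigr|
\le \frac{\widetilde{C}}{T}\int_0^T \Bigl|\xi - \frac{T}{2}\Bigr|^{1+\gamma}\,\dd\xi,
\end{equation*}
and the integral on the right is computed by symmetry about $\xi = T/2$, yielding $\frac{2}{2+\gamma}\bigl(\tfrac{T}{2}\bigr)^{2+\gamma}$. Tracking the powers of $2$ and dividing by $T$ should reproduce the claimed prefactor $\frac{\widetilde{C}}{(\gamma+2)(\gamma+3)}T^{1+\gamma}$, up to checking that the constant matches (the $(\gamma+3)$ factor in the statement suggests the authors use a slightly sharper reference point than the midpoint, or an additional splitting).

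I expect the main obstacle to be precisely this constant-chasing: getting the sharp factor $\frac{1}{(\gamma+2)(\gamma+3)}$ rather than merely the cruder bound $\frac{1}{(2+\gamma)2^{1+\gamma}}$ that the midpoint-centering argument above produces. The extra factor of $(\gamma+3)$ in the denominator strongly hints that the optimal proof does not center $f'$ at the midpoint but instead compares $f$ to its second-order Taylor-type interpolant, or applies the H\"older estimate to a twice-iterated integral (a Peano kernel representation). In that approach one writes the error as $\int_0^T K(\xi)\,[f'(\xi)-f'(\text{pt})]\,\dd\xi$ with a carefully chosen kernel $K$ whose sign and moments are arranged to extract the extra order, and the two denominator factors arise from integrating $\xi^{1+\gamma}$-type terms twice. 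Since this is a cited result from \cite{DM_2000}, I would either reproduce that Peano-kernel computation or simply invoke the reference, but the honest difficulty is entirely in matching the constant, not in the analytic structure, which is routine.
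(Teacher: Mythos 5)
The paper does not actually prove this lemma --- it is quoted verbatim from \cite[Thm.~2]{DM_2000} --- so there is no internal proof to compare against; your argument has to stand on its own. On that footing, your analytic structure is correct and is the standard one: the identity
\begin{equation*}
\frac{f(0)+f(T)}{2}-\frac{1}{T}\int_0^T f(\xi)\,\dd\xi=\frac{1}{T}\int_0^T\Bigl(\xi-\frac{T}{2}\Bigr)f'(\xi)\,\dd\xi
\end{equation*}
is verified by integration by parts exactly as you say, the kernel $\xi-\tfrac{T}{2}$ has zero mean, so $f'(\tfrac{T}{2})$ may be subtracted, and the H\"older hypothesis \eqref{cond1} then closes the estimate. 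Your constant bookkeeping is also right: this route yields $\frac{\widetilde{C}}{(\gamma+2)\,2^{1+\gamma}}T^{1+\gamma}$, and since $2^{1+\gamma}\le \gamma+3$ on $(0,1]$ (with equality only at $\gamma=1$), this is strictly weaker than the stated bound for $\gamma<1$. So the one genuine shortfall is the constant: you have not proved the inequality as literally stated, and you are candid that recovering $\frac{1}{(\gamma+2)(\gamma+3)}$ requires the finer kernel/Peano-type computation of \cite{DM_2000} rather than midpoint centering. For this paper that shortfall is immaterial --- the lemma is invoked (in parts {\bf f}$_1$) and {\bf g)} of the proof of Theorem \ref{lem:scheme1:con}, with $\gamma=\tfrac12$ and $T=2k$) only for the power $T^{1+\gamma}$, with all multiplicative constants absorbed into generic $C$'s --- so your self-contained argument would serve every application here. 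If you want the statement exactly as written, either reproduce the computation of \cite{DM_2000} or simply cite it, as the authors do.
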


\smallskip

\subsection{Assumptions}

In this section, we list all the assumptions and hypotheses that are imposed throughout this paper.

\subsubsection{Domain and initial data}
We make the following assumptions.\\

\noindent
{\bf (A1)} \label{Domain}
Let $\cO\subset {\mathbb R}^d,$ for $1 \leq d\leq 3$,
be a bounded domain 
\begin{itemize}
\item[$(i)$]
with $\partial \cO$ of class $\mathrm{C}^1,$ and $(u_0, v_0) \in \bH^1_0 \times \bL^2\, ,$
\item[$(ii)$]
with $\partial \cO$ of class $\mathrm{C}^2,$ and $(u_0, v_0) \in ( \bH^1_0 \cap \bH^2) \times \bH^1_0\, ,$
\item[$(iii)$]
with $\partial \cO$ of class $\mathrm{C}^3,$ and $(u_0, v_0) \in (\bH^1_0 \cap \bH^3) \times (\bH^1_0 \cap \bH^2)\, .$
\item[$(iv)$]
with $\partial \cO$ of class $\mathrm{C}^4,$ and $(u_0, v_0) \in (\bH^1_0 \cap \bH^4) \times (\bH^1_0 \cap \bH^3)\, .$
\end{itemize}

\subsubsection{Probability set-up}

For simplicity, let $W$ be a finite-dimensional Wiener process.\\

\noindent
{\bf (A2)} \label{pro-space}
Let $\mathfrak{P}:=\big( \Omega, \mathcal{F}, \{\mathcal{F}_t\}_{t \geq 0}, \mathbb{P}\big)$ be a stochastic basis with a complete filtration $\{\mathcal{F}_t\}_{t \geq 0} \subseteq \mathcal{F}$. For some $M \in \mathbb{N}$, let $W$ be a $\mathbb{K}$-valued Wiener process on $\mathfrak{P}$ of the form 
\begin{align}\label{w-form}
W(t, x, \omega) := \sum_{j = 1}^M  \beta_j(t, \omega) e_j(x)\,, 
\end{align}
where $\mathbb{K} \subseteq \bH^1_0 \cap \bH^3$ is a Hilbert space, and $\big\{\beta_j(t, \omega) ; t \geq 0 \big\}$ are mutually independent  Brownian motions relative to $\{\mathcal{F}_t\}_{t \geq 0}$, and $\{e_j\}_{j=1}^M$ be an orthonormal basis of $\mathbb{K}$.

\del{
Note that the series does not converge in $\bL^2,$ but in a larger Hilbert space $U$ such that $\bL^2 \hookrightarrow U$ is Hilbert-Schmidt. For all $s, t\in [0, T]$ and $g, h \in \bL^2$ we have
\[\bE \big[ (W(s)g \cdot W(t)h) \big] = (s \wedge t) (g, h)_{\bL^2}\,.\]
This means that the covariance operator of $W$ is $\mbox{Id}: \bL^2 \to \bL^2$.  We shall always assume that $W(t)$ is adapted to the given filtration $(\mathcal{F}_t)_{t \geq 0}$. For a given linear operator $\Phi$ from $\bL^2$ to some Hilbert space $\mathbb{K},$ the Wiener process $\Phi\,\dd W = \sum_{k \in \mathbb{N}} \beta_k \Phi \,e_k$ is well-defined in $\mathbb{K}$ provided we have $\Phi \in \mathcal{L}_2(\bL^2, \mathbb{K}).$ 
}

\subsubsection{The nonlinearity of the model}

Let $F: [{\mathbb H}^1_0]^2 \rightarrow \bL^2$ and $\sigma: [{\mathbb H}^1_0]^2 \rightarrow {\mathbb H}^1_0$. \\

\noindent
{\bf (A3)} \label{as-Fs}
Assume $F(u, v) = F_1(u) + F_2(v)$ and $\sigma(u, v) = \sigma_1(u) + \sigma_2(v)$, such that $F_2(v)$ and $\sigma_2(v)$ are affine in $v$. For any $u, \tilde{u} \in \bH^1_0$, there is a constant $C_{\tt L}>0$ such that the Lipschitz condition holds:
\begin{align*}
&\|F_1(u) - F_1(\tilde{u}) \|_{\bL^2} + \|\sigma_1(u) - \sigma_1(\tilde{u}) \|_{\bL^2} \leq C_{\tt L} \,\| u - \tilde{u}\|_{\bL^2}\,.
\end{align*} 

\smallskip

\noindent
{\bf (A4)} \label{as-bd-Fs}
There exists a constant $C_g>0$ such that
\begin{align*}
&\|D_u^m F_1(\cdot)\|_{L^{\infty}(\bH^1_0; \cL_m(\bH^1_0, \bL^2))} + \|D_u^m \sigma_1(\cdot)\|_{L^{\infty}(\bH^1_0; \cL_m(\bH^1_0, \bH^1_0))} \le C_g \quad (m=1, 2, 3)\,.
\end{align*}
By the assumption {\bf (A4)}, we deduce that $F(u, v) \in \bH^1$. Since $F(u, v)$ is not assumed to be zero on the boundary, we introduce the following notation.

\smallskip

\noindent
{\bf (A5)}
Let $\widehat{F}(u, v) := F(u, v) - F(0, 0) = F_1(u) + F_2(v) - F(0, 0)$, and assume $F(0, 0) \in L^2(0, T; \bH^m)$ for $m=1, 2, 3$.

\bigskip

\section{Definition and properties of solution}\label{Mat-fra}

\noindent
We recall the concept of a strong  variational  solution for \eqref{stoch-wave1:1a} with $A = -\Delta$ and establish stability results in higher spatial norms, and bounds in temporal H\"older norms. 
\begin{definition}\label{def-Strong_sol}
Assume {\bf (A1)$_{i}$}, {\bf (A2)} and {\bf (A3)}. We call the tuple $(u, v)$ a  strong variational solution of \eqref{stoch-wave1:1a} with $A = -\Delta$ on the interval $[0, T]$ if
\begin{itemize}
\item[$(i)$]
 \ $(u, v) \ \mbox{is an} \ \bH^1_0 \times \bL^2\mbox{-valued}, \{\mathcal{F}_t \}\mbox{-adapted process};$
\item[$(ii)$]
 \ $(u, v) \in L^2\big(\Omega; C([0, T]; \bH^1_0)\big) \times L^2\big(\Omega; C([0, T]; \bL^2)\big);$
and 
{\small{
\begin{align}\label{strong1}
(u(t),\phi) &= \int_0^t (v ,\phi)\, \ds + (u_0,\phi)\, \quad \forall \phi \in \bL^2\, ,\\
(v(t),\psi) &= -\int_0^t \Bigl[(\nabla u , \nabla \psi) +\bigl(F(u,v), \psi \bigr)\Bigr] \ds +\int_0^t \bigl(\psi,\sigma (u,v)\, \dW(s) \bigr) 
+ (v_0,\psi)\, \quad \forall \psi \in \bH^1_0\, , \label{strong2} 
\end{align}
}}
holds for each $t \in [0, T]$ $\bP$-a.s..
\item[$(iii)$]
There exists a constant $C>0$, depending on $T, C_{\tt L}$ and initial data such that there holds $\mathbb P$-a.s.
\begin{align*}
\bE\Bigl[\supT \mathcal{E}(u(t), v(t)) \Bigr] \le C\, .
\end{align*}
\end{itemize}
\end{definition}
\noindent
The existence of a unique strong variational solution was shown in \cite[Sec. 6.8, Thm. 8.4]{Chow_2015}.

\smallskip

\begin{lemma}\label{lem:L2}
Let  $(u, v)$ be the strong (variational) solution to the problem \eqref{strong1}--\eqref{strong2}. For $p \in \mathbb{N}$, there holds $\mathbb P$-a.s.
\begin{itemize}
\item[$(i)$]
under the hypotheses {\bf (A1)$_{i}$}, {\bf (A2)}, and {\bf (A3)}, the $\{\mathcal{F}_t \}_{t \geq 0}-$adapted process \\$(u, v) \in L^{2p} \big(\Omega; L^{\infty}(0, T; \bH^{1} \times \bL^{2}) \big)$, and there exists $K_1 \equiv K_1(p)>0$, such that
\begin{align}
\label{lem:L2:1}
&\bE\Bigl[\supT \Big(  \|u(t)\|_{{\mathbb H}^1}^{2p} + \|v(t)\|_{{\mathbb L}^2}^{2p} \Big) \Bigr] 
%+ \bE\Bigl[ \supT \Bigr] 
\le K_1\, ;
\end{align} 
\item[$(ii)$]
under the hypotheses {\bf (A1)$_{ii}$}, {\bf (A2), (A3)}, and {\bf (A4), (A5)} for $m=1$, the $\{\mathcal{F}_t \}_{t \geq 0}-$adapted process $(u, v) \in L^{2p} \big(\Omega; L^{\infty}(0, T; \bH^{2} \times \bH^{1}) \big)$, and there exists $K_2 \equiv K_2(p)>0$, such that
\begin{align}
\label{lem:H1:1}
&\bE \Bigl[\supT \Big( \| u(t)\|_{\bH^2}^{2p} + \| v(t)\|_{\bH^1}^{2p} \Bigr) \Bigr] \le K_2\, ;
\end{align} 
\item[$(iii)$]
under the hypotheses {\bf (A1)$_{iii}$}, {\bf (A2), (A3)}, and {\bf (A4), (A5)}  for $m=1, 2$, the $\{\mathcal{F}_t \}_{t \geq 0}-$ adapted process $(u, v) \in L^{2p} \big(\Omega; L^{\infty}(0, T; \bH^{3} \times \bH^{2}) \big)$, and there exists $K_3 \equiv K_3(p)>0$, such that
\begin{align}
\label{lem:H2:1}
&\bE \Bigl[ \supT \Big( \|u(t)\|_{\bH^3}^{2p} + \|v(t)\|_{\bH^2}^{2p} \Big) \Bigr] \le K_3\, ;
\end{align} 
\item[$(iv)$]
under the hypotheses {\bf (A1)$_{iv}$}, {\bf (A2), (A3)}, and {\bf (A4), (A5)} for $m=1, 2, 3$, the $\{\mathcal{F}_t \}_{t \geq 0}-$ adapted process $(u, v) \in L^{2p} \big(\Omega; L^{\infty}(0, T; \bH^{4} \times \bH^{3}) \big)$, and there exists $K_4 \equiv K_4(p)>0$, such that
\begin{align}
\label{lem:H3:1}
&\bE \Bigl[ \supT \Big( \|u(t)\|_{\bH^4}^{2p} + \|v(t)\|_{\bH^3}^{2p} \Big) \Bigr] \le K_4\, .
\end{align} 
\end{itemize}
\end{lemma}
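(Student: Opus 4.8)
The plan is to prove all four assertions at once by organizing them into a single hierarchy of energy estimates. For $s \in \{0,1,2,3\}$ define the level-$s$ energy
\[
\mathcal{E}_s(u,v) := \tfrac{1}{2}\|A^{s/2}v\|^2_{\mathbb{L}^2} + \tfrac{1}{2}\|A^{(s+1)/2}u\|^2_{\mathbb{L}^2},
\]
so that $\mathcal{E}_0 = \mathcal{E}$ from \eqref{energ1}. Because $A=-\Delta$ is self-adjoint and positive, the Poincar\'e inequality together with elliptic regularity — valid under the boundary smoothness $\mathrm{C}^1,\dots,\mathrm{C}^4$ assumed in \textbf{(A1)}$_{i}$--\textbf{(A1)}$_{iv}$ — makes $\mathcal{E}_s$ equivalent to $\tfrac12(\|v\|^2_{\mathbb{H}^s}+\|u\|^2_{\mathbb{H}^{s+1}})$. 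Thus \eqref{lem:L2:1}, \eqref{lem:H1:1}, \eqref{lem:H2:1} and \eqref{lem:H3:1} amount to bounding $\mathbb{E}[\sup_t \mathcal{E}_s^p]$ for $s=0,1,2,3$. I would proceed by bootstrapping: part $(i)$ ($s=0$) is the base case relying only on the Lipschitz/linear-growth bounds of \textbf{(A3)}, while for $s\ge 1$ the nonlinear terms are estimated using the already-established bound at level $s-1$ together with \textbf{(A4)}--\textbf{(A5)}.

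For a fixed level $s$ I would apply It\^o's formula to $\mathcal{E}_s$, using $\dd (A^{(s+1)/2}u)=A^{(s+1)/2}v\,\dd t$ and $\dd (A^{s/2}v)=A^{s/2}(-Au+F(u,v))\,\dd t+A^{s/2}\sigma(u,v)\,\dd W$. The decisive algebraic point is that the two ``wave'' contributions to the drift, $(A^{(s+1)/2}u,A^{(s+1)/2}v)$ and $-(A^{s/2}v,A^{s/2+1}u)$, cancel by self-adjointness of $A$, leaving
\[
\dd \mathcal{E}_s = \Big[(A^{s/2}v,\,A^{s/2}F(u,v)) + \tfrac12\|A^{s/2}\sigma(u,v)\|^2_{L_2}\Big]\,\dd t + (A^{s/2}v,\,A^{s/2}\sigma(u,v)\,\dd W).
\]
Applying It\^o once more to $\mathcal{E}_s^p$ produces the factor $p\,\mathcal{E}_s^{p-1}$ on the drift, a quadratic-variation correction proportional to $\mathcal{E}_s^{p-2}\|A^{s/2}v\|^2\|A^{s/2}\sigma\|^2_{L_2}$, and a martingale term whose expectation vanishes.

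The heart of the argument is to dominate the surviving drift by $C(1+\mathcal{E}_s)$. Here I would invoke \textbf{(A4)}: by the chain rule, $\|A^{s/2}F(u,v)\|_{\mathbb{L}^2}\lesssim \|F(u,v)\|_{\mathbb{H}^s}$ is controlled by products of $D^m_u F_1$ and $D^m_v F_2$ (the latter vanishing for $m\ge 2$ since $F_2$ is affine in $v$ by \textbf{(A3)}) against Sobolev norms of $u,v$ of order at most $s$ — each such factor is either a top-order quantity appearing linearly inside $\mathcal{E}_s$ or a lower-order quantity already controlled at level $s-1$. The affine-in-$v$ structure of \textbf{(A3)} is exactly what prevents uncontrollable products of top-order derivatives of $v$, and the algebra property of $\mathbb{H}^2$ in dimension $d\le 3$ (from \textbf{(A1)}) handles the pointwise products $\sigma(u,v)\,e_j$ arising from the finite-dimensional noise \eqref{w-form} with $e_j\in\mathbb{K}\subseteq \mathbb{H}^1_0\cap\mathbb{H}^3$. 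Since $F(u,v)$ need not vanish on $\partial\mathcal{O}$, I would use the splitting $F=\widehat{F}+F(0,0)$ of \textbf{(A5)}, absorbing the boundary-nonvanishing part through the assumed regularity $F(0,0)\in L^2(0,T;\mathbb{H}^m)$ and arranging the integration by parts against $A^{s/2}v$ accordingly. The same reasoning gives $\|A^{s/2}\sigma(u,v)\|^2_{L_2}\lesssim 1+\mathcal{E}_s$, and Cauchy--Schwarz bounds the quadratic-variation correction by $C\mathcal{E}_s(1+\mathcal{E}_s)$.

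Finally, after integrating in time and taking expectations, I would recover the supremum inside the expectation by applying the Burkholder--Davis--Gundy inequality to $\sup_t\big|\int_0^t p\,\mathcal{E}_s^{p-1}(A^{s/2}v,A^{s/2}\sigma\,\dd W)\big|$, bounding its bracket by $C\int_0^t \mathcal{E}_s^{2p-1}(1+\mathcal{E}_s)\,\dd s$ and using Young's inequality to absorb a small multiple of $\mathbb{E}[\sup_t\mathcal{E}_s^p]$ into the left-hand side. What remains is a Gronwall inequality $\mathbb{E}[\sup_{[0,t]}\mathcal{E}_s^p]\le C + C\int_0^t \mathbb{E}[\sup_{[0,r]}\mathcal{E}_s^p]\,\dd r$, which yields the constants $K_s(p)$. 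I expect the main obstacle to be the closure of the high-order nonlinear estimates in the third step: keeping careful track of the chain-rule products so that every factor is either a top-order term entering $\mathcal{E}_s$ linearly or a lower-order term controlled by the previous level, while simultaneously handling the non-vanishing boundary values of $F$ and $\sigma$ via \textbf{(A5)}, is where \textbf{(A3)}--\textbf{(A5)} must be balanced delicately against one another.
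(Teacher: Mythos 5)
Your overall strategy --- level-$s$ energies built from fractional powers of $A$, It\^o's formula, cancellation of the two wave terms, BDG and Gronwall, with a bootstrap from level $s-1$ --- is the same skeleton the paper uses in Appendix \ref{Lem-EE}. However, there are two places where the proposal, as written, would not go through.

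First, and most seriously: for $s\ge 2$ the functional $\mathcal{E}_s(u,v)=\tfrac12\|A^{s/2}v\|^2_{\mathbb{L}^2}+\tfrac12\|A^{(s+1)/2}u\|^2_{\mathbb{L}^2}$ is \emph{not} equivalent to $\tfrac12(\|v\|^2_{\mathbb H^s}+\|u\|^2_{\mathbb H^{s+1}})$ for the actual solution, because membership in $D(A^{(s+1)/2})$ for the Dirichlet Laplacian imposes the compatibility conditions $u=\Delta u=0$ on $\partial\mathcal O$. From the equation, on the boundary one has formally $0=\partial_t^2 u=\Delta u+F(0,0)+\sigma(0,0)\,\partial_t W$ with $\sigma(0,0)=0$ (since $\sigma$ maps into $\mathbb{H}^1_0$) but $F(0,0)\neq 0$ in general, so $\Delta u=-F(0,0)\neq 0$ on $\partial\mathcal O$ and $u\notin D(A^{3/2})$; likewise $A^{s/2}F(u,v)$ is not defined. ``Arranging the integration by parts against $A^{s/2}v$'' cannot repair this: moving powers of $A$ onto $v$ would require $v\in D(A^{s})$, which is more regularity than $\mathcal{E}_s$ contains. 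This is exactly why the paper decomposes the solution itself, $u=u_1+u_2$, with $u_1$ solving the deterministic problem \eqref{Tr-prob} with source $F(0,0)$ (treated by classical regularity theory in ordinary Sobolev spaces, \cite{Evans}, estimate \eqref{u_1-esti}) and $u_2$ solving the SPDE \eqref{zbv-prob} whose drift $\widehat F=F-F(0,0)$ and diffusion $\sigma$ both vanish on $\partial\mathcal O$, so that the fractional-power energies $\Phi_\ell$ are legitimate for $u_2$. You invoke {\bf (A5)} only to ``absorb'' a term, not to decompose the solution; without the decomposition the argument breaks at level $s=2$.

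Second, applying It\^o's formula directly to $\mathcal{E}_s(u,v)$ presupposes that $(u,v)$ already takes values in $\mathbb{H}^{s+1}\times\mathbb{H}^{s}$ --- the very regularity being established --- so the argument is circular as stated. The paper removes this by running the entire computation on a spectral Galerkin approximation (the functionals $\Phi_\ell$ on $\mathbb{H}_n$, estimates uniform in $n$, then weak compactness, identification of the limit by uniqueness, and Fatou's lemma). Some such approximation step is needed. The remainder of your outline --- chain-rule estimates via {\bf (A4)}, the affine-in-$v$ structure of {\bf (A3)} to avoid uncontrollable products of top-order derivatives of $v$ at levels $s=2,3$, Ladyzhenskaya/Sobolev interpolation for the quadratic and cubic gradient products, and BDG plus Gronwall for the $2p$-th moments --- does match the paper's proof.
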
	

\begin{proof}
The proof is given in Appendix \ref{Lem-EE}.
\end{proof}

\subsection{H\"older continuity in time}\label{Hoe-cont}

In this subsection, we derive temporal H\"older continuity estimates for the solution pair $(u, v)$ of the problem \eqref{strong1}--\eqref{strong2} with respect to different norms, which will be useful in the error analysis in later section.

\begin{lemma} \label{lem:Holder}
Let $(u, v)$ be the strong (variational) solution to the problem \eqref{strong1}--\eqref{strong2}. Then, for any $s,t \in [0,T]$, we have for $p \ge 1$ 
\begin{itemize}%\label{Holder_L2_high}
\item[$(i)$]
under the hypotheses {\bf (A1)$_{i}$}, {\bf (A2)}, and {\bf (A3)}, there holds $\mathbb P$-a.s.
\[\left( \bE \Bigl[ \sup_{s \leq r \leq t} \|u(r)-u(s)\|_{{\mathbb L}^2}^{2p} \Bigr] \right)^{1/2p} \le C(K_1)\, |t-s| \, ;\]
%\label{Holder_H1_high}
\item[$(ii)$]
under the hypotheses {\bf (A1)$_{ii}$}, {\bf (A2), (A3)}, and {\bf (A4), (A5)} for $m=1$, there holds $\mathbb P$-a.s.
\[ \left( \bE \Bigl[ \sup_{s \leq r \leq t} \|u(r)- u(s)\|_{{\mathbb H}^1}^{2p} \Bigr] \right)^{1/2p} + \bE \Bigl[ \sup_{s \leq r \leq t} \|v(r)-v(s)\|_{{\mathbb L}^2}^2 \Bigr] \le C(K_2)\, |t-s|\, ;\]
%\label{Holder_H2_high}
\item[$(iii)$]
under the hypotheses {\bf (A1)$_{iii}$}, {\bf (A2), (A3)}, and {\bf (A4), (A5)}  for $m=1, 2$, there holds $\mathbb P$-a.s.
\[ \left( \bE \Bigl[ \sup_{s \leq r \leq t} \| u(r)- u(s) \|_{{\mathbb H}^2}^{2p} \Bigr] \right)^{1/2p} + \bE \Bigl[ \sup_{s \leq r \leq t} \| v(r)- v(s)\|_{{\mathbb H}^1}^2 \Bigr] \le C(K_3)\, |t-s| \, ,\]
\item[$(iv)$]
under the hypotheses {\bf (A1)$_{iv}$}, {\bf (A2), (A3)}, and {\bf (A4), (A5)} for $m=1, 2, 3$, there holds $\mathbb P$-a.s.
\[\left( \bE \Bigl[ \sup_{s \leq r \leq t} \|u(r)- u(s)\|_{{\mathbb H}^3}^{2p} \Bigr] \right)^{1/2p} +\bE \Bigl[ \sup_{s \leq r \leq t} \|v(r)-v(s)\|_{{\mathbb H}^2}^2 \Bigr]
\le C(K_4)\, |t-s|\, ,\]
\end{itemize}
where the positive constants $C(K_i)$ for $i=1, \cdots, 4$, depend on the constants $K_i$, defined in Lemma \ref{lem:L2}. 
\end{lemma}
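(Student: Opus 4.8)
The plan is to prove the Hölder continuity estimates in Lemma~\ref{lem:Holder} directly from the variational formulation \eqref{strong1}--\eqref{strong2}, using the moment bounds from Lemma~\ref{lem:L2} together with standard stochastic tools (Burkholder--Davis--Gundy and Minkowski/Jensen inequalities). The four parts are structurally identical, so I would prove the generic claim in detail and indicate how raising the regularity index shifts which bound $K_i$ is invoked.

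\textbf{Estimate for $u$.} Fix $s \le r \le t$. From \eqref{strong1} we have, for the relevant spatial norm $\mathbb{X} \in \{\bL^2, \bH^1, \bH^2, \bH^3\}$,
\begin{align*}
u(r) - u(s) = \int_s^r v(\tau)\, \dd\tau\, ,
\end{align*}
which is a pathwise Bochner integral since $v$ has continuous paths in the corresponding space by Lemma~\ref{lem:L2}. Taking $\mathbb{X}$-norms and applying Minkowski's integral inequality gives $\|u(r)-u(s)\|_{\mathbb{X}} \le \int_s^r \|v(\tau)\|_{\mathbb{X}}\, \dd\tau \le |r-s| \sup_{0 \le \tau \le T} \|v(\tau)\|_{\mathbb{X}}$. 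Taking the supremum over $r \in [s,t]$, raising to the power $2p$, taking expectation, and invoking the appropriate moment bound from Lemma~\ref{lem:L2} yields the $|t-s|$ rate for $u$ after extracting the $1/2p$ root. Note that to control $\|v\|_{\mathbb{X}}$ at one regularity level higher we need the matching part of Lemma~\ref{lem:L2}: for $u$ in $\bH^{k}$ we use the bound on $v$ in $\bH^{k-1}$, which is exactly how the hypotheses are aligned in parts $(i)$--$(iv)$.

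\textbf{Estimate for $v$.} This is the genuinely stochastic part. From \eqref{strong2},
\begin{align*}
v(r) - v(s) = -\int_s^r \bigl[Au(\tau) + F(u(\tau),v(\tau))\bigr]\,\dd\tau + \int_s^r \sigma(u(\tau),v(\tau))\,\dd W(\tau)\, ,
\end{align*}
interpreting $Au = -\Delta u$ in the dual pairing. I would split the increment into the drift term and the stochastic term. For the drift, Cauchy--Schwarz in time bounds its $\mathbb{X}$-norm by $|r-s|$ times a supremum of $\|Au\|_{\mathbb{X}} + \|F(u,v)\|_{\mathbb{X}}$, which is controlled by Lemma~\ref{lem:L2} (one regularity level higher for $u$) together with the boundedness/growth of $F$ from \textbf{(A3)}--\textbf{(A5)}. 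For the stochastic integral I would apply the Burkholder--Davis--Gundy inequality to bound its $2p$-th moment by the $p$-th moment of the quadratic variation $\int_s^r \|\sigma(u,v)\|_{\mathbb{X}\text{-HS}}^2\, \dd\tau$; since $\sigma$ maps into $\bH^1_0$ with the derivative bounds of \textbf{(A4)} and the noise is finite-dimensional via \eqref{w-form}, this integrand is uniformly bounded in the required norm, giving a factor $|r-s|^{p}$ and hence the rate $|r-s|^{1/2}$ in $L^{2p}(\Omega)$. I should remark that the statement writes the $v$-bound with exponent $2$ (not $2p$) on the left, so I would verify that only a second-moment BDG estimate is actually needed there, which is the mildest case.

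\textbf{Main obstacle.} The principal technical point is that the natural BDG bound for the stochastic integral gives a rate $|t-s|^{1/2}$ for $v$ in $L^{2p}(\Omega)$, which is \emph{not} the linear rate one has for $u$; the asymmetry between the $u$-estimates (rate $|t-s|$) and the $v$-estimates (rate $|t-s|^{1/2}$, and at second moment only) is forced by the noise and must be presented carefully, since the supremum $\sup_{s \le r \le t}$ inside the expectation for $v$ requires BDG applied to the running maximum of the martingale rather than to its endpoint. A secondary subtlety is the regularity bookkeeping: each part of the lemma consumes exactly one more derivative than the preceding one, so I must consistently pair the $\bH^{k}$-estimate for $u$ with the $\bH^{k}$- and $\bH^{k-1}$-moment bounds of Lemma~\ref{lem:L2}, and check that \textbf{(A4)} supplies enough derivatives of $F_1,\sigma_1$ (namely $m=1,2,3$) to keep $F(u,v)$ and $\sigma(u,v)$ in the target space at each level.
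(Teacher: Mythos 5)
Your proposal is correct and follows essentially the same route as the paper's proof in Appendix \ref{Hoe-con}: write $u(r)-u(s)=\int_s^r v(\tau)\,\dd\tau$ and use H\"older/Minkowski with the moment bounds of Lemma \ref{lem:L2} for the $u$-estimates, then use the strong form of \eqref{strong2} and split into drift plus stochastic integral for the $v$-estimates, with the martingale supremum handled by a maximal inequality (the paper invokes the It\^o isometry somewhat informally where your BDG argument is the precise version). You also correctly identify the two points the paper relies on, namely that the $v$-bound is only at second moment with rate $|t-s|^{1/2}$, and that each regularity level pairs with the next part of Lemma \ref{lem:L2} (with the affine-in-$v$ structure of {\bf (A3)} needed to push $\sigma$ and $F$ through the higher spatial derivatives in parts $(iii)$--$(iv)$).
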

\begin{proof}
The proof is given in Appendix \ref{Hoe-con}.
\end{proof}

\bigskip

\section{Discrete Stability Analysis for the $(\widehat{\alpha}, \beta)-$scheme}\label{sec-3}

If compared to the term $-\Delta {u}^{n,\frac{1}{2}}$, the term $-\Delta\widetilde{u}^{n,\frac{1}{2}}$ in the $(\widehat{\alpha}, \beta)-$scheme fortifies stability properties of the method: in fact, the identity
\begin{equation}\label{tidt1}
\widetilde{u}^{n, \frac 12} = {u}^{n, \frac 12} + \beta \frac{k^{\beta}}{2} \big(u^{n+1} - u^{n-1} \big) =
{u}^{n, \frac 12} + \beta k^{1+\beta} v^{n+\frac{1}{2}} 
\end{equation}
creates an additional  `numerical dissipation' term scaled by $\beta k^{2+\beta}$ in \eqref{stoch-wave1:1a}, which suffices to control general
noises $\sigma \equiv \sigma(u,v)$, in case $0 < \beta <\frac{1}{2}$ (see Lemma \ref{lem:scheme1:stab} below); for $\sigma \equiv \sigma(u)$ only, the scheme \eqref{scheme2:1}--\eqref{scheme2:2} yields a stable scheme.

\smallskip

In this section, we discuss the discrete stability analysis for the $(\widehat{\alpha}, \beta)-$scheme and we make a remark on the stability results of the scheme \eqref{scheme2:1}--\eqref{scheme2:2} as this is a sub-case of the $(\widehat{\alpha}, \beta)-$scheme. We recall \eqref{energ1}, where the energy functional is stated. 

\smallskip

\noindent
{\bf (B1)} For the stability results, we need the following assumptions on the iterates $(u^1, v^1)$:
\begin{itemize}
\item[$(i)$]
Along with {\bf (A1)}$_{i}$, assume $(u^1, v^1) \in L^{2^{p}}\big(\Omega; [\bH^1_0 ]^2 \big)$ for $p \geq 1$.
\item[$(ii)$]
Along with {\bf (A1)}$_{ii}$, assume $(u^1, v^1) \in L^{2^{p}}\big(\Omega; [\bH^1_0 \cap \bH^2 ]^2 \big)$ for $p \geq 1$.
\end{itemize}

{{
\begin{lemma}\label{lem:scheme1:stab}
Let $\widehat{\alpha} \in \{0, 1\}$. Assume {\bf (A1)$_{ii}$}, {\bf (A2), (A3)}, {\bf (A4)} for $m=1$, and {\bf (B1)$_{i}$}. Then, there exists an $\big[ \bH^1_0 \big]^2$-valued 
$\{ (\cF_{t_{n}})_{0 \le n \le N} \}$-adapted solution $\{(u^{n}, v^{n}); \, 0 \leq n \leq N\}$ of the $(\widehat{\alpha}, \beta)-$scheme. Moreover, for {\red{$0 < \beta < \frac{1}{2}$}}, and $k \leq k_0(C_{\tt L}, C_g)$ sufficiently small, there exists a constant ${\red{C_1 \equiv C_1(\beta)}}  > 0$ that does not depend on $k>0$ such that 
\begin{equation}\label{energy1}
\max_{1 \le n \le N-1} \bE \Bigl[ \cE(u^{n+1},v^{n+1}) \Bigr] + {\beta {k^{2+\beta}\, \sum_{n=1}^{N-1} \bE \Big[ \| \nabla v^{n+ \frac 12} \|_{\bL^2}^2 \Big]}} \le C_1\, .
\end{equation}
In addition, there exists a further constant $C_{2,p} \equiv C_{2,p}(\beta) >0$ such that we have
\begin{equation}\label{high-moment}
\max_{1 \le n \le N-1} \bE\bigl[ {\mathcal E}^{2^p}(u^{n+1},v^{n+1}) \bigr] \leq C_{2, p} \qquad (p \geq 1)\, .
\end{equation}
Additionally, assume $\sigma_2(v) \equiv 0 \equiv F_2(v)$ in {\bf (A3)} and $\beta=0$. For $k \leq k_0(C_{\tt L}, C_g)$ sufficiently small, there exists a constant $C_3  > 0$ independent of $k>0$ such that
\begin{align}\label{energy2}
\max_{1 \le n \le N} \bE\Bigl[ \| u^{n}\|^2_{\bL^2}\Bigr] + \frac 14 \bE \Big[ k \sum_{j=1}^n  \bigl\Vert \nabla u^{j} \bigr\Vert^2_{{\mathbb L}^2}  \Big]  \le C_3\, .
\end{align}
There exists further constant $C_{4, p}>0$ such that we have
\begin{align}\label{energy2-himo}
\max_{1 \le n \le N} \bE\Bigl[ \| u^{n}\|^{2^{p}}_{\bL^2}\Bigr] \leq C_{4, p} \qquad (p \geq 1)\, .
\end{align}
\end{lemma}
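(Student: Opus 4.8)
The plan is to settle existence first and then prove the four estimates in turn. For \textbf{existence and measurability} of the iterates, I would substitute $u^{n+1}=u^{n}+kv^{n+1}$ (from \eqref{scheme2--1}) into the definition \eqref{tilde1/2} of $\widetilde u^{n,\frac12}$, so that \eqref{scheme2--2} becomes, for the single unknown $v^{n+1}\in\bH^1_0$, the variational identity $(v^{n+1},\psi)+\tfrac{(1+\beta k^{\beta})k^{2}}{2}(\nabla v^{n+1},\nabla\psi)=(\text{data},\psi)$ for all $\psi\in\bH^1_0$, whose right-hand side gathers the explicit $\cF_{t_n}$- and $\Delta_nW$-dependent terms. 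The bilinear form on the left is bounded and coercive on $\bH^1_0$, so Lax--Milgram yields a unique $v^{n+1}$, hence $u^{n+1}=u^n+kv^{n+1}$; $\cF_{t_{n+1}}$-measurability follows from the measurable dependence of $v^{n+1}$ on the data and on $\Delta_nW,\widehat{\Delta_nW}$.

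For the \textbf{energy bound \eqref{energy1}} I would test \eqref{scheme2--2} with $\psi=v^{n+\frac12}:=\tfrac12(v^{n+1}+v^{n})$ and invoke the identity \eqref{tidt1}, $\widetilde u^{n,\frac12}=u^{n,\frac12}+\beta k^{1+\beta}v^{n+\frac12}$. The $u^{n,\frac12}$-part of the elliptic term telescopes, together with $(v^{n+1}-v^{n},v^{n+\frac12})$, into $\widehat{\cE}^{n+1}-\widehat{\cE}^{n}$ for the modified energy $\widehat{\cE}^{n+1}=\tfrac12\|v^{n+1}\|_{\bL^2}^{2}+\tfrac14\|\nabla u^{n+1}\|_{\bL^2}^{2}+\tfrac14\|\nabla u^{n}\|_{\bL^2}^{2}$, which is equivalent to $\cE$, while the $\beta k^{1+\beta}v^{n+\frac12}$-part produces exactly the dissipation $\beta k^{2+\beta}\|\nabla v^{n+\frac12}\|_{\bL^2}^{2}$. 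Taking expectations, I split $v^{n+\frac12}=\tfrac12 v^{n}+\tfrac12 v^{n+1}$ in the noise term: the $v^{n}$-part vanishes since $\sigma(u^{n},v^{n-\frac12})$ and $v^{n}$ are $\cF_{t_n}$-measurable and $\Delta_nW$ is centred and independent, whereas the $v^{n+1}$-part is evaluated by inserting $\psi=\sigma(u^{n},v^{n-\frac12})\Delta_nW\in\bH^1_0$ back into \eqref{scheme2--2}. This isolates the It\^o term $\tfrac12\bE\|\sigma(u^{n},v^{n-\frac12})\Delta_nW\|_{\bL^2}^{2}\simeq\tfrac{k}{2}\bE\|\sigma\|^{2}$ (controlled by $\cE$) together with cross terms; the $\widehat\alpha$-term is higher order because $\bE|\widehat{\Delta_nW}|^{2}\simeq k^{3}$.

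\textbf{The main obstacle} is the cross term $-k^{2}\tfrac{1+\beta k^{\beta}}{2}\bE\bigl(\nabla v^{n+1},\nabla\sigma(u^{n},v^{n-\frac12})\,\Delta_nW\bigr)$ appearing here: because $\sigma$ depends on $v$, $\nabla\sigma$ contains $\nabla v^{n-\frac12}$, so this term carries $\nabla v$, which is \emph{not} in the energy. The plan is to absorb it with the dissipation reservoir: a Young split with weight $\simeq\beta k^{\beta}$ sends one half to $\tfrac14\beta k^{2+\beta}\bE\|\nabla v^{n+1}\|_{\bL^2}^{2}$ (absorbed, after an index shift, by the dissipation term) and leaves $C k^{3-\beta}\bE\bigl(1+\|\nabla u^{n}\|_{\bL^2}^{2}+\|\nabla v^{n-\frac12}\|_{\bL^2}^{2}\bigr)$, whose $\nabla v$-part is again absorbed by the dissipation provided $k^{3-\beta}\le C\,k^{2+\beta}$, i.e. $k^{1-2\beta}$ stays bounded, which is exactly the restriction $0<\beta<\tfrac12$. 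This power-counting is the crux and pins down the role of $\beta$. Summation and a discrete Gronwall argument then give \eqref{energy1}, and for the higher moments \eqref{high-moment} I would raise the one-step identity to the power $2^{p}$, estimate the martingale part by a discrete Burkholder--Davis--Gundy inequality, reabsorb the energy by Young's inequality, and induct on $p$ with \eqref{energy1} as the base case.

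For the remaining bounds \eqref{energy2}--\eqref{energy2-himo} ($\beta=0$, $\sigma\equiv\sigma(u)$, $F\equiv F(u)$) the dissipation reservoir is gone, so the full energy estimate is not expected to close and I would target only the stated $\bL^2$-in-$u$ and $\bL^2_tH^1$-in-$u$ control. The idea is to test \eqref{scheme2:2} with the \emph{explicit}, $\cF_{t_n}$-measurable quantity built from $u^{n}$ (using $u^{n,\frac12}=u^{n}+\tfrac{k}{2}(v^{n+1}-v^{n})$), so that $(\sigma(u^{n})\Delta_nW,u^{n})$ and the $\widehat\alpha$-term have zero expectation outright. Discrete summation-by-parts in $n$ then produces the good term $k\sum_j\|\nabla u^{j}\|_{\bL^2}^{2}$, while \eqref{scheme2:1} supplies the telescoping $\|u^{n}\|_{\bL^2}^{2}$; the residual mixed space--time terms are handled by Young's inequality and discrete Gronwall, and the martingale remainder by BDG for the higher moment \eqref{energy2-himo}. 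Controlling these mixed terms without the velocity bounds that the energy estimate would otherwise furnish is the delicate point in this case.
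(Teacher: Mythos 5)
Your treatment of existence and of the first energy bound \eqref{energy1} matches the paper's proof in all essentials: the same test function $v^{n+\frac12}$ (equivalently $2kv^{n+\frac12}=u^{n+1}-u^{n-1}$), the same telescoping of $k(\nabla u^{n,\frac12},\nabla v^{n+\frac12})$ into $\tfrac14[\|\nabla u^{n+1}\|^2_{\bL^2}-\|\nabla u^{n-1}\|^2_{\bL^2}]$, the same device of killing the $v^n$-part of the noise term by independence and re-substituting \eqref{scheme2--2} for $v^{n+1}-v^n$, and exactly the paper's power-counting $k^{3-\beta}\le c\,\beta k^{2+\beta}$ for absorbing the $D_v\sigma\,\nabla v^{n-\frac12}$ cross term into the dissipation reservoir, which is where $\beta<\tfrac12$ enters. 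For \eqref{high-moment} you deviate mildly: the paper multiplies the one-step identity by $\delta_1\mathfrak{E}(u^{n+1},v^{n+1})+\delta_2[\mathfrak{E}(u^{n+1},v^{n+1})+\mathfrak{E}(u^{n-1},v^{n})]$ (and by $\mathfrak{E}^{2^{p-1}}$ for $p\ge2$) and never needs Burkholder--Davis--Gundy, since only $\max_n\bE[\cdot]$ is claimed; note also that the "martingale part" $(\sigma(u^n,v^{n-\frac12})\Delta_nW,v^{n+\frac12})$ is not a martingale increment because $v^{n+1}$ is $\cF_{t_{n+1}}$-measurable, so before any BDG or centering argument you must again substitute the scheme for $v^{n+1}-v^n$. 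This is fixable but should be said.

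There is, however, a genuine gap in your plan for \eqref{energy2} and \eqref{energy2-himo}. Testing \eqref{scheme2:2} with the $\cF_{t_n}$-measurable function $u^n$ does make the noise terms vanish in expectation, but the elliptic term does not produce the claimed good term. Using $u^{n,\frac12}=u^n+\tfrac k2(v^{n+1}-v^n)$ one gets $-k(\nabla u^{n,\frac12},\nabla u^n)=-k\|\nabla u^n\|^2_{\bL^2}-\tfrac{k^2}{2}(\nabla(v^{n+1}-v^n),\nabla u^n)$, and after summation by parts in $n$ the correction contributes $-\tfrac{k^2}{2}\sum_n(\nabla v^{n+1},\nabla(u^{n+1}-u^n))=-\tfrac{k^3}{2}\sum_n\|\nabla v^{n+1}\|^2_{\bL^2}$ (plus boundary terms in $n$). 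Since $k^3\|\nabla v^{n+1}\|^2_{\bL^2}=k\|\nabla(u^{n+1}-u^n)\|^2_{\bL^2}\le 2k(\|\nabla u^{n+1}\|^2_{\bL^2}+\|\nabla u^n\|^2_{\bL^2})$, this wrong-sign term is of exactly the same order as the good term $k\sum_n\|\nabla u^n\|^2_{\bL^2}$ and cannot be absorbed; and at $\beta=0$ there is no $\|\nabla v\|$-dissipation reservoir to dump it into. (Your Abel summation of $\sum_n(v^{n+1}-v^n,u^n)$ likewise leaves $-k\sum_n\|v^{n+1}\|^2_{\bL^2}$, which needs the $\beta=0$ energy bound that your own argument declares "not expected to close" --- in fact it does close for $\sigma\equiv\sigma(u)$, as the paper notes.) The paper circumvents all of this with the Baker--Dupont device: first combine \eqref{scheme2:1}--\eqref{scheme2:2} into a second-difference equation for $u^{\ell}$, sum it over $\ell=1,\dots,n$ so that the Laplacian acts on the cumulative sum $\uli^{n}=\sum_{\ell}u^{\ell+1}$, and only then test with $\tfrac12(u^{n+1}+u^{n})=\tfrac12(\uli^{n+1}-\uli^{n-1})$, which makes the elliptic term telescope as $\tfrac{k^2}{4}[\|\nabla\uli^{n+1}\|^2_{\bL^2}-\|\nabla\uli^{n-1}\|^2_{\bL^2}]$ with no $\nabla v$ remainder; the summed noise term is then controlled by the It\^o isometry and Gronwall in $\|u^{\ell}\|^2_{\bL^2}$ alone. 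Without this reordering of "sum first, test second" your argument for \eqref{energy2}, and hence for \eqref{energy2-himo}, does not close.
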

\noindent
The following remark discusses specific problems to derive this stability result. 
\begin{remark}\label{rema2}
{\bf 1.}~The derivation of (discrete) stability estimates for a (temporal) discretization  for the stochastic wave equation \eqref{stoch-wave1:1a} --- like Scheme \ref{scheme--2} ---   differs from corresponding tasks for parabolic SPDEs, which is due to the {\em conservation of energy} in the deterministic case. In this case (where  $\sigma \equiv 0$), the test function $v^{n+1/2}$ is `natural' to deduce \eqref{energy1}; it is used in \cite{Dup_1973} as well, and exploits the (third) binomial formula, such that the first term in \eqref{scheme2--2} becomes
$$\bigl(v^{n+1} - v^{n}, \frac{1}{2} [v^{n+1} + v^{n}] \bigr) = \frac{1}{2} \bigl( \Vert v^{n+1} \Vert^2_{{\mathbb L}^2} - \Vert v^{n}\Vert^2_{{\mathbb L}^2}\bigr)\, ;$$
see also \eqref{S1S:1} below. Conceptional difficulties now appear in the stochastic case where $\sigma \neq 0$ --- see {\em e.g.} the term $\mathscr{J}^n_1$  in \eqref{S1S:1}. A well-known strategy in a setting of parabolic SPDEs would be to employ ${\mathbb E}\big[\Delta_n W \big] = 0$ to conclude
\begin{eqnarray}\label{probl1}\mathscr{J}^n_1 &=& {\mathbb E}\Bigl[\Bigl(\sigma (u^n, v^{n- \frac 12}) \,\Delta_nW, \frac{1}{2}[v^{n+1} - v^{n}] \Bigr)\Bigr] \\ \nonumber
&\leq& C_{\delta}\, {\mathbb E}\Bigl[\Vert\sigma (u^n, v^{n- \frac 12}) \,\Delta_nW
\Vert_{{\mathbb L}^2}^2\Bigr] + \delta  \, {\mathbb E}\bigl[ \Vert v^{n+1} - v^n\Vert^2_{{\mathbb L}^2}\bigr]\qquad (\delta >0)\, ,
\end{eqnarray}
and to then absorb the last term by a corresponding one on the left-hand side --- which would arise if
$v^{n+1}$ instead would have been chosen as  test function. We avoid the estimation in \eqref{probl1} by using the equation \eqref{scheme2--2} to replace
$v^{n+1} - v^{n}$ in $\mathscr{J}^n_1$; see \eqref{sig-v-n+1} below.

\smallskip

\noindent
{\bf 2.}~The last term in \eqref{tidt1} is the reason to evaluate $\sigma$ resp. $D_u \sigma$ at $(u^{n},v^{n-\frac 12})$ in \eqref{scheme2--2} --- instead of {\em e.g.}~at $(u^n, v^n)$; see also the left-hand side of \eqref{test-mult},
and the estimation of the terms $\mathscr{J}^{n, 1}_{1,2}$, $\mathscr{J}^{n, 2}_{1,1}$, and $\mathscr{J}^{n, 2}_{1,2}$ in the proof of Lemma \ref{lem:scheme1:stab}.

\smallskip

\noindent
{\bf 3.}~The inequality \eqref{high-moment} assembles higher moment estimates, which  will be used in Section \ref{sec-4} to derive improved rates of convergence; see Theorem \ref{lem:scheme1:con}.

\smallskip

\noindent
{\bf 4.}~The following two inequalities will be used frequently in the proof of discrete stability; see parts {\bf 1a)} and {\bf 1b)} below. The identity \eqref{DnW-tilde} leads to 
\begin{align*}
{\mathbb E}\left[ \vert \widetilde{\Delta_n W}\vert^2 \right]  \leq k \int_{t_n}^{t_{n+1}} \bE \Big[ |W(t_{n+1}) - W(s)|^2 \Big] \,\dd s \leq C k^3\, ,
\end{align*}
and by the identity \eqref{W-hat}, for $q=1, 2$ we infer
\begin{align*}
{\mathbb E}\left[ \vert \widehat{\Delta_n W}\vert^{2q} \right] &\leq C k^{2q} \,\bE \big[ |W(t_{n+1})|^{2q} \big] + C k^{2q+1} \,\sum_{\ell =1}^{k^{-1}} \bE \big[ |W(t_{n, \ell})|^{2q} \big] 
\leq C k^{3q} + C k^{4q} \leq C k^{3q}\, .
\end{align*}
The estimation of the distance between $\widetilde{\Delta_n W}$ and $\widehat{\Delta_n W}$ is useful in the convergence analysis in the next section, which is derived in \eqref{dist-tilde-hat}.

\smallskip

\noindent
{\bf 5.}~If $\sigma_2(v) \equiv 0 \equiv F_2(v)$ in {\bf (A3)} and $\beta=0$ hold, we consider the scheme \eqref{scheme2:1}--\eqref{scheme2:2}, where to verify discrete stability is easier; in fact, the identity (\ref{S1S:1}) simplifies considerably since both, $F$ and $\sigma$ do not depend on $v$ any more; see {\em e.g.}
%
%
%
%The discrete stability analysis for the $(\widehat{\alpha}, \beta)-$scheme is a general case and the discrete stability analysis of the $(\widehat{\alpha}, 0)-$scheme is easier to handle and a special case of the $(\widehat{\alpha}, \beta)-$scheme. For the $(\widehat{\alpha}, 0)-$scheme, the identity \eqref{S1S:1} below will be modified with $\beta=0$ and $\sigma \equiv \sigma(u^n)$ and $F \equiv F(u^n)$, and can be handled thanks to {\bf (A6)} and {\bf (A7)} for $m=1$, {\em e.g.,} see 
the estimate for $\mathscr{J}^{n, 1}_{1, 1}$ in \eqref{413a}. For the higher moment estimates, we multiply the corresponding \eqref{frakE} of the scheme \eqref{scheme2:1}--\eqref{scheme2:2} with $\mathfrak{E}(u^{n+1}, v^{n+1})$ only, which is different from the general case; see step {\bf 2)} below.

\smallskip

\noindent
{\bf 6.}~In the proof of \eqref{energy2}, under the hypotheses $\sigma_2(v) \equiv 0 \equiv F_2(v)$ in {\bf (A3)}, $\beta=0$ and {\bf (A4)} for $m=1$, we combine both equations of the scheme \eqref{scheme2:1}--\eqref{scheme2:2} to write  a single equation for $u^{\ell}$ and sum over the first $n$ steps; see \eqref{sum_ul}. If we would apply the same approach for the general $\sigma \equiv \sigma(u^{\ell}, v^{\ell - 1/2})$, we could use the first equation of the $(\widehat{\alpha}, \beta)-$scheme to replace $v^{\ell - 1/2}$ by $\frac{1}{2k} \big( u^{\ell}- u^{\ell-2} \big)$. Thus, to estimate \eqref{kln2-sigmau} in general case, we use the growth condition to write
\begin{align*}
k^2 \sum^n_{\ell=1} \bE \Big[\|\sigma(u^{\ell}, (1/2k) ( u^{\ell}- u^{\ell-2}))\|_{\bL^2}^2 \Big] \leq C_{\tt L} k^2 \sum^n_{\ell=1} \bE\Big[ 1+ \|\nabla u^{\ell} \|_{\bL^2}^2 + \frac{1}{4 k^2} \big(\|u^{\ell} \|_{\bL^2}^2 + \|u^{\ell-2} \|_{\bL^2}^2 \big) \Big]\, ,
\end{align*}
which, by to the last term, is not in suitable form to apply the discrete Gronwall lemma. Thus, the approach to prove \eqref{energy2} is not useful for the general $\sigma \equiv \sigma(u, v)$.

\end{remark}

\begin{proof}[Proof of Lemma \ref{lem:scheme1:stab}]
The ${\mathbb P}$-a.s.~solvability easily follows from Lax-Milgram lemma, and {\bf (A3)}. Using the $\mathbb{L}^2$-regularity theory for elliptic equations on regular domains (see \cite[Sec.~15.5]{Gil+Tru}), the system in Scheme \ref{scheme--2} holds strongly $\mbox{Leb} \otimes \mathbb{P}-$a.s. The proof of Lemma \ref{lem:scheme1:stab} is split into the following three steps {\bf 1)} -- {\bf 3)}.

\smallskip

\noindent
{\bf 1) Proof of $(\ref{energy1})$.}
We use the test function $2k v^{n+1/2} = u^{n+1} - u^{n-1}$  in (\ref{scheme2--2}),
and identity (\ref{tidt1}) to  get
\begin{equation}\label{S1S:1}
\begin{split}
&\frac{1}{2} {\mathbb E}\Bigl[\|v^{n+1}\|^2_{\bL^2}  -\|v^{n}\|^2_{\bL^2}  \Bigr] + k {\mathbb E}\Bigl[ \big(\nabla  u^{n, \frac 12}, \nabla v^{n+\frac 12} \big)\Bigr] + \beta\,k^{2+\beta} 
{\mathbb E}\bigl[\| \nabla v^{n+ \frac 12} \|_{\bL^2}^2\bigr]
\\ &= {\mathbb E}\Bigl[\Bigl(\sigma (u^n, v^{n- \frac 12}) \,\Delta_nW, v^{n+\frac 12} \Bigr)
+ \,\widehat{\alpha} \,\Bigl(D_u\sigma(u^n, v^{n-\frac 12}) v^n \,\widehat{\Delta_n W}, v^{n+\frac 12} \Bigr) 
\\ &\qquad+ \frac{k}{2}\Bigl( 3F(u^n, v^n) - F(u^{n-1}, v^{n-1}), v^{n+\frac 12}\Bigr)\Bigr] =: \sum_{i=1}^3 \bE \big[ \mathscr{J}^n_i \big]\, . 
\end{split}
\end{equation}
 Next, we use (\ref{scheme2--1}) in strong form, sum it for two subsequent steps, and multiply this equation with $-\Delta u^{n, \frac 12}$; we then arrive at
\begin{align}\label{1eq-test}
\frac 14 \Big[ \|\nabla u^{n+1}\|_{\bL^2}^2 - \|\nabla u^{n-1}\|_{\bL^2}^2 \Big] = k \Bigl(\nabla  u^{n, \frac 12}, \nabla v^{n+\frac 12} \Bigr)\,.
\end{align}
Since the right-hand side of \eqref{1eq-test} is equal to 
the second term on the left-hand side of \eqref{S1S:1} we conclude that
\begin{equation}\label{test-mult}
\begin{split}
&\frac{1}{2} {\mathbb E}\Bigl[\|v^{n+1}\|^2_{\bL^2}  -\|v^{n}\|^2_{\bL^2}  \Bigr] + \frac 14 {\mathbb E}\Big[ \|\nabla u^{n+1}\|_{\bL^2}^2 - \|\nabla u^{n-1}\|_{\bL^2}^2 \Big] 
\\ &\quad+ \beta \,k^{2+\beta} {\mathbb E} \big[ \| \nabla v^{n+ \frac 12} \|_{\bL^2}^2 \big] = \sum_{i=1}^3 \bE \big[ \mathscr{J}^n_i \big]\, . 
\end{split}
\end{equation}
Now we estimate each term on the right-hand side of \eqref{test-mult}. Using properties of the increments $\Delta_n W$, we get $\bE\bigl[\bigl(\sigma (u^n, v^{n-\frac 12})\Delta_n W, v^{n} \bigr)\bigr]=0$. Using this we infer
\begin{align*}
\bE \big[ \mathscr{J}^n_1 \big] =\bE\Bigl[\Bigl(\sigma (u^n, v^{n-\frac 12})\Delta_n W, v^{n+ \frac 12} \Bigr)\Bigr] &= \bE\Bigl[\Bigl(\sigma (u^n, v^{n- \frac 12})\Delta_n W, \frac{v^{n+1} + v^n}{2} \Bigr) \Bigr]
\\ &=\frac 12 \bE\Bigl[\Bigl(\sigma (u^n, v^{n- \frac 12})\Delta_n W, v^{n+1} - v^n \Bigr) \Bigr] \, .
\end{align*} 
We use  (\ref{scheme2--2}) --- in modified form as stated in Scheme \ref{Hat-scheme} --- to replace $v^{n+1} -v^n$. Hence
{\small{
\begin{equation}\label{sig-v-n+1}
\begin{split}
 \bE\big[ \mathscr{J}^n_1 \big] &= \frac 12 \bE\Bigl[\Bigl(\sigma (u^n, v^{n- \frac 12})\Delta_n W, k \Delta u^{n, \frac 12} \Bigr) \Bigr] + \frac 12 \bE\Bigl[\Bigl(\sigma (u^n, v^{n- \frac 12}) \Delta_n W, \beta k^{2+\beta} \Delta v^{n+ \frac 12} \Bigr) \Bigr]
\\ &\quad+ \frac 12 \bE \Big[ \|\sigma (u^n, v^{n- \frac 12})\|_{\bL^2}^2 |\Delta_n W|^2 \Big] 
\\ &\quad+ \frac{\widehat{\alpha}}{2}\, \bE\Bigl[\Bigl(\sigma (u^n, v^{n- \frac 12})\Delta_n W, D_u\sigma(u^{n}, v^{n- \frac 12}) v^{n} \,\widehat{\Delta_n W} \Bigr) \Bigr] 
\\ &\quad+ \frac{k}{4}\,\bE\Bigl[\Bigl(\sigma (u^n, v^{n- \frac 12})\Delta_n W, \bigl[ 3F(u^n, v^n) - F(u^{n-1}, v^{n-1}) \bigr] \Big) \Bigr] =: \sum_{i=1}^5 \mathscr{J}^{n, i}_1\, .
\end{split}
\end{equation}
}}
In the following parts {\bf a)}--{\bf c)}, we independently bound
$\bE \big[ \mathscr{J}^{n}_1 \big]$ through $\bE \big[ \mathscr{J}^{n}_3 \big]$ in (\ref{S1S:1}). 

\smallskip

\noindent
{\bf a) Estimation of $\bE\big[ \mathscr{J}^{n}_1 \big]$ in (\ref{sig-v-n+1}).}  We estimate the five terms $\mathscr{J}^{n, i}_1, \,i=1, \cdots, 5,$ on the right-hand side of \eqref{sig-v-n+1}. Let $\underline{D_u \sigma} \equiv D_u \sigma(u^n, v^{n- \frac 12}) \in \mathcal{L}(\bH^1_0, \bH^1_0)$ and $\underline{D_v \sigma} \equiv D_v \sigma(u^n, v^{n- \frac 12}) \in \mathcal{L}(\bH^1_0, \bH^1_0)$. By integration by parts and using $\sigma (u^n, v^{n-\frac 12}) = 0$ on $\partial {\mathcal O}$ we infer
\begin{equation}\label{j^n,1_1est}
\begin{split}
\mathscr{J}^{n, 1}_1 &= \frac 12 \bE\Bigl[-\Bigl( \underline{D_u \sigma}\, \nabla u^n \Delta_n W, k \nabla u^{n, \frac 12} \Bigr) \Bigr] + \frac 12 \bE\Bigl[-\Bigl( \underline{D_v \sigma} \, \nabla v^{n- \frac 12} \Delta_n W, k \nabla u^{n, \frac 12} \Bigr) \Bigr] 
\\ &= \mathscr{J}^{n, 1}_{1, 1} +\mathscr{J}^{n, 1}_{1, 2}\, .
\end{split}
\end{equation}
%.

\noindent
Using {\bf (A4)} for $m=1$ and the It\^o isometry we get
\begin{equation}\label{413a}
\begin{split}
\mathscr{J}^{n, 1}_{1, 1} &\leq C_g^2\, \bE \Big[ \|\nabla u^n\|_{\bL^2}^2 |\Delta_n W|^2\Big] + C k^2 \,\bE \Big[ \|\nabla u^{n, \frac 12}\|_{\bL^2}^2  \Big]
\\ &\leq C_g^2\, k\, \bE \big[ \|\nabla u^n\|_{\bL^2}^2 \big] + C k^2\, \bE \Big[ \|\nabla u^{n+1}\|_{\bL^2}^2 +\|\nabla u^{n-1}\|_{\bL^2}^2  \Big]\, .
\end{split}
\end{equation}
Using {\bf (A4)} for $m=1$, the independence property of the increment $\Delta_n W$, the It\^o isometry  and the identity $2k v^{n+1/2} = u^{n+1}-u^{n-1}$, we estimate
\begin{equation}\label{delv-sigma-1/2}
\begin{split}
\mathscr{J}^{n, 1}_{1, 2} &= \frac 12 \bE\Bigl[-\Bigl( \underline{D_v \sigma} \, \nabla v^{n- \frac 12} \Delta_n W, \frac k2\, \nabla \big[ u^{n+1} - u^{n-1} \big] \Bigr) \Bigr] 
\\ &= \frac 12 \bE\Bigl[-\Bigl( k^{1- \frac{\beta}{2}}\underline{D_v \sigma} \,\nabla v^{n- \frac 12} \Delta_n W, k^{1+ \frac{\beta}{2}} \,\nabla v^{n+ \frac 12}\Bigr) \Bigr] 
\\ &\leq \frac{3}{8 \beta} \,C_g^2\,  \,k^{1+(2- \beta)}\, \bE \Big[ \|\nabla v^{n- \frac 12}\|_{\bL^2}^2 \Big] + \frac{\beta}{6} \,k^{2+ \beta}\, \bE \Big[ \|\nabla v^{n+\frac 12}\|_{\bL^2}^2 \Big]\,.
\end{split}
\end{equation}
The terms on the right-hand sides of (\ref{413a}) and (\ref{delv-sigma-1/2})
may now be controlled by those on the left-hand side of (\ref{test-mult}) after summation over $1 \le n \le N-1$, provided that
$k \leq \big( \frac{4}{3 C_g^2}\big)^{\frac{1}{1-2\beta}}$ for $\beta < \frac 12$ is sufficiently small.

\smallskip

\noindent
Now we turn to $\mathscr{J}^{n, 2}_{1}$ in (\ref{sig-v-n+1}):  integration by parts and using the fact that $\sigma (u^n, v^{n-\frac 12}) = 0$ on $\partial {\mathcal O}$ we get
\begin{equation}\label{sig-v-n+1_}
\begin{split}
\mathscr{J}^{n, 2}_{1} &= \frac 12\, \bE\Bigl[-\Bigl( k^{1+\frac{\beta}{2}}\underline{D_u \sigma} \, \nabla u^n \,\Delta_n W, \beta\,k^{1+\frac{\beta}{2}}\, \nabla v^{n+\frac 12} \Bigr) \Bigr] 
\\ &\quad+ \frac 12 \,\bE\Bigl[-\Bigl( k^{1+\frac{\beta}{2}} \underline{D_v \sigma} \, \nabla v^{n- \frac 12} \Delta_n W, \beta\,k^{1+\frac{\beta}{2}}\,\nabla v^{n+\frac 12} \Bigr) \Bigr]  =: \mathscr{J}^{n, 2}_{1, 1} +\mathscr{J}^{n, 2}_{1, 2}\, .
\end{split}
\end{equation}
Using {\bf (A4)} for $m=1$ and the independence property of $\Delta_n W$ we estimate
\begin{align*}
\mathscr{J}^{n, 2}_{1, 1} \leq C \beta \,k^{3+\beta}\,\bE \big[ \|\nabla u^n\|_{\bL^2}^2 \big] +  \frac \beta6\, k^{2+ \beta}\, \bE \Big[ \|\nabla v^{n+\frac 12}\|_{\bL^2}^2 \Big]\,,
\end{align*}
where the second term in the right-hand side can be 
be controlled by the corresponding term on the left-hand side of (\ref{test-mult}).
 Again, using {\bf (A4)} for $m=1$ we obtain for the second term in (\ref{sig-v-n+1_})
\begin{align*}
\mathscr{J}^{n, 2}_{1, 2} \leq \beta \frac 38 C_g^2\,k\,k^{2+\beta}\,\bE \big[ \|\nabla v^{n-\frac 12}\|_{\bL^2}^2 \big] +  \frac \beta6\, k^{2+ \beta}\, \bE \Big[ \|\nabla v^{n+\frac 12}\|_{\bL^2}^2 \Big]\,,
\end{align*}
where the right-hand side can be managed with the left-hand side of \eqref{test-mult} for $\beta <1/2$. 

\smallskip

\noindent
We continue with the next term $\mathscr{J}^{n, 3}_1$ in \eqref{sig-v-n+1}:
by It\^o isometry and {\bf (A3)},
\begin{align*}
\mathscr{J}^{n, 3}_1 &= \frac 12 \bE \Big[ \|\sigma (u^n, v^{n- \frac 12})\|_{\bL^2}^2 |\Delta_n W|^2 \Big] 
\leq C k \, \bE \Big[ 1+ \|\nabla u^{n}\|_{\bL^2}^2 + \|v^{n}\|_{\bL^2}^2 + \|v^{n-1}\|_{\bL^2}^2 \Big]\, ,
\end{align*}
where $C>0$ depends on $C_{\tt L}$.
Next comes $\mathscr{J}^{n, 4}_1$: using {\bf (A3)}, {\bf (A4)} for $m=1$ and item {\bf 4.}~of Remark \ref{rema2}, we infer
\begin{equation}
\begin{split}
\mathscr{J}^{n, 4}_1 &\leq C \bE \Big[ \|\sigma (u^n, v^{n- \frac 12})\|_{\bL^2}^2 |\Delta_n W|^2 \Big] + \frac{\widehat{\alpha}^2}{4} C_g^2 \, \bE \Big[ \|v^{n} \|_{\bL^2}^2 \big|\widehat{\Delta_n W} \big|^2 \Big]
\\ &\leq C k \, \bE \Big[ 1+ \|\nabla u^{n}\|_{\bL^2}^2 + \|v^{n}\|_{\bL^2}^2 + \|v^{n-1}\|_{\bL^2}^2 \Big] + \frac{\widehat{\alpha}^2}{4} C_g^2 \, k^3\,\bE \big[ \|v^{n} \|_{\bL^2}^2 \big]\,,
\end{split}
\end{equation}
where $C>0$ depends on $C_{\tt L}$. The last term is $\mathscr{J}^{n, 5}_1$:
by {\bf (A3)} we obtain
\begin{equation}\label{j^n,1_5est}
\begin{split}
\mathscr{J}^{n, 5}_1 &\leq C \bE \Big[ \|\sigma (u^n, v^{n- \frac 12})\|_{\bL^2}^2 |\Delta_n W|^2 \Big] + C k^2 \, \bE \Big[ \|F(u^n, v^n)\|_{\bL^2}^2 + \|F(u^{n-1}, v^{n-1})\|_{\bL^2}^2 \Big]
\\ &\leq C k \, \bE \Big[ 1+ \|\nabla u^{n}\|_{\bL^2}^2 + \|\nabla u^{n-1}\|_{\bL^2}^2 + \|v^{n}\|_{\bL^2}^2 + \|v^{n-1}\|_{\bL^2}^2 \Big] \,,
\end{split}
\end{equation}
where the constant $C>0$ depends on $C_{\tt L}$. Thus, the estimate of $\mathscr{J}^n_1$ through
those of  $\mathscr{J}^{n, 1}_1$ through $\mathscr{J}^{n, 5}_1$ is complete.

\smallskip

\noindent
{\bf b) Estimation of $\bE\big[ \mathscr{J}^{n}_2 \big]$ in (\ref{S1S:1}).}   By {\bf (A4)} for $m=1$, item {\bf 4.}~of Remark \ref{rema2}, and the independence property of $\widehat{\Delta_n W}$,
\begin{equation*}
\begin{split}
 \mathscr{J}^n_2  &\leq \widehat{\alpha}^2 \frac 1k \,\bE\Bigl[ \| D_u\sigma(u^n, v^{n-\frac 12}) v^n \|_{\bL^2}^2 \big|\widehat{\Delta_n W} \big|^2 \Bigr] + C k\,\bE \Big[ \| v^{n+ \frac 12}\|_{\bL^2}^2\Bigr] \\
&\le C_g^2 \,\widehat{\alpha}^2\,k^2 \,\bE \big[ \| v^{n}\|_{\bL^2}^2\bigr]
+ C k\,\bE \Big[ \| v^{n+1}\|_{\bL^2}^2 + \| v^{n}\|_{\bL^2}^2\Bigr]\, .
\end{split}
\end{equation*}
\smallskip

\noindent
{\bf c) Estimation of $\bE\big[ \mathscr{J}^{n}_3 \big]$ in (\ref{S1S:1}).} By  {\bf (A3)} we estimate
\begin{equation}\label{S1S:4}
\begin{split}%&&\frac{k}{2}\Bigl(3F\bigl(u^n,v^n\bigr) 
%- F\bigl(u^{n-1}, v^{n-1}\bigr), v^{n+1}\Bigr)
%\\ \nonumber
 \mathscr{J}^n_3 &= k \bE \Big[ \Bigl(F(u^n, v^n) , v^{n+\frac 12}\Bigr) \Big] +\frac{k}{2}\, \bE \Big[\Bigl(F(u^n, v^n) - F(u^{n-1}, v^{n-1}), v^{n+ \frac 12}\Bigr) \Big]
\\ 
%&&\quad \le k\|v^{n+1}\|^2_{\bL^2}
%+C_Ak\bigl(\|v^n\|^2_{\bL^2}+\|v^{n-1}\|^2_{\bL^2} 
%+ \|\nabla u^n\|^2_{\bL^2} 
%+ \|\nabla u^{n-1}\|^2_{\bL^2}\bigr) \\ \nonumber
& \le C k\, \bE \big[ \|v^{n+\frac 12}\|^2_{\bL^2} \big] + C \,k\, \bE \Bigl[ 1+ \| \nabla u^{n}\|_{\bL^2}^2 + \| \nabla u^{n-1}\|_{\bL^2}^2 + \| v^{n}\|_{\bL^2}^2 + \| v^{n-1}\|_{\bL^2}^2 \Bigr]\, .
\end{split}
\end{equation}
Now, we may use the parts {\bf a)} through {\bf c)} to bound the terms on the right-hand side of \eqref{test-mult}. Summation over all $1 \le n \le N-1$,
for 
%Accumulating the above estimates for $\mathscr{J}^n_1$ to $\mathscr{J}^n_3$ into \eqref{test-mult}, rearranging the terms and using the definition of $\cE$ we infer
%\begin{equation}
%\begin{split}
%&\frac{1}{2} \bE \Bigl[\|v^{n+1}\|^2_{\bL^2}  -\|v^{n}\|^2_{\bL^2}  \Bigr] + \frac 14 \bE \Big[ \|\nabla u^{n+1}\|_{\bL^2}^2 - \|\nabla u^{n-1}\|_{\bL^2}^2 \Big] 
%\\ &\qquad+ \frac 12 \,k^{2+\beta}\, \bE \Big[ \| \nabla v^{n+ \frac 12} \|_{\bL^2}^2 - \| \nabla v^{n- \frac 12} \|_{\bL^2}^2 \Big]
%\\ &\leq C\,k\, \bE \Big[ 1+ \cE(u^{n-1}, v^{n-1}) + \cE(u^{n}, v^{n}) + \cE(u^{n+1}, v^{n+1}) \Big] \, \qquad (\beta < 1/2),
%\end{split}
%\end{equation}
%
%where the constant $C>0$ depends on $C_{\tt L}, C_g$. Then, there exists 
$k \leq k_0 \equiv k_0(C_{\tt L}, C_g)$  sufficiently small, leads to
%such that taking the sum from $1$ to $N-1$ we get
{{
\begin{equation}\label{accu-all-1}
\begin{split}
&\frac{1}{4} \bE \Big[ \cE(u^N, v^N) \Big] +  \beta \frac 14 \,k^{2+\beta}\, \bE \big[ \| \nabla v^{N+ \frac 12} \|_{\bL^2}^2\big]
\\ &\leq \frac{1}{4} \bE \big[ \cE(u_0, v^1) \big] + \beta \frac{k^{2+\beta}}{4} {\mathbb E}\bigl[ \Vert \nabla v^{1/2}\Vert_{\bL^2}^2\bigr] + \frac 14 \bE \big[ \|\nabla u^1\|_{\bL^2}^2 \big] + C k \sum_{n=1}^{N-1} \bE\big[ \cE(u^n, v^n) \big]  \, .
\end{split}
\end{equation}
}}
By {\bf (B1)$_{i}$}, the implicit version of the discrete Gronwall lemma  then shows (\ref{energy1}) for $\beta\in(0, \frac{1}{2})$.

\del{
\noindent
Now taking the sum over $n=1, \cdots, N-1$,  and rearranging we infer that
\begin{equation}\label{eq-4.9}
\begin{split}
&\bE \big[ \cE(u^{N},v^{N}) \big] + \frac{1}{2} \sum_{n=1}^{N-1} \bE\big[ \|v^{n+1}-v^{n}\|^2_{\bL^2} \big]
\\ &\leq (1+C\,k)\,\bE \big[ \cE(u^{1},v^{1}) \big] + C \,k \sum_{n=1}^{N-1} \bE \big[ \cE(u^{n+1},v^{n+1}) \big] + C\,k\, \bE \big[ \cE(u^{0},v^{0}) \big] \, ,
\end{split}
\end{equation}
}

\medskip

\noindent
{\bf 2) Proof of \eqref{high-moment} for $p=1$.} To simplify technicalities, we put $F \equiv 0$.
%
%In the following step, we prove the higher moment estimates. These higher moment estimates are in particular needed in the convergence analysis in Section \ref{sec-4}, where we prove the improved convergence rate for $u$. Thus, it is enough to prove for the case when $\sigma \equiv \sigma(u)$ and $F \equiv F(u)$.
%
%\smallskip
%
%\noindent
%Let us {\bf (A5)}. For simplicity, consider $F=0$. The corresponding $\widetilde{\alpha}$-scheme for the Scheme \ref{scheme-1} can be written in the strong sense (using the similar arguments for \eqref{Sm2:1}-\eqref{Sm2:2})  as
%\begin{eqnarray}
%\label{Sm2:1_-}
%\quad &&u^{n+1}-u^{n} = k \,v^{n+1}, \\ \label{Sm2:2_-}
%\quad &&v^{n+1}-v^{n} = k \Delta u^{n, \frac 12} + \sigma(u^{n}) \Delta_n W + \widetilde{\alpha}\,\partial_u\sigma(u^{n}) v^{n} \,\widetilde{\Delta_n W}\, .
%\end{eqnarray}
%
Let us denote $\mathfrak{E}(u^{n+1}, v^{n+1}) := \frac 14 \big[ \|\nabla u^{n+1}\|^2_{\bL^2} + 2 \|v^{n+1}\|^2_{\bL^2}\big]$. Arguing as before \eqref{test-mult}
then leads to 
%Using this notation, we have the corresponding energy identity \eqref{test-mult} in Step {\bf 1)} for \eqref{Sm2:1_-}-\eqref{Sm2:2_-} as follows
\begin{eqnarray}\label{frakE}
&&\big[ \mathfrak{E}(u^{n+1}, v^{n+1}) - \mathfrak{E}(u^{n-1}, v^{n}) \big]  +
\beta k^{2+\beta} \Vert \nabla v^{n+\frac 12}\Vert^2_{{\mathbb L}^2} \\ \nonumber
&&\qquad =  \Big(\sigma(u^{n}, v^{n- \frac 12}) \Delta_n W, v^{n + \frac 12} \Big) + \widehat{\alpha}\,\Big( D_u\sigma(u^{n}, v^{n- \frac 12}) v^{n} \,\widehat{\Delta_n W}, v^{n + \frac 12} \Big)\, .
\end{eqnarray}
Now fix $\frac{1}{4} \leq \delta_1, \delta_2 \leq 1$, then
multiply (\ref{frakE}) with $$\delta_1 \mathfrak{E}(u^{n+1}, v^{n+1}) +
\delta_2 \Bigl( \mathfrak{E}(u^{n+1}, v^{n+1}) + \mathfrak{E}(u^{n-1}, v^{n})\Bigr) \, ,$$
and take the expectation to get
%Adding \eqref{eq-4.9} and \eqref{add-both-ene} we infer
{\small{
\begin{equation}\label{sum-mult-ene}
\begin{split}
&\frac{\delta_1 + 2\delta_2}{2} \bE \Big[ \mathfrak{E}^2(u^{n+1}, v^{n+1})  - \mathfrak{E}^2(u^{n-1}, v^{n}) \Big] + \frac{\delta_1}{2} \bE \Big[ \big| \mathfrak{E}(u^{n+1}, v^{n+1})  - \mathfrak{E}(u^{n-1}, v^{n}) \big|^2 \Big]
\\ &\quad+ \beta k^{2+\beta} {\mathbb E}\Bigl[ \Vert \nabla v^{n+\frac 12}\Vert^2_{{\mathbb L}^2} \big[ (\delta_1 + \delta_2) \mathfrak{E}(u^{n+1}, v^{n+1})+\delta_2\mathfrak{E}(u^{n-1}, v^{n}) \big] \Bigr] 
\\ &= \bE \Big[ \Big(\sigma(u^{n}, v^{n-\frac 12}) \Delta_n W, v^{n + \frac 12} \Big) \cdot \big[(\delta_1 + \delta_2) \mathfrak{E}(u^{n+1}, v^{n+1})+\delta_2\mathfrak{E}(u^{n-1}, v^{n}) \big] \Big] 
\\ &\quad+ \widehat{\alpha}\,\bE \Big[ \Big( D_u\sigma(u^{n}, v^{n-\frac 12}) v^{n} \,\widehat{\Delta_n W}, v^{n + \frac 12} \Big) \cdot \big[ (\delta_1 + \delta_2) \mathfrak{E}(u^{n+1}, v^{n+1})+\delta_2 \mathfrak{E}(u^{n-1}, v^{n}) \big] \Big] \\
& =: \mathscr{K}^{n,1} +  \mathscr{K}^{n,2}\,.
\end{split}
\end{equation}
}}
We independently estimate the terms $\mathscr{K}^{n,1}$ and $\mathscr{K}^{n,2}$.

\smallskip

\noindent
{\bf a) Estimation of $\mathscr{K}^{n,1}$ in (\ref{sum-mult-ene}).} This term may be written as the sum of two others:
\begin{equation}\label{1.21-eq}
\begin{split}
\mathscr{K}^{n,1}&= (\delta_1 + \delta_2) \bE \Big[ \Big(\sigma(u^{n}, v^{n-\frac 12}) \Delta_n W, v^{n + \frac 12} \Big) \cdot \big(\mathfrak{E}(u^{n+1}, v^{n+1}) - \mathfrak{E}(u^{n-1}, v^{n}) \big) \Big] 
\\ &\qquad+ (\delta_1+2\delta_2) \bE \Big[ \Big(\sigma(u^{n}, v^{n-\frac 12}) \Delta_n W, v^{n + \frac 12} \Big) \cdot \mathfrak{E}(u^{n-1}, v^{n}) \Big] := \mathscr{K}^{n,1}_1 + \mathscr{K}^{n,1}_2\,.
\end{split}
\end{equation}
We consider $\mathscr{K}^{n,1}_1$ first. By ${\mathbb E}\bigl[\vert \Delta_n W\vert^4 \bigr] = {\mathcal O}(k^2)$, and {\bf (A3)} we find
{\small{
\begin{equation*}
\begin{split}
\mathscr{K}^{n,1}_1 &\leq C_{\delta_1}\, \bE\Big[ \|\sigma(u^{n}, v^{n-\frac 12}) \Delta_n W \|_{\bL^2}^2 \|v^{n + \frac 12} \|_{\bL^2}^2 \Big] + \frac{\delta_1}{4} \bE\Big[ \big| \mathfrak{E}(u^{n+1}, v^{n+1})  - \mathfrak{E}(u^{n-1}, v^{n}) \big|^2 \Big] 
\\ & \leq \frac{C_{\delta_1}}{k} \bE\Big[ \|\sigma(u^{n}, v^{n-\frac 12})\|_{\bL^2}^4 |\Delta_n W |^4 \Big] + C_{\delta_1} k\, \bE\Big[ \|v^{n + \frac 12} \|_{\bL^2}^4 \Big] + \frac{\delta_1}{4} \bE\Big[ \big| \mathfrak{E}(u^{n+1}, v^{n+1})  - \mathfrak{E}(u^{n-1}, v^{n}) \big|^2 \Big] 
\\ &\leq C_{\delta_1}k\, \bE\Big[ 1 + \sum_{\ell=-1}^1\mathfrak{E}^2(u^{n+\ell}, v^{n+\ell})\Big] + \frac{\delta_1}{4} \bE\Big[ \big| \mathfrak{E}(u^{n+1}, v^{n+1})  - \mathfrak{E}(u^{n-1}, v^{n}) \big|^2 \Big] \, ,
\end{split}
\end{equation*}
}}
where the last term on the right-hand side can be absorbed on the left-hand side of \eqref{sum-mult-ene}. 

\noindent
We continue with $\mathscr{K}^{n,1}_2$: on
using ithe ndependence property of $\Delta_n W$, and equation \eqref{scheme2--2},
{\small{
\begin{equation*}
\begin{split}
\mathscr{K}^{n,1}_2 &\leq 3 \,\Bigl\vert\bE \Big[ \Big(\sigma(u^{n}, v^{n-\frac 12}) \Delta_n W, v^{n+1} - v^n \Big) \cdot \mathfrak{E}(u^{n-1}, v^{n}) \Big] \Bigr\vert
\\ &= 3 \, \Bigl\vert \bE \Big[ \Big(\sigma(u^{n}, v^{n-\frac 12}) \Delta_n W, k \Delta \widetilde{u}^{n, \frac 12} \Big) \cdot \mathfrak{E}(u^{n-1}, v^{n}) \Big] \Bigr\vert + 3\, \Bigl\vert \bE \Big[ \|\sigma(u^{n}, v^{n-\frac 12}) \Delta_n W \|_{\bL^2}^2\, \mathfrak{E}(u^{n-1}, v^{n}) \Big]\Bigr\vert
\\ &\quad+ 3 \, \Bigl\vert\bE \Big[ \Big(\sigma(u^{n}, v^{n-\frac 12}) \Delta_n W, D_u \sigma(u^n, v^{n-\frac 12}) v^n \widehat{\Delta_n W} \Big) \cdot \mathfrak{E}(u^{n-1}, v^{n}) \Big]\Bigr\vert =: \mathscr{K}^{n,1}_{2, 1} + \mathscr{K}^{n,1}_{2, 2} + \mathscr{K}^{n,1}_{2, 3} \, .
\end{split}
\end{equation*}
}}
We split $\mathscr{K}^{n,1}_{2, 1} := \mathscr{K}^{n,1, {\tt A}}_{2, 1} + \mathscr{K}^{n,1, {\tt B}}_{2, 1}$ because of (\ref{tidt1}); here, $\mathscr{K}^{n,1, {\tt A}}_{2, 1}$ is as $\mathscr{K}^{n,1}_{2, 1}$, where  $\widetilde{u}^{n, \frac{1}{2}}$ is replaced by ${u}^{n, \frac{1}{2}}$. We use
integration by parts, and the fact that $\sigma (u^n, v^{n-\frac 12}) = 0$ on $\partial {\mathcal O}$, {\bf (A4)} for $m=1$, the independence property of $\Delta_n W$ and that
${\mathbb E}\bigl[\vert \Delta_n W\vert^4 \bigr] = {\mathcal O}(k^2)$ to conclude
\begin{equation*}
\begin{split}
\mathscr{K}^{n,1,{\tt A}}_{2, 1} &= \frac 92 \, \Bigl\vert \bE \Big[ -\Big(\nabla \sigma(u^{n}, v^{n-\frac 12}) \Delta_n W, k \nabla [u^{n+1} - u^{n-1}] \Big) \cdot \mathfrak{E}(u^{n-1}, v^{n}) \Big] \Bigr\vert
\\ &= 9 \,\Bigl\vert\bE \Big[ -\Big(\nabla \sigma(u^{n}, v^{n-\frac 12}) \Delta_n W, k^{1- \frac{\beta}{2}}  k^{1+ \frac{\beta}{2}}\, \nabla v^{n+\frac 12} \Big) \cdot \mathfrak{E}(u^{n-1}, v^{n}) \Big] \Bigr\vert
\\ &\leq \frac{C_{\delta_2}}{\beta} k^{3-\beta}\,\bE \Big[\| \underline{D_u \sigma}\, \nabla u^n \|_{\bL^2}^2  \mathfrak{E}(u^{n-1}, v^{n}) \Big]  + \frac{C_{\delta_2}}{\beta} k^{3-\beta}\,\bE \Big[\|\underline{D_v \sigma}\, \nabla v^{n-\frac 12}\|_{\bL^2}^2  \mathfrak{E}(u^{n-1}, v^{n}) \Big] 
\\ &\quad+ \frac{\beta \delta_2}{4} k^{2+\beta} \,\bE \Big[ \|\nabla v^{n+\frac 12}\|_{\bL^2}^2 \,\mathfrak{E}(u^{n-1}, v^{n}) \Big]
\\ &\leq C_{\delta_2} C_{\beta} C_g^2 k^{3-\beta} \bE\Big[ \mathfrak{E}^2(u^{n}, v^{n}) + \mathfrak{E}^2(u^{n-1}, v^{n}) \Big] + C_{\delta_2} C_{\beta} C_g^2 \,k^{3-\beta} \,\bE \Big[ \|\nabla v^{n-\frac 12}\|_{\bL^2}^2 \,\mathfrak{E}(u^{n-1}, v^{n}) \Big] 
\\ &\quad+ \frac{\beta \delta_2}{4} k^{2+\beta} \,\bE \Big[ \|\nabla v^{n+\frac 12}\|_{\bL^2}^2 \,\mathfrak{E}(u^{n-1}, v^{n}) \Big]\,,
\end{split}
\end{equation*}

\noindent
where $C_{\beta} >0$ is a constant dependent on $\beta$ for $0<\beta<\frac{1}{2}$. We use a similar idea to estimate $\mathscr{K}^{n,1,{\tt B}}_{2, 1}$,
\begin{equation*}
\begin{split}
\mathscr{K}^{n,1,{\tt B}}_{2, 1} &= \frac{9}{2} \,\Bigl\vert \bE \Big[ -\Big(\nabla \sigma(u^{n}, v^{n-\frac 12}) \Delta_n W,  \beta k^{2+\beta} \nabla v^{n+\frac{1}{2}} \Big) \cdot \mathfrak{E}(u^{n-1}, v^{n}) \big)\Bigr] \Bigr\vert
\\ &\leq \frac{C_{\delta_2}}{\beta} k^{3+\beta}\,\bE \Big[\| \underline{D_u \sigma}\, \nabla u^n \|_{\bL^2}^2  \mathfrak{E}(u^{n-1}, v^{n}) \Big]  + \frac{C_{\delta_2}}{\beta} k^{3+\beta}\,\bE \Big[\|\underline{D_v \sigma}\, \nabla v^{n-\frac 12}\|_{\bL^2}^2  \mathfrak{E}(u^{n-1}, v^{n}) \Big] 
\\ &\quad+ \beta \frac{\delta_2}{4} k^{2+\beta} \,\bE \Big[ \|\nabla v^{n+\frac 12}\|_{\bL^2}^2 \,\mathfrak{E}(u^{n-1}, v^{n}) \Big]
\\ &\leq C_{\delta_2} C_{\beta} C_g^2\, k^{3+\beta}\,\bE \Big[ \mathfrak{E}^2(u^{n}, v^{n}) + \mathfrak{E}^2(u^{n-1}, v^{n}) \Big]  + C_{\delta_2} C_{\beta} C_g^2\, k^{3+\beta}\,\bE \Big[\| \nabla v^{n-\frac 12}\|_{\bL^2}^2  \mathfrak{E}(u^{n-1}, v^{n}) \Big] 
\\ &\quad+ \beta \frac{\delta_2}{4} k^{2+\beta} \,\bE \Big[ \|\nabla v^{n+\frac 12}\|_{\bL^2}^2 \,\mathfrak{E}(u^{n-1}, v^{n}) \Big]\, ,
\end{split}
\end{equation*}
where the last two terms in the right-hand sides of $\mathscr{K}^{n,1,{\tt A}}_{2, 1}$ and $\mathscr{K}^{n,1,{\tt B}}_{2, 1}$ may be controlled by those on the left-hand side of \eqref{sum-mult-ene} after summation over $1 \le n \le N-1$, provided that $k$ is sufficiently small and $\beta < \frac 12$ .

\noindent
Similarly, using {\bf (A3)}, and ${\mathbb E}\bigl[\vert \Delta_n W\vert^4 \bigr] = {\mathcal O}(k^2)$ we estimate
\begin{equation*}
%\begin{split}
\mathscr{K}^{n,1}_{2, 2} \leq \frac Ck\, \bE \Big[ \|\sigma(u^{n}, v^{n-\frac 12})\|_{\bL^2}^4 |\Delta_n W|^4 \Big] + C k\, \bE\Big[ \mathfrak{E}^2(u^{n-1}, v^{n}) \Big]
\leq C  k\, \bE \Big[ 1+ \sum_{\ell=-1}^0\mathfrak{E}^2(u^{n+\ell}, v^{n+\ell})\Big]\,.
%\end{split}
\end{equation*}
Using ${\mathbb E}\bigl[\vert \widehat{\Delta_n W}\vert^4 \bigr] = {\mathcal O}(k^6)$, and {\bf (A3)} gives
\begin{equation*}
\begin{split}
\mathscr{K}^{n,1}_{2, 3} &\leq \frac Ck\, \bE \Big[ \| \sigma(u^{n}, v^{n-\frac 12}) \Delta_n W\|_{\bL^2}^2 \,\|D_u \sigma(u^n, v^{n-\frac 12}) v^n \widehat{\Delta_n W}\|_{\bL^2}^2  \Big] + C k\, \bE\Big[ \mathfrak{E}^2(u^{n-1}, v^{n}) \Big]
\\ &\leq \frac Ck\, \bE \Big[ \| \sigma(u^{n}, v^{n-\frac 12})\|_{\bL^2}^4 |\Delta_n W|^4 \Big]  + \frac{C_g^4}{k} \bE \Big[  \| v^n\|_{\bL^2}^4 \big| \widehat{\Delta_n W} \big|^4 \Big] + C k\, \bE\Big[ \mathfrak{E}^2(u^{n-1}, v^{n}) \Big]
\\ &\leq C  k\, \bE\Big[1+ \sum_{\ell=-1}^0\mathfrak{E}^2(u^{n+\ell}, v^{n+\ell})\Big]\,.
\end{split}
\end{equation*}
{\bf b) Estimation of $\mathscr{K}^{n,2}$ in (\ref{sum-mult-ene}).} 
%Thus, the estimate for all the terms in the right-hand side of \eqref{1.21-eq} is done. It remains to estimate the second term in the right-hand side of \eqref{eq-4.9}. Using the It\^o isometry, and 
By {\bf (A4)} for $m=1$ and using the fact that ${\mathbb E}\bigl[\vert \widehat{\Delta_n W}\vert^4 \bigr] = {\mathcal O}(k^6)$, we infer
\begin{equation*}
\begin{split}
\mathscr{K}^{n,2} &\leq \frac{\widehat{\alpha}^2}{k} \bE \Big[ \|D_u\sigma(u^{n}, v^{n-\frac 12}) v^{n} \,\widehat{\Delta_n W} \|_{\bL^2}^2\, \|v^{n + \frac 12}\|_{\bL^2}^2 \Big] 
\\ &\quad+ Ck \,\bE\Big[ \mathfrak{E}^2(u^{n+1}, v^{n+1}) \Big] + Ck \,\bE\Big[ \mathfrak{E}^2(u^{n-1}, v^{n}) \Big]
\\ &\leq \frac{\widehat{\alpha}^4}{k^3} \bE \Big[ \|D_u\sigma(u^{n}, v^{n-\frac 12}) v^{n}\|_{\bL^2}^4 \big| \widehat{\Delta_n W}\big|^4 \Big] + Ck\, \bE \Big[ \|v^{n + \frac 12}\|_{\bL^2}^4 \Big] + Ck \,\bE\Big[ \sum_{\ell=-1}^1\mathfrak{E}^2(u^{n+\ell}, v^{n+\ell}) \Big]
 \\ &\leq \widehat{\alpha}^4\,C_g^4\,k^3 \,\bE \Big[ \mathfrak{E}^2(u^{n}, v^{n}) \Big] +  Ck \,\bE\Big[ \sum_{\ell=-1}^1\mathfrak{E}^2(u^{n+\ell}, v^{n+\ell}) \Big]\,.
\end{split}
\end{equation*}
Now we insert the estimates from parts {\bf a)} and {\bf b)} into (\ref{sum-mult-ene}), and sum over $1 \leq n \leq N-1$. Then, for all $k \leq k_0 \equiv k_0(C_{\tt L}, C_g)$ there exists $C \equiv C(\beta)>0$ for $\beta\in (0, \frac{1}{2})$ such that the 
assertion \eqref{high-moment} for $p=1$ follows from the implicit version of the discrete Gronwall lemma.

\medskip

\noindent
{\bf 3) Proof of  \eqref{high-moment} for $p \geq 2$.} Starting from the identity (\ref{sum-mult-ene}), we multiply $\delta_1 {\mathfrak E}^{2^{p-1}}(u^{n+1},v^{n+1}) + \delta_2 \big[{\mathfrak E}^{2^{p-1}}(u^{n+1},v^{n+1}) + {\mathfrak E}^{2^{p-1}}(u^{n-1},v^{n}) \big]$ on both sides, and then take expectations. We may then follow the same argument as in {\bf 2)} to settle the assertion.
\del{
{\small{
\begin{equation}\label{high-est-2}
\begin{split}
&\bE \bigl[{\mathcal E}^4(u^{n+1}, v^{n+1}) - {\mathcal E}^4(u^{n}, v^{n})\bigr] + \bE \Big[ \bigl\vert \bigl[{\mathcal E}^2(u^{n+1}, v^{n+1}) - {\mathcal E}^2(u^{n}, v^{n})\bigr]\bigr\vert^2 \Big]\\ 
&\quad + 2\,\bE \Big[ {\mathcal E}^2(u^{n+1},v^{n+1})\bigl\vert {\mathcal E}(u^{n+1}, v^{n+1}) - {\mathcal E}(u^{n}, v^{n})\bigr\vert^2 \Big] + 2\,\bE \Big[ {\mathcal E}^3(u^{n+1}, v^{n+1}) \|v^{n+1}-v^{n}\|^2_{\bL^2} \Big]
\\ & \leq 2\,\bE \Big[ {\mathcal E}^2(u^{n+1},v^{n+1}) \bigl(I^{(n)}_1 + I^{(n)}_2 + I^{(n)}_3 \bigr) \Big] \, .
\end{split}
\end{equation}
}}
We consider the first term in the right-hand side $\bE \big[ {\mathcal E}^2(u^{n+1},v^{n+1}) I^{(n)}_1 \big]$ to estimate. Recall the splitting of $I^{(n)}_1$ in \eqref{spl-In1}.  Using Young's inequality we estimate the term
\begin{equation}
\begin{split}
 {\mathbb E} \Big[{\mathcal E}^2(u^{n+1},v^{n+1}) (I^{(n)}_{1;1} + I^{(n)}_{1;2}) \Big] &\leq
 \frac{1}{4} {\mathbb E}\bigl[{\mathcal E}^3(u^{n+1}, v^{n+1}) \Vert v^{n+1} - v^n\Vert^2_{{\mathbb L}^2}\bigr] \\ 
 &\quad  +  {\mathbb E}\bigl[\Vert \sigma(u^n, v^n)\Vert^2_{\bL^2} \vert \Delta_nW\vert^2 {\mathcal E}^3(u^{n+1},v^{n+1})\bigr] \, .
\end{split}
\end{equation}
The first term in the right-hand side above can be managed with the fourth term in the left-hand side of \eqref{high-est-2}. For the second term above, we use {\bf (A3)}, \eqref{E-bound}, It\^o isometry, and Young's inequality repeatedly to obtain,
\begin{equation}
\begin{split}
{\mathbb E}\bigl[\Vert \sigma(u^n, v^n)\Vert^2_{\bL^2} \vert \Delta_nW\vert^2 {\mathcal E}^3(u^{n+1},v^{n+1})\bigr] \leq C_{\tt L}\,k \,\bE \big[ {\mathcal E}^4(u^{n+1},v^{n+1}) +{\mathcal E}^4(u^{n},v^{n})\big]\, .
\end{split}
\end{equation}
Now we aim to bound ${\mathbb E}\bigl[{\mathcal E}^2(u^{n+1}, v^{n+1})(I^{(n)}_{2;1}+ I^{(n)}_{2;2})\bigr]$ by splitting it into four terms as
\begin{equation}
\begin{split}
&{\mathbb E}\bigl[{\mathcal E}^2(u^{n+1}, v^{n+1}) (I^{(n)}_{2;1}+ I^{(n)}_{2;2})\bigr] \\ 
 &= {\mathbb E}\Bigl[ \bigl(\sigma (u^n, v^n) \Delta_nW ,v^n \bigr){\mathcal E}^2(u^{n+1}, v^{n+1})\bigl( {\mathcal E}(u^{n+1}, v^{n+1}) \pm {\mathcal E}(u^{n}, v^{n})\bigr)
 \Bigr] \\
 &\leq \frac{1}{8}{\mathbb E}\Bigl[ {\mathcal E}^2(u^{n+1}, v^{n+1}) \,\bigl\vert {\mathcal E}(u^{n+1}, v^{n+1}) - {\mathcal E}(u^{n}, v^{n}) \bigr\vert^2 \Bigr] \\
 &\quad + C\,{\mathbb E} \Bigl[ \bigl\vert {\mathcal E}^2(u^{n+1}, v^{n+1}) -  {\mathcal E}^2(u^{n}, v^{n})\bigr\vert \bigl\vert \bigl(\sigma (u^n, v^n) \Delta_nW ,v^n \bigr)\bigr\vert^2 \Bigr] \\
 &\quad + C\,{\mathbb E}\Bigl[ {\mathcal E}^2(u^n,v^n) \bigl\vert \bigl( \sigma(u^n, v^n)\Delta_n W, v^n\bigr)\bigr\vert^2\Bigr] 
% &&\qquad + {\mathbb E}\bigl[ {\mathcal E}^2(u^n, v^n)\bigl\vert \bigl( \sigma(u^n, v^n) \Delta_n W, v^n \bigr)\bigr\vert^2 \bigr] + 
+ \frac{1}{12}{\mathbb E}\Bigl[ \bigl\vert {\mathcal E}^2(u^{n+1}, v^{n+1}) - {\mathcal E}^2(u^{n}, v^{n}) \bigr\vert^2\Bigr] \\
 &=: \mathscr{J}^{(n)}_1+  \mathscr{J}^{(n)}_2 +  \mathscr{J}^{(n)}_3 +  \mathscr{J}^{(n)}_4\, .
\end{split}
\end{equation}
The terms $ \mathscr{J}^{(n)}_1$ and $ \mathscr{J}^{(n)}_4$ can be managed in the left-hand side of \eqref{high-est-2}. We only need to estimate $ \mathscr{J}^{(n)}_2$ and $ \mathscr{J}^{(n)}_3$. We use \eqref{E-bound}, It\^o isometry, and Young's inequality repeatedly to obtain,
\[ \mathscr{J}^{(n)}_2 \leq \frac{1}{12} {\mathbb E} \Bigl[ \bigl\vert {\mathcal E}^2(u^{n+1}, v^{n+1}) -  {\mathcal E}^2(u^{n}, v^{n})\bigr\vert^2 \Big] + C_{\tt L}\,k \,\bE \big[ {\mathcal E}^4(u^{n}, v^{n}) \big] ,\]
and
\[
 \mathscr{J}^{(n)}_3 \leq C_{\tt L}\,k \,\bE \big[ {\mathcal E}^4(u^{n}, v^{n}) \big] .
\]
Now we estimate the term $\bE \big[ {\mathcal E}^2(u^{n+1},v^{n+1}) I^{(n)}_2\big]$ by using the Cauchy-Schwarz inequality as
\begin{equation}
\begin{split}
&2\, \widetilde{\alpha}\, {\mathbb E}\Bigl[\bigl( \p_u\sigma(u^n,v^n)v^n \widetilde{\Delta_n W}, v^{n+1} \bigr) {\mathcal E}^2(u^{n+1}, v^{n+1})[{\mathcal E}(u^{n+1}, v^{n+1}) \pm {\mathcal E}(u^{n}, v^{n})] \Bigr] \\
& \leq C \,\widetilde{\alpha}^2\,{\mathbb E}\bigl[ \Vert v^{n}\Vert^2_{{\mathbb L}^2} \,\Vert v^{n+1} \Vert^2_{{\mathbb L}^2} \,\vert \widetilde{\Delta_n W}\vert^2 \,{\mathcal E}^2(u^{n+1}, v^{n+1})\bigr] \\ 
&\quad +\frac{1}{8} {\mathbb E}\bigl[ {\mathcal E}^2(u^{n+1}, v^{n+1})\vert{\mathcal E}(u^{n+1}, v^{n+1}) - {\mathcal E}(u^{n}, v^{n})\vert^2 \bigr] \\ &\quad + 2 \,\widetilde{\alpha}\,{\mathbb E}\Bigl[ \bigl( \p_u\sigma(u^n,v^n)v^n \widetilde{\Delta_n W}, v^{n+1} \bigr) \bigl[{\mathcal E}^2(u^{n+1}, v^{n+1}) \pm {\mathcal E}^2(u^n,v^n)\bigr]{\mathcal E}(u^n, v^n)\Bigr]\\
& =: \mathcal{J}^n_1 + \mathcal{J}^n_2 + \mathcal{J}^n_3\, . 
\end{split}
\end{equation}
We treat the above three terms separately. To handle the term $\mathcal{J}^n_1,$ we dominate the term $\Vert v^{n+1}\Vert^2_{{\mathbb L}^2}$ by $ {\mathcal E}(u^{n+1},v^{n+1})$, then we use the generalised Young's inequality (with conjugates $3/4$ and $1/4$) to obtain
\begin{equation}\label{eq-5.9}
\begin{split}
\mathcal{J}^n_1 &= C \,\widetilde{\alpha}^2\,{\mathbb E}\bigl[ k^{3/4}\,\Vert v^{n} \Vert^2_{{\mathbb L}^2} \,\vert \widetilde{\Delta_n W}\vert^2 \cdot k^{-3/4}\,{\mathcal E}^3(u^{n+1}, v^{n+1})\bigr]
\\ &\leq k\, {\mathbb E}\big[{\mathcal E}^4(u^{n+1},v^{n+1}) \big]
+ Ck^{-3}\, {\mathbb E} \big[\Vert v^n\Vert^8_{{\mathbb L}^2} \vert \widetilde{\Delta_n W}\vert^8 \big] \\
&\leq k\, {\mathbb E} \big[{\mathcal E}^4(u^{n+1},v^{n+1}) \big] + k^9\, \widetilde{\alpha}^8\,{\mathbb E} \big[{\mathcal E}^4(u^n, v^n) \big]\, .
\end{split}
\end{equation}
The second term $\mathcal{J}^n_2$ can be managed in the left-hand side of \eqref{high-est-2}. For the last term we use the Cauchy-Schwarz inequality to obtain
\begin{equation}\label{eq-5.10--}
\begin{split}
\mathcal{J}^n_3 &\leq \frac{1}{12} {\mathbb E}\bigl[ \bigl\vert {\mathcal E}^2(u^{n+1}, v^{n+1}) - {\mathcal E}^2(u^n,v^n)\bigr\vert^2\bigr]
\\ &\quad + C \,\widetilde{\alpha}^2\,\bE \Big[ \big| \bigl( \p_u\sigma(u^n,v^n)v^n \widetilde{\Delta_n W}, v^{n+1} \bigr)\big|^2 {\mathcal E}^2(u^n, v^n)\Big] 
\\ &\quad + C\, \widetilde{\alpha}^2\,\bE \Big[ \bigl( \p_u\sigma(u^n,v^n)v^n \widetilde{\Delta_n W}, v^{n+1} \bigr) {\mathcal E}^3(u^n, v^n)\Big] \, .
\end{split}
\end{equation}
The first term in the right-hand side of \eqref{eq-5.10--} can be managed in the left-hand side of \eqref{high-est-2}. To handle the second and third terms, we use the same technique as used for the term $\mathcal{J}^n_1$ in \eqref{eq-5.9}. 

\noindent
The term $\bE \big[ {\mathcal E}^2(u^{n+1},v^{n+1}) I^{(n)}_3 \big]$ can be estimated similarly as in \eqref{F_sums} to get,
{\small{
\begin{equation}
\begin{split}
&\bE \big[ {\mathcal E}^2(u^{n+1},v^{n+1}) I^{(n)}_3 \big]
\leq C_{\tt L} \,k \bE \big[ \bigl({\mathcal E}^4(u^n, v^n) + {\mathcal E}^4(u^{n-1}, v^{n-1})\bigr) \big]  + C_{\tt L}\,k \bE \big[{\mathcal E}^4(u^{n+1}, v^{n+1}) \big].
\end{split}
\end{equation}
}}
Accumulating all the above estimates in \eqref{high-est-2}, rearranging the terms, taking sum from $n=1$ to $N-1$ and proceeding similarly as before we get the assertion $(ii)$ for $p=2$. The case $p=3$ can be proved similarly. Note that if $\widetilde{\alpha}=0,$ then we will also get the assertion $(ii)$.
}
\medskip

\noindent
{\bf 4) Proof of \eqref{energy2}.}
Let $\widehat{\alpha} = 1$. Suppose $\sigma_2(v) \equiv 0 \equiv F_2(v)$ in {\bf (A3)} and $\beta=0$. We combine both equations in the scheme \eqref{scheme2:1}--\eqref{scheme2:2} to get
\begin{equation}\label{ul-comb}
\begin{split}
\bigl[u^{\ell+1} - u^\ell\bigr] - \bigl[u^\ell - u^{\ell-1}\bigr] &=  k^2 \Delta u^{\ell,1/2} + k \,\sigma(u^\ell)\Delta_\ell W  + \widehat{\alpha} k \, D_u\sigma(u^\ell)v^{\ell} \widehat{\Delta_{\ell} W}
\\ &\quad  +  \frac{k^2}{2}\, \bigl[ 3F(u^n) -F(u^{n-1})\bigr]
\end{split}
\end{equation}
for all $1 \leq \ell \leq N$. Now sum over the first $n$ steps, and define $\uli^{n+1} := \sum_{\ell=1}^n {u}^{\ell+1}$ to get 
\begin{equation}\label{sum_ul}
\begin{split}
\bigl[u^{n+1} - u^n\bigr]- k^2 \Delta \uli^{n,1/2} &= \bigl[ u^1 - u^0\bigr] 
+ k  \sum^n_{\ell=1} \sigma(u^\ell) \Delta_{\ell} W + \widehat{\alpha} k \sum^n_{\ell=1} D_u\sigma(u^\ell)v^{\ell} \widehat{\Delta_{\ell} W}
\\ &\quad +\frac{k^2}{2}  \sum^n_{\ell=1} \bigl[3F(u^\ell) - F(u^{\ell-1})\bigr] \, .
\end{split}
\end{equation}
Multiply both sides with $u^{n+1/2}$ and use integration by parts to get
{\small{
\begin{equation}\label{uL2-energy}
\begin{split}
&\frac 12 \Big[ \|u^{n+1}\|_{\bL^2}^2 - \|u^{n}\|_{\bL^2}^2 \Big] + k^2 \big( \nabla \uli^{n,1/2}, \nabla u^{n+1/2} \big) 
\\ &= \big( u^1 - u^0, u^{n+1/2} \big) + k  \Big(\sum^n_{\ell=1} \sigma(u^\ell) \Delta_{\ell} W, u^{n+1/2} \Big) + \widehat{\alpha} k \Big(\sum^n_{\ell=1}\ D_u\sigma(u^\ell)v^{\ell} \widehat{\Delta_{\ell} W}, u^{n+1/2} \Big) 
\\ &\quad+ \frac{k^2}{2}  \sum^n_{\ell=1} \Big( 3F(u^\ell) - F(u^{\ell-1}), u^{n+1/2} \Big) =: \mathfrak{K}^n_{1} + \mathfrak{K}^n_{2} + \mathfrak{K}^n_{3} + \mathfrak{K}^n_{4}\, .
\end{split}
\end{equation}
}}
We observe that the last term in the left-hand side may be written as
{\small{
\begin{align*}
k^2 \big( \nabla \uli^{n,1/2}, \nabla u^{n+1/2} \big) = \frac{k^2}{4} \Big( \nabla [ \uli^{n+1} + \uli^{n-1}], \nabla [ \uli^{n+1} - \uli^{n-1}] \Big) = \frac{k^2}{4} \Big[ \|\nabla \uli^{n+1}\|_{\bL^2}^2 - \|\nabla \uli^{n-1}\|_{\bL^2}^2 \Big]\,.
\end{align*}
}}
Taking expectation on both sides leads to
\begin{align}\label{uL2-dest}
\frac 12 \bE\Big[ \|u^{n+1}\|_{\bL^2}^2 - \|u^{n}\|_{\bL^2}^2 \Big] + \frac{k^2}{4} \bE \Big[ \|\nabla \uli^{n+1}\|_{\bL^2}^2 - \|\nabla \uli^{n-1}\|_{\bL^2}^2 \Big] = \sum_{j=1}^4 \bE\big[\mathfrak{K}^n_{j} \big]\, .
\end{align}
Since $u^1 - u_0 = k v^1$, by {\bf (B1)$_{i}$} we infer
\begin{align}\label{uL2-desta}
\bE\big[\mathfrak{K}^n_{1} \big] \leq \frac 1k \bE \big[ \|u^1 - u_0\|_{\bL^2}^2 \big] + C k \,\bE \big[ \|u^{n+1/2} \|_{\bL^2}^2 \big] \leq Ck + C k \,\bE \big[ \|u^{n+1/2} \|_{\bL^2}^2 \big] \, .
\end{align}
Using the It\^o isometry and {\bf (A3)} we infer
\begin{equation}\label{kln2-sigmau}
\begin{split}
\bE\big[\mathfrak{K}^n_{2} \big] &\leq k \sum^n_{\ell=1} \bE \Big[ \|\sigma(u^{\ell}) \|_{\bL^2}^2 |\Delta_{\ell} W|^2 \Big] + C k \,\bE \big[ \|u^{n+1/2} \|_{\bL^2}^2 \big] 
\\ &\leq C k^2 C_{\tt L}^2 \sum^n_{\ell=1}\, \bE \Big[ 1 + \|u^{\ell} \|_{\bL^2}^2 \Big] + C k \,\bE \big[ \|u^{n+1/2} \|_{\bL^2}^2 \big] \, .
\end{split}
\end{equation}
Using item {\bf 4.} of Remark \ref{rema2}, and {\bf (A4)} for $m=1$ we infer
\begin{equation}\label{kn3-hat}
\begin{split}
\bE\big[\mathfrak{K}^n_{3} \big] &\leq k \sum^n_{\ell=1} \bE \Big[ \|D_u \sigma(u^{\ell}) v^{\ell}\|_{\bL^2}^2 \big|\widehat{\Delta_{\ell} W} \big|^2 \Big] + C k \,\bE \big[ \|u^{n+1/2} \|_{\bL^2}^2 \big] 
\\ &\leq k^4 C_{g}^2 \sum^n_{\ell=1}\, \bE \big[ \|v^{\ell} \|_{\bL^2}^2 \big] + C k \,\bE \big[ \|u^{n+1/2} \|_{\bL^2}^2 \big] \, .
\end{split}
\end{equation}
Since $v^{\ell} = \frac 1k \big[ u^{\ell} - u^{\ell-1}\big]$, we further estimate \eqref{kn3-hat} by
\begin{align*}
%&\leq k^4 \frac{1}{k^2} \tilde{C}_{g}^2 \sum^n_{\ell=1}\, \bE \big[ \|u^{\ell} - u^{\ell-1}\|_{\bL^2}^2 \big] + C k \,\bE \big[ \|u^{n+1/2} \|_{\bL^2}^2 \big]
 &\leq k^2 C_{g}^2 \sum^n_{\ell=1}\, \bE \big[ \|u^{\ell}\|_{\bL^2}^2 + \| u^{\ell-1} \|_{\bL^2}^2\big] + C k \,\bE \big[ \|u^{n+1/2} \|_{\bL^2}^2 \big]\,.
\end{align*}
Using {\bf (A3)} we estimate $\bE\big[\mathfrak{K}^n_{4} \big]$ by
\begin{equation}
\begin{split}
\bE\big[\mathfrak{K}^n_{4} \big] &\leq  C k^2 C_{\tt L}^2 \sum^n_{\ell=1} \bE \Big[ 1 +\|u^{\ell} \|_{\bL^2}^2 + \| u^{\ell-1} \|_{\bL^2}^2 \Big] + C k \,\bE \big[ \|u^{n+1/2} \|_{\bL^2}^2 \big] \, .
\end{split}
\end{equation}
We insert these estimates into
\eqref{uL2-dest} and sum over $1 \leq n \leq N-1$. Then, for all $k \leq k_0 \equiv k_0(C_{\tt L}, C_{g})$
and by the implicit version of the discrete Gronwall lemma, there exists a constant $C>0$ such that the assertion \eqref{energy2} holds.

\medskip

\noindent
{\bf 5) Proof of \eqref{energy2-himo} for $p=1$.} 
To simplify technicalities, we put $F \equiv 0$. Let us denote $\widetilde{\mathfrak{E}}(u^{n}, \uli^n) := \Big[ \frac 12 \|u^{n}\|^2_{\bL^2} + \frac{k^2}{4} \|\nabla \uli^{n}\|_{\bL^2}^2 \Big]$. Then we can rewrite \eqref{uL2-dest} as
\begin{align}\label{uL2-dest-E}
\widetilde{\mathfrak{E}}(u^{n+1}, \uli^{n+1}) - \widetilde{\mathfrak{E}}(u^{n}, \uli^{n-1})   = \mathfrak{K}^n_{1} + \mathfrak{K}^n_{2} + \mathfrak{K}^n_{3}\, .
\end{align}
Multiply both sides with $\widetilde{\mathfrak{E}}(u^{n+1}, \uli^{n+1})$, using binomial formula and taking expectation we obtain
\begin{equation}\label{E-tilde-u}
\begin{split}
&\frac 12 \bE\Big[\widetilde{\mathfrak{E}}^2(u^{n+1}, \uli^{n+1}) - \widetilde{\mathfrak{E}}^2(u^{n}, \uli^{n-1}) \Big] + \frac 12 \bE \Big[\big| \widetilde{\mathfrak{E}}(u^{n+1}, \uli^{n+1}) - \widetilde{\mathfrak{E}}^2(u^{n}, \uli^{n-1}) \big|^2 \Big]
\\ &= \bE \Big[ \mathfrak{K}^n_{1}\, \widetilde{\mathfrak{E}}(u^{n+1}, \uli^{n+1}) \Big] + \bE \Big[ \mathfrak{K}^n_{2}\, \widetilde{\mathfrak{E}}(u^{n+1}, \uli^{n+1}) \Big]  +\bE \Big[ \mathfrak{K}^n_{3} \,\widetilde{\mathfrak{E}}(u^{n+1}, \uli^{n+1}) \Big] \,.
\end{split}
\end{equation}
Using Young's inequality, and arguing similarly to (\ref{uL2-desta}) shows 
\begin{equation}
\begin{split}
\bE \Big[ \mathfrak{K}^n_{1}\, \widetilde{\mathfrak{E}}(u^{n+1}, \uli^{n+1}) \Big] 
%&\leq \frac 1k \,\bE \big[ |\mathfrak{K}^n_{1}|^2 \big] + k \,\bE \Big[ \widetilde{\mathfrak{E}}^2(u^{n+1}, \uli^{n+1}) \Big]
%\\ &\leq \frac{1}{k} \bE \Big[ \|u^1 - u_0\|_{\bL^2}^2 \|u^{n+1/2} \|_{\bL^2}^2 \Big] + k \,\bE \Big[ \widetilde{\mathfrak{E}}^2(u^{n+1}, \uli^{n+1}) \Big]
 &\leq \frac{1}{k^3} \bE \Big[ \|u^1 - u_0\|_{\bL^2}^4 \Big] +  k^2 \bE \big[ \|u^{n+1/2} \|_{\bL^2}^4 \big] + k\,\bE \Big[ \widetilde{\mathfrak{E}}^2(u^{n+1}, \uli^{n+1}) \Big]
\\ &\leq Ck + Ck\, \bE \Big[ \widetilde{\mathfrak{E}}^2(u^{n+1}, \uli^{n+1}) + \widetilde{\mathfrak{E}}^2(u^{n}, \uli^{n}) \Big]\,.
\end{split}
\end{equation}
By adding and subtracting $\widetilde{\mathfrak{E}}(u^{n}, \uli^{n-1})$, and using {\bf (A3)}, we estimate the second term on the right-hand side of \eqref{E-tilde-u} by
\begin{equation}
\begin{split}
&\bE \Big[ \mathfrak{K}^n_{2}\, \big( \widetilde{\mathfrak{E}}(u^{n+1}, \uli^{n+1}) - \widetilde{\mathfrak{E}}(u^{n}, \uli^{n-1}) \big) \Big] + \bE \Big[ \mathfrak{K}^n_{2}\, \widetilde{\mathfrak{E}}(u^{n}, \uli^{n-1}) \Big] 
\\ &\leq \bE \big[ |\mathfrak{K}^n_{2}|^2 \big] + \frac 14 \,\bE \Big[ \big| \widetilde{\mathfrak{E}}^2(u^{n+1}, \uli^{n+1}) - \widetilde{\mathfrak{E}}(u^{n}, \uli^{n-1}) \big|^2 \Big] 
\\ &\quad+ C k^2 C_{\tt L}^2 \sum^n_{\ell=1}\, \bE \Big[ 1 + \|u^{\ell} \|_{\bL^2}^4 \Big] + C k\, \bE \big[\|u^{n+1/2} \|^4_{\bL^2} \big] +C k\, \bE \big[ \widetilde{\mathfrak{E}}^2(u^{n}, \uli^{n-1})  \big] 
\\ &\leq C k^2 C_{\tt L}^2 \sum^n_{\ell=1}\, \bE \Big[ 1 + \widetilde{\mathfrak{E}}^2(u^{\ell}, \uli^{\ell}) \Big] + C k\, \bE \big[ \widetilde{\mathfrak{E}}^2(u^{n}, \uli^{n-1}) \big] 
\\ &\quad+ \frac 14 \,\bE \Big[ \big| \widetilde{\mathfrak{E}}^2(u^{n+1}, \uli^{n+1}) - \widetilde{\mathfrak{E}}(u^{n}, \uli^{n-1}) \big|^2 \Big] \,,
\end{split}
\end{equation}
where the last term in the right-hand side may be absorbed on the left-hand side of \eqref{E-tilde-u}.

\noindent
By item {\bf 4.} of Remark \ref{rema2}, and {\bf (A4)} for $m=1$ we estimate
\begin{equation}
\begin{split}
&\bE \Big[ \mathfrak{K}^n_{3}\, \widetilde{\mathfrak{E}}(u^{n+1}, \uli^{n+1}) \Big] \leq \frac 1k \,\bE \big[ |\mathfrak{K}^n_{3}|^2 \big] + Ck\, \bE \Big[ \widetilde{\mathfrak{E}}^2(u^{n+1}, \uli^{n+1}) \Big]
\\ &\leq \tilde{C}_g^4 \sum^n_{\ell=1} \bE \Big[ \|v^{\ell}\|_{\bL^2}^4 \big|\widehat{\Delta_{\ell} W} \big|^4 \Big] + C k \,\bE \big[ \|u^{n+1/2} \|_{\bL^2}^4 \big] + Ck\, \bE \Big[ \widetilde{\mathfrak{E}}^2(u^{n+1}, \uli^{n+1}) \Big] \\
&\leq C_g^4 k^6 \frac{1}{k^4} \sum^n_{\ell=1} \bE \Big[ \|u^{\ell}\|_{\bL^2}^4 + \|u^{\ell-1}\|_{\bL^2}^4 \Big] + Ck\, \bE \Big[ \widetilde{\mathfrak{E}}^2(u^{n}, \uli^{n}) + \widetilde{\mathfrak{E}}^2(u^{n+1}, \uli^{n+1}) \Big]
\\ &\leq C k^2 \sum^n_{\ell=1} \bE \Big[ \widetilde{\mathfrak{E}}^2(u^{\ell}, \uli^{\ell}) + \widetilde{\mathfrak{E}}^2(u^{\ell-1}, \uli^{\ell-1}) \Big] + Ck\, \bE \Big[ \widetilde{\mathfrak{E}}^2(u^{n}, \uli^{n}) + \widetilde{\mathfrak{E}}^2(u^{n+1}, \uli^{n+1}) \Big]\,.
\end{split}
\end{equation}
%
%We can write $\|v^{\ell}\|_{\bL^2}^4 = \frac{1}{k^4} \|u^{\ell} - u^{\ell-1}\|_{\bL^2}^4$ to further estimate
%\begin{align*}
%&\leq \tilde{C}_g^4 k^6 \frac{1}{k^4} \sum^n_{\ell=1} \bE \Big[ \|u^{\ell}\|_{\bL^2}^4 + \|u^{\ell-1}\|_{\bL^2}^4 \Big] + C k \,\bE \big[ \|u^{n+1/2} \|_{\bL^2}^4 \big] + Ck\, \bE \Big[ \widetilde{\mathfrak{E}}^2(u^{n+1}, \uli^{n+1}) \Big]
%\\ &\leq C k^2 \sum^n_{\ell=1} \bE \Big[ \widetilde{\mathfrak{E}}^2(u^{\ell}, \uli^{\ell}) + \widetilde{\mathfrak{E}}^2(u^{\ell-1}, \uli^{\ell-1}) \Big] + Ck\, \bE \Big[ \widetilde{\mathfrak{E}}^2(u^{n}, \uli^{n}) + \widetilde{\mathfrak{E}}^2(u^{n+1}, \uli^{n+1}) \Big]\,.
%\end{align*}
%
Now we insert the estimates into (\ref{E-tilde-u}), and sum over $1 \leq n \leq N-1$. Then, for all $k \leq k_0 \equiv k_0(C_{\tt L}, C_g)$
assertion \eqref{energy2-himo} for $p=1$ follows from the implicit version of the discrete Gronwall lemma.

\medskip

\noindent
{\bf 6) Proof of \eqref{energy2-himo} for $p \geq 2$.} Starting from the identity (\ref{E-tilde-u}), we multiply ${\widetilde{\mathfrak E}}^{2^{p-1}}(u^{n+1},v^{n+1})$ in both sides, and then take the expectation. We may then follow the same argument as in {\bf 5)} to settle the assertion.

\end{proof}
}}

\medskip

\del{
{\color{magenta}{
\noindent
Now, we briefly discuss the discrete stability analysis for $\widehat{\alpha}-$scheme.
\begin{lemma}\label{lem:alpha:stab}
Let the assumptions of Lemma \ref{lem:scheme1:stab} hold. Additionally, assume {\bf (A5)}. Then, for $k \leq k_0(C_{\tt L}, C_g)$, there exists an $\big[ \bH^1_0 \big]^2$-valued $\{ (\cF_{t_{n}})_{0 \le n \le N} \}$-adapted solution $\{(u^{n}, v^{n}); \, 0 \leq n \leq N\}$ of $\widehat{\alpha}-$scheme, and 
\begin{itemize}
\item[$(i)$]
there exists a constant $\widetilde{C}_1  > 0$ that does not depend on $k>0$ such that 
\begin{equation*}%\label{energy-ou}
\max_{1 \le n \le N-1} \bE \Bigl[ \cE(u^{n+1},v^{n+1}) \Bigr] \le \widetilde{C}_1\, .
\end{equation*}
\item[$(ii)$]
there exist further constant $\widetilde{C}_{2,p}>0$ such that 
\[
\max_{1 \le n \le N-1} \bE\bigl[ {\mathcal E}^{2^p}(u^{n+1},v^{n+1}) \bigr] \leq \widetilde{C}_{2, p} \qquad (p \geq 1)\, .
\]
\end{itemize}
\end{lemma}
\begin{proof}
{\bf 1) Proof of $(i)$.}  Consider the $\widehat{\alpha}-$scheme. We use the same test functions as in the proof of $(\ref{energy1})$ in Lemma \ref{lem:scheme1:stab} to arrive at
\begin{equation}\label{tes-mul}
\begin{split}
&\frac{1}{2} {\mathbb E}\bigl[\|v^{n+1}\|^2_{\bL^2}  -\|v^{n}\|^2_{\bL^2}  \bigr] + \frac 14 {\mathbb E}\big[ \|\nabla u^{n+1}\|_{\bL^2}^2 - \|\nabla u^{n-1}\|_{\bL^2}^2 \big] 
\\ &= {\mathbb E}\Bigl[\Bigl(\sigma (u^n) \,\Delta_nW, v^{n+\frac 12} \Bigr) \Bigr]
+ \,\widehat{\alpha} \,{\mathbb E}\Bigl[\Bigl(\p_u\sigma(u^n) v^n \,\widehat{\Delta_n W}, v^{n+\frac 12} \Bigr) \Bigr]
\\ &\qquad+ \frac{k}{2}\, {\mathbb E}\Bigl[ \Bigl( 3F(u^n) - F(u^{n-1}), v^{n+\frac 12}\Bigr)\Bigr] = \sum_{i=1}^3 \mathscr{M}^n_i . 
\end{split}
\end{equation}
Using properties of the increments $\Delta_n W$ and that $\sigma (u^n) = 0$ on $\partial {\mathcal O}$, and replacing $v^{n+1} -v^n$ by using \eqref{scheme2:2} we infer
{\small{
\begin{equation*}%\label{sig-u-n+1}
\begin{split}
\bE \big[ \mathscr{M}^n_1 \big] = \frac 12 \bE\Bigl[\Bigl(\sigma (u^n)\Delta_n W, v^{n+1} -v^n \Bigr) \Bigr]
&=\frac 12 \bE\Bigl[\Bigl(\sigma (u^n)\Delta_n W, k \Delta u^{n, \frac 12} \Bigr) \Bigr] + \frac 12 \bE \Big[ \|\sigma (u^n)\|_{\bL^2}^2 |\Delta_n W|^2 \Big] 
\\ &\quad+ \frac{\widehat{\alpha}}{2}\, \bE\Bigl[\Bigl(\sigma (u^n)\Delta_n W, \partial_u\sigma(u^{n}) v^{n} \,\widehat{\Delta_n W} \Bigr) \Bigr] 
\\ &\quad+ \frac{k}{4}\,\bE\Bigl[\Bigl(\sigma (u^n)\Delta_n W, \bigl(3F(u^n) - F(u^{n-1}) \bigr) \Big) \Bigr] := \sum_{i=1}^4 \mathscr{M}^{n, i}_1\, .
\end{split}
\end{equation*}
}}
The term $\mathscr{M}^{n, 1}_1$ is the same as the term $\mathscr{J}^{n, 1}_{1, 1}$ defined in \eqref{j^n,1_1est} in the proof of Lemma \ref{lem:scheme1:stab}. We estimate $\mathscr{M}^{n, 1}_1$ as in \eqref{413a} to get
\begin{equation*}
\begin{split}
\mathscr{M}^{n, 1}_1 &\leq C_g^2 \,k \,\bE \big[ \| \nabla u^n \|_{\bL^2}^2\big] + C k\, \bE \Big[ \| \nabla u^{n+1} \|_{\bL^2}^2 +\| \nabla u^{n-1} \|_{\bL^2}^2\Big]\,.
\end{split}
\end{equation*}
Using the It\^o isometry we infer
\begin{align*}
\mathscr{M}^{n, 2}_1 &\leq C_{\tt L} k \,\bE \big[ 1 +\| \nabla u^n \|_{\bL^2}^2\big] \,.
\end{align*}
Using {\bf (A4)$_{i}$}, and the fact that ${\mathbb E}\bigl[ \vert \widehat{\Delta_n W}\vert^2 \bigr] \leq Ck^3$, we get
\begin{equation*}
\begin{split}
\mathscr{M}^{n, 3}_1 &\leq \frac 12 \bE \Big[ \|\sigma (u^n)\|_{\bL^2}^2 |\Delta_n W|^2 \Big] + \frac{\widehat{\alpha}^2}{8}\bE \Big[ \|\p_u \sigma (u^n) v^n \|_{\bL^2}^2 |\widehat{\Delta_n W}|^2 \Big] 
\\ &\leq C_{\tt L} k \,\bE \big[ 1+\| \nabla u^n \|_{\bL^2}^2\big] + \frac{\widehat{\alpha}^2 C_g^2}{8} k^3 \,\bE \big[ \|v^n \|_{\bL^2}^2\big]\,.
\end{split}
\end{equation*}
Again, using the It\^o isometry we infer
\begin{equation*}
\begin{split}
\mathscr{M}^{n, 4}_1 &\leq \bE \Big[ \|\sigma (u^n)\|_{\bL^2}^2 |\Delta_n W|^2 \Big] + C k^2 \bE \Big[ \|F(u^n)\|_{\bL^2}^2 + \|F(u^{n-1})\|_{\bL^2}^2 \Big]
\\ &\leq C_{\tt L} k  \,\bE \big[ 1+\| \nabla u^n \|_{\bL^2}^2\big] + C(C_{\tt L}) k^2  \,\bE \Big[ 1+\| \nabla u^n \|_{\bL^2}^2 + \| \nabla u^{n-1} \|_{\bL^2}^2 \Big]\, .
\end{split}
\end{equation*}
Now, we estimate the term $\mathscr{M}^{n}_2$ in \eqref{tes-mul}. Using {\bf (A4)$_{i}$}, and the fact that ${\mathbb E}\bigl[ \vert \widehat{\Delta_n W}\vert^2 \bigr] \leq Ck^3$, we obtain,
\begin{equation*}
\begin{split}
\mathscr{M}^{n}_2 &\leq \frac{\widehat{\alpha}^2}{2} \frac{1}{k} \,\bE \Big[ \|\p_u \sigma (u^n) v^n \|_{\bL^2}^2 |\widehat{\Delta_n W}|^2 \Big] + \frac{k}{2}  \,\bE \big[ \|v^{n+\frac 12} \|_{\bL^2}^2\big]
\\ &\leq \frac{\widehat{\alpha}^2 C_g^2}{2} k^2 \,\bE \big[ \|v^n \|_{\bL^2}^2 \big] + C k\,\bE \Big[ \|v^n \|_{\bL^2}^2 + \|v^{n+1} \|_{\bL^2}^2 \Big]\, .
\end{split}
\end{equation*}
Finally, we estimate the term $\mathscr{M}^{n}_3$ as
\begin{equation*}
\begin{split}
\mathscr{M}^{n}_3 &\leq C k\, \bE \Big[ \|F(u^n)\|_{\bL^2}^2 + \|F(u^{n-1})\|_{\bL^2}^2 \Big] + C k\,\bE \big[ \|v^{n+\frac 12} \|_{\bL^2}^2\big]
\\ &\leq C k  \,\bE \Big[ 1+\| \nabla u^n \|_{\bL^2}^2 + \| \nabla u^{n-1} \|_{\bL^2}^2 +\|v^n \|_{\bL^2}^2 + \|v^{n+1} \|_{\bL^2}^2 \Big]\,.
\end{split}
\end{equation*}
Finally, insert the above estimates into (\ref{tes-mul}), and sum over $1 \leq n \leq N-1$. Then, for all $k \leq k_0 \equiv k_0(C_{\tt L}, C_g)$, the
assertion $(i)$ follows from the implicit version of the discrete Gronwall lemma.

\smallskip

\noindent
{\bf 2) Proof of $(ii)$ for $p=1$.} Consider the $\widehat{\alpha}-$scheme. For simplify technicalities, we put $F \equiv 0$. Recall the notation $\mathfrak{E}$ in step {\bf 2)} of Lemma \ref{lem:scheme1:stab}. Arguing as before \eqref{tes-mul} then leads to ${\mathbb P}$-a.s.
\begin{equation}\label{frakE-u}
\begin{split}
&\mathfrak{E}(u^{n+1}, v^{n+1}) - \mathfrak{E}(u^{n-1}, v^{n})= \Big(\sigma(u^{n}) \Delta_n W, v^{n + \frac 12} \Big)+ \widehat{\alpha}\,\Big( \partial_u\sigma(u^{n}) v^{n} \,\widehat{\Delta_n W}, v^{n + \frac 12} \Big)\, .
\end{split}
\end{equation}
Multiplying both sides with $\mathfrak{E}(u^{n+1}, v^{n+1})$, using binomial formula and taking expectation we infer
\begin{equation}\label{bE-frakE}
\begin{split}
&\frac 12 \bE \Big[ \mathfrak{E}^2(u^{n+1}, v^{n+1}) - \mathfrak{E}^2(u^{n-1}, v^{n}) \Big] + \frac12 \bE \Big[ \big| \mathfrak{E}(u^{n+1}, v^{n+1}) - \mathfrak{E}(u^{n-1}, v^{n}) \big|^2 \Big] 
\\ &= \bE \Big[ \Big(\sigma(u^{n}) \Delta_n W, v^{n + \frac 12} \Big) \cdot \mathfrak{E}(u^{n+1}, v^{n+1}) \Big] 
\\ &\quad+ \widehat{\alpha}\,\bE \Big[ \Big( \partial_u\sigma(u^{n}) v^{n} \,\widehat{\Delta_n W}, v^{n + \frac 12} \Big) \cdot \mathfrak{E}(u^{n+1}, v^{n+1}) \Big] =: \mathfrak{N}_1 + \mathfrak{N}_2\, .
\end{split}
\end{equation}
We can split $\mathfrak{N}_1$ into two terms as
\begin{align*}
\mathfrak{N}_1 &= \bE \Big[ \Big(\sigma(u^{n}) \Delta_n W, v^{n + \frac 12} \Big) \cdot \big\{ \mathfrak{E}(u^{n+1}, v^{n+1}) - \mathfrak{E}(u^{n-1}, v^{n}) \big\} \Big] 
\\ &\quad+ \bE \Big[ \Big(\sigma(u^{n}) \Delta_n W, v^{n + \frac 12} \Big) \cdot \mathfrak{E}(u^{n-1}, v^{n}) \Big] =: \mathfrak{N}_{1, 1} + \mathfrak{N}_{1, 2}\, .
\end{align*}
We first estimate $\mathfrak{N}_{1, 1}$. Using Young's inequality and ${\mathbb E}\bigl[\vert \Delta_n W\vert^4 \bigr] = {\mathcal O}(k^2)$ we infer,
\begin{align*}
\mathfrak{N}_{1, 1} &\leq \bE \Big[ \big\|\sigma(u^{n}) \Delta_n W \big\|_{\bL^2}^2 \big\| v^{n + \frac 12} \big\|_{\bL^2}^2 \Big] + \frac 14 \bE \Big[ \big| \mathfrak{E}(u^{n+1}, v^{n+1}) - \mathfrak{E}(u^{n-1}, v^{n}) \big|^2 \Big] 
\\ &\leq \frac 1k \bE \Big[ \|\sigma(u^{n}) \|_{\bL^2}^4\, |\Delta_n W|^4 \Big] + k\, \bE \big[ \| v^{n + \frac 12} \|_{\bL^2}^4 \big] + \frac 14 \bE \Big[ \big| \mathfrak{E}(u^{n+1}, v^{n+1}) - \mathfrak{E}(u^{n-1}, v^{n}) \big|^2 \Big] 
\\ &\leq C k \,\bE \big[ \| \nabla u^{n} \|_{\bL^2}^4 \big] + C k\,\bE \Big[ \| v^{n+1} \|_{\bL^2}^4 +  \| v^{n} \|_{\bL^2}^4 \Big] + \frac 14 \bE \Big[ \big| \mathfrak{E}(u^{n+1}, v^{n+1}) - \mathfrak{E}(u^{n-1}, v^{n}) \big|^2 \Big] 
\\ &\leq C k \,\bE \Big[ \mathfrak{E}^2(u^{n+1}, v^{n+1}) + \mathfrak{E}^2(u^{n}, v^{n}) \Big] + \frac 14 \bE \Big[ \big| \mathfrak{E}(u^{n+1}, v^{n+1}) - \mathfrak{E}(u^{n-1}, v^{n}) \big|^2 \Big]\, ,
\end{align*}
where the last term in the right-hand side may be managed in the left-hand side of \eqref{bE-frakE}. 
Using independence properties of $\Delta_n W$ and then using \eqref{scheme2:2} we infer
\begin{align*}
\mathfrak{N}_{1, 2} &= \bE \Big[ \Big(\sigma(u^{n}) \Delta_n W, v^{n + 1} - v^n \Big) \cdot \mathfrak{E}(u^{n-1}, v^{n}) \Big]
\\ &= \bE \Big[ \Big(\sigma(u^{n}) \Delta_n W, k \Delta u^{n, \frac 12} \Big) \cdot \mathfrak{E}(u^{n-1}, v^{n}) \Big] + \bE \Big[ \|\sigma(u^{n}) \|_{\bL^2}^2 |\Delta_n W|^2 \,\mathfrak{E}(u^{n-1}, v^{n}) \Big] 
\\ &\quad+ \widehat{\alpha}\,\bE \Big[ \Big(\sigma(u^{n}) \Delta_n W, \p_u \sigma(u^n) v^n \widehat{\Delta_n W}\Big) \cdot \mathfrak{E}(u^{n-1}, v^{n}) \Big] =: \mathfrak{N}_{1, 2}^{\tt A} + \mathfrak{N}_{1, 2}^{\tt B} + \mathfrak{N}_{1, 2}^{\tt C}\, .
\end{align*}
Using integration by parts, {\bf (A4)$_{i}$} and ${\mathbb E}\bigl[\vert \Delta_n W\vert^4 \bigr] = {\mathcal O}(k^2)$ we get
\begin{align*}
\mathfrak{N}_{1, 2}^{\tt A} &= -\bE \Big[ \Big(\p_u \sigma(u^{n}) \nabla u^n \Delta_n W, k \nabla u^{n, \frac 12} \Big) \cdot \mathfrak{E}(u^{n-1}, v^{n})\Big] 
\\ &\leq \frac 1k \bE \Big[ \big\|\p_u \sigma(u^{n}) \nabla u^n \Delta_n W \big\|_{\bL^2}^2 \,k^2\, \|\nabla u^{n, \frac 12}\|_{\bL^2}^2 \Big] + k\,\bE \Big[ \mathfrak{E}^2(u^{n-1}, v^{n}) \Big]
\\ &\leq \frac{C_g^4}{k} \bE \Big[ \|\nabla u^n \|_{\bL^2}^4 \,|\Delta_n W|^4 \Big] +C k^3\, \bE \Big[ \|\nabla u^{n+1}\|_{\bL^2}^2 + \|\nabla u^{n-1}\|_{\bL^2}^2 \Big] + k\,\bE \Big[ \mathfrak{E}^2(u^{n-1}, v^{n}) \Big]
\\ &\leq C k\,\bE \Big[ \mathfrak{E}^2(u^{n}, v^{n}) + \mathfrak{E}^2(u^{n+1}, v^{n+1}) +\mathfrak{E}^2(u^{n-1}, v^{n}) \Big]\, .
\end{align*}
Using ${\mathbb E}\bigl[\vert \Delta_n W\vert^4 \bigr] = {\mathcal O}(k^2)$ we infer,
\begin{align*}
\mathfrak{N}_{1, 2}^{\tt B} &\leq C k \,\bE \big[ \|\nabla u^n \|_{\bL^2}^4 \big] + C k  \,\bE \big[\mathfrak{E}^2(u^{n-1}, v^{n}) \big]
\leq C k  \,\bE \Big[ \mathfrak{E}^2(u^{n}, v^{n}) + \mathfrak{E}^2(u^{n-1}, v^{n}) \Big]\, .
\end{align*}
Using {\bf (A4)$_{i}$}, ${\mathbb E}\bigl[\vert \Delta_n W\vert^4 \bigr] = {\mathcal O}(k^2)$ and ${\mathbb E}\bigl[\vert \widehat{\Delta_n W}\vert^4 \bigr] = {\mathcal O}(k^6)$ we obtain
\begin{align*}
\mathfrak{N}_{1, 2}^{\tt C} &\leq \frac{\widehat{\alpha}^2}{k} \bE \Big[ \| \sigma(u^n) \Delta_n W\|_{\bL^2}^2\, \|\p_u \sigma(u^n) v^n \widehat{\Delta_n W}\|_{\bL^2}^2 \Big]  + C k  \,\bE \big[ \mathfrak{E}^2(u^{n-1}, v^{n}) \big]
\\ &\leq \frac 1k \bE \Big[ \|\nabla u^n \|_{\bL^2}^4 |\Delta_n W|^4 \Big] + \frac{\widehat{\alpha}^4 C_g^4}{k} \bE \Big[ \|v^n\|_{\bL^2}^2 |\widehat{\Delta_n W}|^4 \Big] + C k  \,\bE \big[ \mathfrak{E}^2(u^{n-1}, v^{n}) \big]
\\ &\leq C k  \,\bE \big[ \mathfrak{E}^2(u^{n}, v^{n}) \big] + \widehat{\alpha}^4 C_g^4 k^5  \,\bE \big[ \mathfrak{E}^2(u^{n}, v^{n}) \big] + C k  \,\bE \big[ \mathfrak{E}^2(u^{n-1}, v^{n}) \big]\, .
\end{align*}
Similarly, we estimate $\mathfrak{N}_2$ as
\begin{align*}
\mathfrak{N}_{2} &\leq \frac{\widehat{\alpha}^2}{k} \bE \Big[ \big\| \p_u \sigma(u^n) v^n \widehat{\Delta_n W} \big\|_{\bL^2}^2 \|v^{n+\frac 12}\|_{\bL^2}^2 \Big] + C k\,\bE \big[ \mathfrak{E}^2(u^{n+1}, v^{n+1}) \big]
\\ &\leq \frac{\widehat{\alpha}^2}{k^2} \bE \Big[ \big\| \p_u \sigma(u^n) v^n \big\|_{\bL^2}^4 |\widehat{\Delta_n W} |^4 \Big] + C k\,\bE \big[ \|v^{n+\frac 12}\|_{\bL^2}^4 \big] + C k\,\bE \big[ \mathfrak{E}^2(u^{n+1}, v^{n+1}) \big]
\\ &\leq C_g^4\, \widehat{\alpha}^4\, k^4\, \bE \big[ \|v^n\|_{\bL^2}^4 \big] + C k\,\bE \Big[ \|v^{n+1}\|_{\bL^2}^4 + \|v^{n}\|_{\bL^2}^4 \Big] + C k\,\bE \big[ \mathfrak{E}^2(u^{n+1}, v^{n+1}) \big]
\\ &\leq C k  \,\bE \Big[ \mathfrak{E}^2(u^{n+1}, v^{n+1}) +  \mathfrak{E}^2(u^{n}, v^{n}) \Big] \, .
\end{align*}
Inserting the above estimates into (\ref{bE-frakE}), and sum over $1 \leq n \leq N-1$. Then, for all $k \leq k_0 \equiv k_0(C_{\tt L}, C_g)$
assertion $(ii)$ for $p=1$ follows from the implicit version of the discrete Gronwall lemma.

For $p \geq 2$, we start from the identity (\ref{bE-frakE}), then multiply $ {\mathfrak E}^{2^{p-1}}(u^{n+1},v^{n+1})$ in both sides, then take the expectation and follow the same argument as in {\bf 2)} to settle the assertion.
\end{proof}
}}
}

\medskip

\section{Strong Rates of Convergence  for $(\widehat{\alpha}, \beta)-$scheme}\label{sec-4}

We prove convergence rate ${\mathcal O}(k^{1/2})$  for the iterates $\{ (u^n, v^n)\}_{n\geq1}$ of the $(\widehat{\alpha}, \beta)-$scheme for $\widehat{\alpha} \in \{0, 1\}$; if additionally $\sigma_2(v) \equiv 0 \equiv F_2(v)$ in {\bf (A3)} holds, we may put $\beta = 0$, and 
\begin{itemize}
\item[a)] the convergence rate improves to ${\mathcal O}(k)$ for iterates $\{ u^n\}_{n\geq 1}$ in case $\widehat{\alpha}=0$, and 
\item[b)] to ${\mathcal O}(k^{3/2})$ in case $\widehat{\alpha}=1$. 
\end{itemize}
For the convergence analysis, we need the following assumption on $(u^1, v^1)$.\\ 

\noindent
{\bf (B2)} Along with {\bf (A1)}$_{ii}$ and {\bf (B1)}$_{ii}$, let $u^0 = u(0)$ and $v^0 = v(0)$, and $(u^1, v^1)$ satisfy
\begin{align*}%\label{first-ite}
\Big( \bE \Big[ \|u(t_1) - u^1\|_{\bH^1}^2 + \|v(t_1) - v^1\|_{\bL^2}^2 \Big] \Big)^{1/2} = \mathcal{O}\big(k^{1/2} \big).
\end{align*}

\smallskip

\begin{theorem}\label{lem:scheme1:con}
Let $(u,v)$ be the strong solution of  (\ref{stoch-wave1:1a}) with $A=-\Delta$. Let $\{ (u^n, v^n)\}_{n\geq1}$ be the iterates from $(\widehat{\alpha}, \beta)-$scheme for $k \leq k_0(C_{\tt L}, C_g)$ sufficiently small, $\widehat{\alpha} \in \{0, 1\}$, and {\red{$0 < \beta < \frac{1}{2}$}}. Then, under the hypotheses  {\bf (A1)$_{iii}$}, {\bf (A2), (A3)}, and {\bf (A4), (A5)}  for $m=1, 2$, and {\bf (B2)}, there exists {\red{$C \equiv C(\beta) >0$}} such that
\begin{align}\label{eq-5.32}
\max_{1 \le n \le N} \Big( \bE\Bigl[ \| u(t_n) - u^n\|^2_{\bH^1} + \|v(t_n) - v^n\|^2_{\bL^2} \Bigr] \Big)^{1/2} \le C k^{1/2}\,.
\end{align}
For the following, additionally suppose $\sigma_2(v) \equiv 0 \equiv F_2(v)$ in {\bf (A3)} and that the initial data $u^0, u^1, v^0$ satisfy
{{
\begin{equation}\label{ini-1} 
\begin{split}
&\bigg( {\mathbb E}\Bigl[ \Vert u(t_1) - u^1\Vert^2_{{\mathbb L}^2} \Bigr]  
%+ k \, {\mathbb E}\bigl[ {\mathcal E}\bigl(u(t_n) - u^n, v(t_n) - v^n\bigr)\bigr]
\\ &\quad+ \frac{1}{k^2}\,  {\red{{\mathbb E} \Bigl[ \big\Vert k v_0 - (u^1 - u^0)  + \frac{k^2}{2}  \Delta u_0 + k^2 F(u_0) + k \sigma(u_0) \Delta_0 W \big\Vert_{{\mathbb L}^2}^2 \Bigr] }}  \bigg)^{1/2} = \mathcal{O}\big(k^{3/2} \big)\, .
\end{split}
\end{equation}
}}
\begin{itemize}
\item[$(i)$]
Consider the $(0, 0)-$scheme and assume {\bf (A1)$_{iii}$}, {\bf (A2), (A3)}, and {\bf (A4), (A5)}  for $m=1, 2$, and {\bf (B2)}. Then there exists $C >0$ such that
\begin{align}\label{pi}
\max_{1 \le n \le N} \Big( \bE\Bigl[ \| u(t_n, \cdot) - u^{n}\|^2_{\bL^2}\Bigr] \Big)^{1/2} + \frac 12 \bigg(\bE \bigg[ k \sum_{j=1}^n  \bigl\Vert \nabla  \big[ u(t_{j}, \cdot)- u^{j} \big] \bigr\Vert^2_{{\mathbb L}^2}  \bigg] \bigg)^{1/2} \le C k\, .
\end{align}
\end{itemize}
\begin{itemize}
\item[$(ii)$]
Consider the $(1, 0)-$scheme and assume {\bf (A1)$_{iv}$}, {\bf (A2), (A3)}, and {\bf (A4), (A5)}  for $m=1, 2, 3$, and {\bf (B2)}. Then, there exists $C>0$ such that
\begin{align}\label{pii}
\max_{1 \le n \le N} \Big( \bE\Bigl[ \| u(t_n, \cdot) - u^{n}\|^2_{\bL^2}\Bigr] \Big)^{1/2} + \frac 12 \bigg(\bE \bigg[ k \sum_{j=1}^n  \bigl\Vert \nabla  \big[ u(t_{j}, \cdot)- u^{j} \big] \bigr\Vert^2_{{\mathbb L}^2}  \bigg] \bigg)^{1/2} \le C k^{3/2}\, .
\end{align}
\end{itemize}
\end{theorem}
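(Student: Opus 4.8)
The plan is to derive error equations for $e^n_u := u(t_n) - u^n$ and $e^n_v := v(t_n) - v^n$ by integrating the continuous system \eqref{strong1}--\eqref{strong2} over $[t_n, t_{n+1}]$ and subtracting Scheme \ref{Hat-scheme}. This reproduces the bilinear energy structure exploited in Lemma \ref{lem:scheme1:stab}, together with consistency residuals: a quadrature residual $\int_{t_n}^{t_{n+1}}(v(s)-v(t_{n+1}),\phi)\,\dd s$ in the displacement equation, a trapezoidal-type residual for $\int_{t_n}^{t_{n+1}} \nabla u(s)\,\dd s$ versus $k\nabla \widetilde{u}^{n,\frac{1}{2}}$, an extrapolation residual for $\int F$ versus $\frac{k}{2}(3F^n - F^{n-1})$, and the stochastic residual for $\int_{t_n}^{t_{n+1}} \sigma(u,v)\,\dW(s)$. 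First I would test the $e_v$-equation with $2k e_v^{n+1/2}$ and combine it with the strong form of the $e_u$-equation tested against $-\Delta e_u^{n,\frac{1}{2}}$, mirroring the passage \eqref{S1S:1}--\eqref{test-mult}, so that $\cE(e_u^{n+1}, e_v^{n+1})$ telescopes and the dissipation term $\beta k^{2+\beta}\|\nabla e_v^{n+1/2}\|_{\bL^2}^2$ reappears to absorb the noise contributions exactly as in parts \textbf{a)}--\textbf{c)} of the stability proof.

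For the general bound \eqref{eq-5.32}, the deterministic residuals are $\mathcal{O}(k)$ by the quadrature formula of Lemma \ref{quadrature1} combined with the H\"older bounds of Lemma \ref{lem:Holder}(iii), while the stochastic residual is rate-limiting: since $\sigma$ depends on $v$, and $v$ is only H\"older-$\frac{1}{2}$ in time in $\bL^2$, the per-step term $\int_{t_n}^{t_{n+1}}[\sigma(u,v)-\sigma(u^n,v^{n-1/2})]\,\dW$ is $\mathcal{O}(k)$ in $\bL^2$; martingale orthogonality of the increments makes the sum of $N=T/k$ of them $\sum_n \mathcal{O}(k^2) = \mathcal{O}(k)$, hence $\mathcal{O}(k^{1/2})$ after taking the square root. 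Discrete Gronwall, fed by the moment bounds \eqref{high-moment} and assumption \textbf{(B2)}, then closes the estimate.

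For the improved rates in parts $(i)$ and $(ii)$, where $\sigma \equiv \sigma(u)$, $F \equiv F(u)$ and $\beta=0$, I would switch to the second-order formulation of step \textbf{4)} of the stability proof: combine the two error equations into a single equation for $e_u^\ell$ as in \eqref{ul-comb}, sum over the first $n$ steps as in \eqref{sum_ul}, and test with $e_u^{n+1/2}$. The decisive gain lies in the stochastic consistency term. Since $\dd u = v\,\dd t$ carries no martingale part, the chain rule yields
\begin{equation*}
\int_{t_n}^{t_{n+1}}\sigma(u(s))\,\dW(s) = \sigma(u(t_n))\,\Delta_n W + D_u\sigma(u(t_n))v(t_n)\,\widetilde{\Delta_n W} + \mathcal{R}_n,
\end{equation*}
with $\widetilde{\Delta_n W}$ the increment \eqref{w-tilde} and $\mathcal{R}_n$ an iterated-integral remainder of size $\mathcal{O}(k^2)$ in $\bL^2$. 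In the $(0,0)$-scheme the uncorrected leading term $D_u\sigma(u(t_n))v(t_n)\widetilde{\Delta_n W}$, a martingale increment of size $\mathcal{O}(k^{3/2})$ in $\bL^2$, sums by orthogonality to $\mathcal{O}(k)$, giving \eqref{pi}; in the $(1,0)$-scheme the term $\widehat{\alpha}D_u\sigma(u^n)v^n\widehat{\Delta_n W}$ cancels this contribution up to the discrepancy between $\widetilde{\Delta_n W}$ and $\widehat{\Delta_n W}$, leaving a per-step residual of order $\mathcal{O}(k^2)$ whose martingale sum is $\mathcal{O}(k^{3/2})$, giving \eqref{pii}.

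The main obstacle will be the stochastic consistency analysis in part $(ii)$: one must simultaneously control the iterated-integral remainder $\mathcal{R}_n$, the error from freezing $D_u\sigma(u(t_n))v(t_n)$ at the discrete $D_u\sigma(u^n)v^n$ (which couples back into the error and requires the higher-moment bounds \eqref{high-moment} and the $\bH^4 \times \bH^3$ regularity of Lemma \ref{lem:L2}(iv)), and the quadrature discrepancy $\widetilde{\Delta_n W} - \widehat{\Delta_n W} = \int_{t_n}^{t_{n+1}} W(s)\,\dd s - k^2\sum_\ell W(t_{n,\ell})$, estimated via Lemma \ref{quadrature1} applied pathwise to $W$ on the sub-mesh of size $k^2$ (cf.~\eqref{dist-tilde-hat}). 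Each contribution must be shown $\mathcal{O}(k^2)$ in $\bL^2$ per step so that, after martingale summation, the stochastic error stays at $\mathcal{O}(k^{3/2})$; preventing the lower-order deterministic residuals from polluting this rate is precisely what forces the sharper initial-data condition \eqref{ini-1}.
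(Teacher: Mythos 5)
Your overall architecture matches the paper's: for \eqref{eq-5.32} the energy identity obtained by testing the $e_v$-equation with $e_v^{n+1/2}$, telescoping $\cE(e_u,e_v)$ and absorbing the noise via the substitution trick of the stability proof; for \eqref{pi}--\eqref{pii} the passage to the summed second-order formulation \eqref{sum_ul} tested with $e_u^{n+1/2}$, with the It\^o--Taylor expansion $\sigma(u(\xi))-\sigma(u(t_\ell))\approx D_u\sigma(u(t_\ell))v(t_\ell)(\xi-t_\ell)+\mathcal{R}$ explaining why the correction term lifts the stochastic consistency error from $\mathcal{O}(k)$ to $\mathcal{O}(k^{3/2})$. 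This is precisely the decomposition \eqref{e2_2:3}, and your list of obstacles (freezing error, iterated-integral remainder, $\widehat{\Delta_n W}$ versus $\widetilde{\Delta_n W}$) corresponds to the paper's parts {\bf a)}--{\bf e)} in step {\bf 3)}.

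The genuine gap is in part $(ii)$: you treat the stochastic consistency term as the only obstruction to $\mathcal{O}(k^{3/2})$ and assert that keeping the deterministic residuals from polluting the rate is "precisely what forces" \eqref{ini-1}. That is not so. The trapezoidal quadrature residual $\int_{t_n}^{t_{n+1}}\int_0^s\Delta u(\xi)\,\dd\xi\,\dd s - k^2\sum_\ell \Delta u^{\ell,1/2}$ (term ${\tt II}$, i.e.\ $I^{\ell,n}_7$) is itself order-limiting at $\mathcal{O}(k)$ and is untouched by the initial-data condition. Applying Lemma \ref{quadrature1} with $\gamma=\tfrac12$ to $f(\xi)=\bE[(\nabla u(\xi),\nabla e_u^{n+1/2})]$ gives $Ck^{5/2}\bigl(\bE\|\nabla e_u^{n+1/2}\|^2_{\bL^2}\bigr)^{1/2}$ per step; since \eqref{eq-5.32} only yields $\bE\|\nabla e_u^{n}\|^2_{\bL^2}=\mathcal{O}(k)$, this sums to $\mathcal{O}(k^2)$, i.e.\ rate $\mathcal{O}(k)$ --- exactly what is wanted in $(i)$ but fatal for $(ii)$. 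The paper's step {\bf f)} in part {\bf 3)} repairs this by an integration by parts that trades $\nabla e_u^{n+1/2}$ for $e_u^{n+1/2}$ at the cost of a second spatial derivative on $v(t)-v(s)$; this is where {\bf (A1)$_{iv}$} and Lemma \ref{lem:Holder}~$(iv)$ actually enter, and the resulting bound $Ck^{5/2}\bigl(\bE\|e_u^{n+1/2}\|^2_{\bL^2}\bigr)^{1/2}\le Ck^4+Ck\,\bE\|e_u^{n+1/2}\|^2_{\bL^2}$ is Gronwall-compatible with $\mathcal{O}(k^{3/2})$. Your plan never performs this upgrade, so as written it delivers only $\mathcal{O}(k)$ in \eqref{pii}. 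Two smaller points: Lemma \ref{quadrature1} cannot be "applied pathwise to $W$" to bound $\widetilde{\Delta_n W}-\widehat{\Delta_n W}$, since Brownian paths are not of class $C^{1,\gamma}$ --- the correct argument is the It\^o-isometry computation \eqref{dist-tilde-hat} that you cite only parenthetically; and the higher moments invoked for the freezing error $D_u\sigma(u(t_\ell))v(t_\ell)-D_u\sigma(u^\ell)v^\ell$ are \eqref{energy2-himo} together with Lemma \ref{lem:L2} for $p=2$, not \eqref{high-moment} alone.
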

\smallskip

\noindent
The following remark discusses the realizability of (\ref{ini-1}), and key tools to verify this theorem.
\begin{remark}
{{
{\bf 1.}~{{In Section \ref{sec-6}, we choose $(u^0, v^0) = (u(0), v(0))$, together with 
\begin{align}\label{u1v1choice}
\begin{cases}
{\red{u^1 = u_0 +k\,v_0 + \frac{k^2}{2} \Delta u_0 + k^2 F(u_0) + (k+k^2) \,\sigma(u_0) \Delta_0 W}}\,,\\
v^1 = v_0 + k \,\sigma(u_0) \Delta_0 W\, .
\end{cases}
\end{align}
By the choice of $v^1$, the verification of {\bf (B2)} is straight-forward. We now prove that \eqref{ini-1} holds in this case: first, we consider  \eqref{stoch-wave1:1a} in integral form on $[0,t_1]$,
\begin{align}\label{sto-wave1:2a}
\begin{cases}
&u(t_1)  = u_0 + \int_0^{t_1} v(s)\,\dd s  \\
&v(s)  = v_0+\int_0^{s} \Delta u(\tau)  \dd \tau + \int_0^{s} F(u(\tau))  \dd \tau + \int_0^{s} \sigma(u(\tau)) \dd W(\tau)\,, \quad 0 \leq \tau \leq s\, ,
%u(0,\cdot) = u_0\, , \quad \p_t u(0,\cdot) = v_0, \qquad &\mbox{\rm on}\  \cO\,;\\
%u(t,\cdot) =0,  \qquad &\mbox{\rm on}\  \p \cO, \,\forall \,  t \in (0,T)\,,
\end{cases}
\end{align}
and insert (\ref{sto-wave1:2a})$_2$ into (\ref{sto-wave1:2a})$_1$;  a change of order of integration then gives
{\small{
\begin{equation}\label{stoch-wave1:3a}
\begin{split}
u(t_1) &= u_0 + t_1 v_0 +  \int_0^{t_1} \int_0^s \Delta u(\tau)\, \dd \tau \dd s +  \int_0^{t_1} \int_0^s F(u(\tau))\, \dd \tau \dd s + \int_0^{t_1} \int_0^s \sigma(u(\tau)) \dd W(\tau) \, \dd s
\\ & = u_0 + k v_0 + \int_0^{t_1} \int_{\tau}^{t_1} \dd s \,\Delta u(\tau)\, \dd \tau +  \int_0^{t_1} \int_{\tau}^{t_1} \dd s \,F(u(\tau)) \dd \tau + \int_0^{t_1} \int_{\tau}^{t_1}  \dd s \,\sigma(u(\tau)) \dd W(\tau)\, .
\end{split}
\end{equation}
}}
\smallskip

\noindent
%
%Using the second equation of \eqref{sto-wave1:2a} in the first equation we infer
%\begin{equation}\label{stoch-wave1:3a}
%\begin{split}
%u(t_1) &= u_0 + t_1 v_0 +  \int_0^{t_1} \int_0^s \Delta u(\tau)\, \dd \tau \dd s + \int_0^{t_1} \int_0^s \sigma(u(\tau)) \dd W(\tau) \, \dd s
%\\ &= u_0 + k v_0 +  \int_0^{t_1} \int_{\tau}^{t_1} \Delta u(\tau)\, \dd s \,\dd \tau + \int_0^{t_1} \int_{\tau}^{t_1}  \dd s \,\sigma(u(\tau)) \dd W(\tau)\, .
%\end{split}
%\end{equation}
%
Thus, 
\begin{equation}\label{u(t_1)}
\begin{split}
u(t_1) &= u_0 + k v_0 +  \int_0^{t_1} (t_1 - \tau) \Delta u(\tau)\,\dd \tau +  \int_0^{t_1} (t_1 - \tau) F(u(\tau))\,\dd \tau 
\\ &\quad+ \int_0^{t_1} (t_1 - \tau) \sigma(u(\tau)) \dd W(\tau)\, .
\end{split}
\end{equation}
\noindent
Subtracting \eqref{u1v1choice}$_{1}$ from \eqref{u(t_1)} we infer
\begin{align*}
u(t_1) - u^1 &= \int_0^{t_1} (t_1 - \tau) \Delta u(\tau)\,\dd \tau - \frac{k^2}{2} \Delta u_0 - k^2 F(u_0) +  \int_0^{t_1} (t_1 - \tau) F(u(\tau))\,\dd \tau 
\\ &\quad+ \int_0^{t_1} (t_1 - \tau) \sigma(u(\tau)) \dd W(\tau) - (k+k^2) \sigma(u_0) \big(W(t_1) - W(0)\big)\,.
\end{align*}
By {\bf (A1)$_{iii}$, (A3)}, It\^o isometry, Lemma \ref{lem:L2} $(i), (ii)$ and Lemma \ref{lem:Holder} $(i)$ we infer
\begin{align}
\bE \Big[ \|u(t_1) - u^1\|_{\bL^2}^2 \Big] &\leq C k^2\, \int_0^{t_1} \bE \Big[ \| \Delta u(\tau)\|_{\bL^2}^2 \Big] \,{\rm d} \tau + C k^4 \,\bE \big[ \|u_0 \|_{\bH^2}^2 \big] + C k^4 \,\bE \big[ \|F(u_0) \|_{\bL^2}^2 \big] \nonumber
\\ &\quad+ C k^2\, \int_0^{t_1} \bE \Big[ \| F(u(\tau))\|_{\bL^2}^2 \Big] \,{\rm d} \tau   + k^3\, \bE \big[ \| \sigma(u_0)\|_{\bL^2}^2 \big] \label{eq_1.7}
\\ &\quad + C k^2\, \int_0^{t_1} \bE \Big[ \| \sigma(u(\tau)) - \sigma(u_0) \|_{\bL^2}^2 \Big]\, \dd s \leq C k^3 \, .\nonumber
\end{align}
Similarly, by \eqref{u1v1choice}$_{1}$, {\bf (A1)$_{i}$} and It\^o isometry we get
\begin{equation}\label{eq_1.8}
\begin{split}
&{\mathbb E} \Bigl[ \big\Vert k v_0 - (u^1 - u^0)+ \frac{k^2}{2} \Delta u_0 +k^2 F(u_0)+ k \,\sigma(u_0) \Delta_0 W \big\Vert_{{\mathbb L}^2}^2 \Bigr] 
\\ &\leq  k^4\, \bE \Big[ \|\sigma(u_0) \|_{\bL^2}^2 |\Delta_0 W|^2 \Big] \leq C k^5 \, .
\end{split}
\end{equation}
Thus, combining \eqref{eq_1.7} and \eqref{eq_1.8} we get the assertion \eqref{ini-1} for $u^1$.
\del{
To validate the choice of $v^1$ in \eqref{u1v1choice}$_{2}$, we use {\bf (A1)$_{i}$} and It\^o isometry to get
\begin{align*}
&{\mathbb E} \Bigl[ \big\Vert k v_0 - (u^1 - u^0) + \frac{k^2}{2} \Delta u_0 + k^2 F(u_0) + k \,\sigma(u_0) \Delta_0 W \big\Vert_{{\mathbb L}^2}^2 \Bigr]  
\\ &= {\mathbb E} \Bigl[\Vert k v_0 - k v^1 \Vert_{{\mathbb L}^2}^2 \Bigr] = k^2 {\mathbb E} \Bigl[\Vert v^1 - v_0 \Vert_{{\mathbb L}^2}^2 \Bigr] \leq  k^4\, \bE \Big[ \|\sigma(u_0) \|_{\bL^2}^2 |\Delta_0 W|^2 \Big] \leq C k^5 \, ,
\end{align*}
which settles the assertion \eqref{ini-1}.
}
}}
}}
\smallskip

\noindent
{\bf 2.}~For $\widetilde{\alpha} \neq 0$, the additional noise term in \eqref{scheme2:2} improves the accuracy of the $(\widehat{\alpha}, 0)-$scheme, where $\widetilde{\Delta_n W}$ is approximated by $\widehat{\Delta_n W}\,.$ By \eqref{W-hat}, \eqref{DnW-tilde}, and the fact that $t_{n,\ell+1} - t_{n,\ell}=k^2$, we estimate the distance between $\widetilde{\Delta_n W}$ and $\widehat{\Delta_n W}$ as
\begin{equation*}
\begin{split}
{\mathbb E}\Bigl[ \big\vert \widetilde{\Delta_n W} - \widehat{\Delta_n W} \big\vert^2 \Bigr] &= {\mathbb E}\biggl[ \Big| - \int_{t_n}^{t_{n+1}} W(s)\,{\rm d}s + k^2 \sum_{\ell=1}^{k^{-1}} W(t_{n,\ell}) \Big|^2 \biggr]
\\ &=  {\mathbb E}\biggl[ \Big| \sum_{\ell=1}^{k^{-1}} \int_{t_{n, \ell}}^{t_{n, \ell+1}} \big( W(s)- W(t_{n,\ell}) \big)\, {\rm d}s\Big|^2 \biggr]\,.
\end{split}
\end{equation*}
By the independence property of the increment $\Delta_n W$, we further estimate 
\begin{equation}\label{dist-tilde-hat}
\begin{split}
&\leq k \sum_{\ell=1}^{k^{-1}}  \int_{t_{n, \ell}}^{t_{n, \ell+1}} {\mathbb E}\Bigl[ \big| W(s)- W(t_{n,\ell}) \big|^2 \Bigr] \,{\rm d}s
\leq k \sum_{\ell=1}^{k^{-1}}  \int_{t_{n, \ell}}^{t_{n, \ell+1}} (s-t_{n,\ell}) \,{\rm d}s \leq C k^4\,.
\end{split}
\end{equation}
\smallskip

\noindent
{\bf 3.}~The basic estimate is \eqref{eq-5.32}, which will be given in part {\bf 1)} in the proof below. Its derivation uses the H\"older estimates in Lemma \ref{lem:Holder} for $(u,v)$ in {\em strong} norms. The strategy of proof 
is similar to the one used in the stability analysis for $(\widehat{\alpha}, \beta)-$scheme in
Section \ref{sec-3}; see item {\bf 1.}~in Remark \ref{rema2}: the central term to estimate is $T_4^{(n)}$ in \eqref{e1_1:9}, in which we replace the
increments $e^{n+1}_v-e^{n}_v$ via the {\em error equation \eqref{e1_1:4}} to obtain
terms which are scaled by $k$, or the stochastic increments ${\Delta_n W}$ and $\widetilde{\Delta_n W}$. The order limiting term then is $T_{4, 1}^{(n,4)}$ in \eqref{limiti}, which may be traced back to the noise term $\sigma$, which may depend on $v$ as well.
In this case (only), the {\em additional term $-k^{2+\beta} \Delta v^{n+1/2}$} in Scheme \ref{scheme--2} is needed to control the effect of noise: see the additional term on the left-hand side of \eqref{e1_1:6} to {\em e.g.}~bound the corresponding term in \eqref{er-T4,1}.

The verification of assertions \eqref{pi} and \eqref{pii} differs completely from this strategy: it starts
with the reformulation \eqref{scheme2:3} that leads to the error identity \eqref{error-eq}, which then is tested with $e_u^{n+1/2}$; the noise part may here be estimated in a straight manner.

\smallskip

\noindent
{\bf 4.}~Part {\bf 2)} in the proof below is conceptually motivated from arguments in \cite{Bak_76}; however, their realization in the stochastic setting differs considerably. We remark that estimate \eqref{eq-5.32} is needed  to verify assertion \eqref{pi} --- next to Lemma \ref{quadrature1} to bound the quadrature error of the trapezoidal rule for integrands with limited regularity; see term $I^{\ell, n}_7$ in \eqref{e1_2:5}.

\smallskip

\noindent
{\bf 5.}~If $\widetilde{\alpha} = 0$, the estimate \eqref{restri} for term $I_3^{\ell,n}$ in \eqref{e1_2:5} restricts the order, and assertion \eqref{pi} follows; the improvement \eqref{pii} uses $\widetilde{\alpha} = 1$, {\em s.t.} this term $I_3^{\ell,n}$ gives way to the sum of new terms  in \eqref{e2_2:3}, which are of higher order; see {\bf a)-- e)} in part {\bf 3)} in the proof below.

\smallskip

\noindent
{\bf 6.}~For $\sigma \equiv \sigma(u, v)$ {\em or} $F \equiv F(u, v)$, neither assertion \eqref{pi} nor \eqref{pii} in Theorem \ref{lem:scheme1:con} may be concluded, due to the restricted H\"older regularity properties of $v$  opposed to $u$.

In this setting, either $\sigma$ {\em or} $F$ in \eqref{scheme2:3}  in the proof below would depend on $v$ as well, and thus would modify corresponding terms in \eqref{error-eq}. For $\sigma \equiv \sigma(u,v)$, (a modified version of) {\bf (A3)} would additionally create a term $Ck^2 \sum_{\ell=1}^n {\mathbb E}\bigl[ \Vert e^{\ell}_v\Vert^2_{{\mathbb L}^2}\bigr]$ on the right-hand side of \eqref{dazu1}, which may not be handled  via Gronwall's lemma to lift the order. For $F \equiv F(u,v)$, the argument in \eqref{dazu2} fails, which rests on Lemma~\ref{quadrature1}, and the H\"older continuity of $v = \partial_t u$.

\smallskip

\noindent
{\bf 7.}~In the proof of \eqref{eq-5.32}, where $\sigma \equiv \sigma(u, v)$ and $F \equiv F(u, v)$, we do not require the discrete energy bounds proved in Lemma \ref{lem:scheme1:stab}. We only require the energy bounds proved in Lemma \ref{lem:L2}. This is possible, since we can add and subtract $\nabla u(t_n)$ or $v(t_n)$ whenever $L^2$-norm of $\nabla u^n$ or $v^n$ appears. However, the higher moment bounds in energy norm (proved in Lemma \ref{lem:scheme1:stab})  are required to show the improved convergence order $\cO(k^{3/2})$ in the proof of \eqref{pii} of Theorem \ref{lem:scheme1:con}. 
\end{remark}

\begin{proof}[Proof of Theorem \ref{lem:scheme1:con}]

{\bf 1) Proof of \eqref{eq-5.32}}.
 For simplicity, we here give the proof for $F \equiv 0$. Correspondingly, let $(u,v)$ solve \eqref{stoch-wave1:1a}, and 
$\{(u^n, v^n) \}_{n\in \mathbb{N}}$ solves $(\widehat{\alpha}, \beta)-$scheme. We denote by $e_{u}^{n}:= u(t_n)-u^n$  and $e_v^n:= v(t_n)-v^n$ error iterates, which are zero on the boundary and solve
\del{
By \eqref{strong1}--\eqref{strong2}, for all $t_n \in [0,T], n \ge 0$, by regularity theory we have
\begin{eqnarray}
\label{e1_1:1}
\bigl( \nabla [u(t_{n+1}) - u(t_n)], \phi\bigr) 
&=& \Bigl(\int_{t_n}^{t_{n+1}} \nabla [v - v(t_{n+1}) + v(t_{n+1})]\,\ds, \phi\Bigr)\, , \\ 
\nonumber
\bigl(v(t_{n+1})-v(t_n), \psi\bigr) 
&=& -\int_{t_n}^{t_{n+1}} (\nabla u, \nabla \psi ) \, \ds  
+ \int_{t_n}^{t_{n+1}} \bigl( F(u,v), \psi\bigr) \, \ds \\ \label{e1_1:2}
&& +\Bigl(\int_{t_n}^{t_{n+1}} \sigma (u, v) \, \dW(s),\psi\Bigr)\, .
\end{eqnarray}
Subtracting \eqref{scheme2:1}--\eqref{scheme2:2} from \eqref{e1_1:1}--\eqref{e1_1:2} and 
}
%
%We obtain the following error
%equations
\begin{eqnarray}
\label{e1_1:3}
\quad &&e^{n+1}_u-e^n_u 
= k \,e^{n+1}_v + \int_{t_n}^{t_{n+1}}  \bigl(v(s)-v(t_{n+1})\bigr)\, \ds\, ,\\
\nonumber
\quad &&e^{n+1}_v - e^n_v  
= k \,\Delta e^{n,1/2}_u + \int_{t_n}^{t_{n+1}}  \Delta \bigg[ \frac{ 2 u(s) - [u(t_{n+1})+u(t_{n-1})]}{2} \bigg]\, \ds \\
\nonumber
\quad &&\qquad \qquad \quad - \beta k^{2+\beta}\,\Delta e_v^{n+\frac 12} + \beta \frac{k^{2+\beta}}{2}\,\Delta \bigl[v(t_{n+1}) + v(t_n)\bigr]\\ \label{e1_1:4}
\quad && \qquad \qquad \quad + \int_{t_n}^{t_{n+1}} \bigl[\sigma\bigl(u(s), v(s)\bigr) - \sigma\bigl(u^n, v^{n-\frac 12} \bigr) \bigr] \, \dW(s) - \widehat{\alpha}\,D_u\sigma(u^n, v^{n-\frac 12})\,v^n\, \widehat{\Delta_n W}\,.
\end{eqnarray}
We multiply \eqref{e1_1:4} with $e^{n+\frac 12}_v$ and use \eqref{e1_1:3} to get
\begin{equation}\label{e1_1:6}
\begin{split}
\frac 12 \Big[ \|e^{n+1}_v \|_{\bL^2}^2 - \|e^{n}_v \|_{\bL^2}^2 \Big]  + \frac 14 \Big[ \|\nabla e^{n+1}_u \|_{\bL^2}^2 - \|\nabla e^{n-1}_u \|_{\bL^2}^2 \Big] 
+ \beta k^{2+\beta} \Vert \nabla e^{n+\frac 12}_v\Vert^2_{{\mathbb L}^2}\leq \sum_{j=1}^5 T_j^{(n)}\, ,
\end{split}
\end{equation}
where
\begin{align*}
T_1^{(n)} &:= \int_{t_n}^{t_{n+1}} \Bigl(\nabla \bigl[v(s) -v(t_{n+1})\bigr], \nabla e^{n,\frac 12}_u \Bigr) \,\ds + \int_{t_{n-1}}^{t_{n}} \Bigl(\nabla \bigl[v(s) -v(t_{n})\bigr], \nabla e^{n, \frac 12}_u \Bigr)\,\ds\, ,
\\ T_2^{(n)} &:=  -\int_{t_n}^{t_{n+1}} \Bigl(\nabla \Big[ \frac{2 u(s) - [u(t_{n+1})+u(t_{n-1})]}{2} \Big], \nabla e^{n+ \frac 12}_v\Bigr)\, \ds \, ,
\\  T_3^{(n)} &:= \beta \frac{k^{2+\beta}}{2} \Big(\nabla \bigl[v(t_{n+1}) + v(t_{n})\bigr], \nabla e_v^{n+ \frac 12} \Big)\, ,
\\ T_4^{(n)} &:= \Bigl(\int_{t_n}^{t_{n+1}} \Bigl[\sigma\bigl(u(s),v(s)\bigr) - \sigma(u^n, v^{n-\frac 12})
\Bigr] \, \dW(s), e^{n+\frac 12}_v\Bigr) \, ,
\\  T_5^{(n)} &:=  -\,\widehat{\alpha}\,\Bigl(D_u\sigma(u^n, v^{n-\frac 12})v^n \,\widehat{\Delta_n W}, e^{n+\frac 12}_v \Bigr)\, .
\end{align*}

\noindent
We estimate the expectation of each term on the right-hand side of \eqref{e1_1:6}. 
By Lemma~\ref{lem:Holder} $(iii)$,  we infer
\begin{align*}\label{e1_1:7}
\bE \bigl[T_1^{(n)} + T_2^{(n)} \bigr] 
%\le& \frac{1}{2}\int_{t_n}^{t_{n+1}} 
%\bE \bigl[\| \nabla [v-  v(t_{n+1})]\|^2_{\bL^2}\bigr] \, \ds 
%+ k\bE \bigl[\| \nabla e^{n+1}_u\|^2_{\bL^2}+\| \nabla e^{n}_u\|^2_{\bL^2}\bigr]\\
&\le Ck^2+ C k \, \bE \Bigl[\| \nabla e^{n+1}_u\|^2_{\bL^2} 
+ \| \nabla e^{n-1}_u\|^2_{\bL^2} \Bigr] + C k \, \bE \Bigl[ \|e^{n+1}_v\|^2_{\bL^2} +  \|e^{n}_v\|^2_{\bL^2}\Bigr]\, .
\end{align*}
We use Lemma \ref{lem:L2} $(ii)$  to estimate
\begin{align*}%\label{er-3-term}
\bE \bigl[T_3^{(n)} \bigr] &\leq \beta \frac{k^{2+\beta}}{6}  {\mathbb E}\bigl[\Vert \nabla e^{n+\frac 12}_v\Vert^2_{{\mathbb L}^2}\bigr] + 
\frac{C}{\beta} k^{2+\beta}\, .
\end{align*}

\noindent
By properties of $\Delta_n W$ we rewrite the term $T_4^{(n)}$ as
\begin{equation}\label{e1_1:9}
\begin{split}
\bE \bigl[T_4^{(n)} \bigr] &= \frac 12 \,\bE \Bigl[ \Bigl( \bigl[ \sigma\bigl(u(t_n), v(t_{n-1/2})\bigr) - \sigma(u^n, v^{n-1/2})\bigr]\Delta_{n}W, e^{n+1}_v - e^{n}_v\Bigr)\Bigr] 
\\ &\quad + \frac 12 \,\bE \Bigl[ \Bigl(\int_{t_n}^{t_{n+1}} \bigl[\sigma \bigl(u(s),v(s) \bigr) 
- \sigma\bigl(u(t_n),v(t_{n-1/2})\bigr)\bigr] \dW(s),e^{n+1}_v-e^{n}_v\Bigr)\Bigr] 
\\ &:=  T_{4, 1}^{(n)} + T_{4, 2}^{(n)}\, .
\end{split}
\end{equation}
In order to estimate $T_{4, 1}^{(n)}$, we use equation \eqref{e1_1:4}  to write
{\small{
\begin{equation*} 
\begin{split}
T_{4, 1}^{(n)} &= \frac 12 \,\bE \Bigl[ \Bigl( \bigl[ \sigma\bigl(u(t_n), v(t_{n-1/2})\bigr) - \sigma(u^n, v^{n-1/2})\bigr]\Delta_{n}W, k \Delta e_u^{n, 1/2}\Bigr)\Bigr] 
\\ \nonumber &\quad + \frac 12 \,\bE \biggl[ \biggl( \bigl[ \sigma\bigl(u(t_n), v(t_{n-1/2})\bigr) - \sigma(u^n, v^{n-1/2})\bigr]\Delta_{n}W,  \int_{t_n}^{t_{n+1}} \Delta \Big[  \frac{2u - [u(t_{n+1})+u(t_{n-1})]}{2} \Big] \, \ds \biggr)\biggr]
\\ \nonumber &\quad + \frac 12 \,\bE \Bigl[ \Bigl( \bigl[ \sigma\bigl(u(t_n), v(t_{n-1/2})\bigr) - \sigma(u^n, v^{n-1/2})\bigr]\Delta_{n}W, - k^{2+\beta}\,\Delta v^{n+\frac 12}\Bigr)\Bigr] 
\\  &\quad + \frac 12 \,\bE \biggl[ \biggl( \bigl[ \sigma\bigl(u(t_n), v(t_{n-1/2})\bigr) - \sigma(u^n, v^{n-1/2})\bigr]\Delta_{n}W, \int_{t_n}^{t_{n+1}} [\sigma(u,v) - \sigma(u^n, v^{n-\frac 12})] \, \dW(s)\biggr)\biggr]
\\ \nonumber &\quad + \frac{\widehat{\alpha}}{2} \,\bE \biggl[ \biggl( \bigl[ \sigma\bigl(u(t_n), v(t_{n-1/2})\bigr) - \sigma(u^n, v^{n-1/2})\bigr]\Delta_{n}W, D_u\sigma(u^n, v^{n-\frac 12})\,v^n\, \widehat{\Delta_n W}\biggr]
\\ \nonumber &:= T_{4, 1}^{(n,1)} + T_{4, 1}^{(n, 2)} + T_{4, 1}^{(n, 3)} + T_{4, 1}^{(n, 4)} + T_{4, 1}^{(n, 5)}\, .
\end{split}
\end{equation*}
}}
We consider $T_{4, 1}^{(n,1)}$ first; to properly address the dependence of $\sigma$ on $v$,  we first restate it with the help of (\ref{e1_1:3}) and use the fact that $\sigma\bigl(u(t_n), v(t_{n-1/2})\bigr)=\sigma(u^n, v^{n-1/2})=0$ on $\partial \mathcal{O}$ to obtain
{\small{
\begin{eqnarray*}T_{4, 1}^{(n,1)} &=&  \frac 12 \,\bE \Bigl[ \Bigl( \bigl[ \sigma\bigl(u(t_n), v(t_{n-1/2})\bigr) - \sigma(u^n, v^{n-1/2})\bigr]\Delta_{n}W, k \Delta [e_u^{n+1} - e^{n-1}_u]\Bigr)\Bigr] \\
&=&-\frac 12 \,\bE \Bigl[ \Bigl( \nabla \bigl[ \sigma\bigl(u(t_n), v(t_{n-1/2})\bigr) - \sigma(u^n, v^{n-1/2})\bigr]\Delta_{n}W, 2k^2 \nabla e_v^{n+1/2} \Bigr)\Bigr]
\\ && -\frac 12 \,\bE \Bigl[ \Bigl( \nabla \bigl[ \sigma\bigl(u(t_n), v(t_{n-1/2})\bigr) - \sigma(u^n, v^{n-1/2})\bigr]\Delta_{n}W, k\nabla {\mathcal R}^{n+1/2}_v\Bigr)\Bigr] =: T_{4, 1, {\tt A}}^{(n,1)} + T_{4, 1, {\tt B}}^{(n,1)} \,,
\end{eqnarray*}
}}
where ${\mathcal R}^{n+1/2}_v := \int_{t_n}^{t_{n+1}}  \bigl(v(s)-v(t_{n+1})\bigr)\, \ds + \int_{t_{n-1}}^{t_{n}}  \bigl(v(s)-v(t_{n})\bigr)\, \ds$.
By chain rule, and {\bf (A4)} for $m=1$ we obtain 
\begin{equation*}
\begin{split} 
T_{4, 1, {\tt A}}^{(n,1)} &\leq - \bE \Big[ C_g \big\{ 2\|\nabla u(t_n)\|_{\bL^2} + 2\|\nabla v(t_{n-1/2})\|_{\bL^2} + \|\nabla e_u^n\|_{\bL^2} + \|\nabla e_v^{n-1/2}\|_{\bL^2} \big\} |\Delta_n W| 
\\ &\qquad \quad \cdot k^{1-\frac{\beta}{2}} k^{1+\frac{\beta}{2}} \,\|\nabla e_v^{n+1/2}\|_{\bL^2} \Big]\, .
\end{split}
\end{equation*}
%
%Since we do not have bound on $\|\nabla v^{n-1/2}\|_{\bL^2}$, we add and subtract $\nabla v(t_{n-1/2})$ to the term. We also add and subtract $\nabla u(t_n)$ to the term $\nabla u^n$. 
We apply Young's inequality, Ito isometry, \eqref{energy1} and Lemma \ref{lem:L2} $(i), (ii)$ to further bound $T_{4, 1, {\tt A}}^{(n,1)}$ by
\begin{equation}\label{er-T4,1}
\begin{split} 
T_{4, 1, {\tt A}}^{(n,1)} &\leq \frac{C_g^2}{\beta}\,k^{2- \beta} \,\bE \Big[ \big\{ 2 \|\nabla u(t_n)\|^2_{\bL^2} +2 \|\nabla v(t_{n-1/2})\|^2_{\bL^2} + \|\nabla e_u^n\|^2_{\bL^2} 
\\ &\qquad \qquad \qquad+ \|\nabla e_v^{n-1/2}\|^2_{\bL^2} \big\} |\Delta_n W|^2 \Big] + \frac{\beta}{6} k^{2+\beta} \,\bE \Big[ \|\nabla e_v^{n+1/2}\|^2_{\bL^2} \Big]
\\ &\leq C_{\beta} k^{3- \beta} + C_{\beta} k^{3- \beta} {\mathbb E}[\|\nabla e_u^n\|^2_{\bL^2}] + C_g^2 C_{\beta}\,k^{3- \beta} \,\bE \Big[ \|\nabla e_v^{n-1/2}\|^2_{\bL^2} \Big] 
\\ &\qquad+ \frac{\beta}{6} k^{2+\beta} \,\bE \Big[ \|\nabla e_v^{n+1/2}\|^2_{\bL^2} \Big]\, ,
\end{split}
\end{equation}
where $C_{\beta} \equiv C(\beta)>0$ is a constant for $\beta\in(0, \frac{1}{2})$ and the last two terms on the right-hand side may be absorbed on the left-hand side of (\ref{e1_1:6}) for $k \leq k_0$ sufficiently small, and $\beta < \frac{1}{2}$. Arguing similarly and by Lemma \ref{lem:Holder} $(iii)$ we infer
\begin{equation}\label{er-T4,1,B}
\begin{split} 
T_{4, 1, {\tt B}}^{(n,1)} &\leq C k^{3- \beta} +C_g^2\,k^{3- \beta} \,\bE \big[ \|\nabla e_v^{n-1/2}\|^2_{\bL^2} \big] + C k^{2+\beta}\, ,
\end{split}
\end{equation}
where the second term on right-hand side may be absorbed on the left-hand side of (\ref{e1_1:6}) for $k \leq k_0$ sufficiently small, and $\beta < \frac{1}{2}$.

\noindent
We now estimate $T_{4, 1}^{(n,2)}$: by properties of $\Delta_n W$,  {\bf (A3)} and Lemma \ref{lem:Holder} $(iii)$ we get
\begin{equation*}
\begin{split} 
T_{4, 1}^{(n,2)} &\leq C_{\tt L} k\, \bE \Big[ \big( \|\nabla u(t_{n}) - \nabla u^n\|_{\bL^2}^2 + \|v(t_{n-1/2}) - v^{n-1/2}\|_{\bL^2}^2 \big) |\Delta_n W|^2 \Big] 
\\ &\quad+ C \int_{t_n}^{t_{n+1}} \bE \Big[ \Big\| \Delta \Big[ u(s) -\frac{u(t_{n+1})+u(t_{n-1})}{2} \Big] \Big\|_{\bL^2}^2 \Big]
\\ &\leq Ck\,\bE \Big[ \|\nabla e_u^{n}\|_{\bL^2}^2 + \|e_v^{n}\|_{\bL^2}^2 + \|e_v^{n-1}\|_{\bL^2}^2 \Big] +C k^3\, .
\end{split}
\end{equation*}
Using similar arguments as for the estimate of \eqref{er-T4,1} we infer
\begin{equation*}
\begin{split} 
T_{4, 1}^{(n,3)} &\leq \frac{C_g^2}{\beta} k^{3+\beta} \bE \Big[ \|\nabla u(t_{n})\|_{\bL^2}^2 + 2 \|\nabla v(t_{n-1/2})\|_{\bL^2}^2 + \|\nabla u^{n}\|_{\bL^2}^2 + \|\nabla e_v^{n-1/2}\|_{\bL^2}^2 \Big] 
\\ &\quad + \frac{\beta}{6} k^{2+\beta}\, \bE \Big[ \|\nabla e_v^{n+1/2}\|_{\bL^2}^2 + \|\nabla v(t_{n+1/2})\|_{\bL^2}^2 \Big] 
\\ &\leq \frac{C_g^2}{\beta}\,k^{3+ \beta} \,\bE \Big[ \|\nabla e_v^{n-1/2}\|^2_{\bL^2} \Big] + \frac{\beta}{6} k^{2+\beta} \,\bE \Big[ \|\nabla e_v^{n+1/2}\|^2_{\bL^2} \Big] + C_{\beta} k^{2+\beta}\, .
\end{split}
\end{equation*}

\noindent
Using {\bf (A3)}, Lemma \ref{lem:Holder} $(ii)$, and properties of $\Delta_n W$, we estimate
\begin{equation}
\begin{split} 
T_{4, 1}^{(n,4)} &\leq C\, \bE \Big[ \| \sigma(u(t_n), v(t_{n-1/2})) - \sigma(u^n, v^{n-1/2})\|_{\bL^2}^2\,|\Delta_n W|^2 \Big]
\\ &\quad+ C \int_{t_n}^{t_{n+1}} \bE \Big[ \| \sigma\bigl(u(s), v(s)\bigr) - \sigma\bigl(u(t_n), v(t_{n-1/2})\bigr)\|_{\bL^2}^2 \Big]\,\rm{d}s
\\ \label{limiti}&\leq Ck\,\bE \Big[ \|\nabla e_u^{n}\|_{\bL^2}^2 + \|e_v^{n}\|_{\bL^2}^2 + \|e_v^{n-1}\|_{\bL^2}^2 \Big] 
\\ &\quad+ C_{\tt L} \int_{t_n}^{t_{n+1}} \bE \Big[ \|\nabla [u-u(t_n)]\|_{\bL^2}^2  + \|v-v(t_{n-1/2})]\|_{\bL^2}^2 \Big] 
\\ &\leq Ck\,\bE \Big[ \|\nabla e_u^{n}\|_{\bL^2}^2 + \|e_v^{n}\|_{\bL^2}^2 + \|e_v^{n-1}\|_{\bL^2}^2 \Big]  + C k^2\, .
\end{split}
\end{equation}
Using {\bf (A3)}, {\bf (A4)} for $m=1$, item {\bf 4.}~of Remark \ref{rema2}, and using Lemma \ref{lem:L2} $(i)$ (due to addition and subtraction of $v(t_n)$ term to $v^n$) we estimate
\begin{equation*}
\begin{split} 
T_{4, 1}^{(n,5)} &\leq k\, \bE \Big[ \|\sigma\bigl(u(t_n), v(t_{n-1/2})\bigr) - \sigma(u^n, v^{n-1/2}) \|_{\bL^2}^2 \Big] +\frac{\widehat{\alpha}^2}{4} k^3\,\bE \Big[ \|D_u\sigma(u^n, v^{n-\frac 12})\,v^n \|_{\bL^2}^2 \Big]
\\ &\leq C_{\tt L} k\,\bE \Big[ \|\nabla e_u^{n}\|_{\bL^2}^2 + \|e_v^{n- 1/2}\|_{\bL^2}^2 \Big] +\frac{\widehat{\alpha}^2}{4} C_g^2\, k^3\, \bE\big[ \|e_v^{n}\|_{\bL^2}^2 \big] + C k^3\, .
\end{split}
\end{equation*}
Similar arguments, in combination with the H\"older estimates in Section \ref{Hoe-cont} may be used to estimate $T_{4, 2}^{(n)}$  in \eqref{e1_1:9}. Now, we estimate the last term in the right-hand side of \eqref{e1_1:6}.

\noindent
Using {\bf (A4)} for $m=1$, It\^o isometry, and Lemma \ref{lem:L2} $(i)$ (due to addition and subtraction of $v(t_n)$ term to $v^n$),  we obtain
\begin{equation*}
\begin{split}
\bE\big[ T_5^{(n)} \big] &\leq \frac{\widehat{\alpha}^2}{k} \bE \Big[ \|D_u\sigma(u^n, v^{n-\frac 12}) v^n \|_{\bL^2}^2 \big|\widehat{\Delta_n W} \big|^2 \Big] + C k\,\bE \big[ \|e^{n+1}_v \|_{\bL^2}^2+\|e^{n}_v \|_{\bL^2}^2 \big] 
\\ &\leq \widehat{\alpha}^2\,C_g^2\,k^2\, \bE \big[ \|v^n \|_{\bL^2}^2 \big] + C k\,\bE \big[ \|e^{n+1}_v \|_{\bL^2}^2+\|e^{n}_v \|_{\bL^2}^2 \big] \leq C k^2+ C k\,\bE \big[ \|e^{n+1}_v \|_{\bL^2}^2+\|e^{n}_v \|_{\bL^2}^2 \big] \, .
\end{split}
\end{equation*}
We now insert these estimates into (\ref{e1_1:6}), for which we apply expectations, and sum over iteration steps. The implicit version of the discrete Gronwall lemma along with {\bf (B2)}
then yields the assertion, again provided $k \leq k_0$ is sufficiently small and $\beta \in (0, 1/2)$.
%, accumulating all the above estimates, and rearrangement of terms yield
%\begin{equation*}
%\begin{split}
%&\frac 12 \Big[ \|e^{n+1}_v \|_{\bL^2}^2 - \|e^{n}_v \|_{\bL^2}^2 \Big]  + \frac 14 \Big[ \|\nabla e^{n+1}_u \|_{\bL^2}^2 - \|\nabla e^{n-1}_u \|_{\bL^2}^2 \Big] 
%\\ &\leq C k^2 + C k \, \bE \Bigl[\| \nabla e^{n+1}_u\|^2_{\bL^2} 
%+ \| \nabla e^{n}_u\|^2_{\bL^2}  +\| \nabla e^{n-1}_u\|^2_{\bL^2} \Bigr] + C k \, \bE \Bigl[ \|e^{n+1}_v\|^2_{\bL^2} +  \|e^{n}_v\|^2_{\bL^2} + \|e^{n-1}_v\|^2_{\bL^2}\Bigr] \, .
%\end{split}
%\end{equation*}
%%
%where the constant $C>0$ depends on $C_{\tt L}$ and $C_g$.  
%Then, there exists $k \leq k_0 \equiv k_0(C_{\tt L}, C_g)$ such that the implicit version of the discrete Gronwall lemma yields the assertion.

\medskip

\noindent
{\bf 2) Proof of \eqref{pi}.}
Suppose $\sigma_2(v) \equiv 0 \equiv F_2(v)$ in {\bf (A3)}, and $\widehat{\alpha} = 0$. We combine both equations in the $(0, 0)-$scheme,
\begin{equation}\label{scheme2:3}
\bigl[u^{\ell+1} - u^\ell\bigr] - \bigl[u^\ell - u^{\ell-1}\bigr] = k^2 \Delta u^{\ell,1/2}  
+  \frac{k^2}{2} \bigl[ 3F(u^n) -F(u^{n-1})\bigr] + k \sigma(u^\ell)\Delta_\ell W  
\end{equation}
for all $1 \leq \ell \leq N$.
%
%{\color{blue}{
%Let $\widetilde{\alpha}=0$ in \eqref{scheme2:3}. The following strategy of proof is motivated by \cite{Dup_1973}. 
%%
%We denote $d_t u^{n+1} := \frac{1}{k} \big[ u^{n+1}-u^n \big]$.
%Thanks to \eqref{scheme2:3} we have for $1 \le \ell \le N$ 
%}}
%%The motivation is to avoid an error representation that 
%%involves temporal increments of $v$,
%%see e.g.\eqref{e1_1:3}. 
%%of \eqref{scheme1:3}
%\begin{equation*}\label{e1_2:1}
%\Bigl( d_t u^{\ell+1} - d_t u^\ell, \psi \Bigr) 
%+ k \Bigl( \nabla u^{\ell,1/2}, \nabla \psi \Bigr)\\
%= \Bigl(\sigma(u^\ell) \Delta_\ell W ,\psi \Bigr) 
%+\frac{k}{2} \Bigl( 3F(u^{\ell}) - F(u^{\ell-1}), \psi\Bigr)\, .
%\end{equation*}
Now sum over the first $n$ steps, and define $\uli^{n+1} := \sum_{\ell=1}^n {u}^{\ell+1}$. We arrive at  
\begin{equation}\label{e1_2:2}
\bigl[u^{n+1} - u^n\bigr]
- k^2 \Delta \uli^{n,1/2} = \bigl[ u^1 - u^0\bigr] 
+\frac{k^2}{2}  \sum^n_{\ell=1} \bigl[3F(u^\ell) - F(u^{\ell-1})\bigr] + k  \sum^n_{\ell=1} \sigma(u^\ell) \Delta_{\ell} W\, .
\end{equation}
We proceed correspondingly with \eqref{strong2}, which we integrate in time: thanks to \eqref{strong1}, we get ($0 \le \lambda \le \mu \le T$)
\begin{equation}\label{e1_2:3}
\begin{split}
&\bigl[u(\mu)  - u(\lambda)\bigr]
- \int_\lambda^\mu \int^s_0 \Delta u(\xi) \, \dd \xi\ds 
\\ &\quad = [\mu-\lambda]  v_0 + \int_\lambda^\mu \int^s_0  F \bigl(u(\xi)\bigr) \, \dd \xi\ds + \int_{\lambda}^{\mu} \int_0^s \sigma \bigl( u(\xi)\bigr)\, {\rm d}W(\xi){\rm d}s \, ,
\end{split}
\end{equation}
where $s \in [t_n, t_{n+1}]$.
Setting $\mu=t_{n+1}$, $\lambda=t_n$ in \eqref{e1_2:3}, subtracting \eqref{e1_2:2} from \eqref{e1_2:3} then leads to 
\begin{align}\label{error-eq}
\bigl[e^{n+1}_u -e^n_u\bigr] - k^2  \Delta \eli^{n,1/2}_u &= k v_0 - (u^1 - u^0)  + \underbrace{\int_{t_n}^{t_{n+1}} {\red{\int_0^s}} \sigma \bigl( u(\xi)\bigr)\, {\rm d}W(\xi)\,{\rm d}s - k  \sum^n_{\ell=1} \sigma(u^\ell) \Delta_{\ell} W}_{{\tt := I}} \nonumber
\\ &\quad+\underbrace{\int_{t_n}^{t_{n+1}} {\red{\int_{0}^{s} }}
 \Delta u(\xi) \,\dd \xi \,\dd s -  k^2 \sum_{\ell =1}^n \frac{\Delta \big[ u(t_{\ell+1}) + u(t_{\ell-1}) \big]}{2}}_{{\tt := II}}
 \\ &\quad+ \underbrace{\int_{t_n}^{t_{n+1}} {\red{\int^s_0}}  F \bigl(u(\xi)\bigr) \, \dd \xi\,\ds - \frac{k^2}{2}  \sum^n_{\ell=1} \bigl[3F(u^\ell) - F(u^{\ell-1})\bigr]}_{{\tt := III}}\,. \nonumber
\end{align}
We first rewrite the term ${\tt I}$ in a form which is more suitable to obtain error estimates. Consider the term 
{\red{
\begin{align}
\int_0^s \sigma \bigl( u(\xi)\bigr)\, {\rm d}W(\xi)\,{\rm d}s = \bigg( \sum_{\ell=0}^{n-1} \int_{t_{\ell}}^{t_{\ell+1}} + \int_{t_n}^{s} \bigg) \sigma \bigl( u(\xi)\bigr)\, {\rm d}W(\xi)\,{\rm d}s\,.
\end{align}
}}
By rearranging the terms, we can rewrite ${\tt I}$ as the sum of five terms which are suitable to obtain error estimates
\begin{equation}\label{sigma-rearr}
\begin{split}
{\tt I} &= k \sum^n_{\ell=1} \bigl[\sigma\bigl( u(t_\ell)\bigr)  - \sigma(u^\ell) \bigr] \Delta_{\ell} W  + \int_{t_n}^{t_{n+1}} \sum^{n-1}_{\ell=1} \int_{t_\ell}^{t_{\ell+1}} 
\bigl[\sigma \bigl( u(\xi) \bigr)  - \sigma\bigl(u(t_\ell)\bigr)\bigr] \dW(\xi)\ds
\\ &\quad + \int_{t_n}^{t_{n+1}} \int_{t_n}^{s} \big[ \sigma \bigl( u(\xi)\bigr) - \sigma \bigl( u(t_n)\bigr) \big] \, {\rm d}W(\xi)\,{\rm d}s + \int_{t_n}^{t_{n+1}} \int_{t_n}^{s} \sigma \bigl( u(t_n)\bigr)\, {\rm d}W(\xi)\,{\rm d}s
\\ &\quad + \int_{t_n}^{t_{n+1}} \int_{t_0}^{t_1} \sigma \bigl( u(\xi)\bigr) \, {\rm d}W(\xi)\,{\rm d}s \,.
\end{split}
\end{equation}
Term ${\tt II}$ gives the quadrature error, for which we aim to apply Lemma \ref{quadrature1}. This result can not directly be applied here as the second term  involves the evaluation of $u$ at times $t_{\ell+1}$ and $t_{\ell-1}$, which are at distance $2k$. Thus, we rewrite the following integral as ($t_0 =0$)
{\red{
\begin{equation}\label{split-deltau}
\begin{split}
\int_0^s \Delta u(\xi)\,\dd \xi &= \bigg(\frac{1}{2} \sum_{\ell=1}^n \int_{t_{\ell-1}}^{t_{\ell+1}} + \frac{1}{2} \int_{t_{0}}^{t_{1}} + \frac{1}{2} \int_{t_{n}}^{t_{n+1}} - \int_s^{t_{n+1}} \bigg) \Delta u(\xi)\,\dd \xi\,,
\end{split}
\end{equation}
}}
where the first term on the right-hand side is now suitable to use Lemma \ref{quadrature1}. 

\medskip

We are now ready for the error analysis. We multiply both sides of \eqref{error-eq} with ${e}_u^{n+1/2}$, and observe that

\begin{equation*}
\begin{split}
k^2 \Big( \nabla \eli^{n,1/2}_u, \nabla e_u^{n+1/2} \Big) 
&= \frac{k^2}{4} \Bigl( \nabla \eli^{n +1}_u + \nabla \eli^{n -1}_u, 
\nabla [\eli_u^{n+1} - \eli_u^{n-1}] \Bigl) 
= \frac{k^2}{4} \Bigl[ \| \nabla \eli^{n+1}_u\|^2_{\bL^2} - \| \nabla \eli^{n-1}_u\|^2_{\bL^2} \Bigr]\, .
\end{split}
\end{equation*}
Using this, and rearranging the terms on the right-hand side of \eqref{error-eq} leads to the following error equation
\begin{eqnarray}\nonumber
&&\frac{1}{2} \Bigl[ \|e_u^{n+1}\|^2_{\bL^2} - \|e_u^{n}\|^2_{\bL^2}\Bigr] 
+\frac{k^2}{4} \Bigl[ \| \nabla \eli^{n+1}_u\|^2_{\bL^2} - \| \nabla \eli^{n-1}_u\|^2_{\bL^2} \Bigr]
\\ \nonumber
&&\quad = \big( k v_0 - [u^1 - u^0], e^{n+1/2}_u \big) + k\Bigl( \sum^n_{\ell=1} \bigl[\sigma\bigl( u(t_\ell)\bigr)  - \sigma(u^\ell) \bigr] \Delta_{\ell} W,e_u^{n+1/2}\Bigr) \\ \nonumber
&&\qquad + \Bigl( \int_{t_n}^{t_{n+1}} \sum^{n-1}_{\ell=1} \int_{t_\ell}^{t_{\ell+1}} 
\bigl[\sigma \bigl( u(\xi) \bigr)  - \sigma\bigl(u(t_\ell)\bigr)\bigr] \dW(\xi)\ds, e^{n+1/2}_u \Bigr) \\ \nonumber
&&\qquad + \Bigl( \int_{t_n}^{t_{n+1}} {\red{\int_{t_n}^{s}}} \big[ \sigma \bigl(u(\xi)\bigr) - \sigma \bigl(u(t_n)\bigr) \big] \, \dW(\xi)\,\ds, e^{n+1/2}_u \Bigr) \\ \label{e1_2:5}&&\qquad + \Bigl( \int_{t_n}^{t_{n+1}} \int_{t_n}^{s} \sigma \bigl(u(t_n)\bigr) \, \dW(\xi)\,\ds, e^{n+1/2}_u \Bigr) + \Bigl( \int_{t_n}^{t_{n+1}} {\red{\int_{t_0}^{t_1}}} \sigma \bigl(u(\xi)\bigr) \, \dW(\xi)\,\ds, e^{n+1/2}_u \Bigr) \\ \nonumber
&&\qquad - \bigg( \int_{t_n}^{t_{n+1}} {\red{\int_{0}^{s}}} \nabla u(\xi) \, \dd \xi \, \ds - k^2 \sum_{\ell=1}^n \frac{\nabla \big[ u(t_{\ell+1}) + u(t_{\ell-1}) \big]}{2} , \nabla e^{n+1/2}_u \bigg) 
\\ \nonumber
&&\qquad + \int_{t_n}^{t_{n+1}} \sum^{n-1}_{\ell=1} \int_{t_\ell}^{t_{\ell+1}} 
\Bigl(F (u)  - \frac{1}{2} \bigl[3F\bigl(u(t_\ell)\bigr) 
- F\bigl(u(t_{\ell-1})\bigr)\bigr], e^{n+1/2}_u\Bigr) \, \dd \xi\ds
\\ \nonumber
&&\qquad + \frac{k^2}{2} \sum^{n-1}_{\ell=1} \Bigl( 3\bigl[F\bigl(u(t_\ell)\bigr) - F(u^\ell) \bigr] - \bigl[F(u(t_{\ell-1})) - F(u^{\ell-1})\bigr] , e_u^{n+1/2}\Bigr)
\\ \nonumber
&&\qquad + \int_{t_n}^{t_{n+1}} {\red{\int_{t_0}^{t_1}}} \Big( F\big(u(\xi) \big) , e^{n+1/2}_u\Bigr) \, \dd \xi\ds + \int_{t_n}^{t_{n+1}} {\red{\int_{t_n}^{s}}} \Big( F\big(u(\xi) \big), e^{n+1/2}_u\Bigr) \, \dd \xi\ds
\\ \nonumber
&&\qquad - \frac{k^2}{2} \Bigl( \big[3F(u^n) - F(u^{n-1})\bigr] , e_u^{n+1/2}\Bigr)
\\ \nonumber
&&\quad =:I^{n}_1+ I^{\ell, n}_2 +\ldots + I^{\ell, n}_{12}\, .
\end{eqnarray}
%where we use $ \bigl(e^{n+1}_u - e^n_u, e_u^{n+1/2} \bigr) 
%= \frac{1}{2} \bigl( \|e_u^{n+1}\|^2_{\bL^2} 
%- \|e_u^{n}\|^2_{\bL^2}\bigr) $
%and
%\begin{align}\label{e1_2:6}
%k^2 \Bigl( \nabla &\eli^{n+1/2}_u, \nabla e_u^{n+1/2}\Bigr) 
%= k^2 \Bigl( \nabla \eli^{n+1/2}_u, 
%\nabla \eli_u^{n+1/2} - \nabla\eli_u^{n-1/2}\Bigr) \\
%=&\frac{k^2}{2} \Bigl( \| \nabla \eli^{n+1/2}_u\|^2_{\bL^2} 
%- \| \nabla \eli^{n-1/2}_u\|^2_{\bL^2} 
%+ \|\nabla e^{n+1/2}_u\|^2_{\bL^2}\Big).\notag
%\end{eqnarray}
%
We take the expectation on both sides and estimate all the terms on the right-hand side of \eqref{e1_2:5} separately. We begin with the term $I^{\ell, n}_2$.

\smallskip

\noindent
{\bf a) Estimation of $\bE \bigl[I^{\ell, n}_2\bigr]$.}
 By It\^o isometry, and {\bf (A3)}, we have 
\begin{eqnarray}\label{dazu1}
\bE \bigl[I^{\ell, n}_2\bigr] 
&\le& Ck \, \bE \bigl[ \|e_u^{n+1/2}\|^2_{\bL^2} \bigr]
+ k\, \bE\Bigl[\Bigl\| \sum_{\ell=1}^n 
\bigl[\sigma\bigl( u(t_\ell)\bigr) 
- \sigma(u^\ell) \bigr] \Delta_{\ell} W\Bigr\|^2_{\bL^2}\Bigr] \\ \nonumber
%&=& Ck\, \bE \bigl[ \|e_u^{n+1/2}\|^2_{\bL^2} \bigr] 
%+  Ck^2\,  \sum_{\ell=1}^n  \bE\Bigl[\bigl\| 
%\sigma\bigl( u(t_\ell)\bigr) 
%- \sigma(u^\ell) \bigr\|^2_{\bL^2}\Bigr]
%\\ \nonumber
&\le&Ck\,  \bE \bigl[ \|e_u^{n+1/2}\|^2_{\bL^2} \bigr] 
+ \tilde{C}_{\tt L} k^2 \sum_{\ell=1}^n   
\bE\bigl[ \|e_u^\ell\|^2_{\bL^2}\bigr]\, .
\end{eqnarray}

\smallskip

\noindent
{\bf b) Estimation of $\bE \bigl[I^{\ell, n}_3\bigr]$.}
The term $I^{\ell, n}_3$ can be controlled by It\^o isometry, {\bf (A3)}, and Lemma~\ref{lem:Holder} $(i)$ as
\begin{eqnarray}\nonumber
\bE \bigl[I^{\ell, n}_3\bigr]  
&=& \bE \Bigl[\Bigl( \int_{t_n}^{t_{n+1}} \sum_{\ell=1}^{n-1} \int_{t_\ell}^{t_{\ell+1}} \bigl[\sigma \bigl(u(\xi)\bigr)  - \sigma\bigl(u(t_\ell)\bigr)\bigr] \, \dW(\xi)\ds, e^{n+1/2}_u \Bigr)\Bigr]
\\  \label{restri}
&\le& 
\tilde{C}_{\tt L} \int_{t_n}^{t_{n+1}} \sum_{\ell=1}^{n-1}
\int_{t_\ell}^{t_{\ell+1}}  \bE \Bigl[ \bigl\| u(\xi) - u(t_\ell)\bigr\|^2_{\bL^2} \Bigr] \dd \xi \,\dd s + Ck \, \bE \bigl[\|e_u^{n+1/2}\|^2_{\bL^2}\bigr] 
\\ \nonumber
&\le& \tilde{C}_{\tt L} k^3
+Ck \, \bE \bigl[\|e_u^{n+1}\|^2_{\bL^2} + \|e_u^n\|^2_{\bL^2}\bigr]\, .
\end{eqnarray}

\smallskip

\noindent
{\bf c) Estimation of $\bE \bigl[I^{\ell, n}_4\bigr]$.}
We use the similar arguments as for $\bE \bigl[I^{\ell, n}_3\bigr]$ to get
\begin{align*}
\bE \bigl[I^{\ell, n}_4\bigr]  &= \bE \Bigl[\Bigl( \int_{t_n}^{t_{n+1}} \int_{t_n}^{s} \bigl[\sigma \bigl(u(\xi)\bigr)  - \sigma\bigl(u(t_n)\bigr)\bigr] \, \dW(\xi)\ds, e^{n+1/2}_u \Bigr)\Bigr] 
\\ &\leq \tilde{C}_{\tt L} \int_{t_n}^{t_{n+1}} \int_{t_n}^{s} \bE \Bigl[ \bigl\| u(\xi) - u(t_n)\bigr\|^2_{\bL^2} \Bigr] \dd \xi \,\dd s + Ck \, \bE \bigl[\|e_u^{n+1/2}\|^2_{\bL^2}\bigr] 
\\ &\leq C k^4 +Ck \, \bE \bigl[\|e_u^{n+1/2}\|^2_{\bL^2}\bigr] \,.
\end{align*}

\smallskip

\noindent
{\bf d) Estimation of $\bE \bigl[I^{\ell, n}_5\bigr]$.} By independence of stochastic increments,
$$
\bE \bigl[I^{\ell, n}_5\bigr] = k \bE \Big[ \Big( \sigma \big(u(t_n) \big) \big( W(s) - W(t_n)\big), e_u^{n+1/2} \Big)\Big] = \frac{k}{2} \bE \Big[ \Big( \sigma \big(u(t_n) \big) \big( W(s) - W(t_n)\big), e_u^{n+1} - e_u^n \Big)\Big]\,.
$$
Using \eqref{e1_1:3} we rewrite
\begin{align*}
\bE \bigl[I^{\ell, n}_5\bigr] &= \frac{k}{2} \bE \Big[ \Big( \sigma \big(u(t_n) \big) \big( W(s) - W(t_n)\big), k e_v^{n+1} \Big)\Big]
\\ &\quad+ \frac{k}{2} \bE \Big[ \Big( \sigma \big(u(t_n) \big) \big( W(s) - W(t_n)\big), \int_{t_n}^{t_{n+1}} \big( v(s) - v(t_{n+1})\big)\,\dd s \Big)\Big] = \bE \bigl[I^{\ell, n}_{5; 1}\bigr] + \bE \bigl[I^{\ell, n}_{5; 2}\bigr]
\end{align*}
Using {\bf (A3)}, \eqref{eq-5.32} and the It\^o isometry,
\begin{align*}
\bE \bigl[I^{\ell, n}_{5; 1}\bigr] \leq C k^2 \,\bE\Big[ \big\| \sigma \big(u(t_n) \big) \big\|_{\bL^2}^2 \Big] \,\bE\Big[ |W(s) - W(t_n)|^2 \Big] + C k^2 \bE\big[ \|e_v^{n+1}\|_{\bL^2}^2 \big] \leq C k^3\,.
\end{align*}
Using the similar arguments and by Lemma \ref{lem:Holder} $(ii)$ we infer
\begin{align*}
\bE \bigl[I^{\ell, n}_{5; 2}\bigr] \leq C k^2 \,\bE\Big[ \big\| \sigma \big(u(t_n) \big) \big\|_{\bL^2}^2 \Big] \,\bE\Big[ |W(s) - W(t_n)|^2 \Big] + C k^2 \bE\Big[ \|v(s) - v(t_{n+1})\|_{\bL^2}^2 \Big] \leq C k^3\,.
\end{align*}

\smallskip

\noindent
{\bf e) Estimation of $\bE \bigl[I^{\ell, n}_6\bigr]$.} We add and subtract the term $\sigma(u_0)$ to get
\begin{align*}
\bE \bigl[I^{\ell, n}_6\bigr] &= \bE \bigg[ \Bigl( \int_{t_n}^{t_{n+1}} \int_{t_0}^{t_1} \big( \sigma \bigl(u(\xi)\bigr) - \sigma \bigl(u_0 \bigr) \big) \, \dW(\xi)\,\ds, e^{n+1/2}_u \Bigr) \bigg]
\\ &\quad+  \bE \bigg[ \Bigl( \int_{t_n}^{t_{n+1}} \int_{t_0}^{t_1} \sigma \bigl(u_0\bigr) \, \dW(\xi)\,\ds, e^{n+1/2}_u \Bigr) \bigg] := \bE \bigl[I^{\ell, n}_{6; 1}\bigr] + \bE \bigl[I^{\ell, n}_{6; 2}\bigr] \,,
\end{align*}
where the term $\bE \bigl[I^{\ell, n}_{6; 1}\bigr]$ will follow the similar arguments as for $\bE \bigl[I^{\ell, n}_4\bigr]$ in step {\bf c)} to yield 
\begin{align*}
\bE \bigl[I^{\ell, n}_{6; 1}\bigr] \leq C k^4 +Ck \, \bE \bigl[\|e_u^{n+1/2}\|^2_{\bL^2}\bigr] \,.
\end{align*}
The term 
$
\bE \bigl[I^{\ell, n}_{6; 2}\bigr] = k \bE \Big[ \big( \sigma \bigl(u_0\bigr) \Delta_0 W, e^{n+1/2}_u \big) \Big] 
$
will be merged with $I^n_1$\,.

\smallskip

\noindent
{\bf f) Estimation of $\bE \bigl[I^{\ell, n}_7\bigr]$.}
We will apply the quadrature formula (Lemma \ref{quadrature1}) to handle this term as mentioned above for the term ${\tt II}$. To get suitable terms for the error bounds, we use the identity \eqref{split-deltau} and again rearrange the terms by splitting the integral $\int_{t_{n}}^{t_{n+1}} \cdot \,\dd s$ into $\int_{t_{n}}^{s} \cdot \,\dd s + \int_{s}^{t_{n+1}} \cdot \,\dd s$,  to get
\begin{equation}\label{nab-xi}
\begin{split}
\int_0^s \nabla u(\xi)\,\dd \xi &= \frac{1}{2} \Bigg( \overbrace{\sum_{\ell=1}^n \int_{t_{\ell-1}}^{t_{\ell+1}} \nabla u(\xi)\,\dd \xi}^{:= {\tt I}_7} +  \Big( \overbrace{\int_{t_{0}}^{t_{1}} + \int_{t_{n}}^{s} - \int_{s}^{t_{n+1}} \Big) \nabla u(\xi)\,\dd \xi}^{:= {\tt II}_7} \Bigg) \,.
\end{split}
\end{equation}
Below, we handle the terms separately. 

\smallskip

\noindent
${\bf f}_1$)
Lemma~\ref{quadrature1} (with $T=2k$) now applies to handle ${\tt I}_7$.
 For this, we choose $f(\xi) = \bE\bigl[ (\nabla u(\xi), \nabla e_u^{n+1/2})\bigr]$ for all  $\xi \in [t_n, t_{n+1}]$. Using integration by parts, by Lemma~\ref{lem:Holder} $(iv)$, we have
$\gamma = \frac{1}{2}$ in (\ref{cond1}),
\begin{equation*}\label{e1_2:8}
\Bigl|\bE\Bigl[\Bigl(\nabla [v(t)- v(s)], \nabla e_u^{n+1/2} \Bigr)\Bigr]\Bigr|
%&&\quad \le \Bigl(\bE\bigl[ \| \nabla e_u^{n+1/2} \|^2_{\bL^2}\bigr]\Bigr)^{1/2}
%\Bigl(\bE\bigl[ \|\nabla [v(t)-v(s)]\|^2_{\bL^2}\bigr]\Bigr)^{1/2}
%\\ \nonumber
\le C \Bigl(\bE\bigl[ \| \nabla e_u^{n+1/2} \|^2_{\bL^2}\bigr]\Bigr)^{1/2} \vert t-s \vert^{\frac{1}{2}}\, .
\end{equation*}
We know that
\begin{align*}
\frac{1}{2k} \int_{t_{\ell-1}}^{t_{\ell+1}} f(\xi)\,\dd \xi - \frac{f(t_{\ell+1})+f(t_{\ell-1})}{2} = \frac{1}{2k} \bigg[ \int_{t_{\ell-1}}^{t_{\ell+1}} \Big\{ f(\xi) - \frac{f(t_{\ell+1})+f(t_{\ell-1})}{2} \Big\}\,\dd \xi \bigg]\,.
\end{align*}
As a consequence 
\begin{align*}\label{e1_2:9}
&\frac{1}{2k} \bigg| \int_{t_{\ell-1}}^{t_{\ell+1}} \bE\Bigl[\Bigl(\nabla \Big[u(\xi) -  \frac{u(t_{\ell+1}) + u(t_{\ell-1})}{2} \Big], \nabla e_u^{n+1/2} \Bigr)\Bigr] \, \dd \xi \bigg| 
\\ &= \bigg| \frac{1}{2k} \int_{t_{\ell-1}}^{t_{\ell+1}} f(\xi)\,\dd \xi - \frac{f(t_{\ell+1})+f(t_{\ell-1})}{2} \bigg|
\leq \frac{\tilde{C}}{(\frac{1}{2}+2)(\frac{1}{2}+3)} (2k)^{3/2} \Big( \bE\big[ \|\nabla e_u^{n+1/2}\|_{\bL^2}^2 \big] \Big)^{1/2}\,.
\end{align*} 
Using \eqref{eq-5.32}, we can finally bound the term with the integral from $t_n$ to $t_{n+1}$ by
\begin{align*}
\leq C k^{5/2} \Big( \bE\big[ \|\nabla e_u^{n+1/2}\|_{\bL^2}^2 \big] \Big)^{1/2} 
\leq C k^2 \bE\big[ \|\nabla e_u^{n+1/2}\|_{\bL^2}^2 \big] + C k^3\, \leq Ck^3\,.
\end{align*} 
%where $u(\xli) =: \bigl(u(t_\ell) + u(t_{\ell+1})\bigr)/2$.
%We use \eqref{e1_2:9} to  conclude

\smallskip

\noindent
${\bf f}_2$)
Consider the difference of the last two terms in ${\tt II}_7$. By adding and subtracting the terms $\frac{1}{2}\int_{t_n}^{t_{n+1}} \int_{t_n}^s \nabla u(t_n) \,\dd \xi \,\dd s$ and $\frac{1}{2} \int_{t_n}^{t_{n+1}} \int^{t_{n+1}}_s \nabla u(t_n) \,\dd \xi \,\dd s$ we get
\begin{align*}
&\frac{1}{2} \int_{t_n}^{t_{n+1}}  \bigg( \int_{t_{n}}^{s} \nabla u(\xi)\,\dd \xi - \int_{s}^{t_{n+1}} \nabla u(\xi) \,\dd \xi \bigg)\,\dd s 
\\ &= \overbrace{\frac{1}{2}  \int_{t_n}^{t_{n+1}} \int_{t_n}^s \nabla \Big( u(\xi) - u(t_n) \Big)\,\dd \xi \,\dd s}^{{\tt II}_{7, a}} - \overbrace{\frac{1}{2}  \int_{t_n}^{t_{n+1}} \int_{s}^{t_{n+1}} \nabla \Big( u(\xi) - u(t_n) \Big)\,\dd \xi \,\dd s}^{{\tt II}_{7, b}}
\\ &\quad+ \frac{1}{2} \nabla u(t_n) \bigg( \int_{t_n}^{t_{n+1}} \int_{t_n}^s \dd \xi \,\dd s - \int_{t_n}^{t_{n+1}} \int_{s}^{t_{n+1}} \dd \xi \,\dd s \bigg)\,,
\end{align*}
where the last term on the right-hand side vanishes as $\int_{t_n}^{t_{n+1}} \big[ (s- t_n) - (t_{n+1}-s)\big] \,\dd s = 0$. So, we estimate the first two terms only. By standard estimation,
\begin{align*}
&\int_{t_n}^{t_{n+1}} \bE \Big[ \Big({\tt II}_{7, a}, \nabla e_u^{n+1/2} \Big) \Big]\,\dd s
\\ &\leq C\,k^2 \int_{t_n}^{t_{n+1}} \bE \Big[ \big\|\nabla \big[ u(\xi) - u(t_n)\big] \big\|_{\bL^2}^2 \Big]\,\dd \xi + C k \bE \big[ \|\nabla e_u^{n+1/2}\|_{\bL^2}^2 \big]
\\ &\leq C\,k^5 + C k \bE \big[ \|\nabla e_u^{n+1/2}\|_{\bL^2}^2 \big]
\end{align*}
thanks to Lemma \ref{lem:Holder} $(ii)$.
Similarly, one may handle the term that involves ${\tt II}_{7, b}$. The only term left to handle from the right-hand side of \eqref{nab-xi} is $\int_{t_{0}}^{t_{1}} \nabla u(\xi)\,\dd \xi$. We can rewrite the term $\int_{t_n}^{t_{n+1}} \int_{t_{0}}^{t_{1}} \nabla u(\xi)\,\dd \xi\,\dd s$ as $\int_{t_n}^{t_{n+1}} \int_{t_{0}}^{t_{1}} \nabla \big[ u(\xi) - u_0 \big] \,\dd \xi\,\dd s + k^2 \nabla u_0\,.$ To estimate the first term, we proceed as before to conclude
\begin{align*}
&\Bigg| \int_{t_n}^{t_{n+1}} \bE \bigg[ \Big(\frac{1}{2} \int_{t_{0}}^{t_{1}} \nabla \big[ u(\xi) - u_0 \big] \,\dd \xi, \nabla e_u^{n+1/2} \Big) \bigg]\,\dd s  \Bigg|
\\ &\leq C \int_{t_n}^{t_{n+1}} \int_{t_{0}}^{t_{1}}  \xi^2\,\dd \xi \,\dd s + C k \bE\big[ \|\nabla e_u^{n+1/2}\|_{\bL^2}^2\big] \leq C k^4 + C k \bE\big[ \|\nabla e_u^{n+1/2}\|_{\bL^2}^2\big]\,.
\end{align*}
Now, we only need to handle the term $\bE \big[ \big( \frac{1}{2} k^2 \Delta u_0, e_u^{n+1/2} \big)\big]\,.$ This term will now be combined with the term $I^n_1$.

%
%\int_{t_n}^{t_{n+1}}\sum_{\ell=1}^{n}\Bigl(\bE\bigl[ \bigl\| \nabla e_u^{n+1/2} \bigr\|^2_{\bL^2}\bigr]\Bigr)^{1/2}\, \ds \\ \label{e1_2:10}
%&\le&\frac{T}{4}k\bE\bigl[ \bigl\| e_u^{n+1} \bigr\|^2_{\bL^2}\bigr]
%+\frac{T}{4}k\bE\bigl[ \bigl\| e_u^n \bigr\|^2_{\bL^2}\bigr]
%+K_6Tk^4\, .

\smallskip

\noindent
{\bf g) Estimation of $\bE \bigl[I^{\ell, n}_8\bigr]$.}
As a first step, we split it into two parts,
\begin{equation*}
\begin{split}
{\mathbb E}[I^{\ell, n}_8] &= 
\int_{t_n}^{t_{n+1}} \sum^{n-1}_{\ell=1} \int_{t_\ell}^{t_{\ell+1}} {\mathbb E}\Bigl[
\Bigl(F \bigl(u(\xi)\bigr) - \frac{1}{2} \bigl[F\bigl(u(t_{\ell})\bigr) + F\bigl(u(t_{\ell+1})\bigr) 
\bigr], e^{n+1/2}_u \Bigr)\Bigr] \, \dd \xi\ds \\ 
& \quad+ \frac{k^2}{2}\sum_{\ell=1}^n 
{\mathbb E}\Bigl[\Bigl( F\bigl( u(t_{\ell+1})\bigr) - 2 F\bigl(u(t_{\ell})\bigr) + F \bigl( u(t_{\ell-1})\bigr),e^{n+1/2}_u\Bigr)\Bigr] =: {\mathbb E} \big[ I^{\ell, n}_{8;1} + I^{\ell, n}_{8;2} \big]\, .
\end{split}
\end{equation*}
To handle these two terms, we use Lemma~\ref{quadrature1} with
$f(\xi) = \bE\bigl[ \bigl( F\bigl(u(\xi)\bigr), e_u^{n+1/2} \bigr)\bigr]$ where 
$\xi \in [t_n, t_{n+1}]$, and verify $\gamma = \frac{1}{2}$ in
(\ref{cond1}): by {\bf (A4)} for $m=1, 2$, the chain-rule and the mean-value theorem
\begin{equation}\label{dazu2}
\begin{split}
&\Bigl\vert \bigl(D_t F\bigl(u(t)\bigr) - D_t F\bigl(u(s)\bigr), e^{n+1/2}_u\bigr)\Bigr\vert 
\\ &=  \Bigl\vert \Bigl(D_u F\bigl(u(t)\bigr) v(t) - D_u F\bigl(u(s)\bigr) v(s), e^{n+1/2}_u\Bigr)
\Bigr\vert 
\\ &=\Bigl\vert \Big( \big(D_u F(u(t)) - D_u F(u(s)) \big) v(t) + D_u F(u(s)) (v(t)-v(s)), e^{n+1/2}_u \Big)\Bigr\vert
\\ &= \Bigl\vert \Big( \big(\underline{D_u^2 F}(u(t)-u(s)) \big) (v(t)) + D_u F(u(s)) (v(t)-v(s)), e^{n+1/2}_u \Big) \Bigr\vert
\\ &\leq \tilde{C}_g \,\Vert u(t) - u(s)\Vert_{{\mathbb H}^1} \Vert v(t)\Vert_{{\mathbb H}^1} \Vert e^{n+1/2}_u\Vert_{{\mathbb L}^2}+ \tilde{C}_g\, \Vert v(t) - v(s)\Vert_{{\mathbb L}^2} \Vert e^{n+1/2}_u\Vert_{{\mathbb L}^2}\, .
\end{split}
\end{equation}
where $\underline{D_u^2 F} := D_u^2 F(\widetilde{u}_{\rho})$ and $\widetilde{u}_{\rho} := \rho u(t) + (1-\rho) u(s)$, for some
$\rho \in [0,1]$. Lemma~\ref{lem:Holder} $(ii)$ then establishes $\gamma = \frac{1}{2}$ in (\ref{cond1}), and so Lemma~\ref{quadrature1} yields
$${\mathbb E}\bigl[ I^{\ell, n}_{8;1} \bigr]  \leq C k^{\frac{5}{2}}C \Bigl(\bE\bigl[ \bigl\| e_u^{n+1/2} \bigr\|^2_{\bL^2}\bigr]\Bigr)^{\frac{1}{2}} \leq Ck^4 + k\, \bE\bigl[ \bigl\| e_u^{n+1/2} \bigr\|^2_{\bL^2}\bigr]\, .$$
{{
In order to estimate ${\mathbb E}[I^{\ell, n}_{8;2}]$, we may write for some $\theta \in (0, 1)$
\begin{align*}
F(u(t_{\ell+1})) &= F(u(t_{\ell})) + D_u F(u(t_{\ell})) (u(t_{\ell+1}) - u(t_{\ell})) \\ &\quad+ \frac{1}{2} \Big( D_u^2 F\big(u(t_{\ell})+ \theta (u(t_{\ell+1}) - u(t_{\ell})) \big) (u(t_{\ell+1}) - u(t_{\ell})) \Big) \big(u(t_{\ell+1}) - u(t_{\ell}) \big) \,,
\end{align*}
and
\begin{align*}
F(u(t_{\ell-1})) &= F(u(t_{\ell})) + D_u F(u(t_{\ell})) (u(t_{\ell-1}) - u(t_{\ell})) \\ &\quad+ \frac{1}{2} \Big( D_u^2 F\big(u(t_{\ell})+ \theta (u(t_{\ell-1}) - u(t_{\ell})) \big) (u(t_{\ell-1}) - u(t_{\ell})) \Big) \big(u(t_{\ell-1}) - u(t_{\ell}) \big) \,.
\end{align*}
Then, adding the above two terms we get
\begin{align*}
&F\bigl( u(t_{\ell+1})\bigr) - 2 F\bigl( u(t_{\ell})\bigr) + F\bigl( u(t_{\ell-1}) \bigr)
\\ &= \underline{D_u F} \big(u(t_{\ell+1}) - 2 u(t_{\ell})+ u(t_{\ell-1}) \big) + \frac 12 \big(\underline{D_u^2 F} \,( u(t_{\ell+1}) - u(t_{\ell})) \big) (u(t_{\ell+1}) - u(t_{\ell})) 
\\ &\quad+ \frac 12 \big(\overline{D_u^2 F} \,( u(t_{\ell-1}) - u(t_{\ell}))\big)(u(t_{\ell-1}) - u(t_{\ell}))\, ,
\end{align*}
where $\underline{D_u F} := D_u F(u(t_{\ell}))$, $\underline{D_u^2 F} := D_u^2 F\big(u(t_{\ell})+ \theta (u(t_{\ell+1}) - u(t_{\ell})) \big)$ and $\overline{D_u^2 F} := D_u^2 F\big(u(t_{\ell})+ \theta (u(t_{\ell-1}) - u(t_{\ell})) \big)$. We begin with the first term on the right-hand side: first, by the mean value theorem, there exist $\zeta_1, \zeta_2  \in [0,1]$, such that
$$  u(t_{\ell+1}) - u(t_{\ell})  = k v\bigl( \zeta_1 t_{\ell}+[1-\zeta_1]t_{\ell+1}\bigr)\,, \qquad
- \bigl[u(t_{\ell}) - u(t_{\ell-1})\bigr]  = - k v\bigl( \zeta_2 t_{\ell-1}+[1-\zeta_2]t_{\ell}\bigr)\,.
$$
Hence, Lemma \ref{lem:Holder} $(ii)$ settles ${\mathcal O}(k^{\frac{3}{2}})$ for this term. If combined with Lemma \ref{lem:Holder} $(i)$, {\bf (A4)} for $m=1, 2$, we can conclude
\[{\mathbb E}[I^{\ell, n}_{8;2}]  \leq C {k^4} + k\, \bE\big[ \|e^{n+1/2}_u\big\|^2_{\bL^2} \big] \, .\]
}}
\del{
We use the mean-value theorem twice to conclude
$${\red{{\mathbb E}[I^{\ell, n}_{5;2}] \leq Ck^2 \sum_{\ell=1}^n {\mathbb E}\Bigl[ \Bigl(\Vert u (t_{\ell+1} ) - u (t_{\ell} )\Vert^2_{{\mathbb L}^2} + k\, \Vert d_t u (t_{\ell+1} ) - d_t u (t_{\ell} )\Vert_{{\mathbb L}^2} \Bigr)\Vert e^{n+1/2}_u\Vert_{{\mathbb L}^2}\Bigr].}}
$$
%$${\mathbb E}\bigl[F \bigl(u (t_{\ell+1} )\bigr) - F \bigl(u (t_{\ell} )\bigr)\bigr] \leq 
%{\mathbb E}\bigl[ \partial_u F(\bullet) \bigl(u (t_{\ell+1} ) - u (t_{\ell} )\bigr)\bigr]\, .$$
%\begin{eqnarray*}
%\frac{k^2}{2} \sum_{\ell=1}^n {\mathbb E}\Bigl[ \partial_u F \bigl( \bullet\bigr)\Bigr]
Using Lemma \ref{lem:Holder_L2_u} $(i)$, and the mean-value theorem one more time for the second summand in combination with Lemma \ref{lem:Holder_L2_u} $(i)$ and  \ref{lem:Holder_L2} $(i)$ we estimate further,
$$ \leq  Ck^2 \sum_{\ell=1}^n \bigl(k^2 + k^{\frac{3}{2}}\bigr)
%\Bigl({\mathbb E}\Bigl[ \Bigr]\Bigr)^{\frac{1}{2}} 
\Bigl({\mathbb E}\Bigl[ \Vert e^{n+1/2}_u\Vert^2_{{\mathbb L}^2}\Bigr]\Bigr)^{\frac{1}{2}} \leq Ck^4 + k\, \bE\bigl[ \bigl\| e_u^{n+1/2} \bigr\|^2_{\bL^2}\bigr]\, .$$
}

\noindent
{\bf h) Estimation of $\bE \bigl[I^{\ell, n}_9\bigr]$.}
Finally, by {\bf (A3)} we infer
\begin{align*}
{\mathbb E}[I^{\ell, n}_9] \leq Ck^2 \biggl(\sum_{\ell=1}^n {\mathbb E}\Bigl[ \Vert e_u^{\ell}\Vert^2_{{\mathbb L}^2} + \Vert e_u^{\ell-1}\Vert^2_{{\mathbb L}^2}\Bigr]
+ {\mathbb E}\bigl[\Vert e^{n+1/2}_u\Vert_{{\mathbb L}^2}^2\bigr]\biggr)\, .
\end{align*}

\noindent
{\bf i) Estimation of $\bE \bigl[I^{\ell, n}_{10}\bigr]$.}
Adding and subtracting $F(u_0)$ to the term we get,
\begin{align*}
\bE \bigl[I^{\ell, n}_{10}\bigr] = \bE\bigg[\int_{t_n}^{t_{n+1}} \int_{t_0}^{t_1} \Big( \big[ F(u(\xi)) - F(u_0)\big], e^{n+1/2}_u \Big) \bigg] \,\dd \xi \,\dd s +  \bE\Big[ \big( k^2 F(u_0), e^{n+1/2}_u\big) \Big]\,.
\end{align*}
Using previous arguments as before, by {\bf (A3)} and Lemma \ref{lem:Holder} $(i)$ we bound the first term on the right-hand side by $C k^4 + C k\, {\mathbb E}\bigl[\Vert e^{n+1/2}_u\Vert_{{\mathbb L}^2}^2\bigr]\,.$ The second term on the right-hand side will now be combined with the term $I^n_1$.

\smallskip

\noindent
{\bf j) Estimation of $\bE \bigl[I^{\ell, n}_{11} + I^{\ell, n}_{12}\bigr]$.}  Here, we consider the estimation of $I^{\ell, n}_{11}$ and $I^{\ell, n}_{12}$ together. We subtract the term $\frac{1}{2} \big[ 3 F(u(t_n)) - F(u(t_{n-1})) \bigr]$ from $I^{\ell, n}_{11}$ and add the corresponding term with $I^{\ell, n}_{12}$. Then, the new decomposition can be estimated similarly as $\bE \big[I^{\ell, n}_{8}\big]$ and $\bE \big[I^{\ell, n}_{9} \big]$ (see steps {\bf g)} and {\bf h)}, respectively) to finally get 
\begin{align*}
\bE \bigl[I^{\ell, n}_{11} + I^{\ell, n}_{12}\bigr] \leq C k^4 + Ck \bigg({\mathbb E}\Bigl[ \Vert e_u^{n}\Vert^2_{{\mathbb L}^2} + \Vert e_u^{n-1}\Vert^2_{{\mathbb L}^2}\Bigr] + {\mathbb E}\bigl[\Vert e^{n+1/2}_u\Vert_{{\mathbb L}^2}^2\bigr]\biggr)\,.
\end{align*}

\smallskip

\noindent
{\bf k) Estimation of $\bE \bigl[I^{n}_1\bigr]$.}
To estimate this term, we need to combine the terms $k \sigma \bigl(u_0\bigr) \Delta_0 W, \,\frac{k^2}{2} \Delta u_0$ and $k^2 F(u_0)$, which are coming from the steps ${\bf e)}, {\bf f}_2)$ and ${\bf i)}$, respectively. By the assumption (\ref{ini-1}) we infer
\begin{align*}
&{\mathbb E} \Big[ \Big( k v_0 - [u^1 - u^0] + \frac{k^2}{2} \Delta u_0 + k^2 F(u_0)+ k \sigma \bigl(u_0\bigr) \Delta_0 W,  e_u^{n+1/2} \Big) \Big] 
\\ & \leq \frac Ck\, {\mathbb E} \Big[ \big\| k v_0 - [u^1 - u^0] + \frac{k^2}{2} \Delta u_0 + k^2 F(u_0) + k \sigma \bigl(u_0\bigr) \Delta_0 W \big\|_{\bL^2}^2  \Big] + C k\,  {\mathbb E} \Big[ \Vert e^{n+1/2}_u \Vert^2_{{\mathbb L}^2} \Big] 
\\ &\leq C k^4 +Ck\,  {\mathbb E} \Big[ \Vert e^{n+1/2}_u \Vert^2_{{\mathbb L}^2} \Big]\,. 
\end{align*}

\smallskip

Now we combine all the above estimates in (\ref{e1_2:5}) in summarized form, then the implicit version of the discrete Gronwall lemma yields  assertion \eqref{pi}.

\medskip

\noindent
{\bf 3) Proof of \eqref{pii}.} Similar to (\ref{e1_2:2}), we have for $\widehat{\alpha} = 1$ 
%We begin from the restated version of \eqref{scheme2:3} for $\widehat{\alpha}=1$ and instead of (\ref{e1_2:2}) get the equation
\begin{equation}\label{tilde-comb-sum}
\begin{split}
&\big[u^{n+1} - u^n \big] - k^2 \Delta \uli^{n,1/2} 
- \big[ u^1 - u^0 \big] 
\\ &= k \sum^n_{\ell=1} \sigma(u^\ell) \Delta_{\ell} W  
+ \widehat{\alpha} k \sum^n_{\ell=1} D_u\sigma(u^\ell)v^{\ell} \widehat{\Delta_{\ell} W}+\frac{k^2}{2}  \sum^n_{\ell=1} \bigl[3F(u^\ell) - F(u^{\ell-1})\bigr]\, . 
\end{split}
\end{equation}
So the additional term on the right-hand side of the error equation \eqref{error-eq} is
\begin{align}\label{dist-th}
\widehat{\alpha} k\sum_{\ell=1}^n  
- D_u\sigma(u^\ell) v^{\ell}  \widetilde{\Delta_{\ell} W} + 
\widehat{\alpha} k\sum_{\ell=1}^n  
- D_u\sigma(u^\ell) v^{\ell} \bigl[
(\widehat{\Delta_{\ell} W}- \widetilde{\Delta_{\ell} W})\bigr] := \mathfrak{I}^{\ell,n}_{3,{\tt A}}
+ \mathfrak{I}^{\ell,n}_{3,{\tt B}}\,. 
\end{align}
We now follow the argumentation in {\bf 2)}:
multiplication with ${e}_u^{n+1/2}$ of the modified error equation (\ref{error-eq}) then leads to (\ref{e1_2:5}), where $\mathfrak{I}^{\ell,n}_{3,{\tt A}}$ is merged with $I^{\ell, n}_3$. Then, the sum $I^{\ell, n}_3+I^{\ell, n}_4 + I^{\ell, n}_5$ may be rewritten as the following sums
{\small{
\begin{equation}\label{e2_2:3}
\begin{split}
&\underbrace{\Bigl( \int_{t_n}^{t_{n+1}} \sum^{n-1}_{\ell=1} \int_{t_\ell}^{t_{\ell+1}} \Bigl[ D_u \sigma \bigl(u(t_\ell) \bigr)v(t_\ell) - D_u \sigma(u^\ell) v^{\ell} \Bigr](\xi - t_\ell) \, \dW(\xi)\ds, e_u^{n+1/2}\Bigr)}_{:= {I}^{\ell,n}_{3,{\tt A}_1}}\\ 
&\quad + \underbrace{\Bigl( \int_{t_n}^{t_{n+1}} \sum^{n-1}_{\ell=1} \int_{t_\ell}^{t_{\ell+1}} \Bigl[\sigma \bigl( u(\xi)\bigr)  - \sigma\bigl(u(t_\ell)\bigr) - D_u \sigma\bigl(u(t_\ell)\bigr) v(t_\ell)(\xi - t_\ell)\Bigr] \, \dW(\xi)\ds, e_u^{n+1/2} \Bigr)}_{:= {I}^{\ell,n}_{3,{\tt A}_2}} \\
&\quad + \underbrace{\Bigl( \int_{t_n}^{t_{n+1}} \int_{t_n}^{s} \Bigl[ D_u \sigma \bigl(u(t_n) \bigr)v(t_n) - D_u \sigma(u^n) v^{n} \Bigr](\xi - t_n) \, \dW(\xi)\ds, e_u^{n+1/2}\Bigr)}_{:= {I}^{\ell,n}_{3,{\tt A}_3}} \\ 
&\quad + \underbrace{\Bigl( \int_{t_n}^{t_{n+1}} \int_{t_n}^{s} \Bigl[\sigma \bigl( u(\xi)\bigr)  - \sigma\bigl(u(t_n)\bigr) - D_u \sigma\bigl(u(t_n)\bigr) v(t_n)(\xi - t_n)\Bigr] \, \dW(\xi)\ds, e_u^{n+1/2} \Bigr)}_{:= {I}^{\ell,n}_{3,{\tt A}_4}}\,.
\end{split}
\end{equation}
}}
We independently bound the other error terms in (\ref{e1_2:5}) in this modified setting:

\smallskip

\noindent
{\bf a)} To bound ${\mathbb E}[I^{\ell, n}_{3;{\tt A}_1}]$ in \eqref{e2_2:3}, we use It\^o isometry, the mean-value theorem, {\bf (A4)} for $m=1, 2$,  to get
\begin{equation}\label{I3;1_Itois}
\begin{split}
\bE\bigl[I^{\ell, n}_{3;{\tt A}_1}\bigr] &\leq k \bE \Bigl[ \frac{1}{k} \cdot \Big\| \int_{t_{n}}^{t_{n+1}} \sum^{n-1}_{\ell=1} \int_{t_{\ell}}^{t_{\ell+1}} \bigl[ D_u \sigma \bigl(u(t_{\ell}) \bigr)v(t_{\ell}) - D_u\sigma(u^{\ell}) v^{\ell} \bigr]
\\ &\qquad \qquad \times (\xi - t_{\ell}) \, \dW(\xi)\, \ds \Big\|_{\bL^2}^2 \Big] + k \, \bE\bigl[\|e_u^{n+1/2}\|^2_{\bL^2}\bigr]
\\ &\leq \int_{t_{n}}^{t_{n+1}} \sum^{n-1}_{\ell=1} \bE \Big[ \int_{t_{\ell}}^{t_{\ell+1}} \big\| D_u \sigma \bigl(u(t_{\ell}) \bigr)v(t_{\ell}) 
- D_u\sigma(u^{\ell}) v^{\ell}  \big\|^2_{\bL^2} 
\\ &\qquad \qquad \qquad \times (\xi - t_{\ell})^2 \,d \xi \,\ds \Big] + k \, \bE\bigl[\|e_u^{n+1/2}\|^2_{\bL^2}\bigr]
\\ &\le C k^4 \,\sum_{\ell=1}^{n-1} \bE\Bigl[\bigl\| D_u\sigma\bigl( u(t_{\ell})\bigr) v( t_{\ell}) - D_u\sigma(u^{\ell}) v^{\ell} \bigr\|^2_{\bL^2}  \Bigr]  + k \, \bE\bigl[\|e_u^{n+1/2}\|^2_{\bL^2}\bigr] 
\\ &\le Ck^4\,\sum_{\ell=1}^{n-1} \Bigl(\bE \Big[ \|e_v^{\ell} \|^2_{\bL^2} \Big]
+{\mathbb E} \Big[\widetilde{I^{\ell, n}_{3;{\tt A}_1}} \Big] \Bigr) + k \, \bE\Bigl[\|e_u^{n+1/2}\|^2_{\bL^2}\Bigr]\,,
\end{split}
\end{equation}
where $\widetilde{I^{\ell, n}_{3;{\tt A}_1}} := 
 \bigl\|[ D_u\sigma\bigl( u(t_{\ell})\bigr) - D_u\sigma(u^{\ell})] v\bigl( t_{\ell}\bigr)\bigr\|^2_{\bL^2}$. 
In order to handle the first term in the right-hand side, we estimate \eqref{I3;1_Itois} further by
 \begin{align*}
 \le Ck^2 \sum_{\ell=1}^{n-1} \Bigl(\bE \Big[ \|e_u^{\ell} \|^2_{\bL^2}+\|e_u^{\ell-1} \|^2_{\bL^2} \Big]
+ k^2 {\mathbb E} \Big[\widetilde{I^{\ell, n}_{3;{\tt A}_1}} \Big] \Bigr)+ k \, \bE\Bigl[\|e_u^{n+1/2}\|^2_{\bL^2}\Bigr]\,.
 \end{align*}
 We estimate the second term in the right-hand side as
 \begin{equation}\label{eq-embe}
 \begin{split}
C k^4 \sum_{\ell=1}^{n-1} {\mathbb E}\Big[\widetilde{I^{\ell, n}_{3;{\tt A}_1}} \Big]
&\leq C k^4 \sum_{\ell=1}^{n-1}
\bE\Bigl[ \|e_u^{\ell}\|_{\bL^2}^2 \|v(t_{\ell})\|_{\bL^{\infty}}^2 \Bigr] 
\\ &\leq C k \cdot k^3 \sum_{\ell=1}^{n-1}
\bE\Bigl[ \|e_u^{\ell}\|_{\bL^2} \| e_u^{\ell}\|_{\bL^2}  \|v(t_{\ell})\|_{\bL^{\infty}}^2 \Bigr] 
\\ &\leq Ck^2 \sum_{\ell=1}^{n-1} {\mathbb E} \Bigl[ \Vert e^{\ell}_u\Vert^2_{{\mathbb L}^2} \Bigr] + C k^6 \sum_{\ell=1}^{n-1} {\mathbb E} \Bigl[ \| e^{\ell}_u\Vert^4_{{\mathbb L}^2} \Bigr] + C k^6 \sum_{\ell=1}^{n-1} {\mathbb E} \Bigl[ \Vert v(t_{\ell})\Vert^8_{{\mathbb L}^{\infty}} \Bigr]\, ,
\end{split}
\end{equation}
where the last term on the right-hand side is bounded by $Ck^5$ due to Lemma \ref{lem:L2} $(iii)$. The second term on the right-hand side is bounded further by $C k^6 \sum_{\ell=1}^{n-1} {\mathbb E} \Bigl[ \Vert u(t_{\ell})\Vert^4_{{\mathbb L}^2} + \Vert u^{\ell}\Vert^4_{{\mathbb L}^2} \Bigr]$, which may be bounded by $Ck^5$, thanks to Lemma \ref{lem:L2} $(i)$ for $p=2$, and \eqref{energy2-himo}.

\smallskip

\noindent
{\bf b)} Now consider ${\mathbb E}[I^{\ell, n}_{3;{\tt A}_2}]$. Let $\xi \in [t_n, t_{n+1}]$; we use
the mean-value theorem twice, {\bf (A4)} for $m=1, 2$,  to conclude
\begin{equation}\label{sig-u-esti}
\begin{split}
&\bigl\Vert \sigma \bigl( u(\xi) \bigr)  
- \sigma\bigl(u(t_{\ell})\bigr) -D_u \sigma\bigl(u(t_{\ell})\bigr)v(t_{\ell})(\xi - t_{\ell}) \bigr\Vert^2_{{\mathbb L}^2}
 \\
& = \Bigl\Vert  [ D_u \sigma(\widetilde{u}_\zeta) - D_u \sigma\bigl(u(t_{\ell})\bigr)]\int_{t_{\ell}}^{\xi}
v(\eta)\, {\rm d}\eta + D_u \sigma\bigl(u(t_{\ell})\bigr)\int_{t_{\ell}}^{\xi}
\bigl[v(\eta)- v(t_{\ell})\bigr]\, {\rm d}\eta \Bigr\Vert^2_{{\mathbb L}^2} \\
& \leq C \Vert \nabla u(\xi) - \nabla u(t_{\ell}) \Vert_{{\mathbb L}^2}^4 + C k^2\, 
\sup_{t_{\ell} \leq \xi \leq t_{\ell+1}} \Vert v(\xi) - v(t_{\ell}) \Vert^2_{{\mathbb L}^2}\, ,
\end{split}
\end{equation}
where $\widetilde{u}_{\zeta} = \zeta u(\xi) + (1-\zeta) u(t_{\ell})$, for some
$\zeta \in [0,1]$. Thus, we have
\begin{equation*}
\begin{split}
{\mathbb E}[I^{\ell, n}_{3;{\tt A}_2}] 
&\leq Ck \, {\mathbb E}\Bigl[ \sup_{t_{\ell} \leq \xi \leq t_{\ell+1}} \bigl( \Vert \nabla u(\xi) - \nabla u(t_{\ell}) \Vert_{{\mathbb L}^2}^4 + k^2 \, 
\Vert v(\xi) - v(t_{\ell}) \Vert^2_{{\mathbb L}^2}\bigr) \Bigr] + k \, {\mathbb E} \big[ \Vert e^{n+1/2}_u\Vert^2_{{\mathbb L}^2} \big] 
\\ &\leq C k^4 + k \, {\mathbb E} \big[\Vert e^{n+1/2}_u\Vert^2_{{\mathbb L}^2} \big]\, .
\end{split}
\end{equation*}

\noindent
{\bf c)} We now consider ${\mathbb E}[I^{\ell, n}_{3;{\tt A}_3}]$. This term can be estimated by using the same arguments as for ${\mathbb E}[I^{\ell, n}_{3; {\tt A}_1}]$; see \eqref{I3;1_Itois} and \eqref{eq-embe}. Since we do not have the summation in this term, we will get
\[
{\mathbb E}[I^{\ell, n}_{3; {\tt A}_3}] \leq C k^6 + C k \, \bE\Bigl[\|e_u^{n+1}\|^2_{\bL^2} + \|e_u^{n}\|^2_{\bL^2}\Bigr]\,.
\]

\noindent
{\bf d)} Then, we consider ${\mathbb E}[I^{\ell, n}_{3;{\tt A}_4}]$, which will follow the same arguments as for ${\mathbb E}[I^{\ell, n}_{3; {\tt A}_2}]$; see \eqref{sig-u-esti}. We get the estimate
\[
{\mathbb E}[I^{\ell, n}_{3; {\tt A}_4}] \leq C k^5 + C k \, \bE\bigl[\|e_u^{n+1/2}\|^2_{\bL^2} \bigl]\,.
\]

\noindent
{\bf e)} To estimate the term involving $\mathfrak{I}^{\ell,n}_{3,{\tt B}}$, which is defined in \eqref{dist-th}, we use Young's inequality to write
\begin{equation*}%\label{untersch1}
\begin{split}
{\mathbb E}\bigl[\bigl(\mathfrak{I}^{\ell,n}_{3,{\tt B}}, {e}_u^{n+1/2} \bigr)\bigr]
&\leq \frac{\widehat{\alpha}^2}{k} k^2 \,\bE \Big[ \Big\| \sum_{\ell =1}^n D_u \sigma(u^{\ell}) v^{\ell} \big[ \widehat{\Delta_{\ell} W} - \widetilde{\Delta_{\ell} W} \big] \Big\|_{\bL^2}^2 \Big] + k\, \bE \big[ \Vert {e}_u^{n+1/2}\Vert^2_{{\mathbb L}^2} \big]\,.
\end{split}
\end{equation*} 
Then, we use {\bf (A4)} for $m=1$, and independence of increments $\Delta_nW$ to get
\begin{equation*}
\begin{split}
&\leq \widehat{\alpha}^2\,C_g^2\, k \,\sum_{\ell =1}^n \bE \Big[ \| D_u \sigma(u^{\ell}) v^{\ell} \|_{\bL^2}^2 \big| \widehat{\Delta_{\ell} W} - \widetilde{\Delta_{\ell} W} \big|^2 \Big] + k\, \bE \big[ \Vert {e}_u^{n+1/2}\Vert^2_{{\mathbb L}^2} \big]\,.
\end{split}
\end{equation*} 
Finally, we use \eqref{dist-tilde-hat} and \eqref{energy1} of Lemma \ref{lem:scheme1:stab} to obtain
\begin{equation}\label{untersch1}
\begin{split}
&\leq \widehat{\alpha}^2\,C_g^2\, k^5 \,\sum_{\ell =1}^n \bE \big[ \|v^{\ell} \|_{\bL^2}^2 \big] + k\, \bE \big[ \Vert {e}_u^{n+1/2}\Vert^2_{{\mathbb L}^2} \big]
\leq C k^4 + k\, \bE \big[ \Vert {e}_u^{n+1/2}\Vert^2_{{\mathbb L}^2} \big]\, .
\end{split}
\end{equation} 
\smallskip

\noindent
{\bf f)} We may modify the argument in part {\bf 2)} to improve the order for the for the order limiting term ${\tt I}_7$ in $I^{\ell, n}_7$; see step {\bf f}$_1$).  Using integration by parts and using
Lemma~\ref{lem:Holder} $(iv)$ instead, we verify  (\ref{cond1}) of Lemma \ref{quadrature1} for $\gamma=1/2$ (by choosing $f(\xi) = \bE\bigl[ (\nabla u(\xi), \nabla e_u^{n+1/2})\bigr]$ for all $\xi \in [t_n, t_{n+1}]$ ) to get
\begin{equation*}
\Bigl|\bE\Bigl[\bigl(\nabla [v(t)- v(s)], \nabla e_u^{n+1/2} \bigr)\Bigr]\Bigr|
\le C \Bigl(\bE\bigl[ \| e_u^{n+1/2} \|^2_{\bL^2}\bigr]\Bigr)^{1/2} \vert t-s \vert^{\frac{1}{2}}\, .
\end{equation*}
Using this estimate we may bound the term in ${\bf f}_1)$ by
\begin{equation*}
\le Ck^{\frac{5}{2}} \Bigl(\bE\bigl[ \bigl\| e_u^{n+1/2} \bigr\|^2_{\bL^2}\bigr]\Bigr)^{1/2} \leq Ck\,  {\mathbb E}\bigl[ \Vert e^{n+1/2}_u\Vert^2_{{\mathbb L}^2} \bigr] + Ck^4\, .
\end{equation*}

Thanks to the above estimates, and after summation over all iteration steps in (\ref{e1_2:5}) we may then conclude assertion \eqref{pii}. 
\end{proof}

\bigskip

\section{Computational experiments}\label{sec-6}

In this section, we provide computational studies to check 
\begin{itemize}
\item
how essential the assumptions {\bf (A1)--(A5)} and {\bf (B1)--(B2)} ({\em i.e.,} needed in Sections \ref{Mat-fra}--\ref{sec-4}) are in actual computations. In this respect, we computationally study the impact of rough initial data $(u_0, v_0)$ on the discrete dynamics, as well as of drift nonlinearities $F$ (see Example \ref{exm-alp7}). 
\item
If the diffusion $\sigma \equiv \sigma(v)$ and the drift $F \equiv 0$, then there is a reduction of convergence order as proved in \eqref{eq-5.32} of Theorem \ref{lem:scheme1:con}; see Example \ref{ex-sv}. 
\item
The diffusion $\sigma \equiv \sigma(u,v) = 0$ on the boundary, and satisfies {\bf (A3)}. Example \ref{exm-alp8}  discusses the effect that noise has, which is non-homogeneous on the boundary, or violates {\bf (A3)}.
\item
By Theorem \ref{lem:scheme1:stab}, $\beta$ in the $(\widehat{\alpha}, \beta)$-scheme needs be chosen from $(0, 1/2)$ to ensure stable, accurate simulation of \eqref{stoch-wave1:1a} with $\sigma \equiv \sigma(u,v)$ and $F \equiv F(u,v)$. The  simulations in Example \ref{exa-bet1} evidence a small choice for $\beta$ for faster Monte Carlo approximation.
\end{itemize}

\medskip

We use the lowest order conforming finite element method to simulate the $(\widehat{\alpha}, \beta)-$scheme on a regular triangulation $\mathcal{T}_h$ of $\mathcal{O}$; see \cite{BS_2008}. 
Let the finite element space be
\begin{align*}
\bVh := \bigl\{u_h \in \bH^1_0: \  u_h \bigl\vert_K \in \mathcal{P}_1(K) \quad \forall \, K \in \cT_h \bigr\}\, ,
\end{align*}
where $\mathcal{P}_1(K)$ denotes the space of polynomials of degree one on $K \in \mathcal{T}_h$. \\

\noindent
As initial data, we choose $u^1$ and $v^1$ as
\begin{align}\label{u1+v1}
\begin{cases}
u^1 &= u_0 +k\,v_0 + \frac{k^2}{2} \Delta u_0 + k^2 F(u_0) + (k+k^2) \,\sigma(u_0) \Delta_0 W\,, 
\\ v^1 &= v_0 +k \sigma(u_0) W(t_1)\,,
\end{cases}
\end{align}
where $u_0, v_0$ (not finite element valued) satisfy assumptions {\bf (A1)$_{iv}$} and {\bf (B2)}. Recall the definitions for $\widetilde{u}^{n, 1/2}$ and $\widehat{\Delta_n W}$ in \eqref{tidt1} and \eqref{W-hat}, respectively.  We implement the following scheme:

\begin{scheme}  Let $\widehat{\alpha} \in \{0, 1\}$, and $0 \leq \beta < \frac{1}{2}$. Let $\{ t_n\}_{n=0}^N$ be a mesh of size $k>0$ covering $[0, T]$, and \eqref{u1+v1}.  For every $n \ge 1$, find a
$[{\mathbb V}_h]^2$-valued, 
${\mathcal F}_{t_{n+1}}$-measurable random variable~$(u_h^{n+1}, v_h^{n+1})$ such that 
\begin{align}
\label{scheme1:1FEM}
\bigl(u_h^{n+1} - u_h^n, \phi_h) &= 
k(v_h^{n+1},\phi_h) 
&\forall \phi_h \in {\mathbb V}_h\, , \\ 
\notag
(v_h^{n+1}-v_h^n, \psi_h) &= -k \big(\nabla \widetilde{u}_h^{n, 1/2}, \nabla \psi_h \big)  
+ \Bigl(\sigma(u^n_h, v^{n-\frac{1}{2}}_h)\Delta_n W, \psi_h \Bigr)
&\\ 
&\quad + \widehat{\alpha} \,\Bigl(D_{u} \sigma(u_h^{n}, v_h^{n-\frac{1}{2}}) v_h^{n} \,\widehat{\Delta_n W}, \psi_h \Bigr) \label{scheme1:2FEM}  &\\ 
&\quad +\frac{k}{2} \Bigl( 3 F(u_h^n, v_h^n) 
-  F(u_h^{n-1}, v_h^{n-1}), \psi_h \Bigr) 
&\forall \psi_h \in {\mathbb V}_h\, . \notag
\end{align} 
\end{scheme}

\smallskip

\subsection{Convergence rates}

The numerical experiments are performed using MATLAB. In this section, for all the examples we choose $\cO = (0, 1)$, $T=1$, $A = -\Delta$ in \eqref{stoch-wave1:1a}. We choose $u_0(x) = \sin(2 \pi x)$ and $v_0(x) = \sin(3 \pi x)$, and $u^1, v^1$ are chosen as in \eqref{u1+v1}. A reference solution is computed with a step size $k_{\tt ref}=2^{-7}$ and $h_{\tt ref}=2^{-7}$ to approximate the exact solution and the sample Wiener processes $W$. The expected values are approximated by computing averages over ${\tt MC} = 3000$ number of samples. The plots are shown for the time steps $k= \{2^{-3}, \cdots, 2^{-6}\}$.  

Example \ref{exm-alp0} in Section 1 provides computational evidence for the improved convergence rate $\cO(k^{3/2})$ for the scheme \eqref{scheme2:1}--\eqref{scheme2:2} with $\widehat{\alpha}=1$ in the situations where $\sigma \equiv \sigma(u)$. In the following example, we consider $\sigma \equiv \sigma(v)$, and find a convergence rates of ${\mathcal O}(k^{1/2})$ in simulations {\rm (A)--(C)} of Fig. \ref{conv-F-sv}, which validates \eqref{eq-5.32} of Theorem \ref{lem:scheme1:con}. So we observe a reduction of convergence order if compared to Example \ref{exm-alp0}, where $\sigma \equiv \sigma(u)$.

\smallskip

%%%
\del{
\begin{example}[{\bf Additive noise}]\label{ex-additive-s}
In Example \ref{exm-alp0}, take $\sigma(x)= 4x(1-x), \ x \in [0, 1]$. Let
$u_0(x) = \sin(2 \pi x) $ and $ v_0(x) = \sin(3 \pi x)$ and $u^1, v^1$ are chosen as \eqref{u1+v1}.
The $\bL^2$-error for $u$ and $\nabla u$ are computed with the scheme \eqref{scheme2:1}--\eqref{scheme2:2} and Fig. \ref{additive} confirm the convergence order $\cO(k^{3/2})$ and $\bL^2$-error for $v$ confirms $\cO(k)$.

\smallskip

\begin{figure}[h!]
  \centering
   \subfloat[$\bL^2$-error for $u$]{\includegraphics[width=0.33\textwidth]{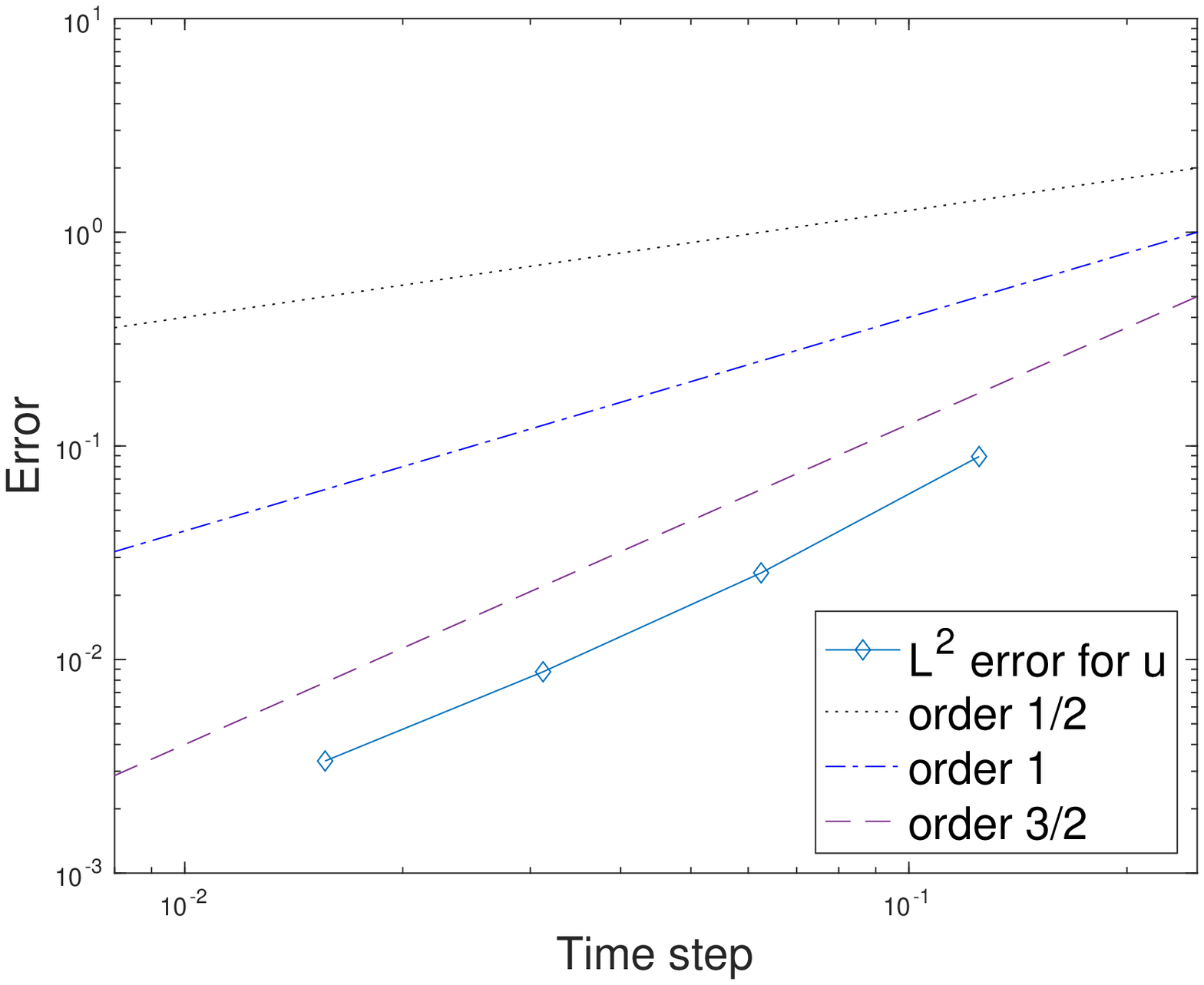}\label{fig:y1}}
  \hfill
  \subfloat[$\bL^2$-error for $\nabla u$]{\includegraphics[width=0.33\textwidth]{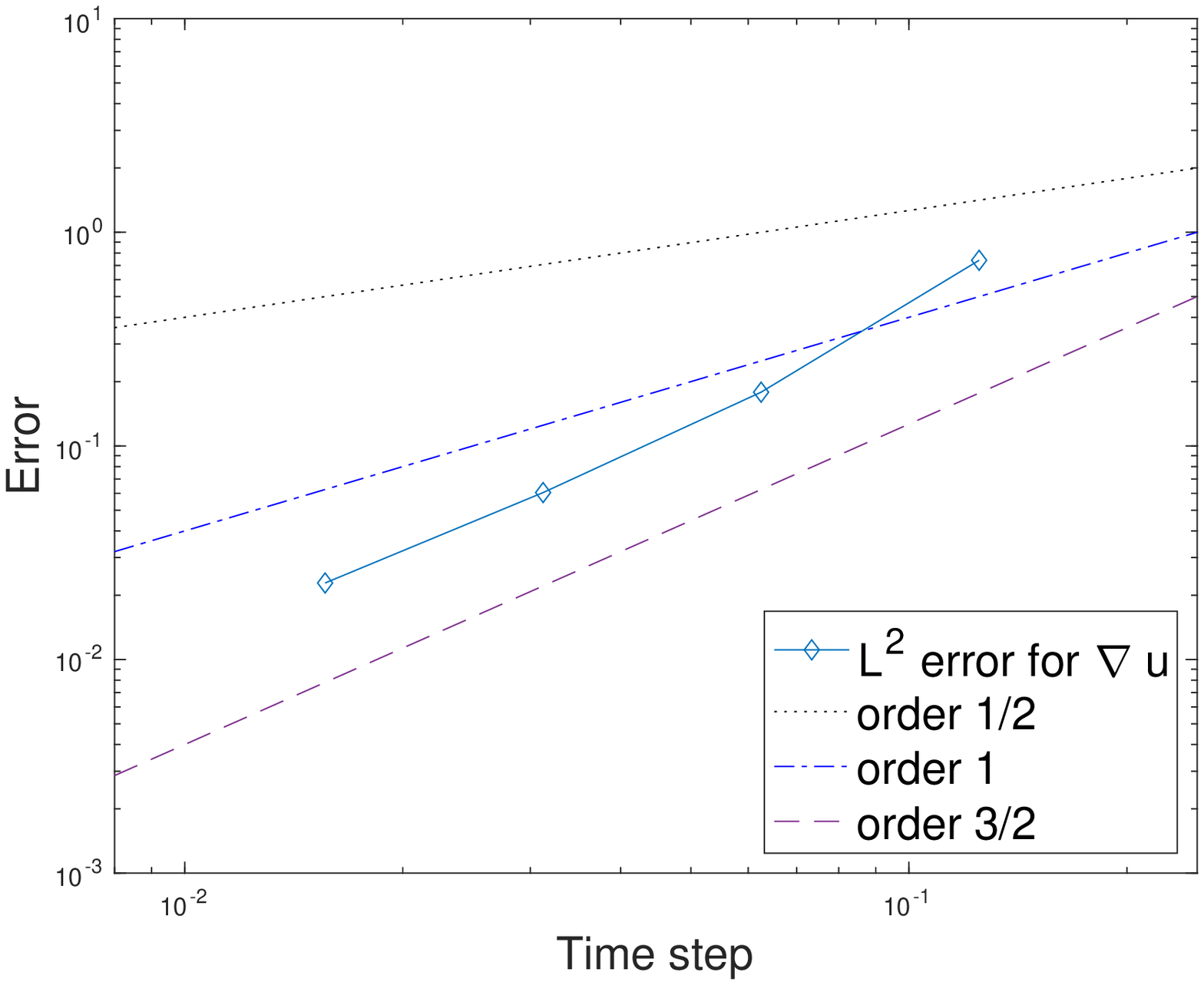}\label{fig:y2}}
  \hfill
  \subfloat[$\bL^2$-error for $v$]{\includegraphics[width=0.33\textwidth]{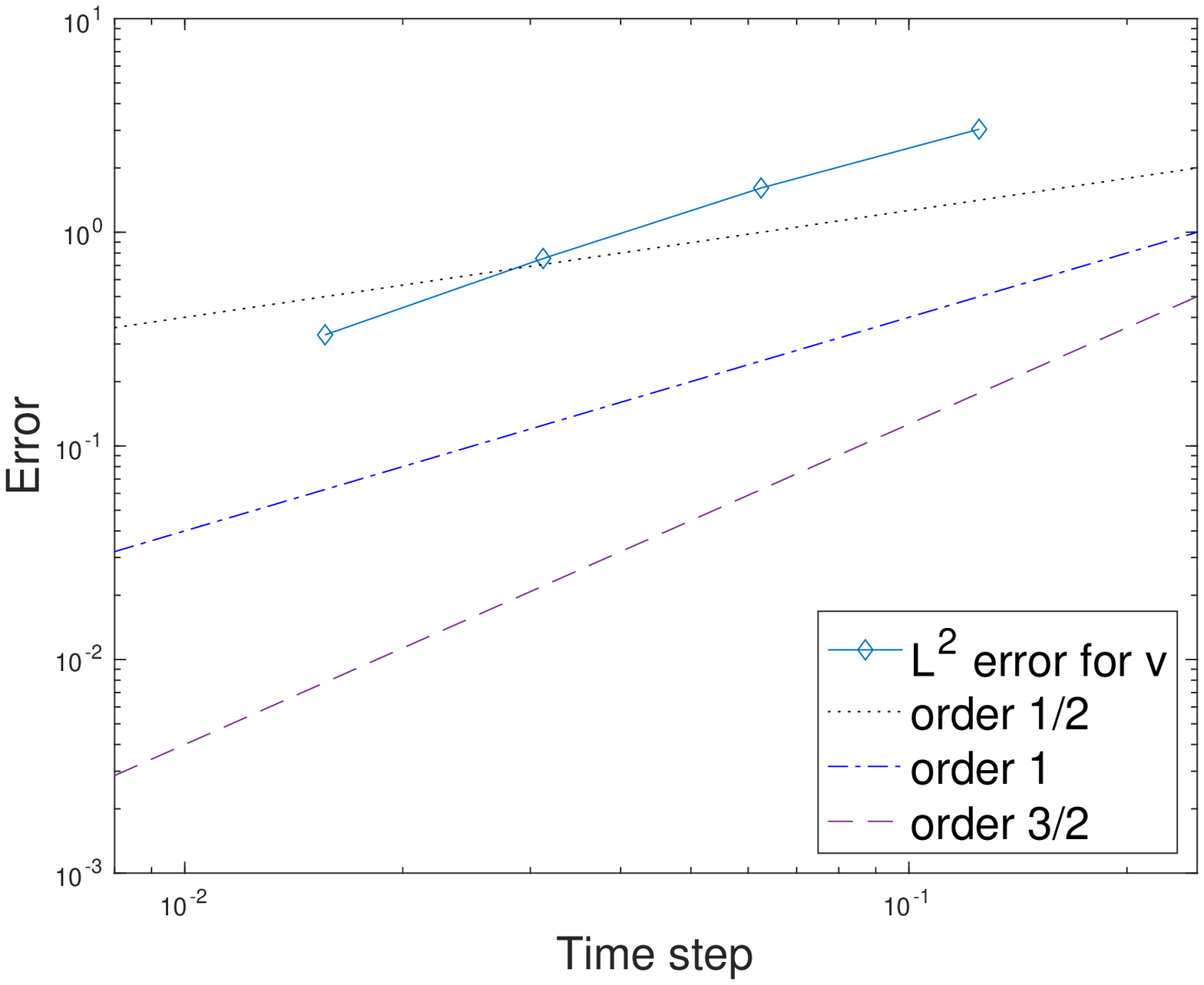}}\label{fig:y3} \caption{{\bf (Example \ref{ex-additive-s})} Temporal rates of convergence for $(\widehat{\alpha}, 0)-$scheme for $\widehat{\alpha}=1$ with $\sigma(x)= 4x(1-x)$; discretization parameters: $h=2^{-7}, k=\{2^{-3}, \cdots, 2^{-6}\}$, ${\tt MC}=3000$.} 
\label{additive}
\end{figure}
\end{example}
}
%%%

\begin{example}\label{ex-sv}
Consider $\sigma(v) = \frac{3}{2} v$ and $F \equiv 0$. Fig. \ref{conv-F-sv} displays convergence studies for the $(\widehat{\alpha}, \beta)-$scheme for $\widehat{\alpha}=1$ and $\beta=1/4:$  the plots {\rm (A)--(C)} of $\bL^2$-errors in $u, \nabla u$ and $v$, respectively, confirm convergence order ${\mathcal O}(k^{1/2})$; see \eqref{eq-5.32} of  Theorem \ref{lem:scheme1:con}.
\begin{figure}[h!]
  \centering
  \subfloat[$\bL^2$-error for $u$]{\includegraphics[width=0.33\textwidth]{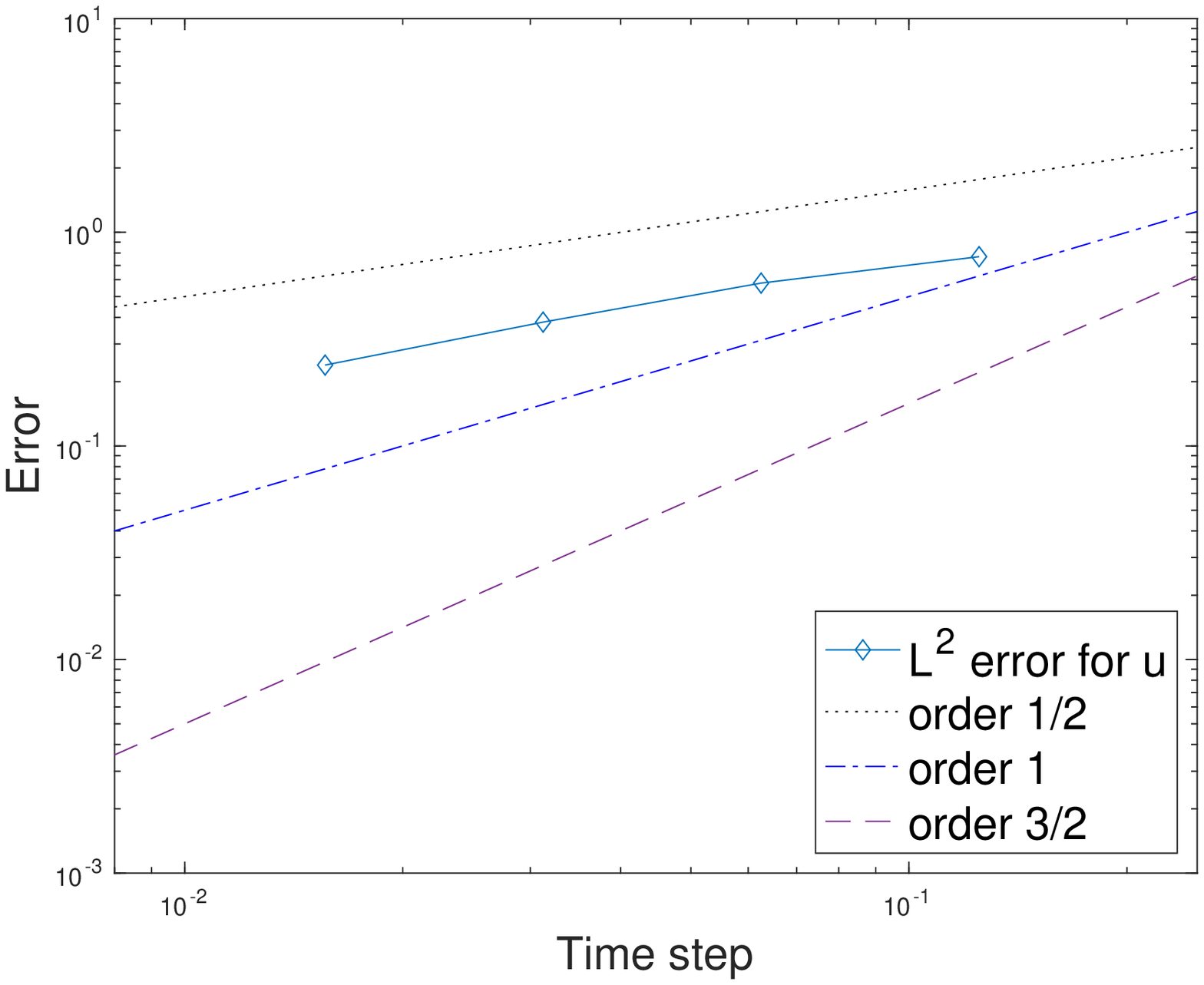}\label{fig--1}}
  \hfill
  \subfloat[$\bL^2$-error for $\nabla u$]{\includegraphics[width=0.33\textwidth]{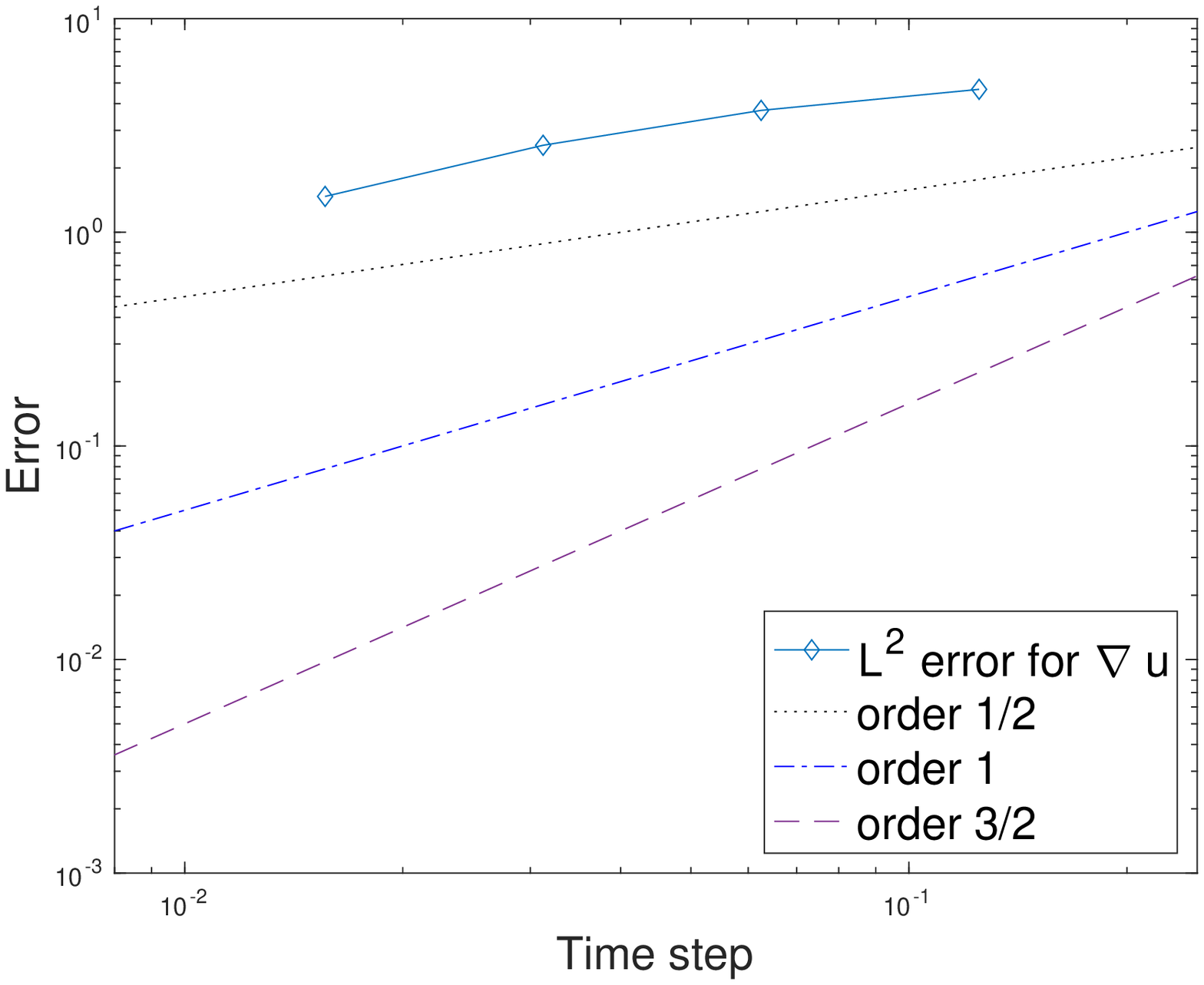}\label{fig--2}}
  \hfill
  \subfloat[$\bL^2$-error for $v$]{\includegraphics[width=0.33\textwidth]{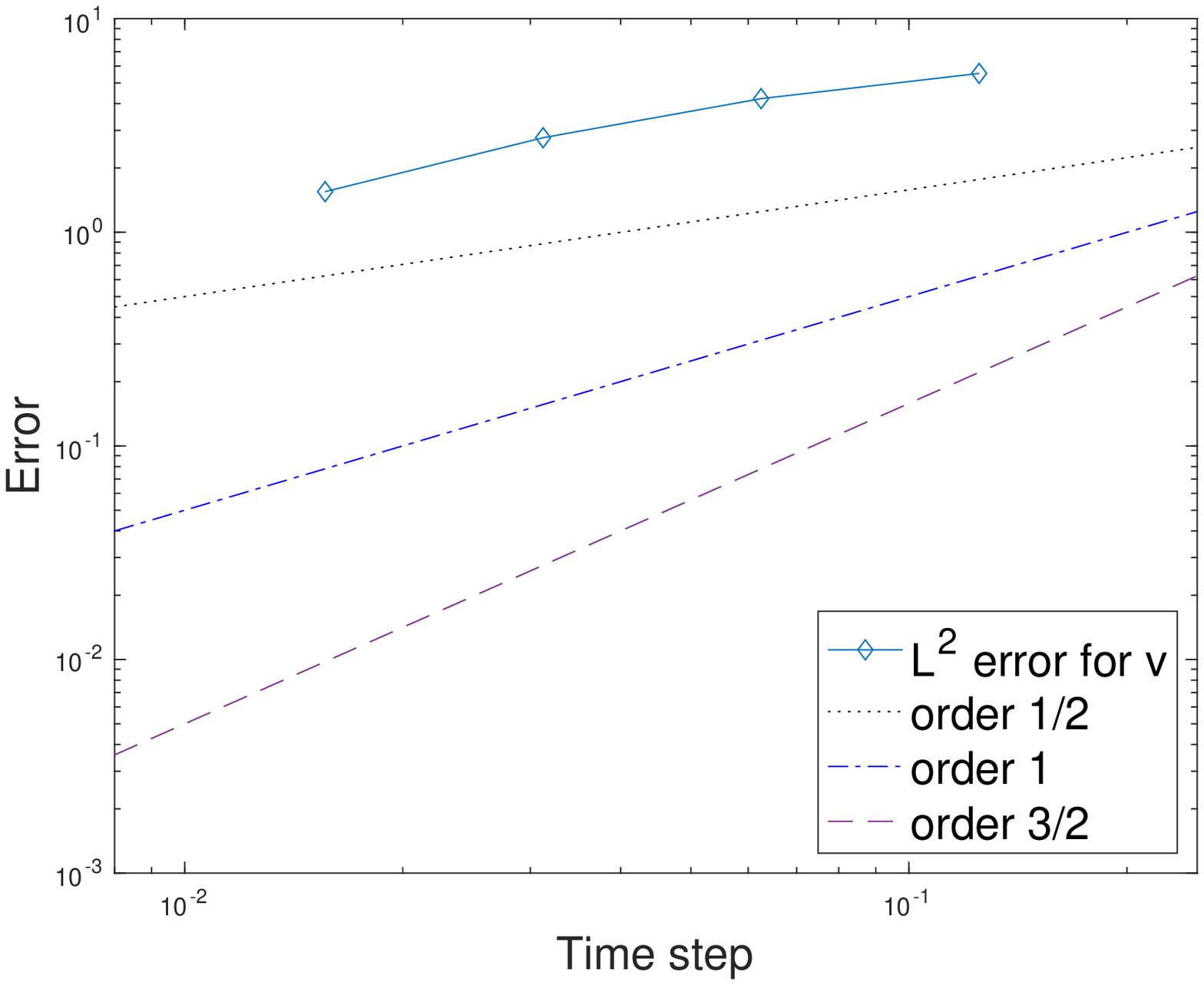}\label{fig--2}}
 % \subfloat[]{\includegraphics[width=0.5\textwidth]{fig-3_uv_err.eps}\label{fig:f3}}
 % \hfill
 % \subfloat[]{\includegraphics[width=0.5\textwidth]{fig-4_uL2_err.eps}\label{fig:f4}}
  \caption{{\bf (Example \ref{ex-sv})} Rates of convergence of the $\big(1, \frac{1}{4} \big)-$scheme with $\sigma(v) = \frac{3}{2} v$ and $F \equiv 0$.}
\label{conv-F-sv}
\end{figure}

\end{example}

\del{
\begin{example}\label{exm-alp6}
{\blue{In Example \ref{ex-additive-s}, take $\sigma(x) = 2(x+1)(x+2), \ x \in [0, 1]$. The errors are computed via the $(\widehat{\alpha}, 0)-$scheme. We compare the Fig. \ref{additive} with Fig. \ref{no+bou} to observe the reduction of convergence order for $\nabla u$ to $\cO(k)$ in plot {\rm (B)}; in plots {\rm (A)} and {\rm (C)}, $\bL^2$-errors in $u$ and $v$ remain the same as $\cO(k^{3/2})$ and $\cO(k)$, respectively.
}}
\smallskip

\begin{figure}[h!]
  \subfloat[$\bL^2$-error for $u$]{\includegraphics[width=0.33\textwidth]{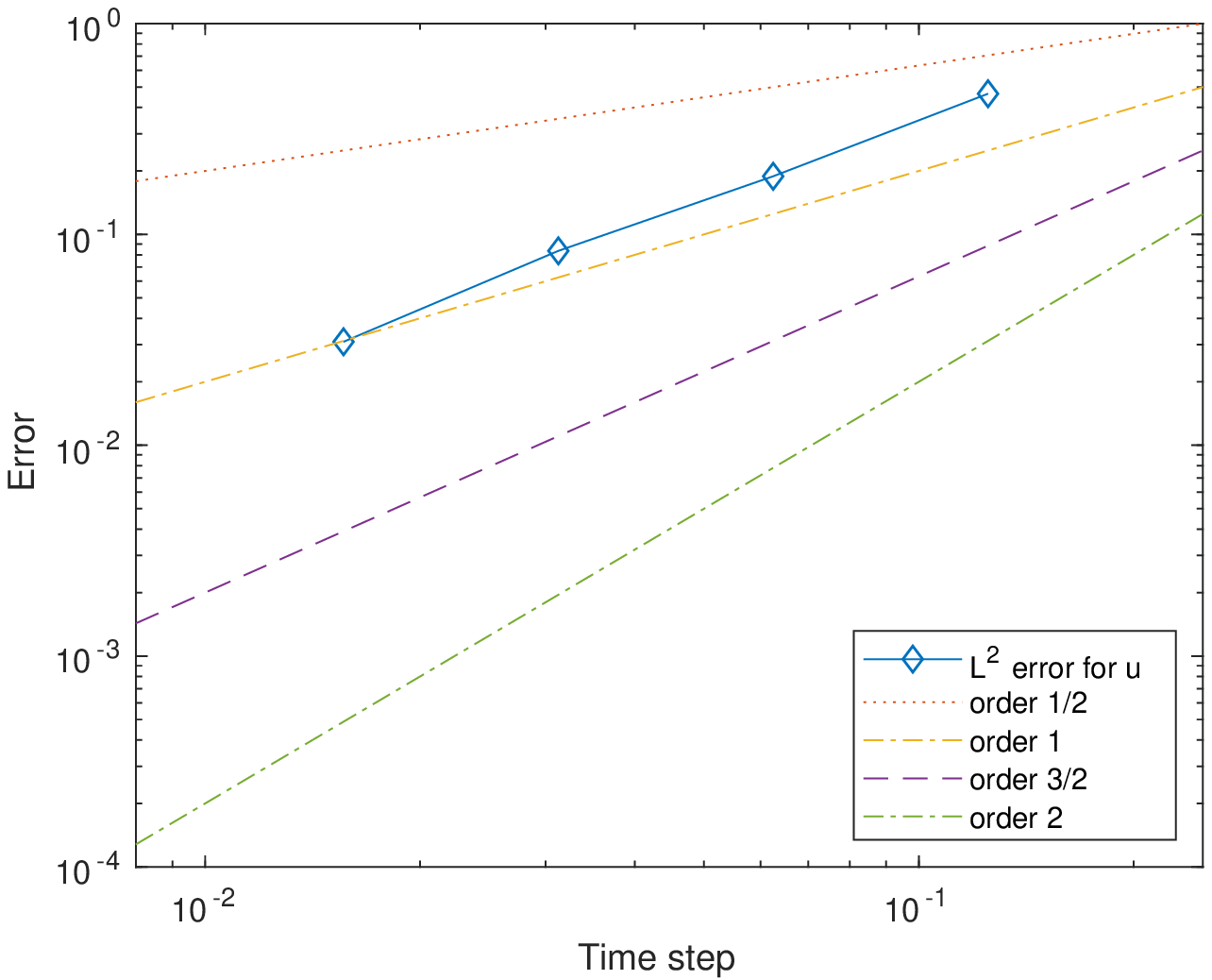}\label{fig:-x--1}}
  \hfill
  \subfloat[$\bL^2$-error for $\nabla u$]{\includegraphics[width=0.33\textwidth]{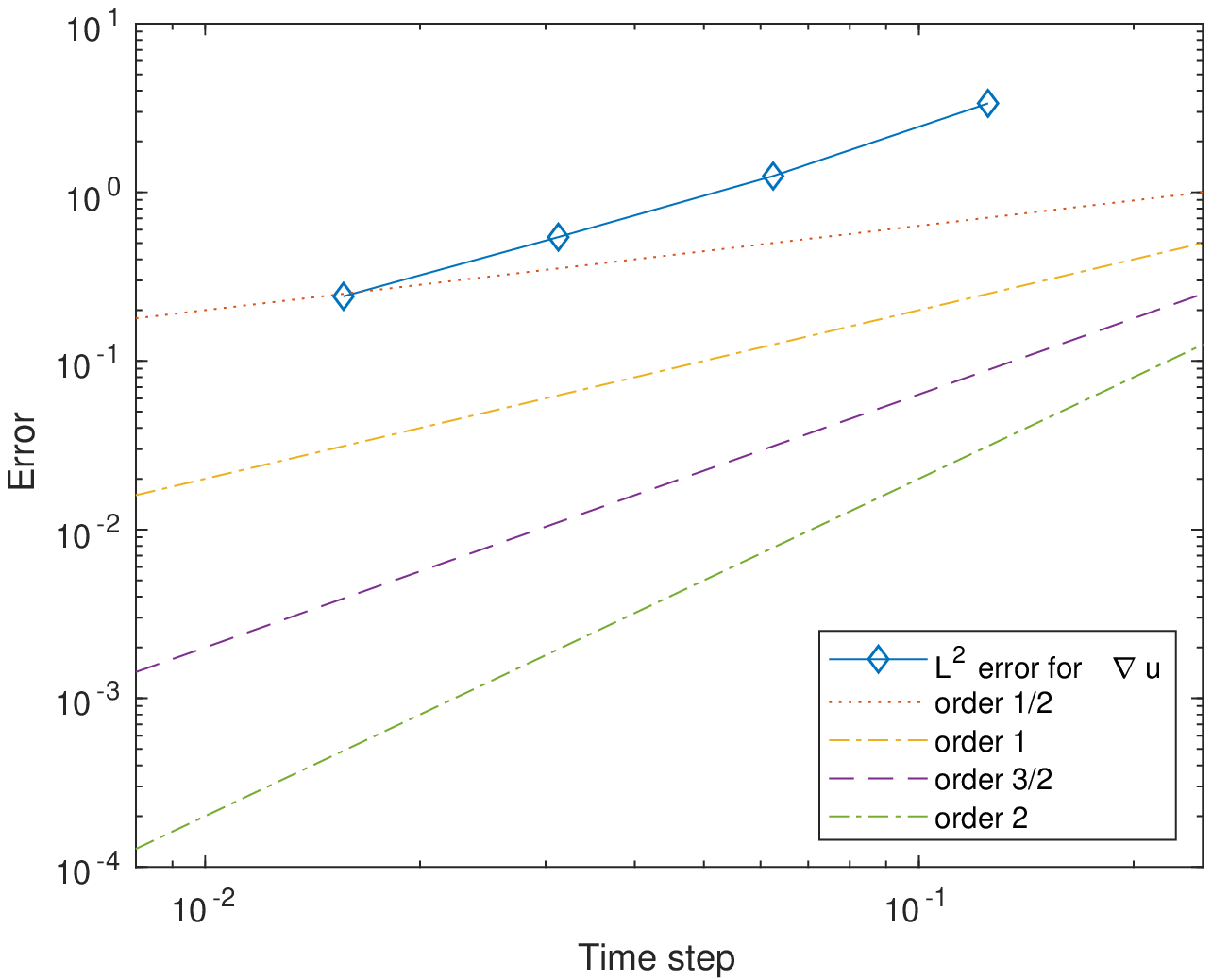}\label{fig:-x--2}}
  \hfill
  \subfloat[$\bL^2$-error for $v$]{\includegraphics[width=0.33\textwidth]{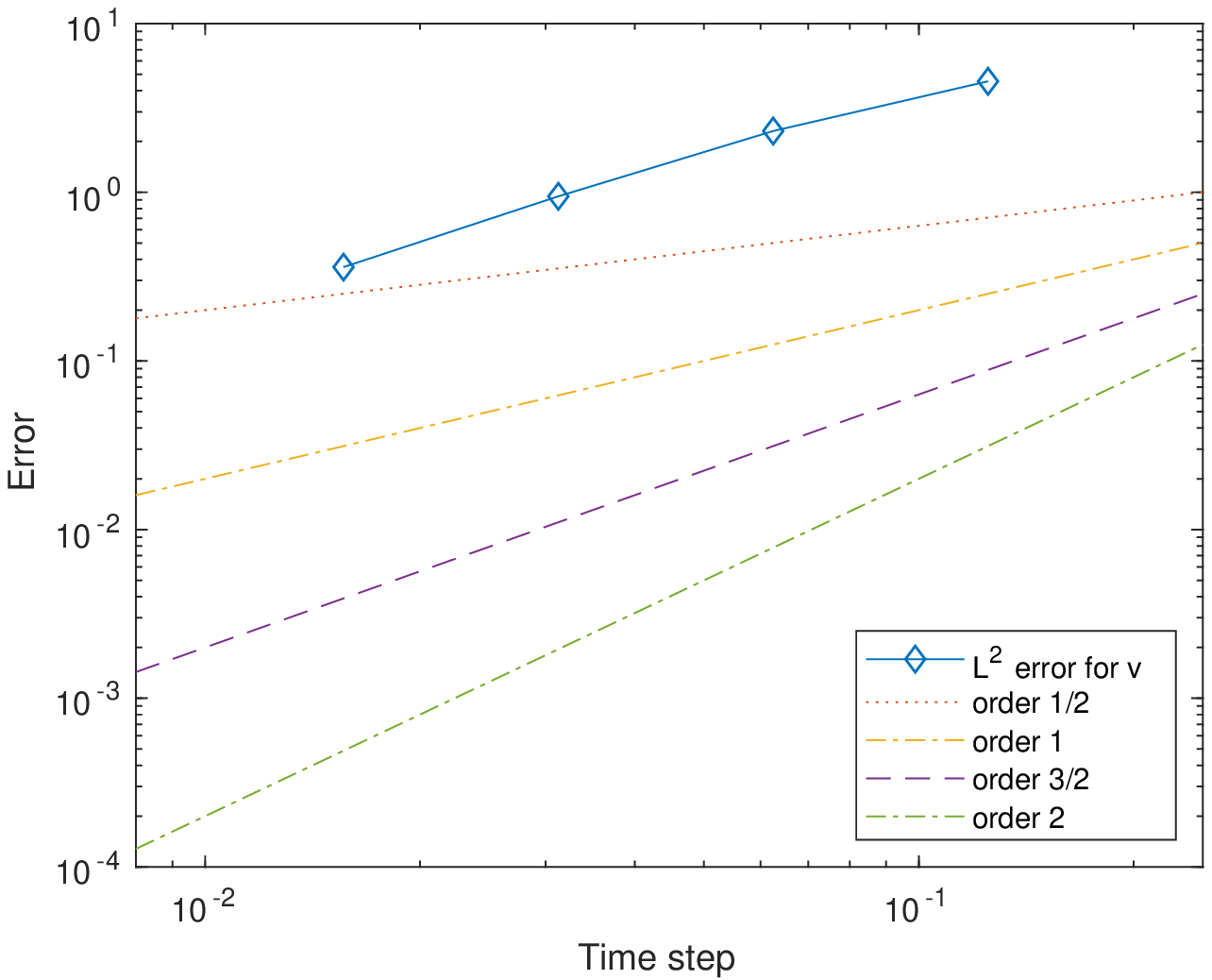}}\label{fig:-x--3}
\label{fig:--y-3} \caption{{\bf (Example \ref{exm-alp6})} {\blue{Temporal rates of convergence for $(\widehat{\alpha}, 0)-$scheme with $\sigma(x) = 2(x+1)(x+2)$; discretization parameters: $h=1/100, k=\{2^{-3}, \cdots, 2^{-6}\}$, ${\tt MC}=1200$.}}} 
\label{no+bou}
\end{figure}

\end{example}
}

\smallskip

\noindent
In the following example, we discuss four different cases where 
\begin{itemize}
\item[$(i)$]
$F \equiv F(u,v)$ is non-zero on the boundary, but Lipschitz and $\sigma \equiv \sigma(u)$;
\item[$(ii)$]
$F \equiv F(u,v)$ only H\"older continuous, and $\sigma \equiv \sigma(u)$;
\item[$(iii)$]
$F \equiv F(u,v)$ is same as $(i)$, and $\sigma \equiv \sigma(u, v)$ satisfying {\bf (A3)};
\item[$(iv)$] 
$F \equiv F(u,v)$ is same as $(ii)$, and $\sigma \equiv \sigma(u, v)$ satisfying {\bf (A3)}.
\end{itemize}
We observe that although $F \equiv F(u,v)$ violates {\bf (A3)} in $(ii)$, we still get improved convergence rates, but if $\sigma \equiv \sigma(u, v)$, we get the convergence order $\cO(k^{1/2})$ as shown in \eqref{eq-5.32} of  Theorem \ref{lem:scheme1:con}.
\begin{example}\label{exm-alp7}

We consider the following cases:
\begin{itemize}
\item[$(i)$]
$\sigma(u) = u$ and $F(u, v) = \cos(u)+2v$;
\item[$(ii)$]
$\sigma(u) = u$ and $F(u, v) = \sqrt{u} + \sqrt{v+2}$;
\item[$(iii)$]
$\sigma(u, v) = \frac{u}{1+u^2} + v$ and $F(u, v) = \cos(u)+2v$;
\item[$(iv)$]
$\sigma(u, v) = \frac{u}{1+u^2} + v$ and $F(u, v) = \sqrt{u} + \sqrt{v+2}$;
\end{itemize}
The errors are computed via the $(\widehat{\alpha}, \beta)-$scheme with $\widehat{\alpha}=1$ for $\beta=1/4 :$ the plots {\rm (A)--(C)} for the problem $(i)$ evidence the convergence order ${\mathcal O}(k^{3/2})$ for $u, \nabla u$, and ${\mathcal O}(k)$ for $v$.  We observe the same convergence rates for the problem $(ii)$ despite the lack of Lipschitzness of $F$ which violates {\bf (A3)}; see plots {\rm (D)--(F)} of Fig. \ref{5.7}. The plots {\rm (G)--(I)} of $\bL^2$-errors in $u, \nabla u$ and $v$, respectively, for the problem $(iii)$ and evidence the convergence order ${\mathcal O}(k^{1/2})$ as shown in \eqref{eq-5.32} of Theorem \ref{lem:scheme1:con}. We observe the same order of convergence for the problem $(iv)$; see plots {\rm (J)--(L)} of Fig. \ref{5.7}. Thus, the above two examples verify that the estimate \eqref{eq-5.32} is sharp in the case of diffusion $\sigma \equiv \sigma(u,v)$.

\smallskip

\begin{figure}[h!]
  \subfloat[$\bL^2$-error for $u$ in $(i)$]{\includegraphics[width=0.24\textwidth]{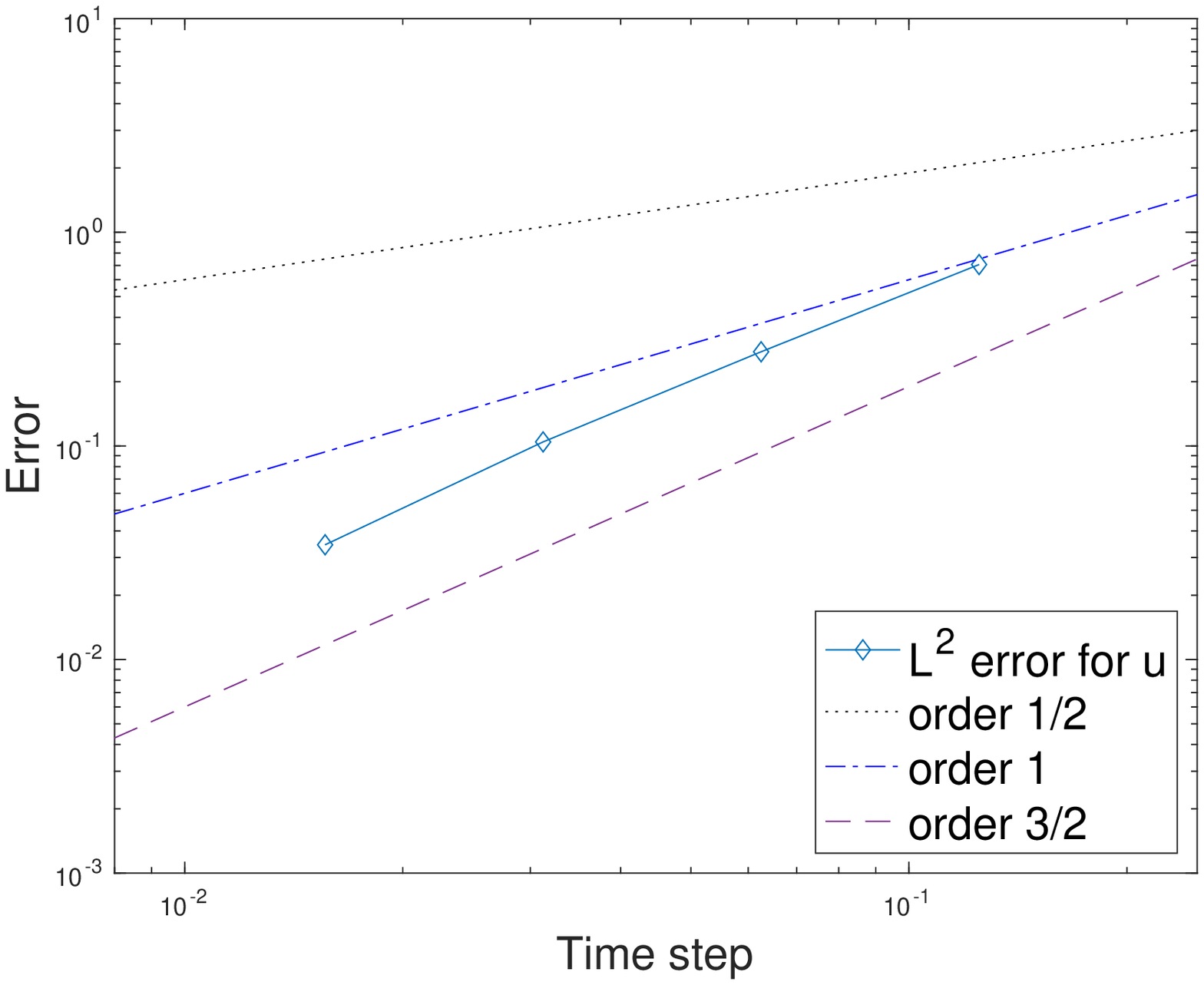}\label{fig:x_-1}}
  \hfill
  \subfloat[$\bL^2$-error for $\nabla u$ in $(i)$]{\includegraphics[width=0.24\textwidth]{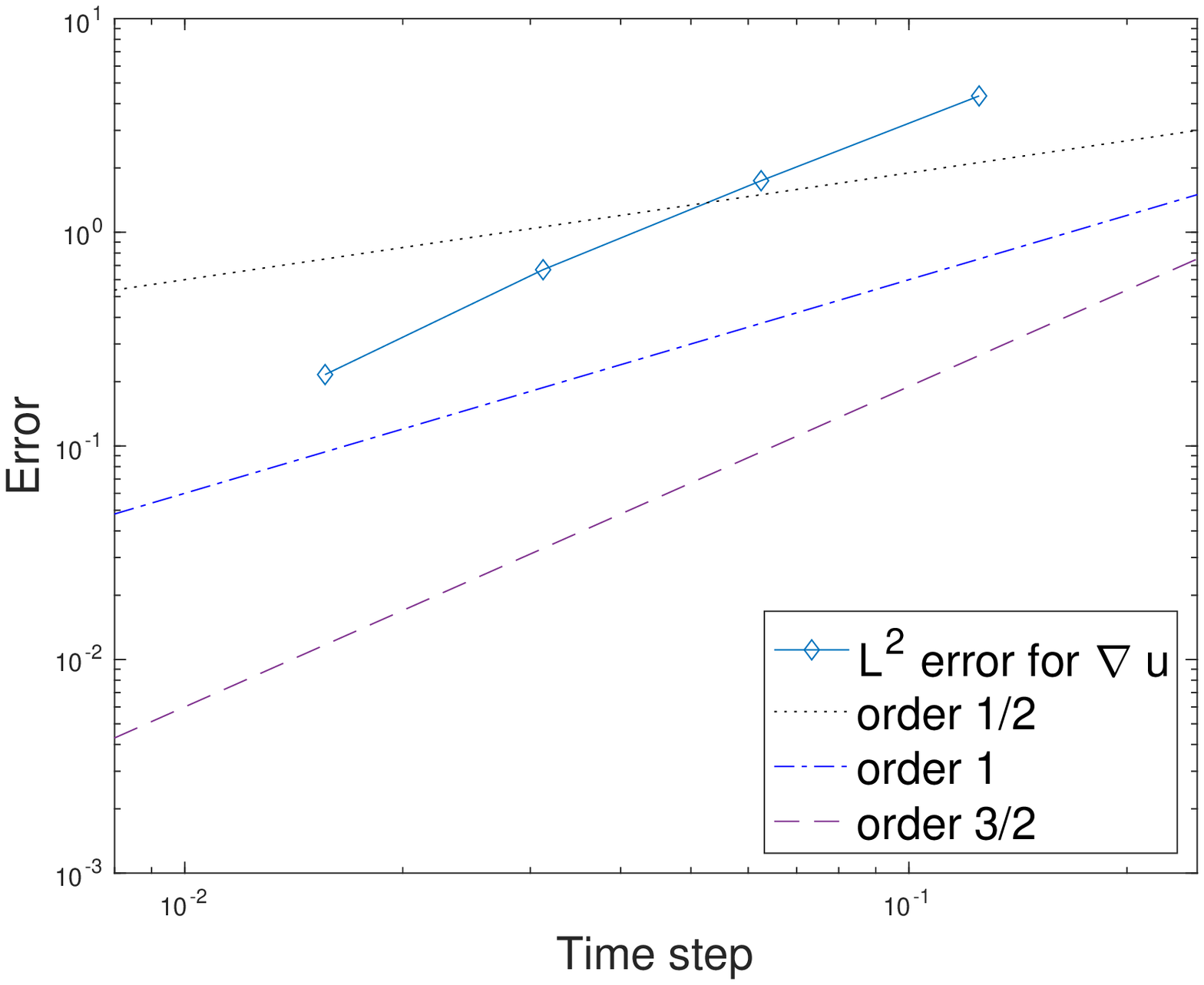}\label{fig:x_-2}}
  \hfill
  \subfloat[$\bL^2$-error for $v$ in $(i)$]{\includegraphics[width=0.24\textwidth]{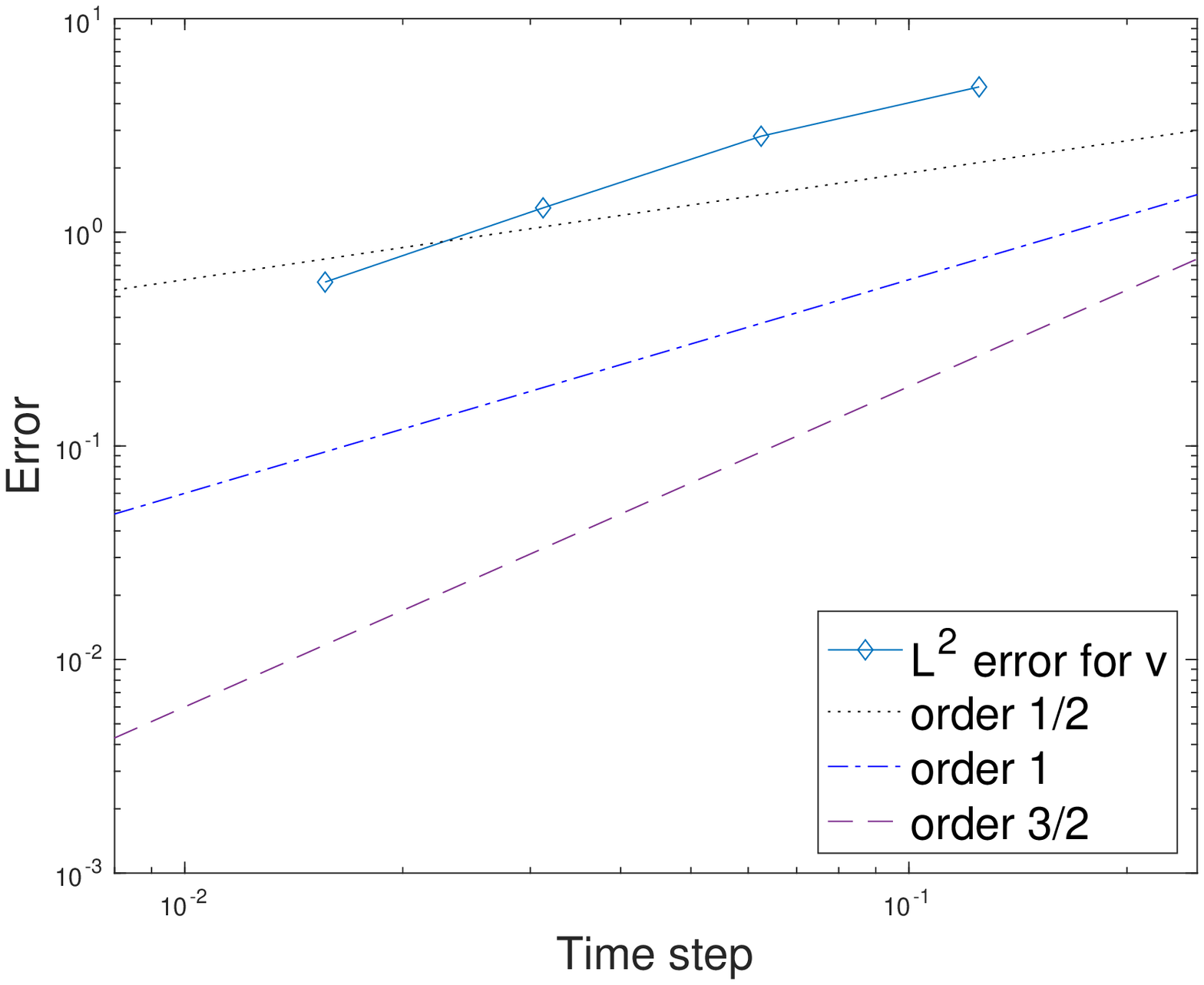}}\label{fig:x_-3}
  \hfill
  \subfloat[$\bL^2$-error for $u$ in $(ii)$]{\includegraphics[width=0.24\textwidth]{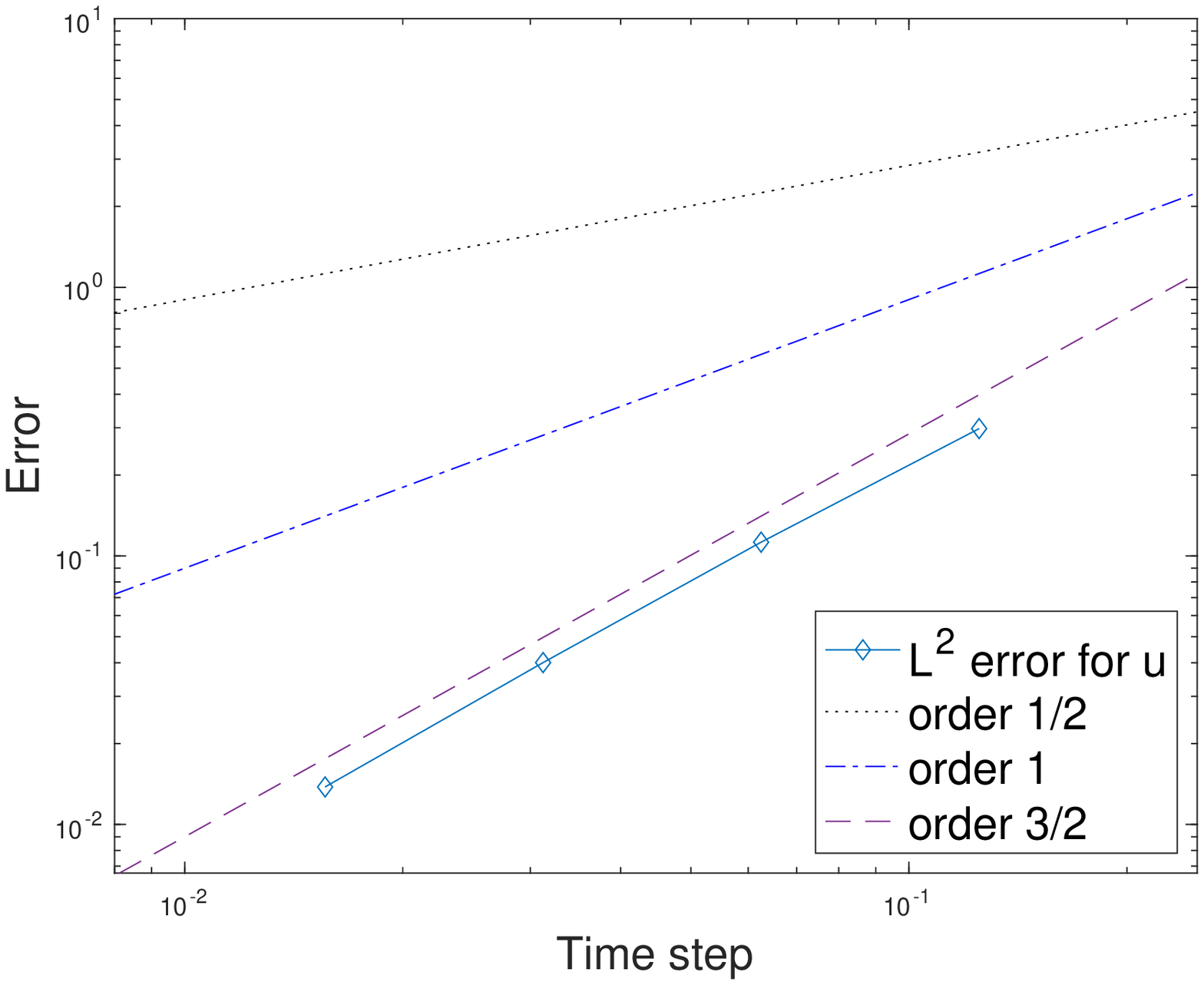}}\label{figu:x_-1}
  \par
  \subfloat[$\bL^2$-error for $\nabla u$ in $(ii)$]{\includegraphics[width=0.24\textwidth]{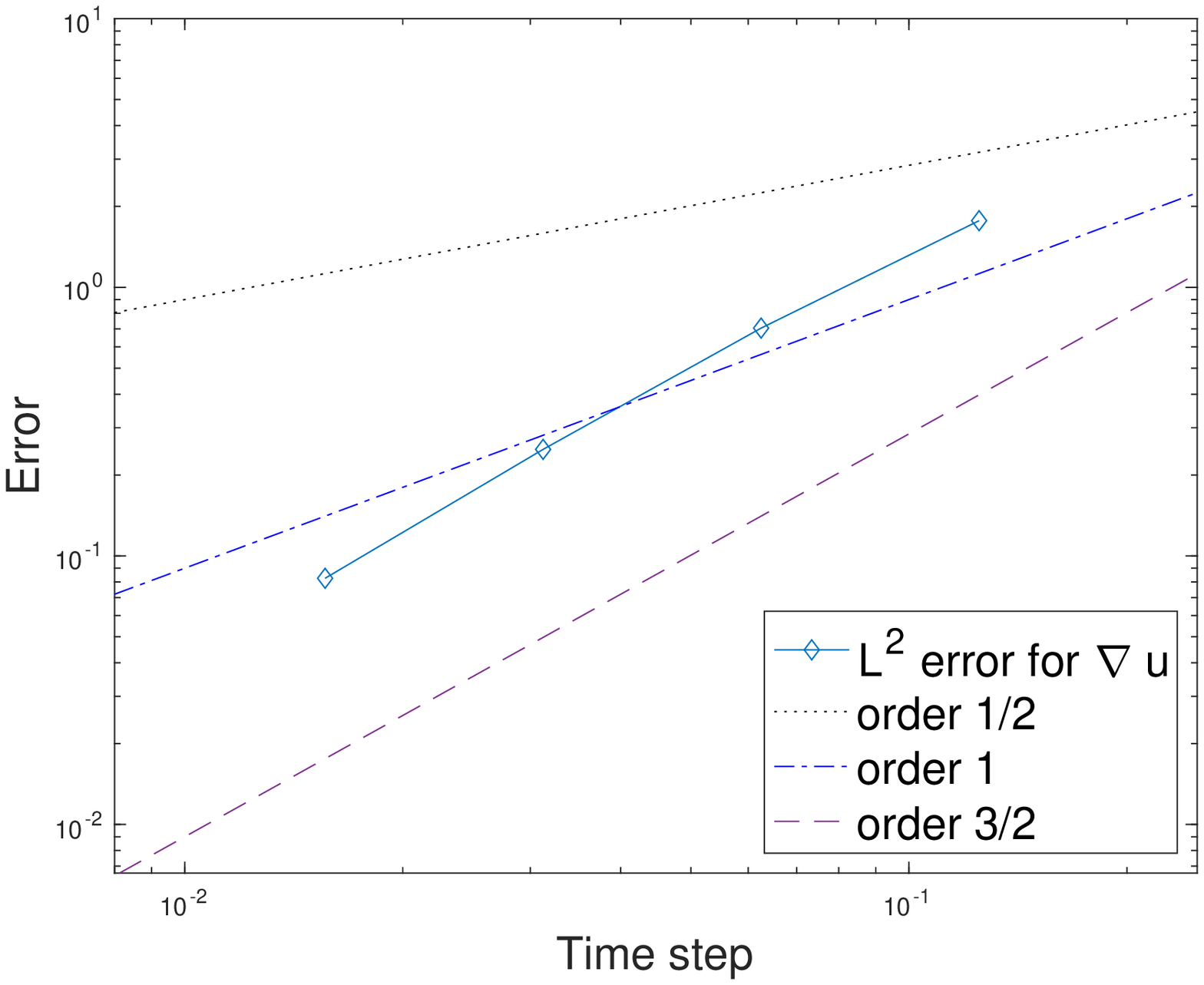}}\label{figu:x_-2}
  \hfill
  \subfloat[$\bL^2$-error for $v$ in $(ii)$]{\includegraphics[width=0.24\textwidth]{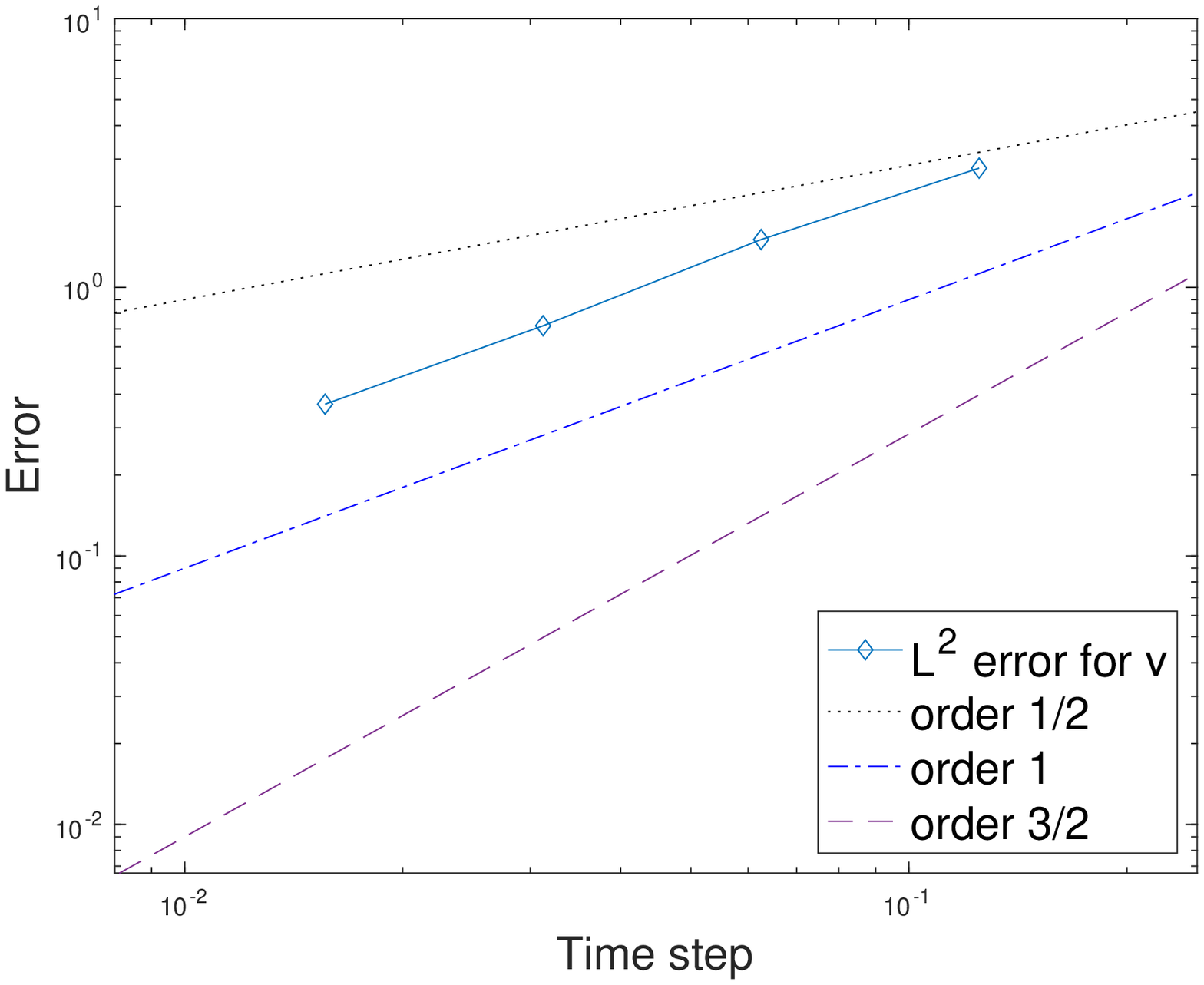}}\label{figu:x_-1}
  \hfill
    \subfloat[$\bL^2$-error for $u$ in $(iii)$]{\includegraphics[width=0.24\textwidth]{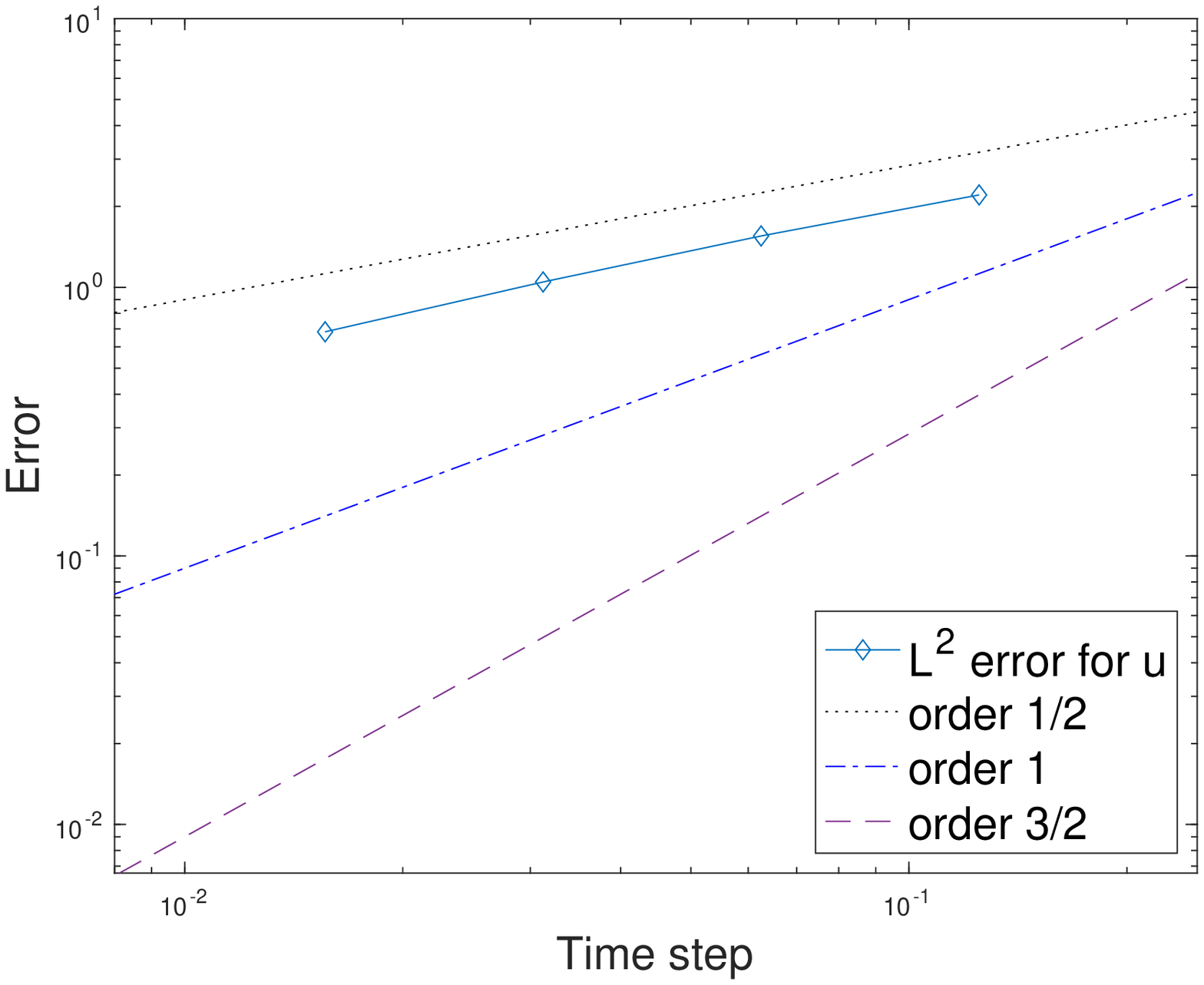}\label{fig:x_-1-}}
  \hfill
  \subfloat[$\bL^2$-error for $\nabla u$ in $(iii)$]{\includegraphics[width=0.24\textwidth]{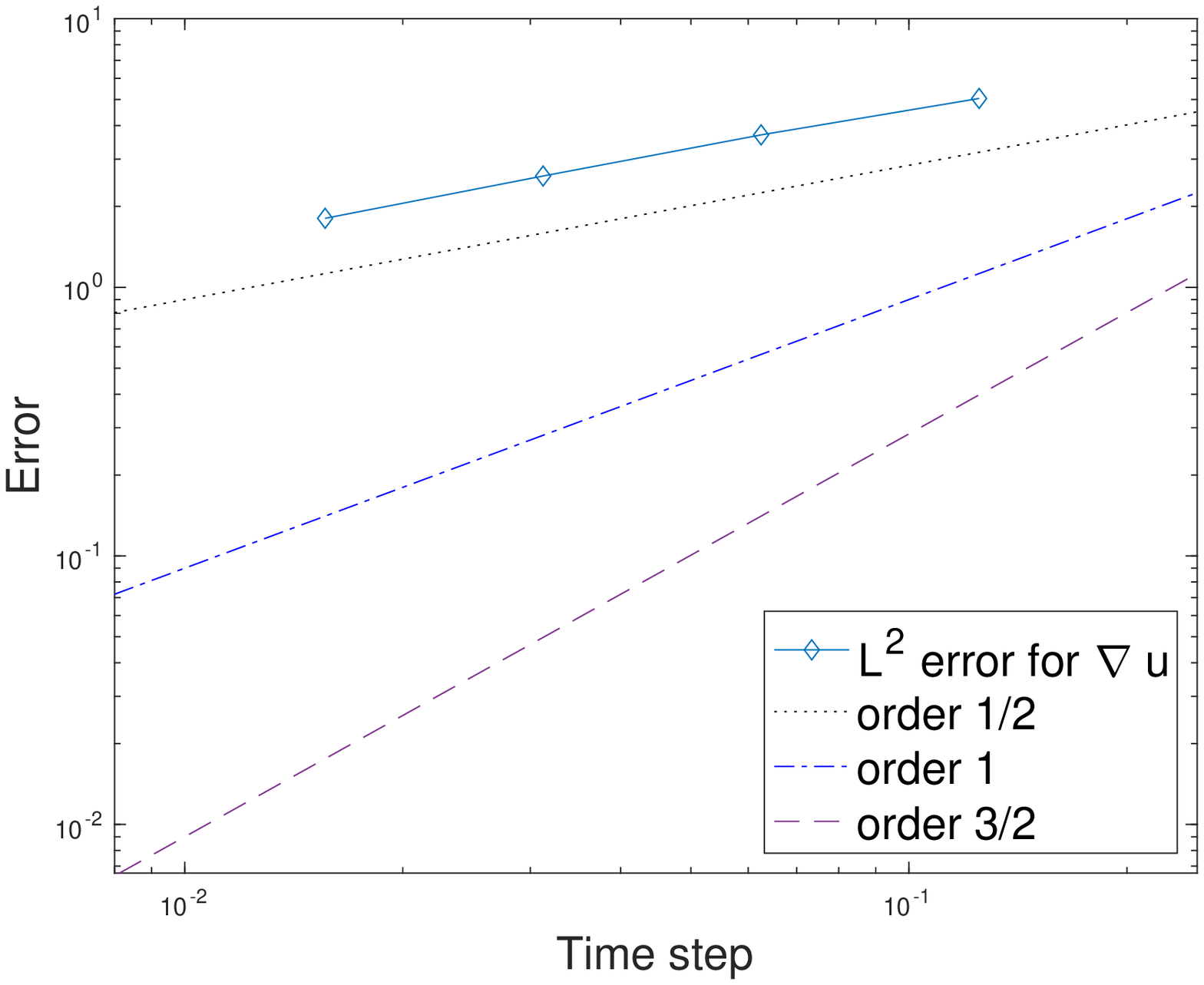}\label{fig:x_-2-}}
  \par
  \subfloat[$\bL^2$-error for $v$ in $(iii)$]{\includegraphics[width=0.24\textwidth]{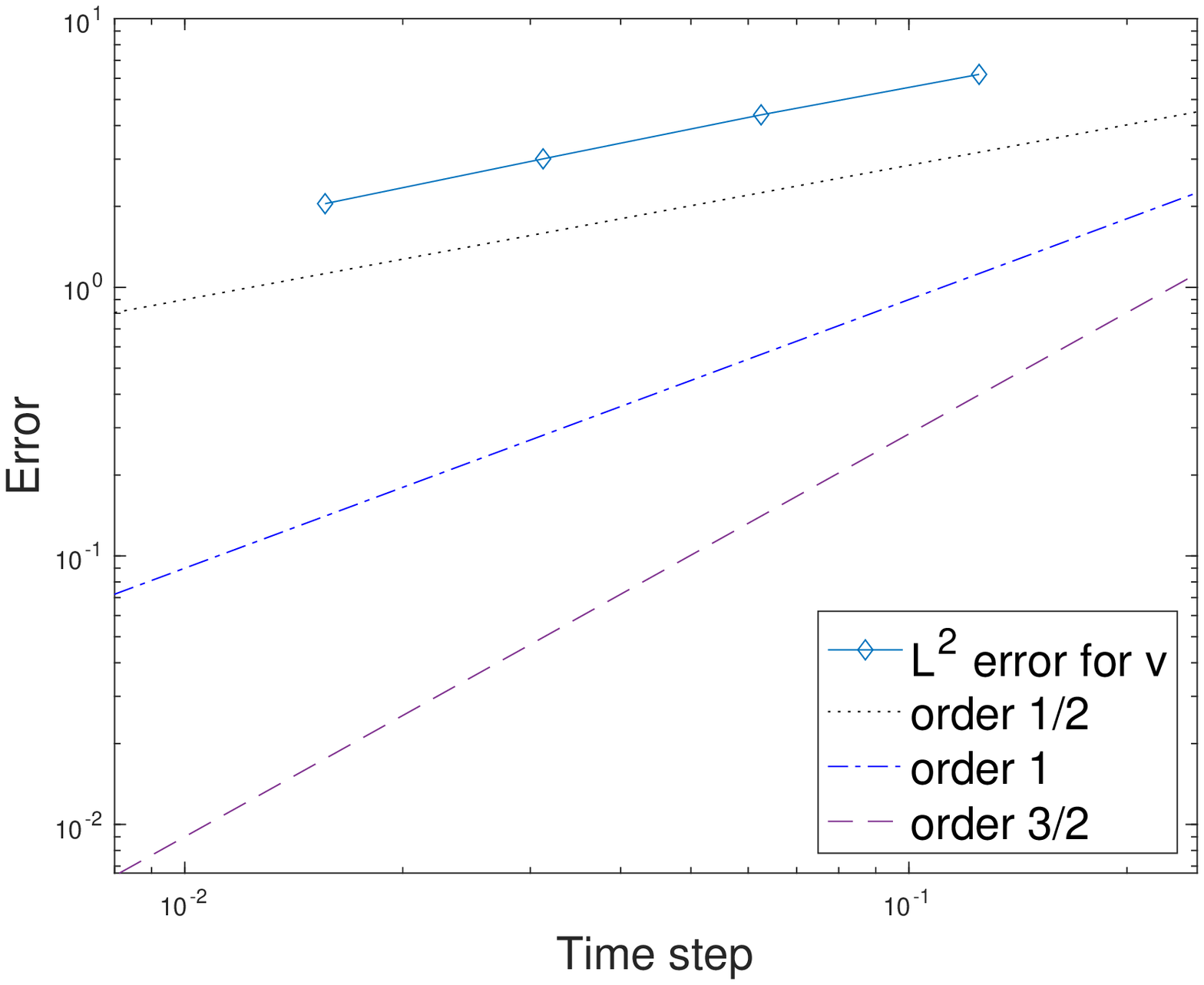}}\label{fig:x_-3-}
  \hfill
  \subfloat[$\bL^2$-error for $u$ in $(iv)$]{\includegraphics[width=0.24\textwidth]{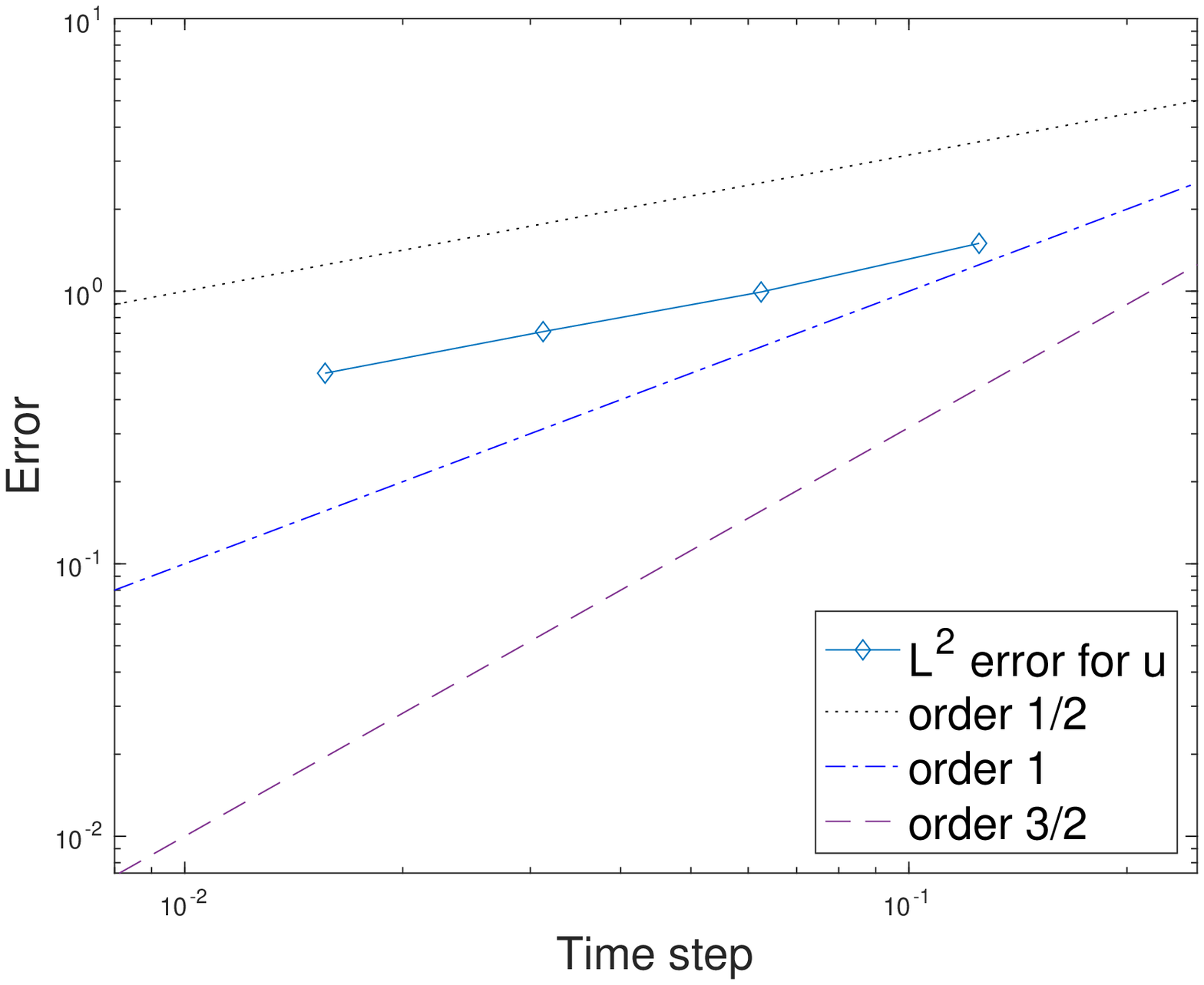}}\label{fi:y_-1-}
  \hfill
  \subfloat[$\bL^2$-error for $\nabla u$ in $(iv)$]{\includegraphics[width=0.24\textwidth]{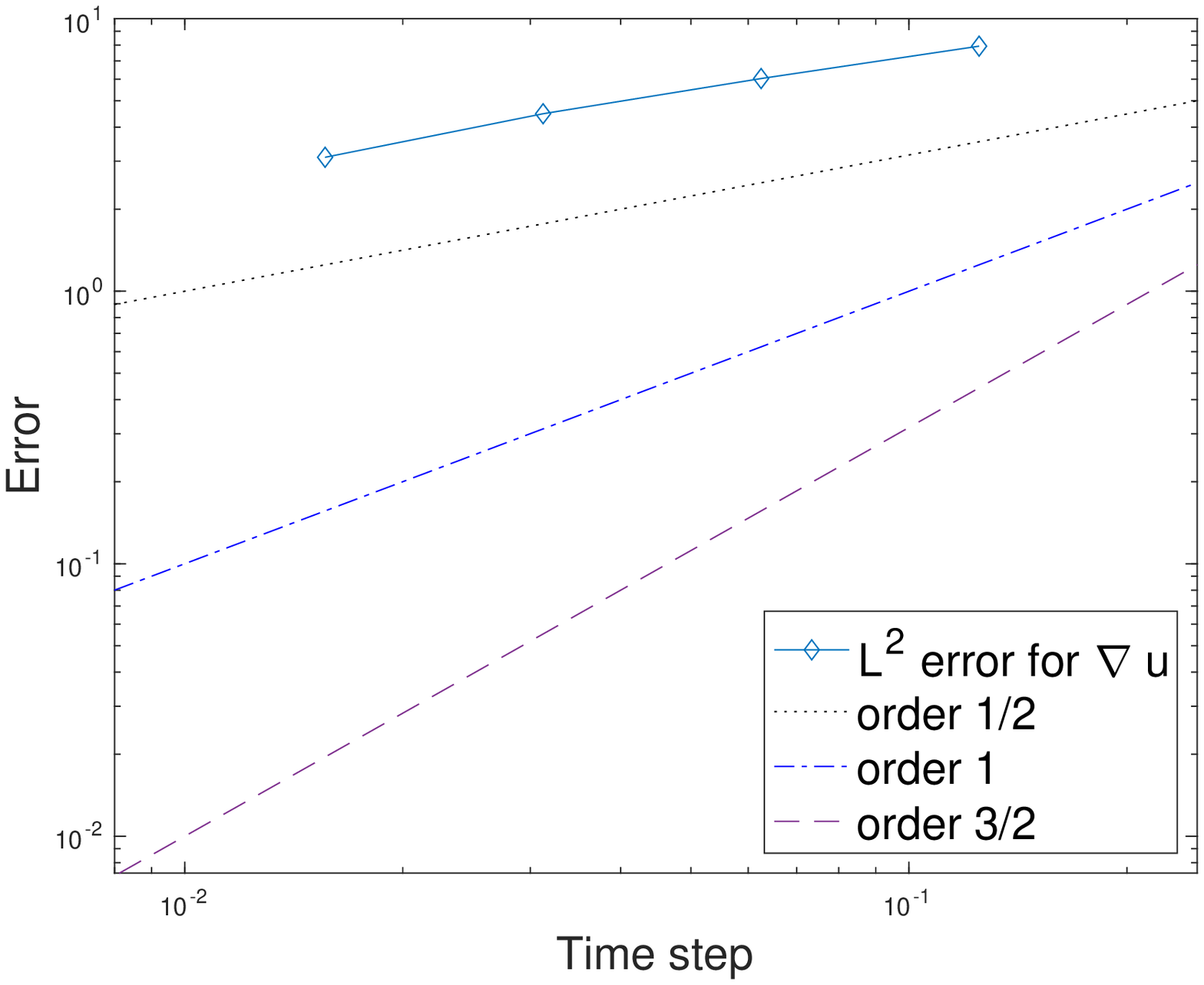}}\label{fi:y_-2-}
  \hfill
  \subfloat[$\bL^2$-error for $v$ in $(iv)$]{\includegraphics[width=0.24\textwidth]{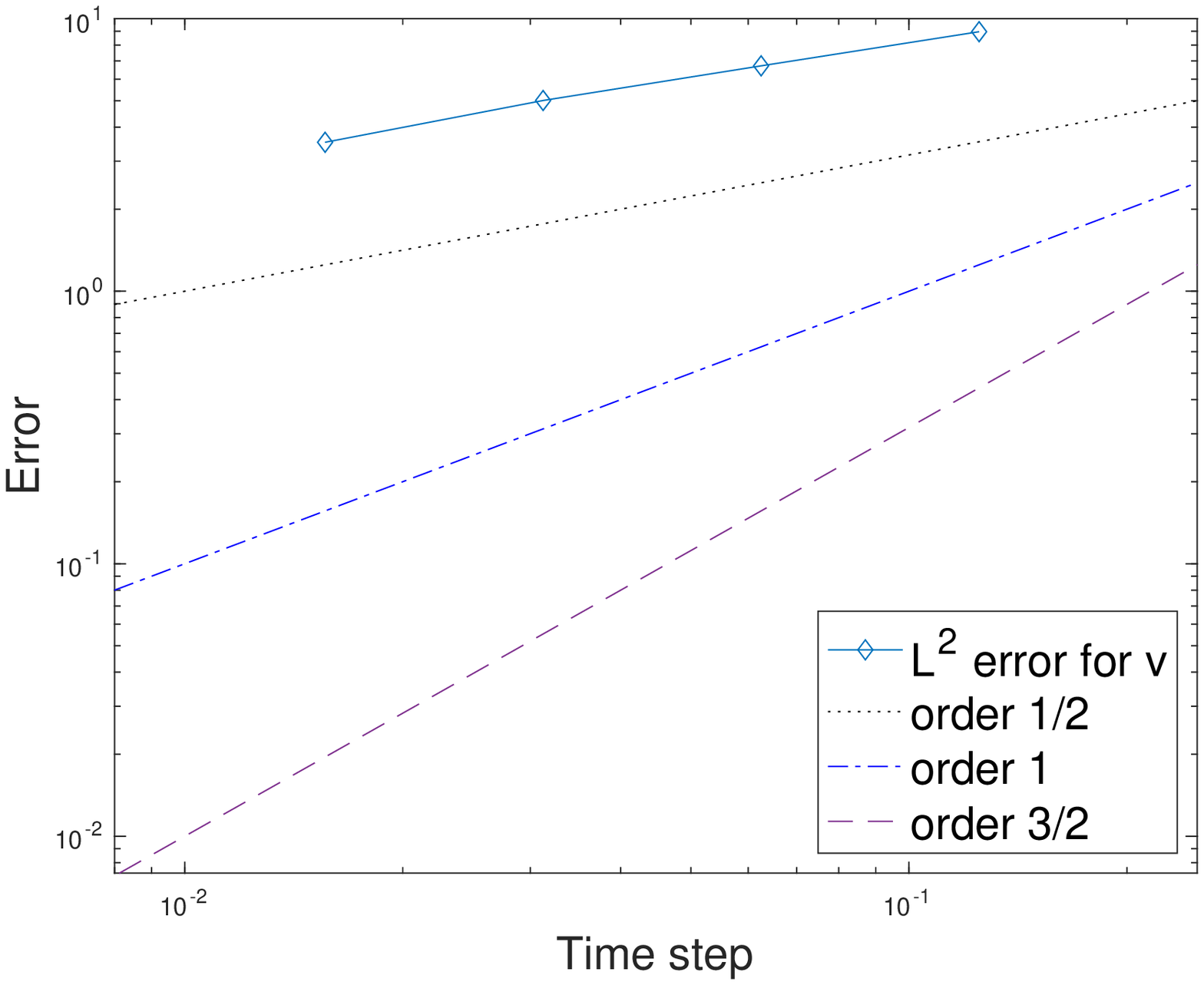}}\label{fi:y_-3-}
\label{fig:--y-3} \caption{{\bf (Example \ref{exm-alp7})} Rates of convergence of the $\big(1, \frac{1}{4} \big)-$scheme.} 
\label{5.7}
\end{figure}

\end{example}

\smallskip

In the next example, we drop the assumption on $\sigma \equiv \sigma(u)$ to be Lipschitz and zero on the boundary to see which of these violations spot the reduction of the convergence order of scheme \eqref{scheme2:1}-\eqref{scheme2:2}.

\begin{example}\label{exm-alp8}

Let $F \equiv 0$. Consider the following cases:
\begin{itemize}
\item[$(i)$]
$\sigma(u) = \frac{1}{1+ u^2}$;
\item[$(ii)$]
$\sigma(u) = \sqrt{|u|}$.
\end{itemize}
In Fig. \ref{nln0}, the errors are computed via the scheme \eqref{scheme2:1}--\eqref{scheme2:2} with $\widehat{\alpha}=1.$ For problem $(i)$ (nonzero boundary), the plots {\rm (A)--(B)} for $\bL^2$-errors in $u, \nabla u$, respectively, show the convergence order ${\mathcal O}(k^{3/2})$ and the plot {\rm (C)} for $\bL^2$-error in $v$ shows ${\mathcal O}(k)$. For the problem $(ii)$ (non-Lipschitz), the convergence rates for $\bL^2$-errors in $u, \nabla u$ are reduced to ${\mathcal O}(k)$; see plots {\rm (D)--(E)}, but $\bL^2$-error in $v$ remains same as ${\mathcal O}(k)$; see plot {\rm (F)}. 

\smallskip

\begin{figure}[h!]
  \subfloat[$\bL^2$-error for $u$ in $(i)$]{\includegraphics[width=0.33\textwidth]{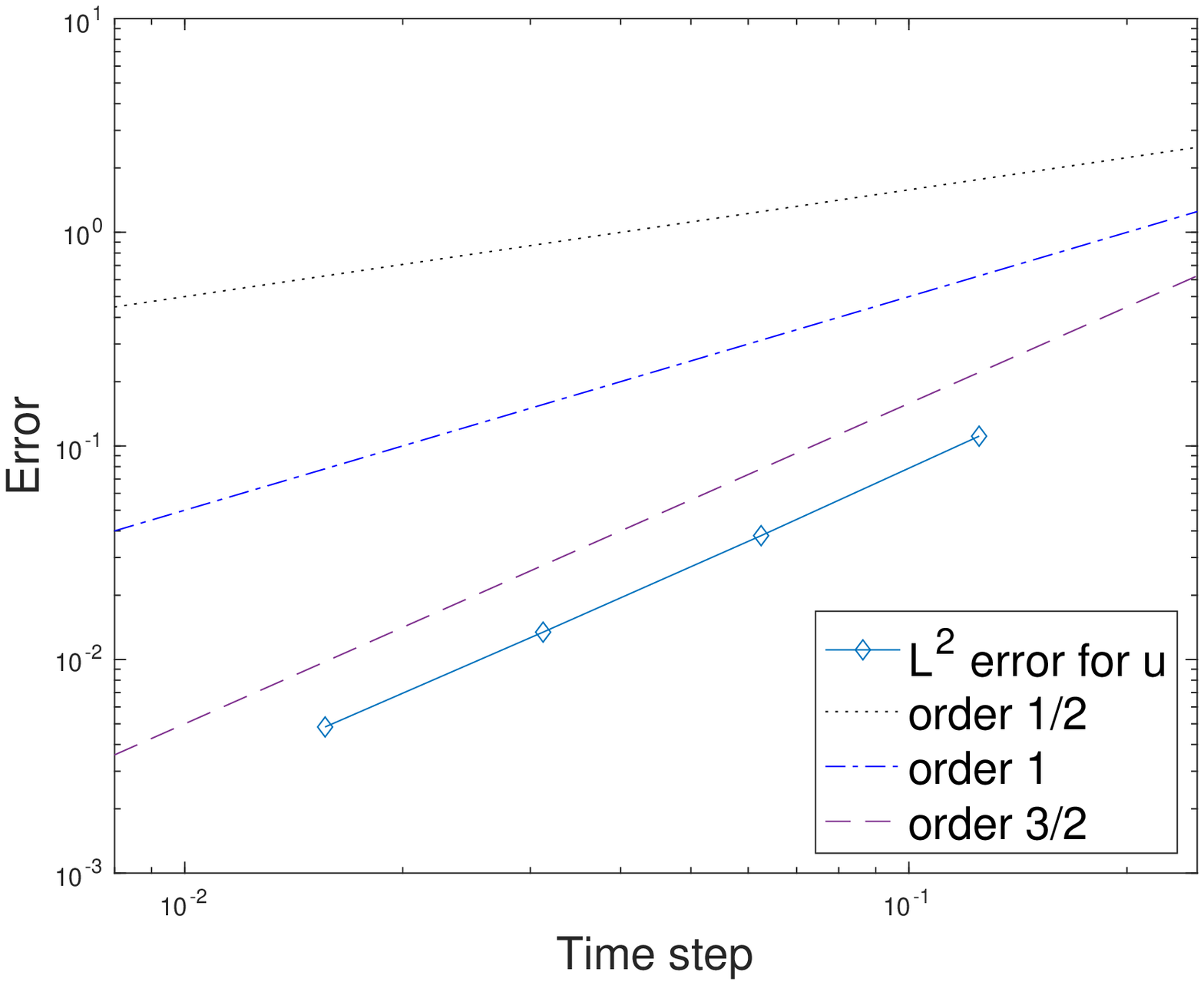}\label{f:x1}}
  \hfill
  \subfloat[$\bL^2$-error for $\nabla u$ in $(i)$]{\includegraphics[width=0.33\textwidth]{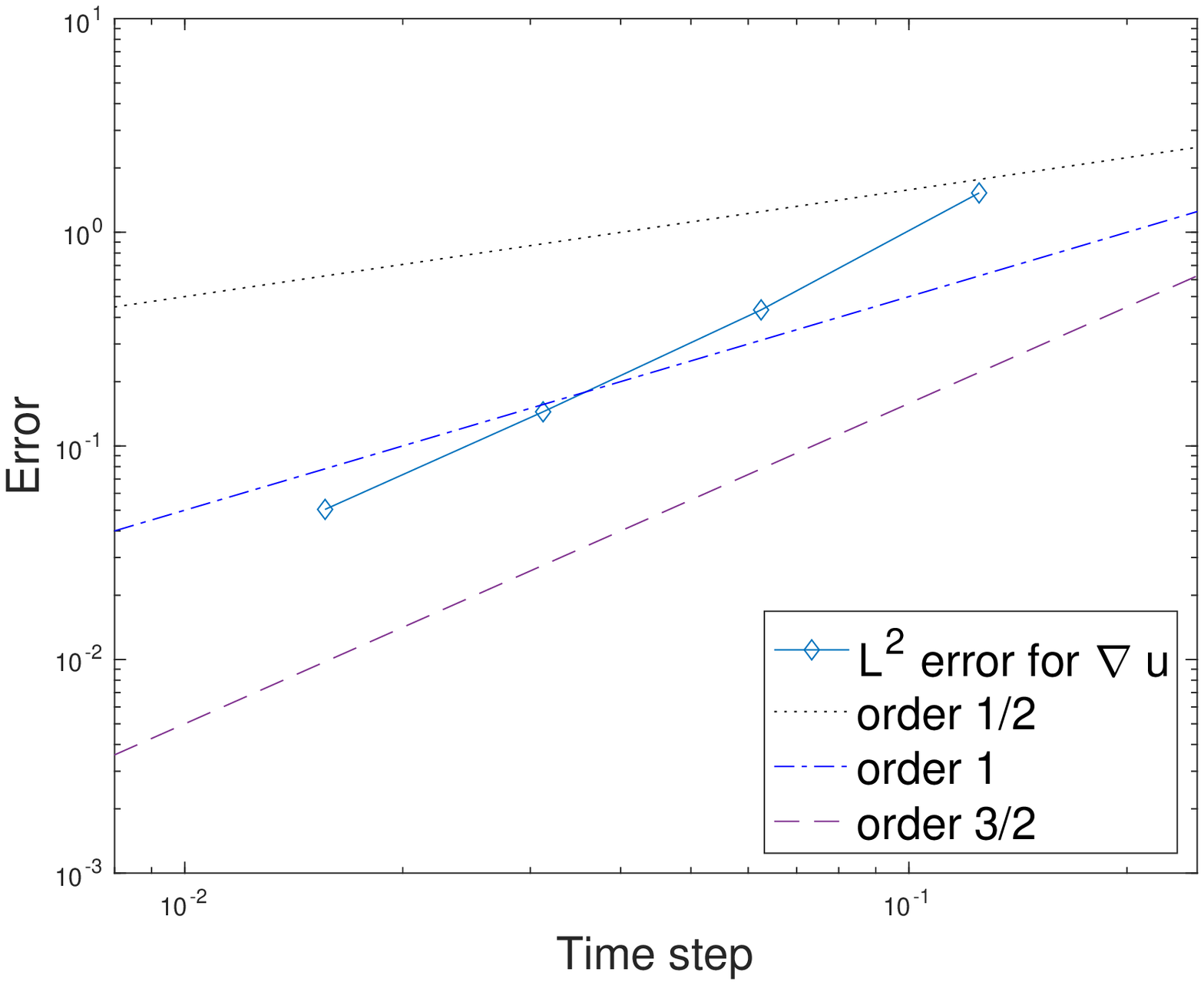}\label{f:x2}}
  \hfill
  \subfloat[$\bL^2$-error for $v$ in $(i)$]{\includegraphics[width=0.33\textwidth]{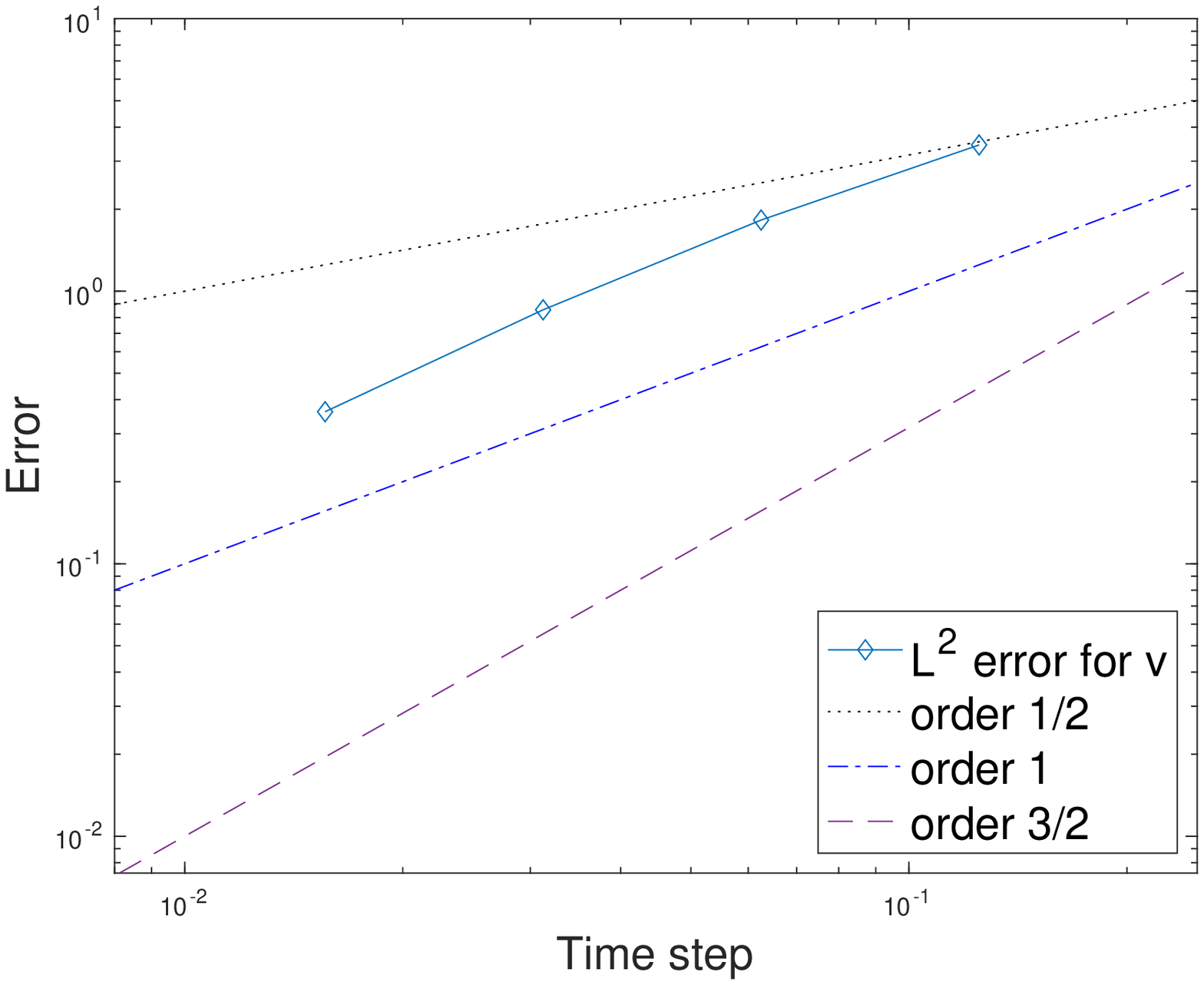}}\label{f:x3}
  \par
  \subfloat[$\bL^2$-error for $u$ in $(ii)$]{\includegraphics[width=0.33\textwidth]{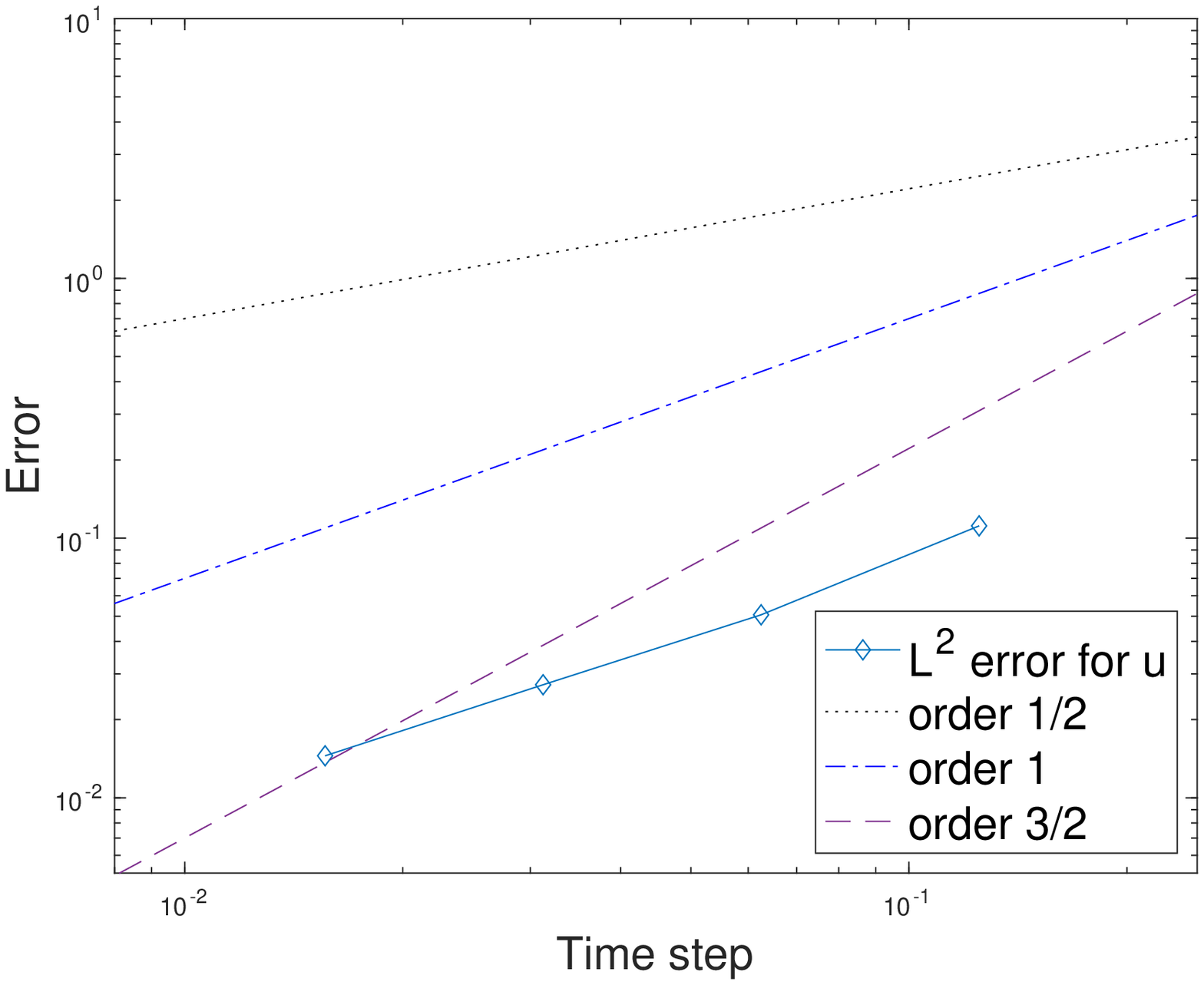}\label{f-:-x1}}
  \hfill
  \subfloat[$\bL^2$-error for $\nabla u$ in $(ii)$]{\includegraphics[width=0.33\textwidth]{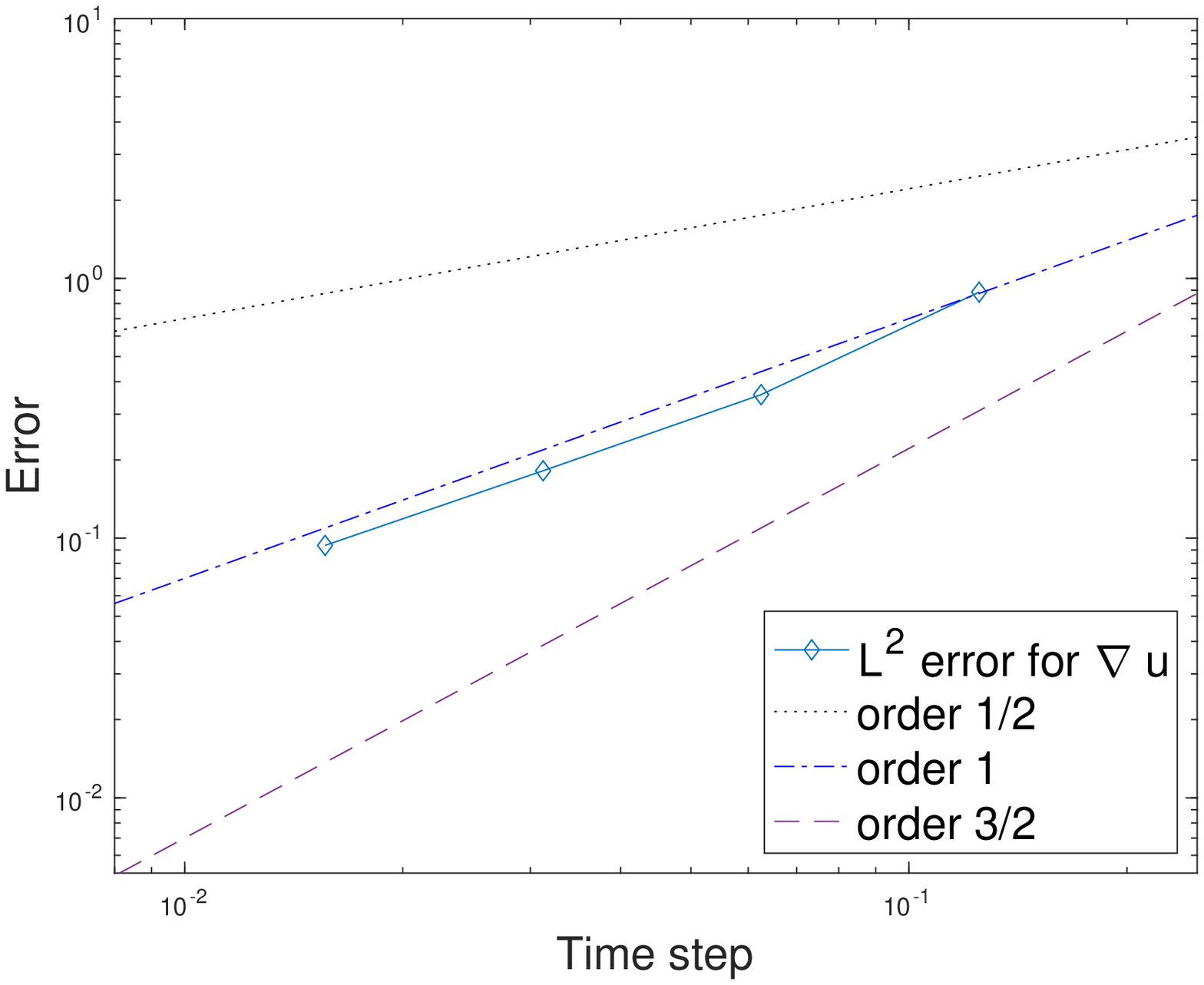}\label{f-:-x2}}
  \hfill
  \subfloat[$\bL^2$-error for $v$ in $(ii)$]{\includegraphics[width=0.33\textwidth]{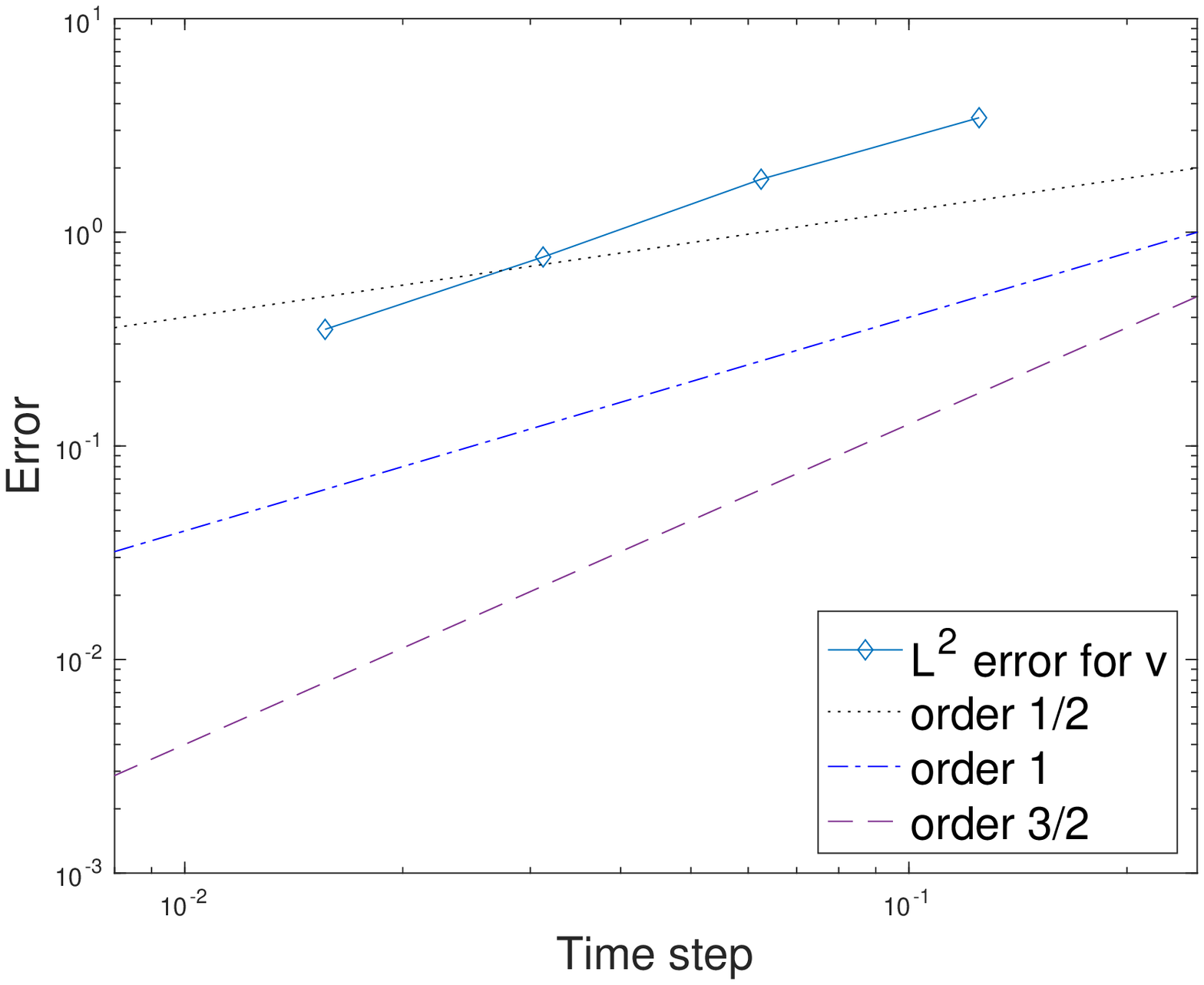}}\label{f-:-x3}
 \label{f:y3} \caption{{\bf (Example \ref{exm-alp8})} Rates of convergence of the the scheme \eqref{scheme2:1}--\eqref{scheme2:2} for $\widehat{\alpha}=1$.} 
\label{nln0}
\end{figure}

\end{example}

\subsection{Choice of $\beta$ and required number of {\tt MC}}

\begin{example}\label{exa-bet1}
Let $\cO=(0, 1)$, $T=0.5$, $A = -\Delta$, $F \equiv 0$, $\sigma(v)= 5v$. 
We compute $W$ on the mesh of size $k = 2^{-12}$ covering $[0, 0.5]$. 
In the $(\widehat{\alpha}, \beta)$-scheme, the term $\widetilde{u}^{n, \frac 12} = {u}^{n, \frac 12} + \beta k^{1+\beta} v^{n+\frac{1}{2}}$ involves $\beta$,  where the last term creates an additional numerical dissipation term  in \eqref{stoch-wave1:1a} to control discretization effect of the noise. For $\beta=0$ with $\sigma \equiv \sigma(u)$ and $F \equiv F(u)$, the scheme \eqref{scheme2:1}--\eqref{scheme2:2} is stable, but for general case we require $\beta \in (0, 1/2)$ for the stability of the $(\widehat{\alpha}, \beta)$-scheme; see Lemma \ref{lem:scheme1:stab}. For increased value of $\beta$, stabilization effect vanishes for small $k$.  Thus, a smaller choice of $\beta$ is preferred to have the stability of the scheme. The snapshot $(A)$ in Fig. \ref{beta-MC} shows for $\beta = 0,  \frac 14, \frac 12, \frac 34, 1$, that at least ${\tt MC}= 400, 600, 800, 1000,1400$, are needed to have a steady of the energy $\mathcal{E}$ at time $T=0.5$. The snapshot $(B)$ evidence a higher number of {\tt MC} as we increase $\beta$ to have a steady energy curve.
\begin{figure}[h!]
  \centering
  \subfloat[Energy for different $\beta$]{\includegraphics[width=0.33\textwidth]{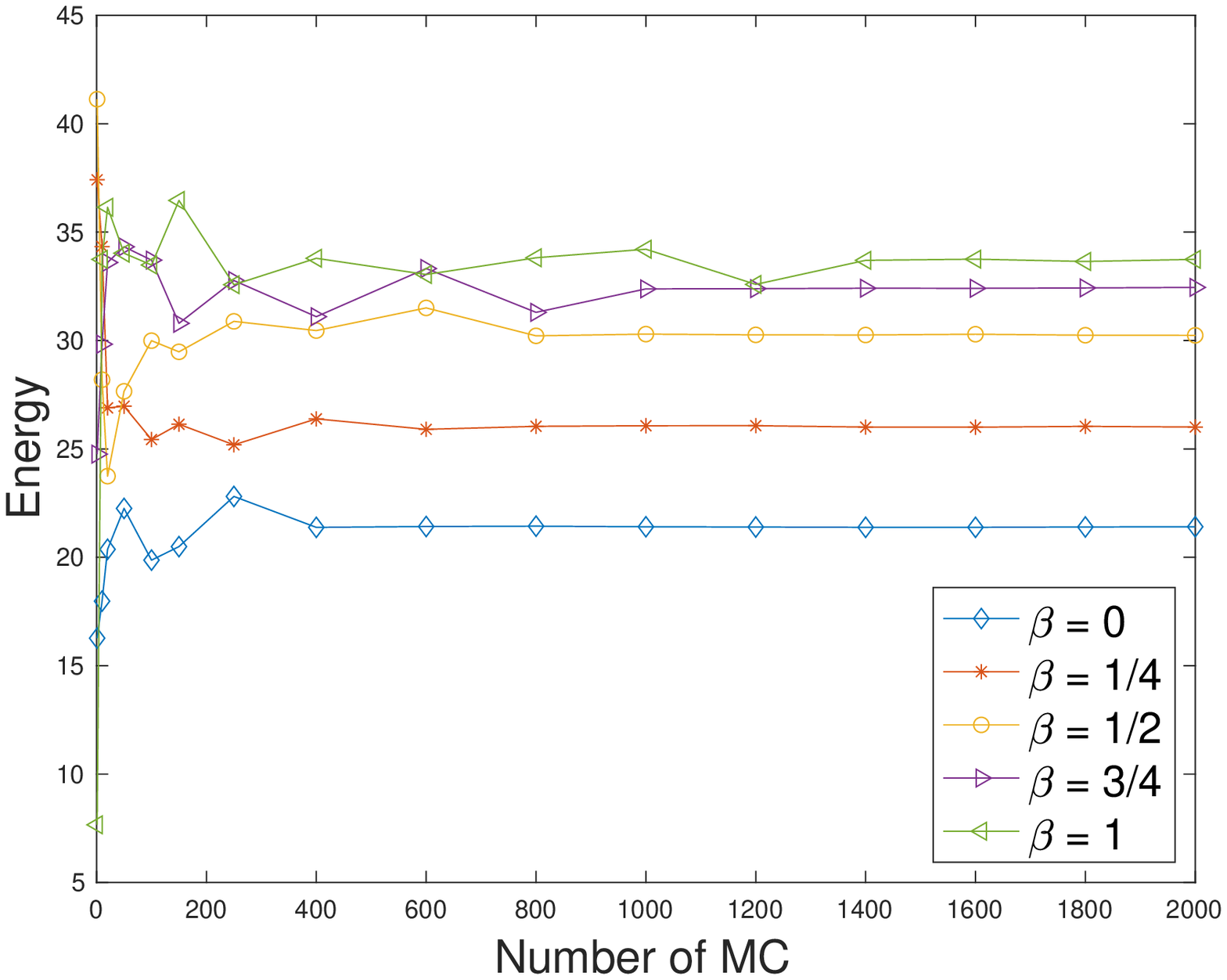}\label{fig:f1}}
 %\hfill
  \subfloat[$\beta$ vs number of {\tt MC}]{\includegraphics[width=0.33\textwidth]{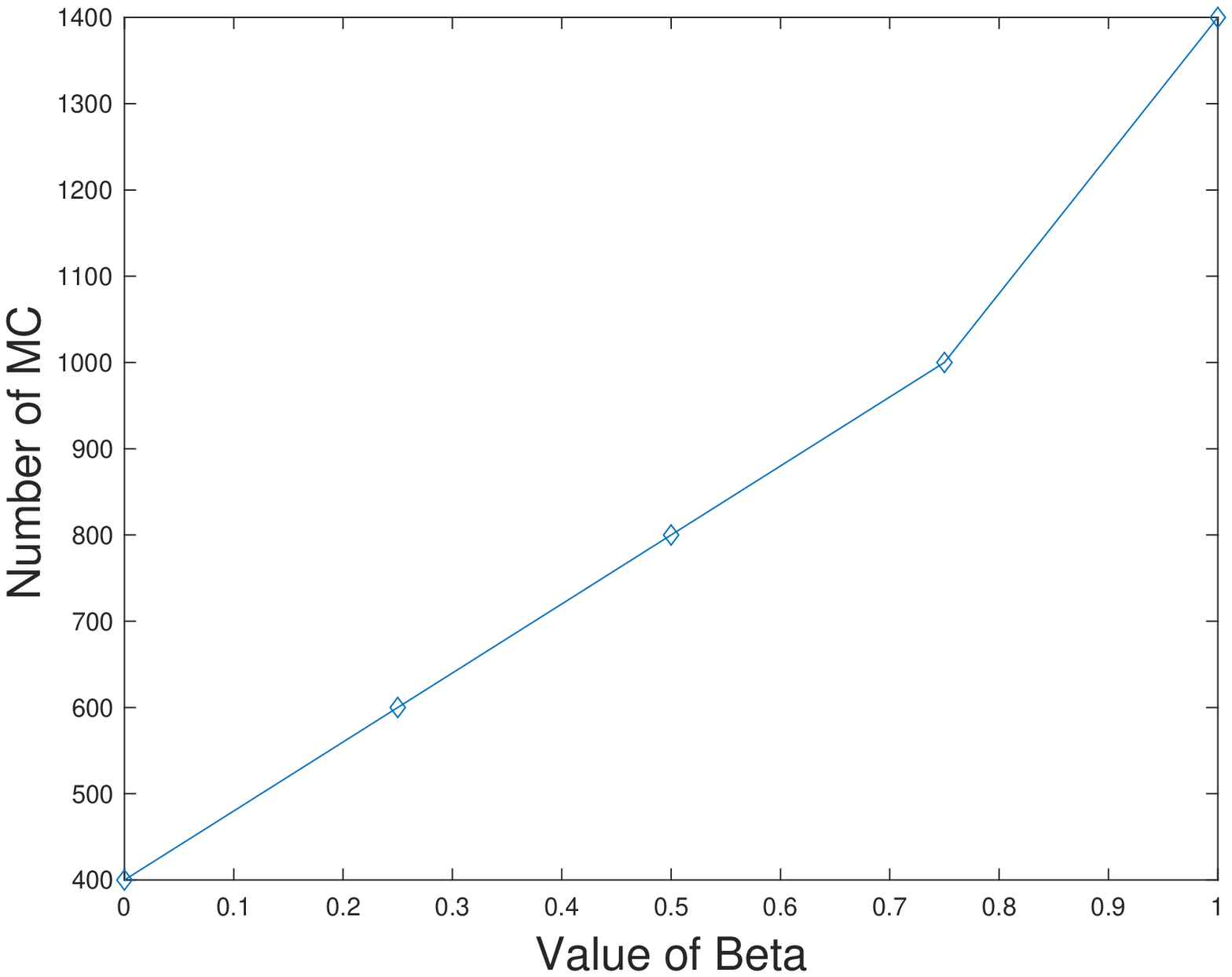}\label{fig:x3}}  \caption{{\bf (Example \ref{exa-bet1})} $(\widehat{\alpha}, \beta)$-scheme with $\sigma(v)= 5v$, and $F \equiv 0$. } 
\label{beta-MC}
\end{figure}

\end{example}

\del{
\begin{example}\label{nl-F}

In this example, we discuss the possible blow-up of energy by considering the non-zero $F=F(u, v)$ in the system \eqref{stoch-wave1:1}. Figs. \ref{energy-F} $(A)-(F)$ below display the energy profiles (as introduced before) for different $F$ and $\sigma.$ Figs. \ref{energy-F} $(A), (B)$ and $(C)$ suggest that there is no blow-up of energy when we consider $F(u, v)=u^2$ for different $\sigma.$ In fact, this holds true for any $F$ that is polynomial function in $u.$ 
\begin{figure}[h!]
  \centering
  \del{
  \subfloat[$\sigma(u, v)=0,$ $F(u, v)=v^2$]{\includegraphics[width=0.32\textwidth]{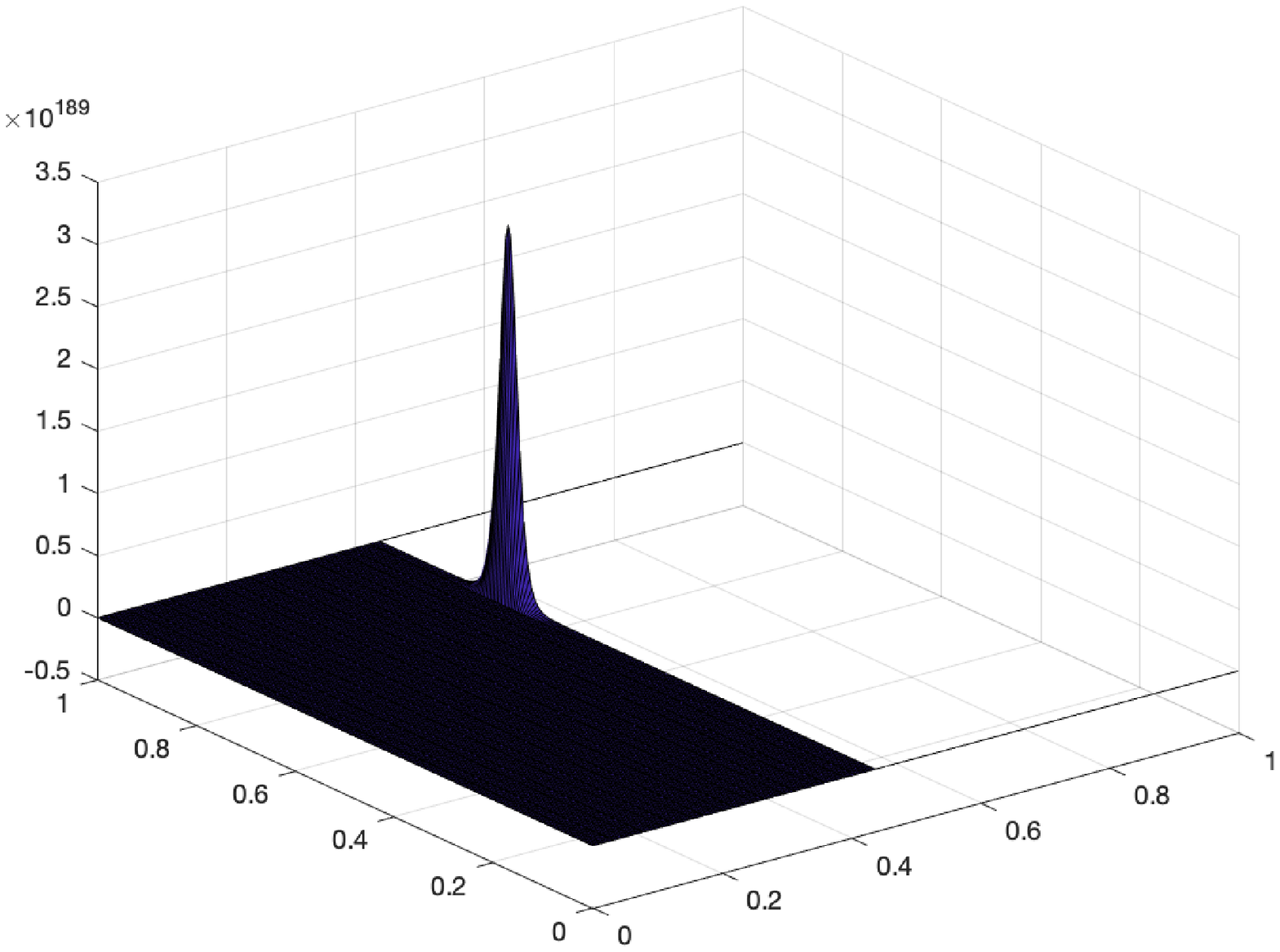}\label{fig--f1}}
  \hfill
  \subfloat[$\sigma(u, v)=u,$ $F(u, v)=v^2$]{\includegraphics[width=0.32\textwidth]{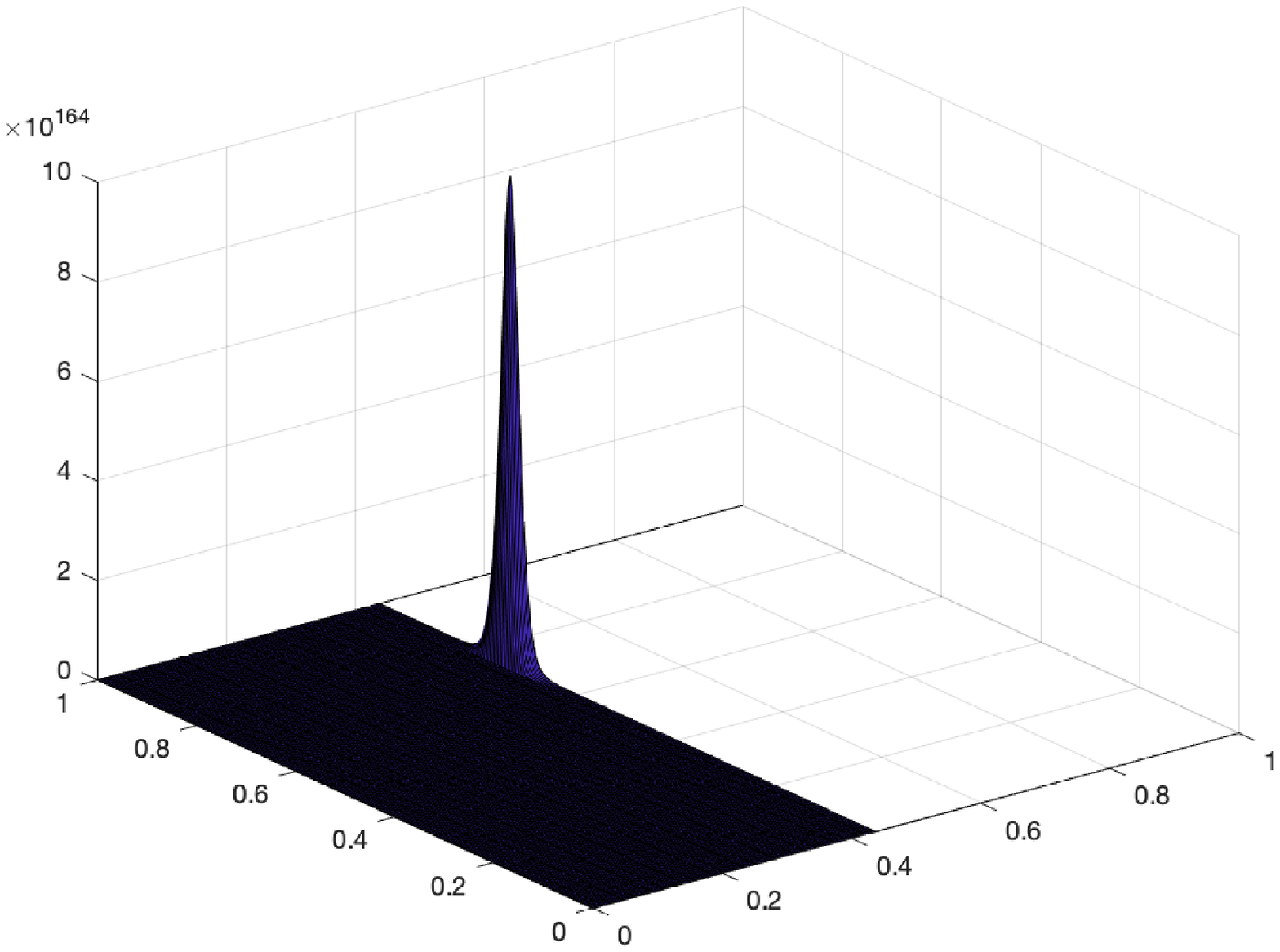}\label{fig--f2}}
 \hfill
 \subfloat[$\sigma(u, v)=v,$ $F(u, v)=v^2$]{\includegraphics[width=0.32\textwidth]{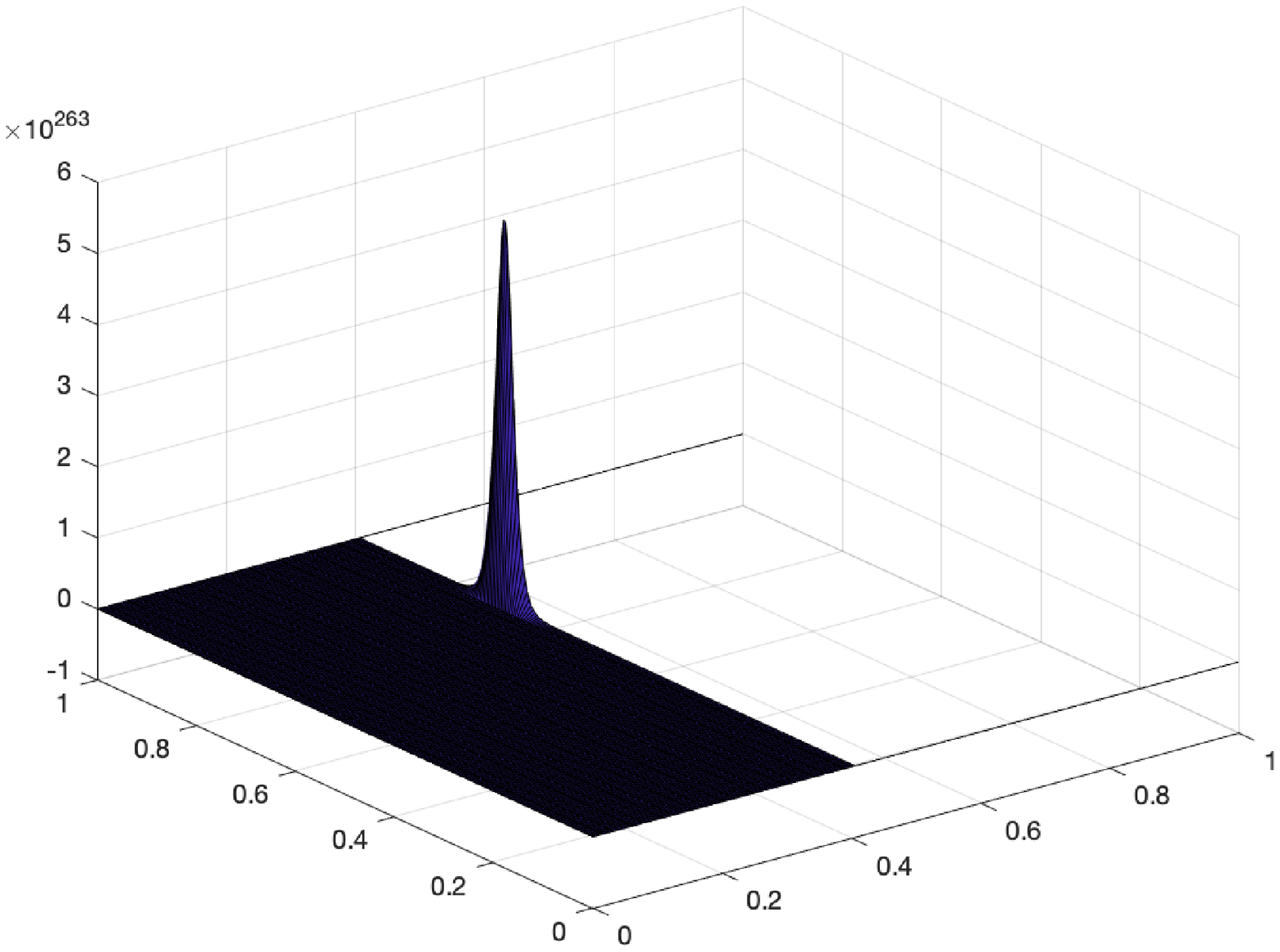}\label{fig--f3}}
\par
}
  \subfloat[$\sigma(u, v)=0,$ $F(u, v)=u^2$]{\includegraphics[width=0.32\textwidth]{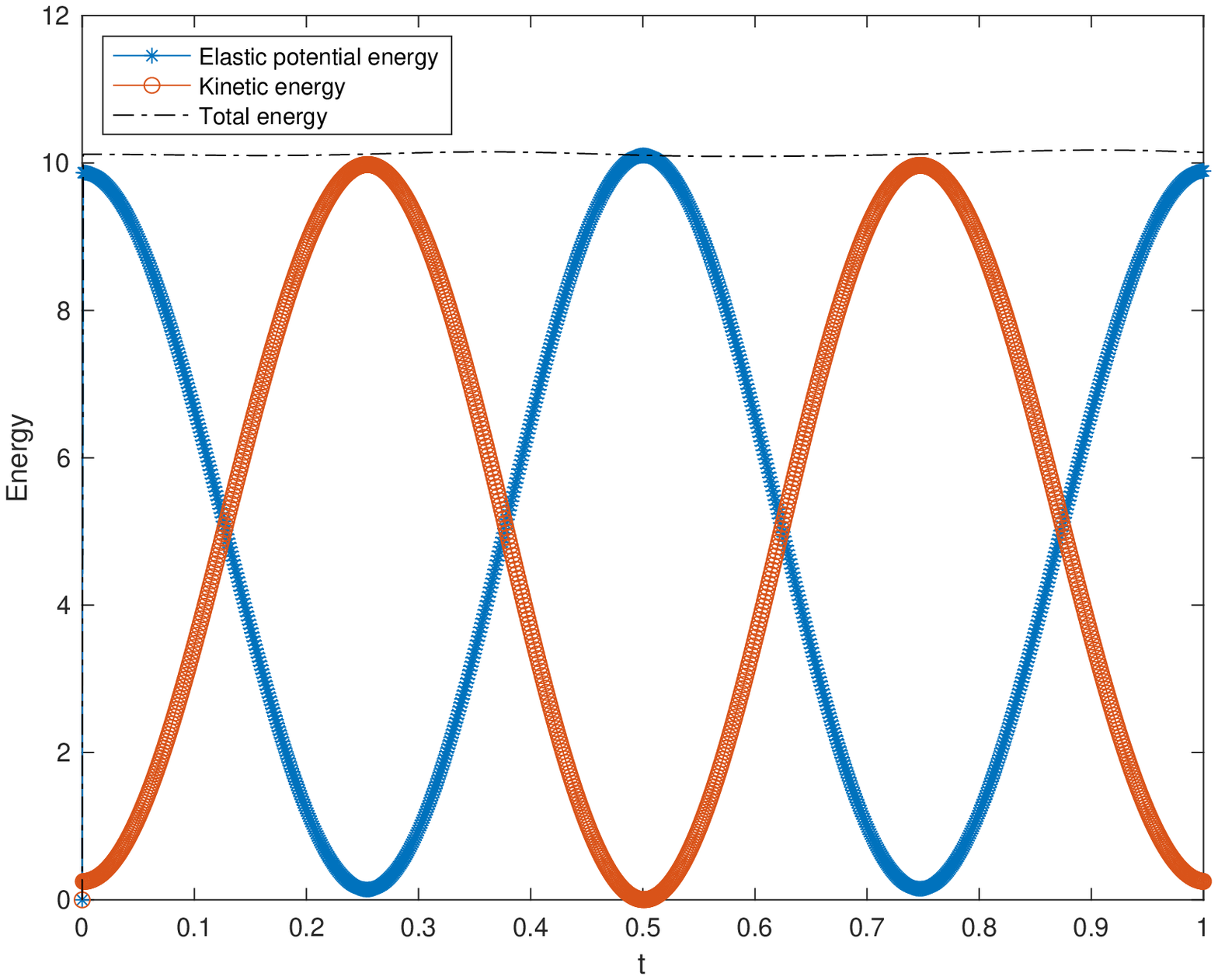}\label{fi-f1}}
  \hfill
  \subfloat[$\sigma(u, v)=u,$ $F(u, v)=u^2$]{\includegraphics[width=0.32\textwidth]{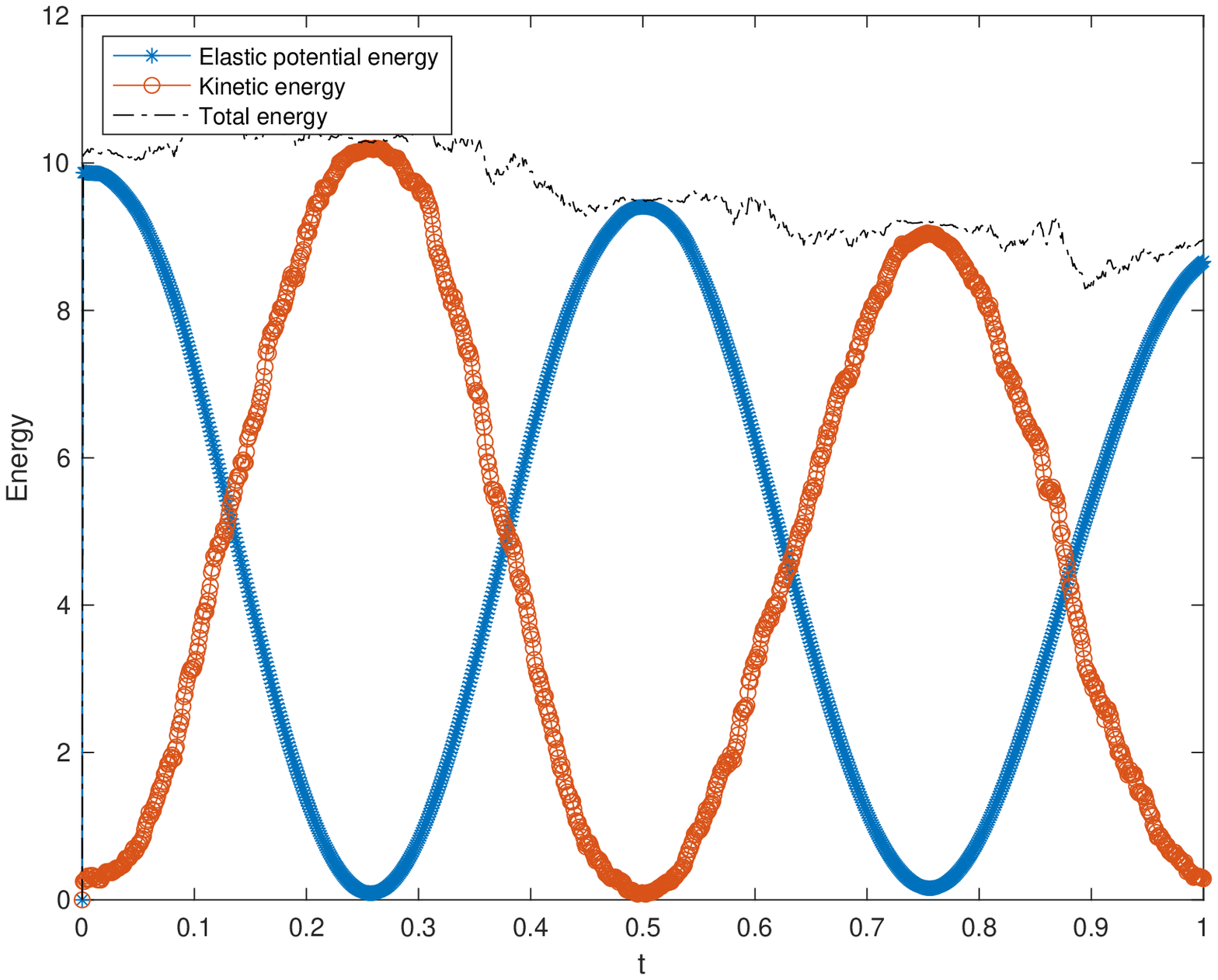}\label{fi-f2}}
 \hfill
 \subfloat[$\sigma(u, v)=v,$ $F(u, v)=u^2$]{\includegraphics[width=0.32\textwidth]{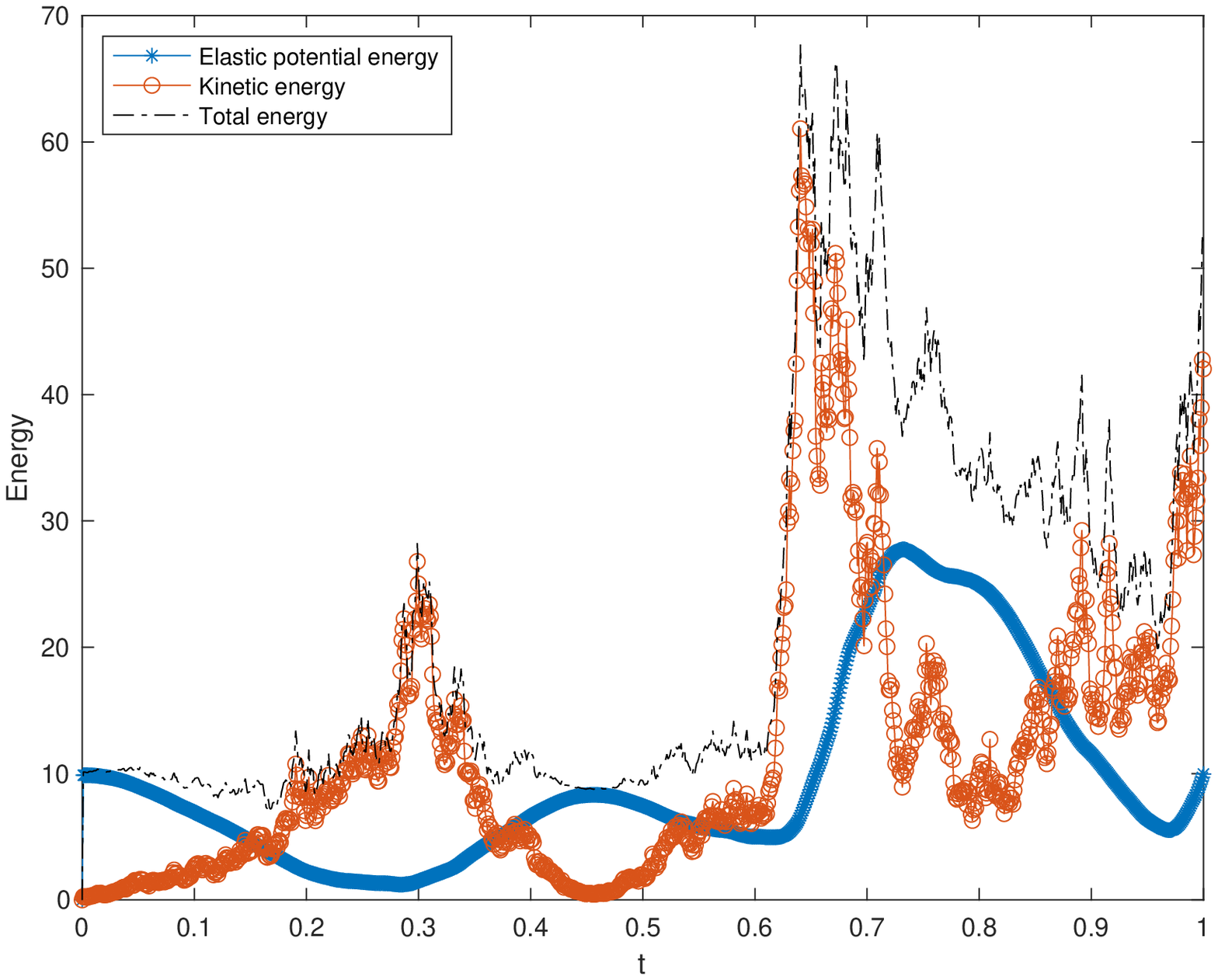}\label{fi-f3}}
 \par
 \subfloat[$\sigma(u, v)=0,$ $F(u, v)=v^2$]{\includegraphics[width=0.32\textwidth]{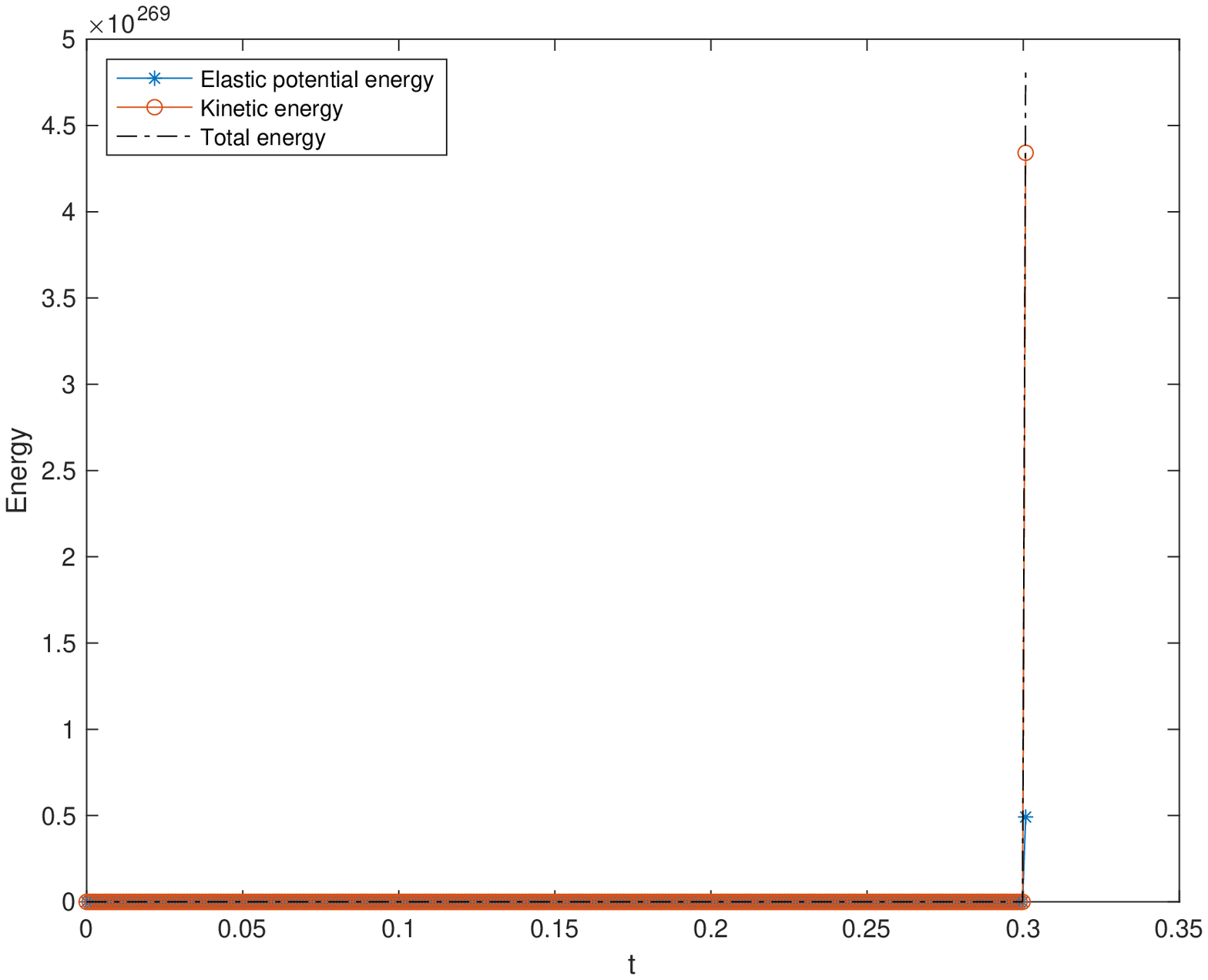}\label{fi-f4}}
  \hfill
  \subfloat[$\sigma(u, v)=u,$ $F(u, v)=v^2$]{\includegraphics[width=0.32\textwidth]{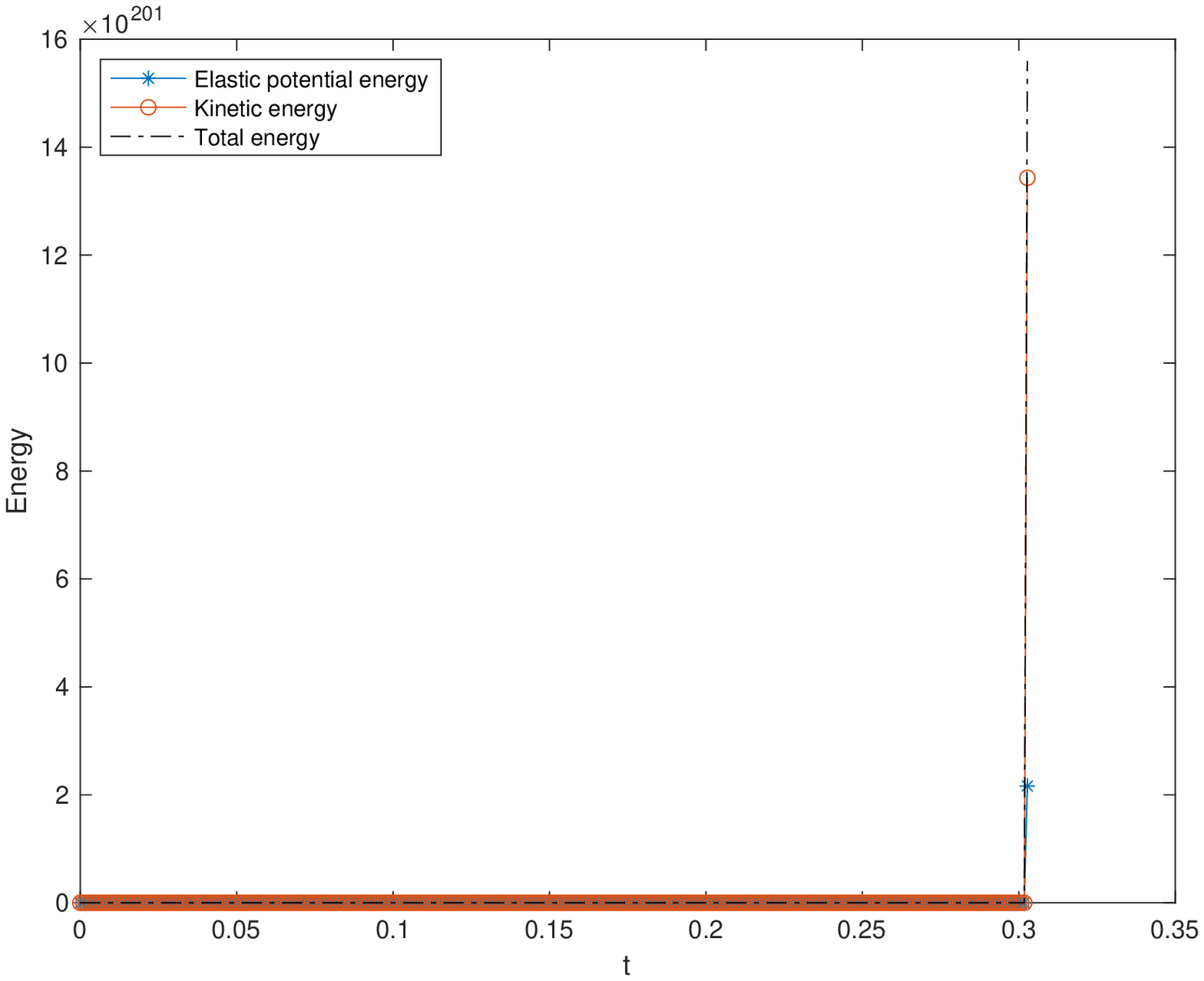}\label{fi-f5}}
 \hfill
 \subfloat[$\sigma(u, v)=v,$ $F(u, v)=v^2$]{\includegraphics[width=0.32\textwidth]{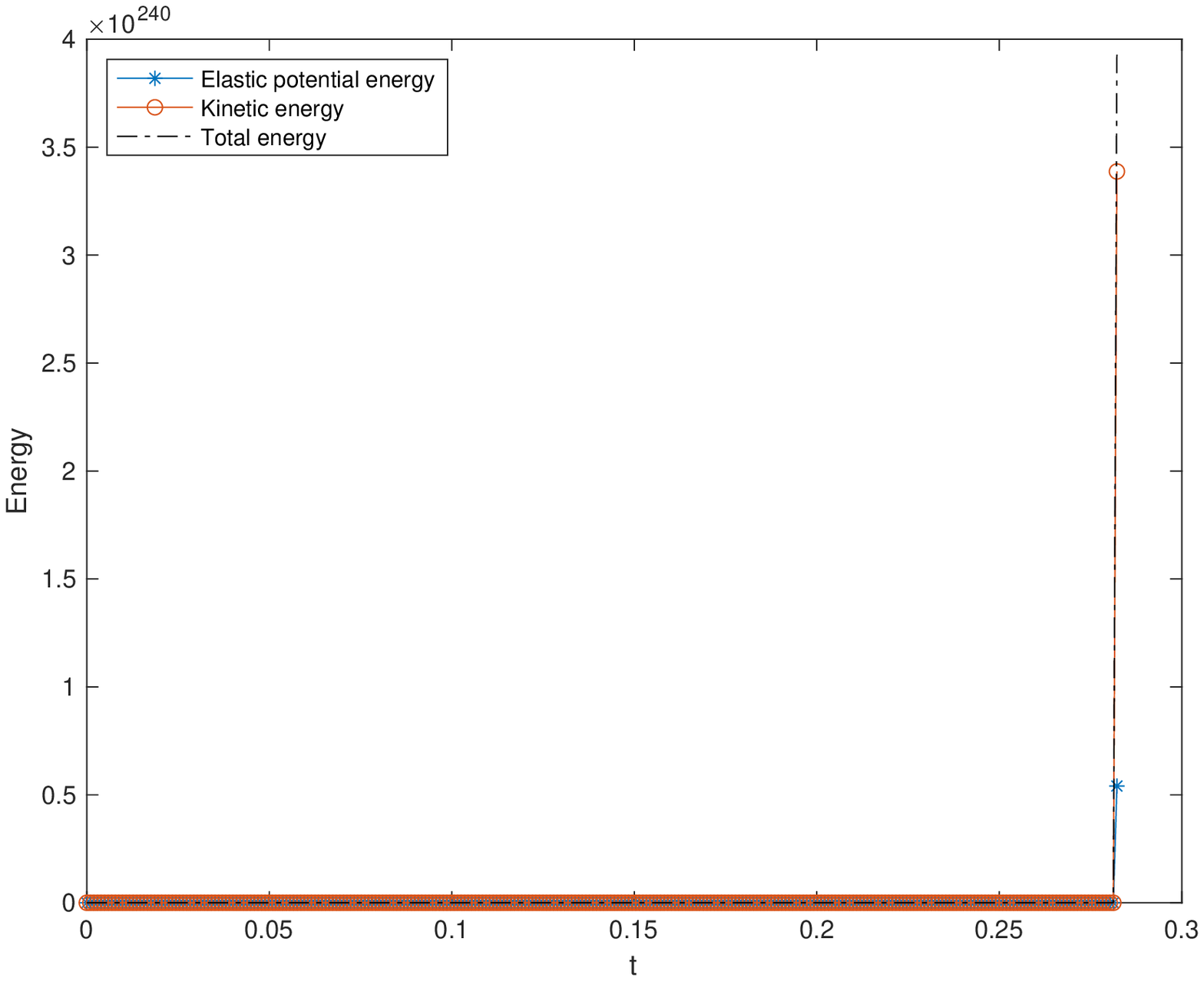}\label{fi-f6}}
 \del{
 \par
 \subfloat[$\sigma(u, v)=0,$ $F(u, v)=v^5$]{\includegraphics[width=0.32\textwidth]{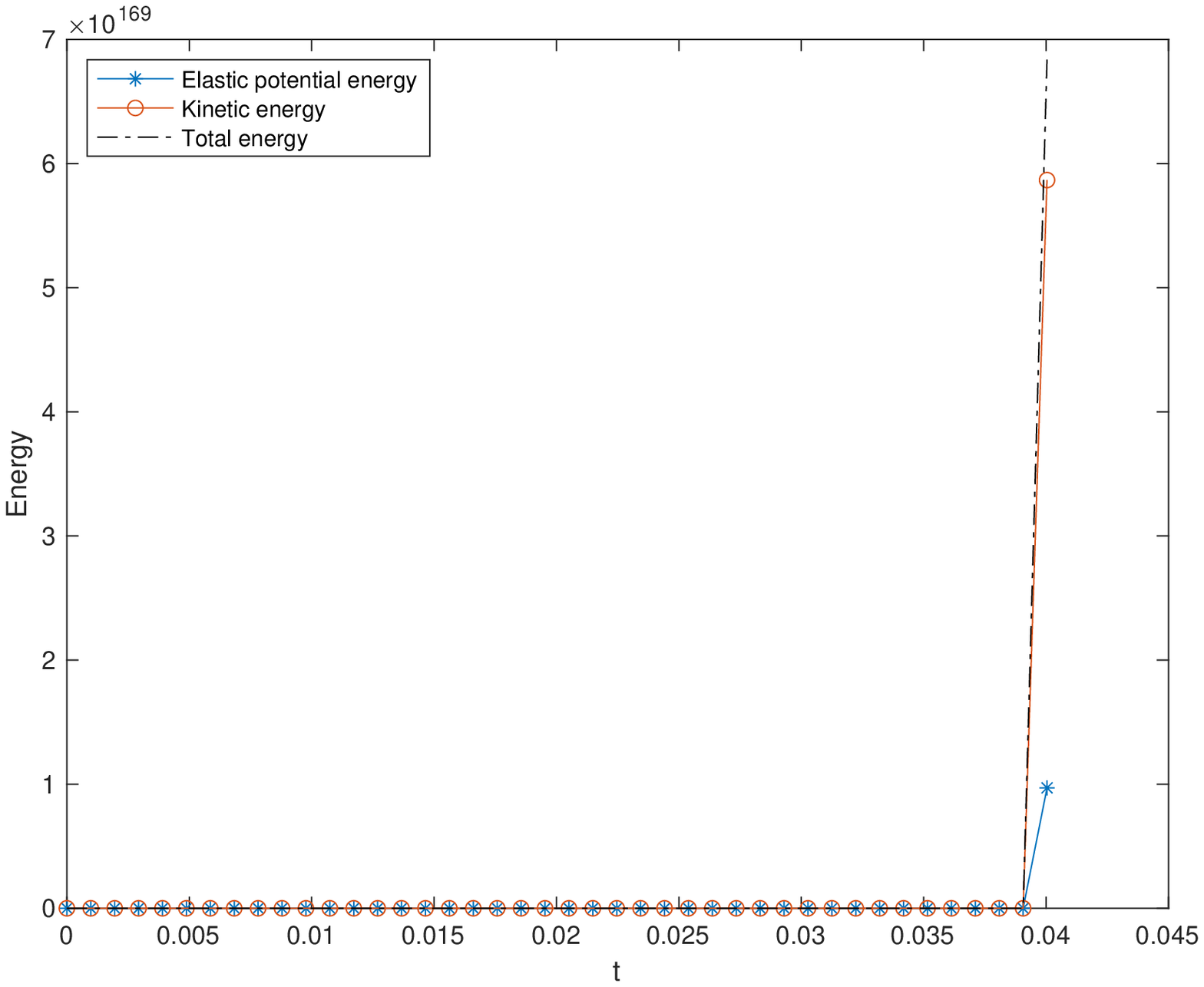}\label{fi-f10}}
  \hfill
  \subfloat[$\sigma(u, v)=u,$ $F(u, v)=v^5$]{\includegraphics[width=0.32\textwidth]{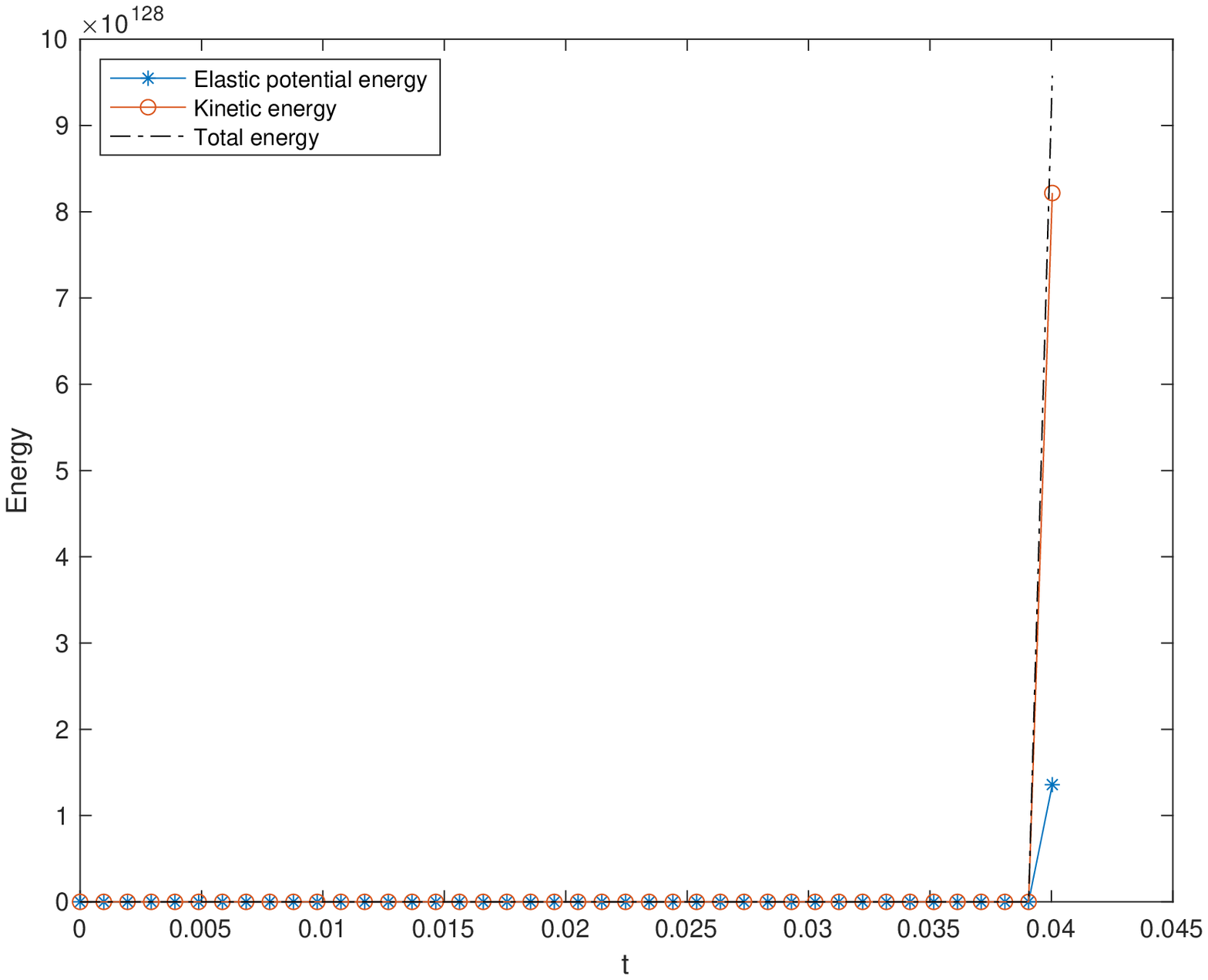}\label{fi-f11}}
 \hfill
 \subfloat[$\sigma(u, v)=v,$ $F(u, v)=v^5$]{\includegraphics[width=0.32\textwidth]{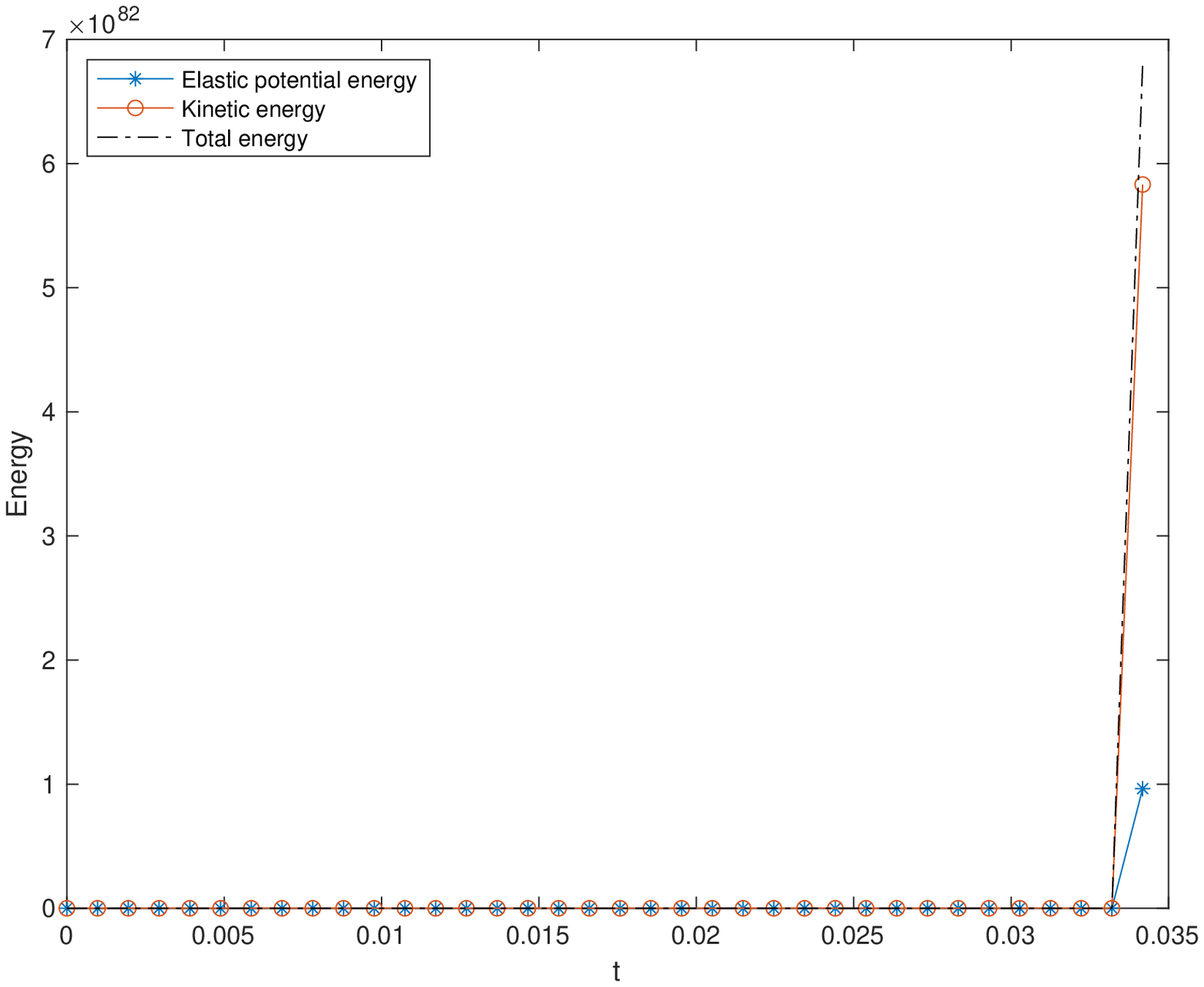}\label{fi-f12}}
 }
 % \hfill
 % \subfloat[]{\includegraphics[width=0.5\textwidth]{fig-4_uL2_err.eps}\label{fig:f4}}
\caption{{\em The $\widetilde{\alpha}$-scheme \eqref{scheme2:1}-\eqref{scheme2:2} is implemented with $h=2^{-6}$ and $k=2^{-9}$.}} 
\label{energy-F}
\end{figure}
For $F(u, v)=v^2$,  we observe a blow-up of the energy at different times for different $\sigma$; see Figures \ref{energy-F} $(D), (E)$ and $(F)$. We observe that the blow-up occurs at an earlier time in the case when $\sigma(u, v)=v$. 
\end{example}
}

\appendix

\section{Proof of Lemma \ref{lem:L2}}\label{Lem-EE}

%%%
\del{
Before going into the proof, we present the main ideas that will be used. The proof in the convergence analysis (Sec. \ref{sec-4}, Thm. \ref{lem:scheme1:con} $(ii)$) does not require the zero boundary conditions on $F$ and $\sigma$, which uses the improved regularity results in Lemmata \ref{lem:L2} and \ref{lem:Holder}. This motivates us to prove these Lemmata under the assumption that $F$ is not zero on the boundary. Although we need to assume $\sigma$  is zero on the boundary due to the regularity results in higher norms, for which we provide a detailed explanation below.
}
%%%

We exploit the linearity of the drift operator to decompose the solution $u$ of \eqref{stoch-wave1:1a} with $A = -\Delta$ in the form $u = u_1+u_2$, where $u_1$ solves the following PDE
\begin{align}\label{Tr-prob}
\begin{cases}
\dd \dot{u}_1 - \Delta  u_1 \,\dd t= F(0, 0) \,\dd t \ \qquad
&\mbox{\rm in}\  (0,T) \times \cO\,, \\ 
u_1(0,\cdot) = 0\, , \quad \p_t u_1(0,\cdot) = 0 \ \qquad &\mbox{\rm in}\  \cO\,,\\
u_1(t,\cdot) =0  \ \qquad &\mbox{\rm on}\  \p \cO, \,\forall \,  t \in (0,T)\,,
\end{cases}
\end{align}
where $``\cdot"$ denotes the time derivative, while $u_2$ solves the SPDE
\begin{align}\label{zbv-prob}
\begin{cases}
\dd \dot{u}_2 - \Delta  u_2 \,\dd t= \widehat{F}(u, v) \,\dd t + \sigma(u, v)\,\dd W(t) \ \qquad
&\mbox{\rm in}\  (0,T) \times \cO\,, \\ 
u_2(0,\cdot) = u_0\, , \quad \p_t u_2(0,\cdot) = v_0 \ \qquad &\mbox{\rm in}\  \cO\,,\\
u_2(t,\cdot) =0  \ \qquad &\mbox{\rm on}\  \p \cO, \,\forall \,  t \in (0,T)\,,
\end{cases}
\end{align}
where $\widehat{F}(u, v) := F(u, v) - F(0, 0)$, and $v = \p_t u := \p_t u_1 + \p_t u_2$. The reason for introducing $\widehat{F}$ is to make the drift term has zero trace in \eqref{zbv-prob}$_{1}$. To prove the regularity results, we use the framework of  \cite{Evans} for \eqref{Tr-prob} and we use the Galerkin-based proof for \eqref{zbv-prob}; see, {\em e.g.} \cite[Ch. 6]{Chow_2015}. We need some extra assumptions on $F$ and $\sigma$ ({\em e.g.} {\bf (A3)--(A5)}) and use different arguments than \cite{Chow_2015} as we require improved regularity results.

\smallskip

For the argumentation below to work, for the improved regularity, we assume that $\sigma$ is zero on the boundary, but not $F$. If this is not assumed, then the subproblem \eqref{Tr-prob} will have an extra term $\Big(\int_0^t \sigma(0, 0) \,\dd W(s), \phi \Big)$ in the right-hand side, and in \eqref{zbv-prob}, $\sigma(u, v)$ will be replaced by $\widehat{\sigma}(u, v):= \sigma(u, v) - \sigma(0, 0)$, which is zero on the boundary. In the next step to prove the higher regularity of the modified \eqref{Tr-prob}, we need to consider the following transformation,
$y(t) = u_1(t) - \int_0^{t} \int_0^s \sigma(0, 0) \,\dd W(r) \, \dd s\,.$ Now, $y$ solves a randomized PDE with $y=h$ on the boundary, where $h(t) := \int_0^{t} \int_0^s \sigma(0, 0) \,\dd W(r) \, \dd s\,.$ Since $h$ is of class $\mathrm{C}^{1, \frac 12}$ with respect to the time variable, the standard PDE techniques to show the improved regularity may not be applied. This motivates us to assume that $\sigma$ is zero on the boundary. 

\del{
In the proof of {\bf b)} below, in order to show $(u_{2}, v) \in L^p \big(\Omega; L^{\infty}(0, T; \bH^{2\ell+1} \times \bH^{2 \ell}) \big)$ for $\ell = 1, \frac 32$, we impose {\bf (A5)}, {\em i.e.,} $F \equiv F(v)$ and $\sigma \equiv \sigma(v)$ are linear in $v$, but $F \equiv F(u)$ and $\sigma \equiv \sigma(u)$ are treated as general functions such that the chain rule and the product rule formula of calculus is applied; see {\bf b3)} below. For the case $\ell = 0, \frac 12$, the functions $F \equiv F(u, v)$ and $\sigma \equiv \sigma(u, v)$ are always treated as general functions; see {\bf b1)} and {\bf b2)} below.
}

\smallskip

\begin{proof}[Proof of Lemma \ref{lem:L2}]

We first prove the improved regularity results for $u_1$ and use a bootstrapping argument to prove the improved regularity results for $u_2$.

\smallskip

\noindent
{\bf a) Improved regularity of $u_1$.} By \cite[Sec. 7.2]{Evans}, there exists a unique solution $u_1 \in C([0, T]; \bH^1_0)$ and  $\p_t u_1 \in C([0, T]; \bL^2)$ to \eqref{Tr-prob}. By \cite[Sec. 7.2]{Evans}, for $m = 1, 2, 3$, under the assumption {\bf (A5)}, we get $\big(u_1, \p_t u_1 \big) \in L^{\infty}(0, T; \bH^{m+1}) \times L^{\infty}(0, T; \bH^{m})$, and we have the following estimate
\begin{align}\label{u_1-esti}
\sup_{0 \leq t \leq T} \Big( \|u_1(t) \|_{\bH^{m+1}}^{q} + \|\p_t u_1(t) \|_{\bH^m}^{q} \Big)  \leq C_q\, \| F(0, 0) \|^{q}_{L^2(0, T; \bH^m)}\, \quad (q \geq 2).
\end{align}
We will use this result to prove the improved regularity for $u_2$. 

\smallskip

\noindent
{\bf b) Improved regularity of $u_2$.} By \cite[Thm. 8.4]{Chow_2015}, there exists a unique  $\{\mathcal{F}_t \}_{t \geq 0}-$adapted process $(u_2, \p_t u_2) \in L^2\big(\Omega; C([0, T]; \bH^1_0)\big) \times L^2\big(\Omega; C([0, T]; \bL^2)\big)$, which satisfies \eqref{zbv-prob} $\mathbb{P}-$a.s.. The proof uses a Galerkin approximation, with $\{\rho_i\}_{i=1}^{\infty}$ the orthonormal basis of $\bL^2$, composed of eigenfunctions of $- \Delta$. For any $n \in \mathbb{N}$, we define the finite dimensional space 
$\mathbb{H}_n := \mbox{Span}\{\rho_1, \cdots, \rho_n\}$, and
$\mathcal{P}_n$ be the projection from $\bL^2$ onto $\mathbb{H}_n$. We define $\Delta_n := \mathcal{P}_n \Delta: \mathbb{H}_n \to \mathbb{H}_n$ and use the mappings $\widehat{F}_n(u_{n}, v_n) := \mathcal{P}_n \widehat{F}(u_{n}, v_n) \in \mathbb{H}_n$ and $\sigma_n(u_{n}, v_n) := \mathcal{P}_n \sigma(u_{n}, v_n) \in \mathbb{H}_n$ for $(u_n, v_n) \in [\mathbb{H}_n]^2$, such that $u_n = u_{1n} + u_{2n}$, where $u_{1n} := \mathcal{P}_n u_1$, $v_n := v_{1n} + v_{2n} := \p_t u_{1n} + \p_t u_{2n}$ with $u_{2n}(0)= \mathcal{P}_n u_0$ and $v_{2n}(0)= \mathcal{P}_n v_0$,
\del{
\begin{align*}
F_n : \mathbb{H}_n \times \mathbb{H}_n \ni (u, v) \mapsto \mathcal{P}_n F(u, v) \in \mathbb{H}_n, \\
\sigma_n : \mathbb{H}_n \times \mathbb{H}_n \ni (u, v) \mapsto \mathcal{P}_n \sigma(u, v) \in \mathbb{H}_n\, ,
\end{align*}
}
where $u_{2n}$ and $v_{2n}$ satisfy the following approximated system
 \begin{align}\label{Gal-stoch}
\begin{cases}
\dd u_{2n} &= v_{2n}\, \dd t\,  \\
\dd v_{2n}  &= \left(\Delta_n u_{2n}  + \widehat{F}_n \big(u_{1n} + u_{2n}, v_{1n} + v_{2n} \big) \right) \dd t +  \sigma_n \big(u_{1n} + u_{2n}, v_{1n} + v_{2n} \big) \dd W(t)\,.
%u(0,\cdot) = u_0\, , \quad \p_t u(0,\cdot) = v_0, \qquad &\mbox{\rm on}\  \cO\,;\\
%u(t,\cdot) =0,  \qquad &\mbox{\rm on}\  \p \cO, \,\forall \,  t \in (0,T)\,,
\end{cases}
\end{align}
By \cite{Oksendal}, there exists a unique $\{\mathcal{F}_t\}_{t \geq 0}$-adapted process $(u_{2n}, v_{2n})$ on $\big( \Omega, \mathcal{F}, \{\mathcal{F}_t\}_{t \geq 0}, \mathbb{P}\big)$ such that for each $n\in \mathbb{N}$, $(u_{2n}, v_{2n}) \in L^2 \big(\Omega; C([0, T]; [\bH_n]^2) \big)$ for $(\mathcal{P}_n u_0, \mathcal{P}_n v_0) \in [\bH_n]^2$. 

\smallskip

\noindent
{\bf 1) Bounds:}  Let $\ell \in \Big\{ 0, \frac{1}{2}, 1, \frac{3}{2} \Big\}$, which correspond to the parts $(i)-(iv)$ of Lemma \ref{lem:L2}, respectively. Define the map $\Phi_{\ell} : \mathbb{H}_n \times \mathbb{H}_n \to \mathbb{R}$, where 
$$
\Phi_{\ell}(u, v):= \frac 12 \Big[ \|\Delta_n^{\ell +\frac{1}{2}} u\|_{\bL^2}^2 + \|\Delta_n^{\ell} v\|_{\bL^2}^2 \Big]\,.
$$
Thus, $D_u \Phi_{\ell}(u, v), D_v \Phi_{\ell}(u, v) \in \mathcal{L}(\mathbb{H}_n, \mathbb{R})$. For any $\phi \in \mathbb{H}_n$, we have
\del{
we may write
\begin{align*}
\Phi_{\ell}(u_n +h, v_n) &= \frac 12 \big[ \|A_n^{\ell +\frac{1}{2}} (u_n +h)\|_{\bL^2}^2 + \|A_n^{\ell} v_n\|_{\bL^2}^2\big] 
= \frac 12 \big[ \big( A_n^{\ell +\frac{1}{2}} (u_n +h), A_n^{\ell +\frac{1}{2}} (u_n +h)\big) + \|A_n^{\ell} v_n\|_{\bL^2}^2 \big]
\\ &= \frac{1}{2} \big[ \big( A_n^{\ell +\frac{1}{2}} u_n, A_n^{\ell +\frac{1}{2}} u_n \big) + \big( A_n^{\ell +\frac{1}{2}} u_n, A_n^{\ell +\frac{1}{2}} h \big) + \big( A_n^{\ell +\frac{1}{2}} h, A_n^{\ell +\frac{1}{2}} u_n \big) 
\\ &\quad+ \big( A_n^{\ell +\frac{1}{2}} h, A_n^{\ell +\frac{1}{2}} h \big)  + \|A_n^{\ell} v_n\|_{\bL^2}^2 \big]
\\ &= \Phi_{\ell}(u_n, v_n) + \big( A_n^{\ell +\frac{1}{2}} u_n, A_n^{\ell +\frac{1}{2}} h \big) + \|A_n^{\ell +\frac{1}{2}} h\|_{\bL^2}^2\, .
\end{align*}
}
\begin{align*}
D_u \Phi_{\ell}(u, v)(\phi)= \big( \Delta_n^{\ell +\frac{1}{2}} u, \Delta_n^{\ell +\frac{1}{2}} \phi \big) \ \mbox{ and } \ D_v \Phi_{\ell}(u, v)(\phi)= \big( \Delta_n^{\ell} v, \Delta_n^{\ell} \phi \big)\,.
\end{align*}
Applying It\^o's formula to the process $\Phi_{\ell}$ we obtain
\begin{equation}\label{Phi_l}
\begin{split}
\Phi_{\ell} \big(u_{2n}(t), v_{2n}(t) \big) &= \Phi_{\ell}\big(u_{2n}(0), v_{2n}(0) \big) + \int_0^t \Big( \Delta_n^{\ell +\frac{1}{2}} u_{2n}(s), \Delta_n^{\ell +\frac{1}{2}} v_{2n}(s)\Big)\, {\rm d}s 
\\ & \quad + \int_0^t \Big( \Delta_n^{\ell} v_{2n}(s), \Delta_n^{\ell +1} u_{2n}(s) + \Delta_n^{\ell} \widehat{F}_n \big(u_n(s), v_n(s) \big) \Big)\,{\rm d} s 
\\ & \quad + \int_0^t \Big( \Delta_n^{\ell} v_{2n}(s), \Delta_n^{\ell} \sigma_n \big(u_n(s), v_n(s) \big) \,{\rm d}W(s) \Big) 
\\ &\quad+ \frac 12 \int_0^t \big\| \Delta_n^{\ell} \sigma_n \big(u_n(s), v_n(s) \big) \big\|_{\bL^2}^2 \, {\rm d}s \, ,
\end{split}
\end{equation}
where $u_n = u_{1n} + u_{2n}$ and $v_n = v_{1n} + v_{2n}$. We use different arguments for the cases $\ell = 0, \frac 12, 1, \frac 32$, which represnt the parts $(i)-(iv)$ of Lemma \ref{lem:L2}, respectively. 

\smallskip

\noindent
{\bf b1) $F \equiv F(u, v)$ and $\sigma \equiv \sigma(u, v)$ for $\ell = 0$.}
Since $\mathcal{P}_n \sigma(u_n, v_n) = \sum_{i=1}^n \big( \sigma\big(u_n, v_n \big), \rho_i \big) \rho_i$, using {\bf (A3)}, a standard argument gives
\begin{align*}
&\big\| \sigma_n \big(u_n, v_n \big) \big\|_{\bL^2}^2 \leq \big\|\sigma\big(u_n, v_n \big) \big\|_{\bL^2}^2 \leq C_{\tt L} \Big\{ 1+ \| \nabla u_n \|_{\bL^2}^2 + \| v_n \|_{\bL^2}^2 \Big\}
\\ &\leq C_{\tt L} \Big\{ 1+ \| \nabla u_{n} \|_{\bL^2}^2 + \| v_n \|_{\bL^2}^2 \Big\}  \leq C \Big\{ 1+ \| \Delta_n^{1/2} u_{n} \|_{\bL^2}^2 + \| v_n \|_{\bL^2}^2 \Big\} \,.
\end{align*}
A similar estimate will hold for $\| \widehat{F}_n(u_n, v_n) \|_{\bL^2}^2\,.$

\smallskip

\noindent
{\bf b2) $F \equiv F(u, v)$ and $\sigma \equiv \sigma(u, v)$ for $\ell = 1/2$.}
Proceeding similarly as before for $\ell = 0$, and using {\bf (A4)} we infer
\begin{align*}
&\big\|\Delta_n^{1/2} \sigma_n(u_n, v_n) \big\|_{\bL^2}^2 = \sum_{j=1}^n \lambda_j \Big| \Big( \sigma \big(u_n, v_n \big), \rho_j \Big) \Big|^2 \leq \big\|\nabla \sigma \big(u_n, v_n \big) \big\|_{\bL^2}^2 \\
 &\leq C \Big\{ 1 + \big\| \p_u \sigma \big(u_n, v_n \big) (\nabla u_n) + \p_v \sigma \big(u_n, v_n \big)  (\nabla v_n) \big\|_{\bL^2}^2 \Big\} \leq C \Big\{ 1+ \|\Delta_n u_{n} \|_{\bL^2}^2 + \|\Delta_n^{1/2} v_n \|_{\bL^2}^2 \Big\}\,.
\end{align*}
A similar estimate will hold for $\big\|\Delta_n^{1/2} \widehat{F}_n \big(u_n, v_n \big) \big\|_{\bL^2}^2$. The other terms in the right-hand side of \eqref{Phi_l} can be dealt similarly by the use of Cauchy-Schwarz inequality. 

\noindent
Using the above estimates in {\bf b1)} and {\bf b2)} (for $\ell = 0$ and $\frac 12$, respectively) in \eqref{Phi_l} we obtain
{\small{
\begin{equation}\label{ineq-2}
\begin{split}
\Phi_{\ell} \big(u_{2n}(t), v_{2n}(t) \big) &\leq \Phi_{\ell} \big(u_{2n}(0), v_{2n}(0) \big) + C \int_0^t \Big[ 1+ \|\Delta_n^{\ell} v_n(s)\|_{\bL^2}^2 + \|\Delta_n^{\ell+ \frac{1}{2}} u_{n}(s)\|_{\bL^2}^2  \Big] {\rm d}s 
\\ &\quad+ \int_0^t \Big( \Delta_n^{\ell} v_{2n}(s), \Delta_n^{\ell} \sigma_n \big(u_n(s), v_n(s) \big) {\rm d}W(s) \Big)\, .
\end{split}
\end{equation}
}}
Using the definition of $\Phi_{\ell}$, raising the power $p$ in both sides of the inequality for some $p > 2$, taking the supremum over time and then taking expectation, and using the regularity results in {\bf a)}, we get
\begin{equation}\label{ineq-3}
\begin{split}
&\bE \Big[ \sup_{0 \leq s \leq t} \Phi_{\ell}^p \big(u_{2n}(s), v_{2n}(s) \big) \Big] 
\\ &\leq C+ 3^{p-1} \bE \Big[\Phi_{\ell}^p \big(u_{2n}(0), v_{2n}(0) \big) \Big] + 3^{p-1} \int_0^t \bE \Big[\sup_{0 \leq r \leq s} \Phi_{\ell}^p \big( u_{2n}(r), v_{2n}(r) \big) \Big] \,{\rm d}r
\\ &\quad+ 3^{p-1} \bE \bigg[ \sup_{0 \leq s \leq t} \bigg| \int_0^s \Big( \Delta_n^{\ell} v_n(r), \Delta_n^{\ell} \sigma_n \big(u_n(r), v_n(r) \big)\, {\rm d}W(r) \Big) \bigg|^p \bigg]\, .
\end{split}
\end{equation}

Using the Burkholder-Davis-Gundy inequality and previous estimates for $\ell = 0, \frac 12$, and using the regularity results in {\bf a)}, we obtain
\begin{equation}\label{ineq-4}
\begin{split}
&\bE \bigg[ \sup_{0 \leq s \leq t} \bigg| \int_0^s \Big( \Delta_n^{\ell} v_{2n}(r), \Delta_n^{\ell} \sigma_n \big(u_n(r), v_n(r) \big) {\rm d}W(r) \Big) \bigg|^p \bigg] 
\\ &\leq C \bE \bigg[ \bigg( \int_0^t \|\Delta_n^{\ell} v_{2n}(s)\|_{\bL^2}^2 \big\|\Delta_n^{\ell} \sigma_n\big(u_n(s), v_n(s) \big) \big\|_{\bL^2}^2\,{\rm d} s \bigg)^{p/2} \bigg] 
\\ &\leq C + C \bE \bigg[ \sup_{0 \leq s \leq t} \Phi_{\ell}^p \big(u_{2n}(s), v_{2n}(s) \big) \bigg] + C \int_0^t \bE \bigg[ \sup_{0 \leq s \leq t} \Phi_{\ell}^p \big(u_{2n}(s), v_{2n}(s) \big) \bigg] \,{\rm d}s.
\end{split}
\end{equation}
Using \eqref{ineq-4} in \eqref{ineq-3} and using the Gronwall lemma we get for $\ell = 0, \frac 12$ and $p \geq 2$,
\begin{align}\label{Phi-esti}
\bE \bigg[ \sup_{0 \leq s \leq t} \Phi_{\ell}^p \big(u_{2n}(s), v_{2n}(s) \big) \bigg]  \leq C \,\bE\, \Big[ \Phi_{\ell}^p \big(u_{2n}(0), v_{2n}(0) \big) \Big] \, e^{CT} \leq C\, \bE\,\Big[ \Phi_{\ell}^p\big(u_0, v_0 \big) \Big]\, e^{CT} \, .
\end{align}

\smallskip

\noindent
{\bf b3) Dealing of cases $\ell = 1, \frac 32$.}
We assume {\bf (A3)} for these two cases.
If we treat $\sigma_2(v)$ and $F_2(v)$ as general functions, then the chain rule and the product rule formula of calculus will lead us to higher order derivative terms with higher moments in $v$ in the right-hand side as compared to the left-hand side; see \eqref{l=1esti} and \eqref{l=1esti-1} below for the similar estimates in $v$. Then, the Gronwall lemma may not be applied. Thus, $F \equiv F(u)$ and $\sigma \equiv \sigma(u)$ are treated as general functions, but $F \equiv F(v)$ and $\sigma \equiv \sigma(v)$ are assumed to be only affine in $v$.

\smallskip

\noindent
{\bf Case-1:} Let us consider the case $\sigma \equiv \sigma_1(u)$ and $\widehat{F} \equiv F_1(u),$ which can be dealt as general functions. Take $\ell =1$ in \eqref{Phi_l}. Then, using product formula, and chain rule for general functions and by {\bf (A4)} for $m=1, 2$, we infer
{\small{
\begin{align}\label{l=1esti}
&\|\Delta_n \sigma_{n}(u_n) \|_{\bL^2}^2 = \sum_{j=1}^n \lambda_j^2 \big| \big( \sigma(u_n), \rho_j \big) \big|^2 \leq \|\Delta \sigma(u_n)\|_{\bL^2}^2 \leq \tilde{C}_g^2 \,\| (\nabla u_{n})^2 \|_{\bL^2}^2 + C_g \| \Delta u_{n} \|_{\bL^2}^2\,.
\end{align}
}}
Now, using Ladyzhenskaya inequality and Poincar\'e inequality, we estimate the term
\begin{equation}\label{l=1esti-1}
\begin{split}
\| (\nabla u_{n})^2 \|_{\bL^2}^2 \leq C \| \nabla u_{n} \|_{\bL^2}^2 \| \Delta u_{n} \|_{\bL^2}^2 &\leq C \| \nabla u_{n} \|_{\bL^2}^4 \| \Delta u_{n} \|_{\bL^2}^2 + C \| \Delta u_{n} \|_{\bL^2}^2 
\\ &\leq C \| \nabla u_{n} \|_{\bL^2}^8 + C \| \Delta u_{n} \|_{\bL^2}^4 + C \| \nabla \Delta u_{n} \|_{\bL^2}^2\,.
\end{split}
\end{equation}
Similar estimates will hold for $\|\Delta_n \widehat{F}_n(u_n) \|_{\bL^2}^2$. Now, we take $\ell =\frac 32$ in \eqref{Phi_l}. Using the chain rule, {\bf (A4)} for $m=1, 2, 3$, we infer
{\small{
\begin{align}\label{l=3/2esti}
&\|\Delta_n^{3/2} \sigma_n(u_n) \|_{\bL^2}^2 \leq \|\Delta^{3/2} \sigma(u_n)\|_{\bL^2}^2 \leq \bar{C}_g^2 \,\| (\nabla u_{n})^3 \|_{\bL^2}^2 + \tilde{C}_g^2 \,\| \nabla u_{n} \Delta u_n \|_{\bL^2}^2 + C_g \| \nabla \Delta u_{n} \|_{\bL^2}^2\,.
\end{align}
}}
Using the Sobolev embeddings we further esimate
\begin{align}
\| (\nabla u_{n})^3 \|_{\bL^2}^2 \leq C\,\| \nabla u_{n} \|_{\bL^6}^6 \leq C\,\| \Delta u_{n} \|_{\bL^2}^6 \,,
\end{align}
and
\begin{equation}\label{l=3/2esti-1}
\begin{split}
&\| \nabla u_{n} \Delta u_n \|_{\bL^2}^2 \leq C\,\| \nabla u_{n} \|_{\bL^4}^2 \| \Delta u_n \|_{\bL^4}^2 \leq C\,\| \nabla u_{n} \|_{\bL^2}^{1/2} \| \Delta u_{n} \|_{\bL^2}^{3/2} \| \Delta u_{n} \|_{\bL^2}^{1/2} \| \nabla \Delta u_{n} \|_{\bL^2}^{3/2} 
\\ &\leq C \| \nabla u_{n} \|_{\bL^2}^{2} + C \| \Delta u_{n} \|_{\bL^2}^{8} + C \| \nabla \Delta u_{n} \|_{\bL^2}^{3}\,.
\end{split}
\end{equation}
Similar estimates will hold for $\|\Delta_n^{3/2} \widehat{F}_n(u_n) \|_{\bL^2}^2$. 

\smallskip

\noindent
{\bf Case-2:} Let $\sigma \equiv \sigma_2(v)$ and $\widehat{F} \equiv F_2(v)$, such that {\bf (A5)} holds. For $\ell =1$ we have
\begin{align}\label{l=1esti-v}
&\|\Delta_n \sigma_{n}(v_n) \|_{\bL^2}^2 = \sum_{j=1}^n \lambda_j^2 \big| \big( \sigma(v_n), \rho_j \big) \big|^2 \leq \|\Delta \sigma(v_n)\|_{\bL^2}^2 \leq C_g \| \Delta v_{n} \|_{\bL^2}^2\,,
\end{align}
and for $\ell =\frac 32$ we have
\begin{align}\label{l=3/2esti-v}
&\|\Delta_n^{3/2} \sigma_n(v_n) \|_{\bL^2}^2 \leq \|\Delta^{3/2} \sigma(v_n)\|_{\bL^2}^2 \leq C_g \| \nabla \Delta v_{n} \|_{\bL^2}^2\,.
\end{align}
Similar estimates will hold for $\|\Delta_n^{3/2} \widehat{F}_n(v_n) \|_{\bL^2}^2$.

Using the estimates \eqref{l=1esti}, \eqref{l=3/2esti-1}, \eqref{l=1esti-v}, and \eqref{l=3/2esti-v} in \eqref{Phi_l} for $\ell =1, \frac 32$, and using the regularity results proved so far for $u_{1n}$ and $u_{2n}$ and their time derivatives, we get \eqref{ineq-3} for $\ell =1, \frac 32$. Finally, the use of Burkholder-Davis-Gundy inequality yields the assertion for $\ell=1, \frac 32$. 

\del{
Here, we claim that under the assumption $(u_0, v_0) \in \bH^{2\ell+1} \times \bH^{2 \ell}$ for $\ell = 0, \frac 12, 1, \frac 32$, we get
\begin{align}\label{u_2-esti}
\bE \bigg[ \sup_{0 \leq t \leq T} \Big( \|u_2(t) \|_{\bH^{2 \ell+1}}^{p} + \|v_2(t) \|_{\bH^{2 \ell}}^{p} \Big) \bigg] \leq C\big(p, T, \|u_0\|_{\bH^{2\ell+1}}, \|v_0\|_{\bH^{2\ell}} \big)\,.
\end{align}
}

\smallskip

\noindent
{\bf 2) Convergence:}
By step {\bf 1)},  for $p \geq 2$
$$(u_{2n}, v_{2n})_{n} \subset L^p \big(\Omega; L^{\infty}(0, T; \bH^{2\ell+1} \times \bH^{2 \ell}) \big) \cap L^p\big(\Omega; L^{2}(0, T; \bH^{2\ell+1} \times \bH^{2 \ell}) \big)$$ 
is bounded for $\ell = 0, \frac 12, 1, \frac 32$. Here, we need to argue the convergence case by case. First, consider $\ell=0$. Then, there exist subsequences $(u_{2n'})_{n'}$ and $(v_{2n'})_{n'}$, which converge weakly to $u'_2$ and $v'_2$, respectively. Then, using the standard arguments (see \cite{Chow_2015}) it can be shown that $(u'_2, v'_2)$ is a weak solution of \eqref{zbv-prob}. By the uniqueness of the weak solution, we have $(u'_2, v'_2) = (u, v)$.
 By Fatou's lemma, passing to the limit in \eqref{Phi-esti} yields
\begin{align}\label{u2v2_bound}
\bE \bigg[ \sup_{0 \leq s \leq t} \Phi_{\ell}^p \big(u_2(s), v_2(s) \big) \bigg] \leq C \bE\,\Big[ \Phi_{\ell}^p \big(u_0, v_0 \big) \Big]\, e^{CT} \, ,
\end{align}
for $\ell=0$. Now, consider $\ell=\frac 12$. Then, there exist subsequences $(u_{2n''})_{n''}$ and $(v_{2n''})_{n''}$ which converge weakly to some $\tilde{u}_2$ and $\tilde{v}_2$, respectively. By using the standard arguments and the uniqueness of the solution of the system \eqref{zbv-prob}, we claim that $(\tilde{u}_2, \tilde{v}_2) = (\nabla u_2, \nabla v_2)$. Thus, by passing to the limit \eqref{u2v2_bound} holds for $\ell=1/2$.  Similar arguments will yield the result for $\ell = 1, \frac 32$. Combining \eqref{u2v2_bound} with \eqref{u_1-esti} we get the  assertions in Lemma \ref{lem:L2}.
\end{proof}

\section{Proof of H\"older continuity in time}\label{Hoe-con}

The proof of Lemma \ref{lem:Holder} uses the regularity results for the variational solution of \eqref{strong1}--\eqref{strong2} in Lemma \ref{lem:L2}. We obtain a H\"older regularity in time for $u$ which is double the one for $v$: the reason for it is the occurrence of the It\^o integral in \eqref{strong2}, but not in \eqref{strong1}.

\begin{proof} [Proof of Lemma \ref{lem:Holder}] 
{\bf Proof of $(i)$.} Let $r, s \in [0, T]$, and fix $p \in \mathbb{N}$. By Lemma \ref{lem:L2} $(i)$, we have $v \in L^{2} \big(\Omega; L^{\infty}(0, T; \bL^{2}) \big)$. Therefore, $\int_s^r v(\xi)\dd \xi$ is well-defined for {\em a.e.} $x \in \cO$ and $\bP$-a.s.. Thus, we can write the weak formulation \eqref{strong1} in strong form $\bP$-a.s. as 
$$u(r)-u(s) =\int_s^r v(\xi)\dd \xi, \quad \mbox{ for a.e. } x \in \cO, \mbox{ for } r, s \in [0, T]\,.$$
Then, the H\"older inequality yields
\[ \|u(r)-u(s)\|_{\bL^2}^{2p} \leq \Big( \int_s^r \|v(\xi)\|_{\bL^2}\,\dd \xi \Big)^{2p} \leq |r-s|^{2p-1} \int_s^r \|v(\xi)\|^{2p}_{\bL^2}\,\dd \xi \,.\]

\del{
Since the proofs of assertions $\bE \bigl[\|\nabla [u(t)- u(s)]\|_{{\mathbb L}^2}^{2p} \bigr]$ and $\bE \bigl[\|\Delta [u(t)- u(s)]\|_{{\mathbb L}^2}^2 \bigr]$ follow the same lines as that for the $\bE \bigl[\|u(t)-u(s)\|_{{\mathbb L}^2}^{2p} \bigr]$ by using the restated version of \eqref{stoch-wave1:1a}$_1$, $\nabla \bigl(u(t)-u(s)\bigr) =\int_s^t \nabla v(\xi)\dd \xi $, $\Delta \bigl(u(t)-u(s)\bigr) =\int_s^t \Delta v(\xi)\dd \xi $, respectively. 
\begin{align*}
\bE \Bigl[ \sup_{s \leq r \leq t} \|u(r)-u(s)\|_{{\mathbb L}^2}^{2p} \bigr]
&\le \bE \Bigl[\Bigl\|\Bigl(\int_s^t |v(\xi)|^2\dd \xi\Bigr)^{\frac{1}{2}}
\Bigl(\int_s^t 1\dd \xi\Bigr)^{\frac{1}{2}} \Bigl\|_{{\mathbb L}^2}^{2p}\Bigl]
\le |t-s|^p\bE \Bigl[\Bigl(\int_s^t\bigl\|v(\xi)\bigl\|_{{\mathbb L}^2}^2\dd \xi \Bigl)^p\Bigl]
\\ &\le |t-s|^p\bE \Bigl[\Bigl(\int_s^t\bigl\|v(\xi)\bigl\|_{{\mathbb L}^2}^{2p}\dd \xi \Bigl)\Bigl(\int_s^t 1\dd \xi \Bigl)^{p-1}\Bigl]
\le \bE \Bigl[ \supT \|v(t)\|_{\bL^2}^{2p}\Bigr]|t-s|^{2p}\, .
\end{align*}
}
We fix $s, t \in [0, T]$, and take supremum w.r.t. $r$, and then take expectation to get
\begin{align*}
\bE \Bigl[ \sup_{s \leq r \leq t} \|u(r)-u(s)\|_{{\mathbb L}^2}^{2p} \bigr]
&\le |t-s|^{2p-1}\, \bE \Big[ \int_s^t \|v(\xi)\|_{\bL^2}^{2p}\,\dd \xi \Big]
\le |t-s|^{2p}\,\bE \Bigl[ \supT \|v(t)\|_{\bL^2}^{2p}\Bigr]\, .
\end{align*}
Hence, $(i)$ holds by applying \eqref{lem:L2:1} in Lemma~\ref{lem:L2}.

\smallskip

\noindent
{\bf Proof of $(ii)$.} Let $r, s \in [0, T]$, and fix $p \in \mathbb{N}$. The first part follows as {\bf $(i)$}. By Lemma \ref{lem:L2} $(ii)$, we have $u \in L^{2} \big(\Omega; L^{\infty}(0, T; \bH^{2}) \big)$. Therefore, $\int_s^r \Delta u(\xi)\,\dd \xi$ is well-defined for {\em a.e.} $x \in \cO$ and $\bP$-a.s.. By Lemma \ref{lem:L2} $(i)$, we have $(u, v) \in L^{2} \big(\Omega; L^{\infty}(0, T; \bH^{2} \times \bH^1) \big)$. Therefore, by {\bf (A3)}, $\int_s^r F \big(u(\xi), v(\xi) \big) \dd \xi$ is well-defined for {\em a.e.} $x \in \cO$ for $s, r \in [0, T]$ and $\bP$-a.s.. Similarly, by {\bf (A3)} and It\^o isometry, $\int_s^r \sigma \big(u(\xi), v(\xi) \big) \dd W(\xi)$ is well-defined for {\em a.e.} $x \in \cO$ for $s, r \in [0, T]$ and $\bP$-a.s.. Now, from the weak formulation \eqref{strong2} and using the above conclusion, we may rewrite the equation in the strong form as (see \cite[Section 6.3, Remark (ii)]{Evans})
\begin{align}\label{vt-vs}
v(r) - v(s) = \int_s^r \Delta u(\xi) \dd \xi + \int_s^r F \big(u(\xi), v(\xi) \big) \dd \xi + \int_s^r \sigma \big(u(\xi), v(\xi) \big) \dd W(\xi).
\end{align}
\del{
Taking the square, integrating over the domain, and then the expectation on both sides, using It\^o isometry and \eqref{E-bound} and Lemma \ref{lem:L2} $(i)$ and $(ii)$ we infer
\begin{align*}
\bE \Big[ \|v(t) - v(s) \|_{\bL^2}^2 \Big] &\leq C \bE \Big[ \int_{\cO} \Big( \int_s^t \Delta u(\xi) \,\dd \xi \Big)^2 \,\dd x \Big] + C \bE \Big[ \int_{\cO} \Big( \int_s^t F \big(u(\xi), v(\xi) \big) \,\dd \xi \Big)^2 \,\dd x \Big] 
\\ &\quad+ C \bE \Big[ \int_{\cO} \Big( \int_s^t \sigma \big(u(\xi), v(\xi) \big) \,\dd W(\xi) \Big)^2 \,\dd x \Big]
\\ &\leq C \bE \Big[ \int_s^t \|\Delta u(\xi)\|_{\bL^2}^2 \, \dd \xi \Big] (t-s) + C \bE \Big[ \int_s^t \|F \big(u(\xi), v(\xi) \big)\|_{\bL^2}^2 \, \dd \xi \Big] (t-s) 
\\ &\quad+ C \bE \Big[ \int_s^t \|\sigma \big(u(\xi), v(\xi) \big)\|_{\bL^2}^2 \, \dd \xi \Big] 
\\ &\leq C (t-s) + C \bE \Big[ \int_s^t \big( 1 + \| \nabla u(\xi)\|_{\bL^2}^2 + \| v(\xi)\|_{\bL^2}^2 \big) \Big] \leq C (t-s).
\end{align*}
}
By H\"older inequality, we estimate
\begin{equation}\label{v-Hol-L2}
\begin{split}
\|v(r) - v(s) \|_{\bL^2}^2 &\leq C (r-s) \int_s^r \|\Delta u(\xi)\|_{\bL^2}^2 \, \dd \xi  + C (r-s) \int_s^r \|F \big(u(\xi), v(\xi) \big)\|_{\bL^2}^2 \, \dd \xi 
\\ &\quad+ C \int_{\cO} \Big( \int_s^r \sigma \big(u(\xi), v(\xi) \big) \,\dd W(\xi) \Big)^2 \,\dd x \,.
\end{split}
\end{equation}
We fix $s, t \in [0, T]$, and take supremum w.r.t. $r$, then take expectation. Using {\bf (A3)}, It\^o isometry, Lemma \ref{lem:L2} $(i)$ and $(ii)$, we infer
{\small{
\begin{align*}
\bE \Big[ \sup_{s \leq r \leq t} \|v(r) - v(s) \|_{\bL^2}^2 \Big] &\leq C \bE \Big[ \sup_{0 \leq t \leq T} \|\Delta u(t)\|_{\bL^2}^2 \Big] (t-s)^2 + C (t-s)^2 + C \bE \Big[ \sup_{0 \leq t \leq T} \|\nabla u(t)\|_{\bL^2}^2 \Big] (t-s)^2  \nonumber
\\ &\quad+ C \bE \Big[ \sup_{0 \leq t \leq T} \|v(t)\|_{\bL^2}^2 \Big] (t-s)^2 + C \bE \Big[ \int_s^t \big( 1 + \| \nabla u(\xi)\|_{\bL^2}^2 + \| v(\xi)\|_{\bL^2}^2 \big) \Big] %\label{sup-v-Hol-L2}
\\ &\leq C (t-s) \,. \nonumber
\end{align*}
}}

\smallskip

\noindent
{\bf Proof of $(iii)$.} Let $r, s \in [0, T]$, and fix $p \in \mathbb{N}$. The first part follows as {\bf $(i)$}. In order to verify the bound for $\bE \big[\sup_{s \leq r \leq t} \|\nabla[v(r) - v(s)] \|_{\bL^2}^2 \big]$, consider the equation \eqref{vt-vs}. By Lemma \ref{lem:L2}, we have $(u, v) \in L^{2p} \big(\Omega; L^{\infty}(0, T; \bH^{4} \times \bH^{3}) \big)$. Thus, by {\bf (A3)}, {\bf (A4)}, we can take the gradients in \eqref{vt-vs}, since it is a closed operator on $\bH^1$. Then, the terms are well-defined. Proceeding similarly as part $(ii)$ we get
{\small{
\begin{align*}
\bE \Big[ \sup_{s \leq r \leq t} \|\nabla[v(r) - v(s)] \|_{\bL^2}^2 \Big] &\leq C \bE \Big[ \int_s^t \|\nabla \Delta u(\xi)\|_{\bL^2}^2 \, \dd \xi \Big] (t-s) + C \bE \Big[ \int_s^t \big\| \nabla F\big(u(\xi), v(\xi) \big) \big\|_{\bL^2}^2 \, \dd \xi \Big] (t-s) 
\\ &\quad+ C \bE \Big[ \int_s^t \big\|\nabla \sigma \big(u(\xi), v(\xi) \big) \big\|_{\bL^2}^2 \, \dd \xi \Big] \,.
\end{align*}
}}
By {\bf (A4)}, $\big\|\nabla \sigma\big(u(\xi), v(\xi) \big) \big\|_{\bL^2}^2 \leq C_g^2 \big( \|\nabla u(\xi)\|_{\bL^2}^2 + \|\nabla v(\xi)\|_{\bL^2}^2 \big)$. Then, using Lemma \ref{lem:L2} $(i)$, $(ii)$ and $(iii)$, we further estimate
\begin{align*}
\leq C (t-s)^2 + C \bE \Big[ \int_s^t \big( \| \nabla u(\xi)\|_{\bL^2}^2 + \| \nabla v(\xi)\|_{\bL^2}^2 \big) \Big] \leq C (t-s).
\end{align*}

\noindent
{\bf Proof of $(iv)$.} Let $r, s \in [0, T]$, and fix $p \in \mathbb{N}$. The first part follows as {\bf $(i)$}. To verify the bound for $\bE \big[ \sup_{s \leq r \leq t} \|\Delta[v(r) - v(s)] \|_{\bL^2}^2 \big]$, consider \eqref{vt-vs}. Argue similarly as part $(ii)$ to apply the Laplacian to \eqref{vt-vs} due to {\bf (A3)}, {\bf (A4)}, and proceed similarly to obtain
{\small{
\begin{align*}
\bE \Big[ \sup_{s \leq r \leq t} \|\Delta[v(r) - v(s)] \|_{\bL^2}^2 \Big] &\leq C \bE \Big[ \int_s^t \| \Delta^2 u(\xi)\|_{\bL^2}^2 \, \dd \xi \Big] (t-s) + C \bE \Big[ \int_s^t \big\| \Delta F\big(u(\xi), v(\xi) \big) \big\|_{\bL^2}^2 \, \dd \xi \Big] (t-s) 
\\ &\quad+ C \bE \Big[ \int_s^t \big\|\Delta \sigma \big(u(\xi), v(\xi) \big) \big\|_{\bL^2}^2 \, \dd \xi \Big] \,.
\end{align*}
}}
To bound the last two terms requires {\bf (A3)} to {\em e.g.} write $\sigma \big(u(\xi), v(\xi) \big) = \sigma_1\big(u(\xi)\big) + \sigma_2\big(v(\xi)\big)$, where $\sigma_2$ is affine in $v$. Then, $\Delta \sigma \big(u(\xi), v(\xi) \big) = \Delta \sigma_1\big(u(\xi)\big)$ and we can follow the steps of \eqref{l=1esti}-\eqref{l=3/2esti-1} to bound it. Similar techniques may be used to deal with $\| \Delta F \big(u(\xi), v(\xi) \big)\|_{\bL^2}^2$. Lemma \ref{lem:L2} then settles the assertion. 
Thus, the proof is complete.
\end{proof}

\bigskip

\end{document}